  \newskip\prethm \prethm3.0pt plus1.3pt minus.4pt
  \newskip\posthm \posthm2.7pt plus1.4pt minus.3pt
  \newtheoremstyle{STATEMENT}%
       {\prethm}{\posthm}{\itshape}{\parindent}{\scshape}
       {.}{.6em plus.2em minus.1em}{}
  \newtheoremstyle{EXPLANATION}%
       {\prethm}{\posthm}{}{\parindent}{\scshape}
       {.}{.6em plus.2em minus.1em}{}
\theoremstyle{STATEMENT}
\newtheorem {theorem}{Theorem}
\newtheorem {lemma}{Lemma}
\newtheorem    {prop} { Proposition}
\newtheorem    {claim} {Assertion}
\newtheorem    {corollary} {Corollary}
\theoremstyle{EXPLANATION}
\newtheorem {definition} {Definition}
\newtheorem   {remark}{Remark}
\newtheorem    {example} {Example}
\newtheorem    {problem} {Problem}
\begin{document}

\author{Kozlov~I.\,K.\thanks{No Affiliation, Moscow, Russia.} \\  ikozlov90@gmail.com}
\date{}

\title{Classification of Lagrangian fibrations}

\maketitle

\footnotetext[0]{This research was carried out with the financial support of the Russian Foundation for Basic
Research (grant no. 10-01-00748-a), the Programme for the Support of Leading Scientific Schools
of the President of the RF (grant no. НШ-3224.2010.1) and the Programme of the Development
of Research Potentials in Higher Education (grant no. РНП 2.1.1.3704).}

\begin{abstract}
This paper classifies Lagrangian fibrations over surfaces with compact total spaces up to fiberwise symplectomorphism identical on the base. \end{abstract}

\begin{flushleft}
\textbf{Keywords:} Lagrangian fibrations, Klein bottle.
\end{flushleft}

\tableofcontents

\section{Introduction}
\label{s1}

This paper classifies Lagrangian fibrations. A locally trivial fibration $\pi\colon(M^{2n},\omega)\to B$ is a
\textit{Lagrangian fibration} if the total space $(M^{2n},\omega)$ is a symplectic manifold and the symplectic form $\omega$ vanishes on each fibre $\omega\big|_{F^n}\equiv0$ (each fibre $F^n\subset M^{2n}$ is a Lagrangian submanifold). We assume that all fibres of Lagrangian fibrations are connected.

Lagrangian fibrations naturally arise in physics and mechanics. In particular, a momentum mapping of an integrable Hamiltonian system defines a Lagrangian fibration with singularities (about singularities of IHS, see for example \cite{1}). In this paper Lagrangian fibrations have no singularities.

In \cite{2}, using sheaf theory, J. Duistermaat introduced classifying invariants of Lagrangian fibrations : a lattice on the base and a Lagrangian Chern class (see \cite{2}, \cite{3}, and brief description in sections \ref{s2.4.2},~\ref{s2.4.3}). These invariants are hard to compute (even for low-dimensional fibrations). Nevertheless Mishachev \cite{4} obtained a full list of Lagrangian fibrations over orientable two-dimensional surfaces. Mishachev also show that among surfaces only the torus $\mathbb T^2$ and the Klein bottle $\mathbb K^2$ can be a base of a Lagrangian fibration.

In this paper we obtain a full list of Lagrangian fibrations over non-orientable surfaces. Any Lagrangian fibration (with compact fibre) over the Klein bottle can be obtained from a fibration $\pi\colon(I^2\times\mathbb T^2,\omega)\to I^2$ over a parallelogram $I^2$ by identifying points over opposite sides of $I^2$ by some fiberwise symplectomorphisms $g$ and $h$.

The base $I^2$ is a parallelogram with vertices $(0,0)$, $(x_0,0)$, $\bigl(\frac{m-\delta}2y_0,y_0\bigr)$ and $\bigl(x_0+\frac{m-\delta}2y_0,y_0\bigr)$ for some $x_0,y_0\in\mathbb R$, $m,\delta\in\mathbb Z$. The form $\omega$ is standard, that is $d\alpha\wedge dx+d\beta\wedge dy$, where $x$, $y$ are coordinates on the base and $\alpha,\beta\ (\operatorname{mod}1)$ are coordinates in the fibre. Fiberwise symplectomorphisms $g$, $h$ are defined by the formulas:
\begin{align}
\notag
h(x, y, \alpha, \beta) &= \biggl(x+my +\frac{m-\delta}{2}y_{0},
\ y + y_{0},
\ \alpha+ \frac{m_{0}}{x_{0}}x+\biggl(\frac{mm_{0}}{x_{0}} + \frac{n_{0}}{x_{0}}\biggr)y,
\\
\label{eq1}
&\qquad\qquad
- m \alpha+ \beta+ \frac{n_{0}}{x_{0}}x\biggr),
\\
\label{eq2} g(x, y, \alpha, \beta)
&= \biggl(x + (\delta-m) y +\frac{m-\delta}{2} y_{0} + x_{0},
\ -y+y_{0},
\ \alpha,
\ (\delta-m)\alpha-\beta\biggr)
\end{align}
for some $m_{0}, n_{0} \in \mathbb{Z}$.

Any Lagrangian fibration over the Klein bottle is fiberwise symplectomorphic to one of the fibrations
$$
\pi\colon(M^4_{m_0,n_0},\omega)\to\mathbb K^2_{m,y_0;\delta,x_0}
$$
described above (see Lemma \ref{l14}).

Note that some fibration in the list are fiberwise symplectomorphic.

Moreover, in this paper we classify all Lagrangian fibrations up to Lagrangian equivalence, that is up to fiberwise symplectomorphism identical on the base. Our strategy is to obtain (using obstruction theory) classifying invariants first and then to compute these invariants for fibrations over two-dimensional surfaces.

It was proved the following (see Theorems \ref{t1}--\ref{t5}). We start with classifying invariants.

An important invariant of a Lagrangian fibration $\pi\colon(M^{2n},\omega)\to B^n$ is a closed lattice $P$ (of rank $k$) in the cotangent bundle $T^*\!B^n$. A lattice $P$ is a multi-valued closed $1$-form such that its intersection with each cotangent space $T^*_xB^n$ is a discrete subgroup (of rank $k$). Since all fibres of $P$ are discrete, the quotient of the cotangent bundle $T^*\!B/P$ is well defined. Moreover, for any section $\alpha\colon B^n\to T^*\!B/P$ the differential $d\alpha$ is also well defined (the differential $d\alpha$ is the same for all local representatives of the form $\alpha$, since the lattice $P$ is closed).

The lattice of a Lagrangian fibration is defined as follows (this is a sketch of definition, for details see definition \ref{d9}). Each covector $\alpha\in T^*_xB^n$ defines an automorphism $s_\alpha\colon F_x\to F_x$ of the corresponding fibre $F_x$. If $\alpha=df\big|_x$, then $s_\alpha$ is the time-one flow of the vector field  $\operatorname{sgrad}(\pi^*f)$. The lattice $P$ consists of all covectors $\alpha$ such that $s_\alpha$ is the identity. It is easy to show that if the lattice has rank $k$, then fibres are diffeomorphic to $\mathbb T^k\times\mathbb R^{n-k}$.

The lattice $P$, considered as a locally trivial fibration over $B^n$, is canonically isomorphic to the local coefficients $\{\pi_1(F)\}$. Therefore instead of $H^2(B^n,\{\pi_1(F)\})$ we write $H^2(B,P)$ for the set of primary obstruction classes. If two Lagrangian fibrations $\pi_i\colon (M^{2n}_i,\omega_i)\to B^n$ over the same base $B^n$ have equal lattices $P_i$, then their primary obstruction classes belong to the same group $H^2(B^n,P)$ and consequently can be compared to each other (for details, see section \ref{s2.4.1}).

A lattice $P$ and a primary obstruction class $\gamma\in H^2(B,P)$ determine a Lagrangian fibration up to
the pullback of a $2$-form on the base. It was proved the following.

\begin{theorem} \label{t1}
Two Lagrangian fibrations $\pi_i\colon(M^{2n}_i,\eta_i)\to B^n$, $i=1,2$, with the same lattices $P_i\subset T^*\!B$ have equal primary obstruction classes $\gamma_i\in H^2(B,P_i)$ if and only if there exists a $2$-form $\varphi$ on the base $B$ such that the fibration $\pi_1\colon(M^{2n}_1,\eta_1+\pi_1^*\varphi)\to B^n$ is Lagrangian equivalent to the second fibration $\pi_2\colon(M^{2n}_2,\eta_2)\to B^n$.
\end{theorem}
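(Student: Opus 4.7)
My plan is to represent both fibrations by \v{C}ech cocycles with respect to a common local model and to identify the primary obstruction class as the image of such a cocycle under a connecting homomorphism in a short exact sequence of sheaves on $B$.

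Choose a good cover $\{U_i\}$ of $B$ on which the lattice $P$ is trivialized by closed $1$-forms. The Arnold--Liouville construction of action--angle coordinates yields, for each $i$ and each $j=1,2$, a fiberwise symplectomorphism $\Psi^{(j)}_i\colon \pi_j^{-1}(U_i)\to (T^*U_i)/P$ onto the standard local model with its canonical symplectic form. On double intersections $U_i\cap U_k$ the transition $\Psi^{(j)}_i\circ(\Psi^{(j)}_k)^{-1}$ is a fiberwise symplectomorphism of the standard model identical on the base, and such maps are precisely translations by sections of the sheaf $\mathcal{Z}^1/P$, where $\mathcal{Z}^1$ denotes the sheaf of closed $1$-forms. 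Denote the resulting $1$-cochain by $\{\beta^{(j)}_{ik}\}$. The short exact sequence of sheaves
\begin{equation*}
0\to P\to \mathcal{Z}^1\to \mathcal{Z}^1/P\to 0
\end{equation*}
yields a connecting map $\delta\colon H^1(B,\mathcal{Z}^1/P)\to H^2(B,P)$, and a direct identification will show that $\delta[\{\beta^{(j)}_{ik}\}]=\gamma_j$.

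For the ``only if'' direction, assume $\gamma_1=\gamma_2$. Then the difference cochain $\{\beta^{(1)}_{ik}-\beta^{(2)}_{ik}\}$ has vanishing image under $\delta$, so by exactness it lies in the image of $H^1(B,\mathcal{Z}^1)\to H^1(B,\mathcal{Z}^1/P)$. Combining with the soft resolution $0\to\mathbb R\to C^\infty\to\mathcal{Z}^1\to 0$ we identify $H^1(B,\mathcal{Z}^1)\cong H^2_{dR}(B)$, so there exist $1$-forms $\sigma_i$ on $U_i$ and a closed $2$-form $\varphi$ on $B$ with $d\sigma_i=\varphi|_{U_i}$ and $\beta^{(1)}_{ik}-\beta^{(2)}_{ik}=\sigma_k-\sigma_i\pmod P$. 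Adding $\pi_1^*\varphi$ to $\eta_1$ modifies the Arnold--Liouville trivializations by exactly $\sigma_i$, hence shifts $\beta^{(1)}_{ik}$ by $\sigma_i-\sigma_k$ and brings the two \v{C}ech cocycles into agreement. The corrected local symplectomorphisms $(\Psi^{(2)}_i)^{-1}\circ\Psi^{(1)}_i$ then glue into a global Lagrangian equivalence $(M_1,\eta_1+\pi_1^*\varphi)\to(M_2,\eta_2)$. The converse ``if'' direction is a naturality statement: adding $\pi_1^*\varphi$ shifts the class of $\{\beta^{(1)}_{ik}\}$ only by an element in the image of $H^1(B,\mathcal{Z}^1)$, which is annihilated by $\delta$, leaving $\gamma_1$ unchanged.

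The main technical obstacle will be the sheaf-theoretic bookkeeping: identifying fiberwise symplectomorphisms of the standard model with sections of $\mathcal{Z}^1/P$, verifying that the connecting-homomorphism class agrees with the topologist's primary obstruction defined via the isomorphism $P\cong\{\pi_1(F)\}$, and checking that the symplectic modification $\eta_1\mapsto\eta_1+\pi_1^*\varphi$ deforms the action--angle trivializations in precisely the way required to shift the \v{C}ech cocycle by $\{\sigma_i-\sigma_k\}$.
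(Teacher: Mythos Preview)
Your approach is correct but differs substantively from the paper's. The paper does not argue via sheaves at all in proving Theorem~\ref{t1}; it factors the statement through an intermediate notion of \emph{affine fibration} (Theorem~\ref{t6}: an almost Lagrangian fibration is, up to twisting, the same as an affine fibration with underlying fibration $T^*B/P$) and then proves directly, by obstruction theory on a CW structure, that affine fibrations with underlying $T^*B/P$ are classified by $H^2(B,P)$ (Theorem~\ref{t7}). That proof is a hands-on cell-by-cell extension argument: given two affine fibrations with equal obstruction cochains, one builds an equivalence skeleton by skeleton, using that $\pi_k(F)=0$ for $k\ge 2$. Your route---encode each Lagrangian fibration as a \v{C}ech class in $H^1(B,\mathcal Z^1/P)$, push through the connecting map of $0\to P\to\mathcal Z^1\to\mathcal Z^1/P\to 0$, and recover the twisting $2$-form from $H^1(B,\mathcal Z^1)\cong H^2_{dR}(B)$---is essentially the Duistermaat picture the paper describes in \S\ref{s2.4.3} but deliberately sets aside in favor of the obstruction-theoretic argument.

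Each approach has its merits. The paper's argument is elementary in the sense that it needs no sheaf cohomology, and it makes the role of the primary obstruction transparent: the obstruction cochain literally measures the failure to extend a section across $2$-cells. Your argument is shorter once the machinery is in place and makes the appearance of the closed $2$-form $\varphi$ more natural (it drops out of the identification $H^1(B,\mathcal Z^1)\cong H^2_{dR}(B)$), but you pay for this with the technical point you yourself flag: checking that $\delta[\{\beta_{ik}\}]$ coincides with the obstruction-theoretic class under $P\cong\{\pi_1(F)\}$. That comparison is standard but is exactly the content of the paper's Theorems~\ref{t7} and~\ref{t9} combined, so in a sense you are trading one nontrivial verification for another.
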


The next statement is simple.

\begin{theorem}
\label{t2}
1) For any Lagrangian fibration $\pi\colon(M^{2n},\eta)\to B^n$ and any two-form $\varphi$ on the base the fibration $\pi\colon(M^{2n},\eta+\pi^*\varphi)\to B^n$ is Lagrangian if and only if the $2$-form $\varphi$ is closed.

2) Two Lagrangian fibrations $\pi\colon(M^{2n},\eta+\pi^*\varphi_i)\to B^n$, $i=1,2$, with lattice $P$ are Lagrangian equivalent if and only if $\varphi_1-\varphi_2=d\alpha$ for some section $\alpha\colon B^n\to T^*\!B^n/P$.
\end{theorem}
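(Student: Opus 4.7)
The plan is to treat the two parts separately, with Part~2 carrying the bulk of the work.

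For Part~1, two elementary observations suffice. First, since $\pi$ is a submersion, $\pi^*$ is injective on differential forms, so $d(\eta + \pi^*\varphi) = \pi^* d\varphi$ vanishes iff $d\varphi = 0$. Second, for any vertical vector $v \in \ker d\pi$ one has $i_v \pi^*\varphi = 0$, so the $\eta$-orthogonal and the $(\eta + \pi^*\varphi)$-orthogonal complements of the vertical distribution coincide. In particular, vertical subspaces remain maximally isotropic, which forces $\eta + \pi^*\varphi$ to be non-degenerate and the fibres to be Lagrangian for the new form. Combining the two observations settles both directions of Part~1.

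For the ``if'' direction of Part~2, a section $\alpha$ of $T^*B/P$ gives a fibrewise diffeomorphism $s_\alpha \colon M \to M$ sending $p \in F_x$ to $s_{\alpha(x)}(p)$; this is well defined because any two lifts of $\alpha(x)$ to $T^*_xB$ differ by an element of $P_x$, which by the definition of $P$ acts trivially on $F_x$. In local Darboux coordinates $\eta = \sum dx_i \wedge dy_i$ with $\alpha = \sum \alpha_i(x)\,dx_i$, the map $s_\alpha$ is the fibrewise translation $(x, y) \mapsto (x, y + \alpha(x))$, and a direct computation yields $s_\alpha^*\eta = \eta - \pi^*(d\alpha)$. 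Since $\pi \circ s_\alpha = \pi$, this gives $s_\alpha^*(\eta + \pi^*\varphi_2) = \eta + \pi^*(\varphi_2 - d\alpha)$; hence whenever $\varphi_1 - \varphi_2 = d\alpha$, the map $s_{-\alpha}$ realises the required Lagrangian equivalence.

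For the ``only if'' direction, the key structural step is to show that every Lagrangian equivalence $\Phi$ between the two fibrations has the form $\Phi = s_\tau$ for some section $\tau$. For $\alpha \in T^*_xB$, let $X_\alpha$ be the unique vertical vector field on $F_x$ with $\eta(X_\alpha, w) = \alpha(d\pi w)$ for all $w \in T_pM$; its time-one flow is $s_\alpha$. Because $\pi^*(\varphi_2 - \varphi_1)$ annihilates vertical vectors, the same $X_\alpha$ is associated to $\alpha$ by all of $\eta$, $\eta_1 = \eta + \pi^*\varphi_1$ and $\eta_2 = \eta + \pi^*\varphi_2$. The identities $\Phi^*\eta_2 = \eta_1$ and $\pi \circ \Phi = \pi$ then force $\Phi_* X_\alpha = X_\alpha$ for every such $\alpha$. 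Since $\{X_\alpha\}_{\alpha \in T^*_xB}$ integrates to a free transitive action of the abelian group $T^*_xB/P_x$ on $F_x$, any diffeomorphism of $F_x$ commuting with all of these must itself be a translation in this action. Hence $\Phi|_{F_x}$ is translation by some $\tau(x) \in T^*_xB/P_x$; the assignment $x \mapsto \tau(x)$ is smooth because $\Phi$ is, so $\tau$ is a section of $T^*B/P$. Substituting $\Phi = s_\tau$ into $\Phi^*(\eta + \pi^*\varphi_2) = \eta + \pi^*\varphi_1$ and using $s_\tau^*\eta = \eta - \pi^*d\tau$ yields $\varphi_1 - \varphi_2 = d(-\tau)$, so $\alpha = -\tau$ is the desired section.

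The main obstacle is the structural claim that every Lagrangian equivalence factors through the $T^*B/P$-action; once this is in hand, the remaining arguments reduce to a routine local computation.
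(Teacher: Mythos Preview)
Your argument is correct and follows essentially the same route as the paper. Part~1 is treated in the paper as obvious (it is restated as Lemma~\ref{l4} with no proof), and your Part~2 matches the paper's Lemma~\ref{l2}: the ``if'' direction via the time-one flow $s_\alpha$ together with the pullback identity (the paper's Assertion~\ref{cl3}, item~2), and the ``only if'' direction via the observation that a Lagrangian equivalence must respect the shiftings. The only cosmetic difference is that the paper phrases the ``only if'' step through its affine-fibration language (a twisting does not change the underlying affine fibration, and any equivalence of affine fibrations with underlying $T^*B/P$ is a shift by a latticed $1$-form), whereas you unpack this directly by checking $\Phi_*X_\alpha=X_\alpha$ and invoking the simple transitivity of the $T^*_xB/P_x$-action; the content is identical.
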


Not all lattices $P\,{\subset}\, T^*\!B^n$ and obstructions $\gamma\,{\in}\, H^2(B,P)$ can be realized by Lagrangian fibrations. It was proved the following.

\begin{theorem}
\label{t3} Every closed lattice $P\subset T^*\!B^n$ and every obstruction class $\gamma\in H^2(B,P)$ can be realized by a locally trivial fibration $\pi\colon(M^{2n},\eta)\to B^n$ such that the form $\eta$ vanishes on each fibre (that is $\eta|_{F_x}\equiv0$) and $d\eta=\pi^*\psi$ for some $3$-form $\psi$ on the base $B^n$.

The class $[\psi]\in H^3(B^n,\mathbb R)$ is uniquely defined.
\end{theorem}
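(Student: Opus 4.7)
The plan is to realize the given data by a Čech-theoretic gluing of standard local models and then to read off $\psi$ from the bookkeeping of how the local symplectic forms fail to glue. Fix a good cover $\{U_i\}$ of $B^n$ and represent $\gamma\in H^2(B,P)$ by a Čech $2$-cocycle $\{p_{ijk}\}$ with values in the sheaf of sections of $P$. Lift this to a Čech $1$-cochain of smooth $1$-forms $\sigma_{ij}\in\Gamma(U_{ij},T^*B)$ satisfying $\sigma_{ij}+\sigma_{jk}-\sigma_{ik}=p_{ijk}$. Over each $U_i$ take the local model $M_i:=T^*U_i/P|_{U_i}$ with the canonical form $\omega_i=\sum_k d\alpha_k\wedge dx_k$ descended from $T^*U_i$, and glue the $M_i$'s by the fiberwise translations $g_{ij}\colon(x,\alpha)\mapsto(x,\alpha+\sigma_{ij}(x))$. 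Working modulo $P$ in the fibre makes this cocycle condition hold and produces a locally trivial fibration $\pi\colon M\to B$ with fibre $\mathbb T^k\times\mathbb R^{n-k}$, whose primary obstruction is by construction exactly $\gamma$.

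A direct computation in the coordinates above gives $g_{ij}^*\omega_j=\omega_i+\pi^*(d\sigma_{ij})$, so the local forms do not glue on the nose. However $\{d\sigma_{ij}\}$ is a Čech $1$-cocycle with values in the sheaf of smooth $2$-forms on $B$ (the cocycle identity on triple overlaps uses that $dp_{ijk}=0$, which holds because $P$ is a closed lattice), and smooth forms are a fine sheaf, so a partition-of-unity argument produces $\tau_i\in\Omega^2(U_i)$ with $\tau_j-\tau_i=d\sigma_{ij}$. Set $\tilde\omega_i:=\omega_i-\pi^*\tau_i$. Then $g_{ij}^*\tilde\omega_j=\tilde\omega_i$, so the $\tilde\omega_i$ assemble into a global $2$-form $\eta$ on $M$. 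Both summands vanish on fibres, giving $\eta|_{F_x}\equiv 0$. Finally $d\tilde\omega_i=-\pi^*(d\tau_i)$, and since $d(d\sigma_{ij})=0$ the local $3$-forms $d\tau_i$ agree on overlaps and glue to a global (automatically closed) $3$-form $\psi$ on $B$ with $d\eta=-\pi^*\psi$ (the sign is conventional).

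For the uniqueness of $[\psi]\in H^3(B,\mathbb R)$ I would proceed in two stages. First, within the construction above, a different Čech representative, lift, or primitive modifies $\psi$ only by an exact $3$-form: a competing lift $\sigma'_{ij}=\sigma_{ij}+\delta\rho_{ij}+q_{ij}$ with $\rho_i\in\Omega^1(U_i)$ and $q_{ij}$ integral shifts $d\sigma_{ij}$ by a coboundary, hence allows one to take $\tau'_i=\tau_i+d\rho_i+\mu|_{U_i}$ for a global $\mu\in\Omega^2(B)$, whence $\psi'-\psi=d\mu$. Second, for two a priori unrelated realizations $(\eta,\psi)$ and $(\eta',\psi')$ of the same pair $(P,\gamma)$, the argument behind Theorem~\ref{t1}, adapted to the pre-Lagrangian setting $d\eta=\pi^*\psi$ in place of $d\eta=0$, gives a fiberwise diffeomorphism over the identity relating $\eta'$ to $\eta+\pi^*\varphi$ for some $\varphi\in\Omega^2(B)$; differentiating yields $\psi'-\psi=d\varphi$, so $[\psi']=[\psi]$ in de Rham cohomology.

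The existence construction is a fairly clean Čech-to-de~Rham computation and I do not expect serious difficulty there. The main obstacle I anticipate is the last step: to conclude that $[\psi]$ is an honest invariant of $(P,\gamma)$ one must extend the classification behind Theorem~\ref{t1} from genuine Lagrangian fibrations to fibrations with $d\eta=\pi^*\psi$. This is expected but requires redoing that proof with the weaker hypothesis, tracking that the obstruction-class argument does not use $d\eta=0$ beyond concluding that the resulting correction $\varphi$ lives on the base.
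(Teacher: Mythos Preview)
Your argument is correct, but it proceeds differently from the paper's own proof. The paper separates the two tasks: it first realizes the obstruction class by an affine fibration with underlying fibration $T^*\!B/P$, building this fibration skeleton by skeleton via obstruction theory (Theorem~\ref{t8}), and only afterwards equips the resulting affine fibration with an almost Lagrangian form $\eta$ using a partition of unity over a trivializing cover (Theorem~\ref{t6}, part~2). You instead work \v{C}ech-theoretically from the outset, representing $\gamma$ as a \v{C}ech $2$-cocycle in $P$, lifting it to a $1$-cochain of $1$-forms by acyclicity of the sheaf of smooth forms, and gluing the local models and their symplectic forms in a single construction; your partition-of-unity correction of the $\omega_i$ by $\pi^*\tau_i$ is essentially the same step as in the paper's Theorem~\ref{t6}. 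Your route is the sheaf-theoretic viewpoint the paper sketches in \S\ref{s2.4.2}--\S\ref{s2.4.3} but does not use for the proof itself; it is more streamlined, while the paper's cell-by-cell construction is more explicit about why the class landing in $H^2(B,P)$ really is the primary obstruction. For uniqueness of $[\psi]$, the ``obstacle'' you flag---extending Theorem~\ref{t1} from Lagrangian to almost Lagrangian fibrations---is exactly what the paper does: Theorem~\ref{t11}, part~3) (via Theorems~\ref{t6} and~\ref{t7}) gives the required fiberwise identification up to a twist $\pi^*\varphi$ without assuming $d\eta=0$, and differentiating yields $\psi'-\psi=d\varphi$ as you say.
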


Theorem \ref{t3} implies that a lattice and an obstruction class can be realized by a Lagrangian fibration  $\pi\colon(M^{2n},\eta)\to B^n$ if and only if the symplectic obstruction class $[\psi]\in H^3(B^n,\mathbb R)$ is trivial.

There do exist fibrations with nontrivial symplectic obstructions $[\psi]$ (see Example \ref{ex8}). Classification of  Lagrangian fibrations is naturally generalized to almost Lagrangian fibrations $\pi\colon(M^{2n},\eta)\to B^n$ (see definition \ref{d5}) whose fibres are not necessarily compact, and whose form $\eta$ is not necessarily closed. Obtained results are summarized in Theorem \ref{t11} that generalizes Theorems \ref{t1},~\ref{t2} and \ref{t3}.

If $H^2(B,\mathbb R)=H^3(B,\mathbb R)=0$, then Lagrangian fibrations over the base $B$ are classified by their lattices and primary obstruction classes.

In this paper we compute the invariants for all two-dimensional surfaces. Theorem \ref{t15} classifies all closed lattices of rank $2$ on compact two-dimensional surfaces up to isomorphism, that is up to an automorphism of the base. The next example contains the full list of lattices on the Klein bottle $\mathbb K^2$.

\begin{example}
\label{ex1}
Consider the affine plane $\mathbb R^2$. Let $G_{m,y;\delta,x}$ be a subgroup of $\mathrm{GL}_2(\mathbb
Z)\leftthreetimes\mathbb R^2\subset\mathrm{Aff}_2$ spanned by
$$\biggl(\begin{pmatrix}1&\delta
\\
0&-1\end{pmatrix},\begin{pmatrix}x
\\
0\end{pmatrix}\biggr), \quad
\biggl(\begin{pmatrix}1&m
\\
0&1\end{pmatrix},\begin{pmatrix}0
\\
y\end{pmatrix}\biggr),
\qquad
m,\delta\in\mathbb Z,
\quad
x,y\in\mathbb R,
\quad
x,y>0.
$$
The quotient $\mathbb R^2/G_{m,y;\delta,x}$ is diffeomorphic to the Klein bottle $\mathbb K^2$. Denote by $\mathbb K^2_{m,y;\delta,x}$ the lattice on the quotient $\mathbb R^2/G_{m,y;\delta,x}\simeq\mathbb K^2$ induced by the standard lattice $\mathbb Z^2$ on the plane.
\end{example}

\begin{theorem}
\label{t4}
For any closed lattice $P$ of rank $2$ on the Klein bottle $\mathbb K^2$ there exist unique $m,\delta\in\mathbb Z$, $x,y\in\mathbb R$, such that  $m\ge0$, $x,y>0$, $\delta$ is either $0$  or $1$, $m$ is even for $\delta=1$, and the lattice $P$ and the lattice $\mathbb K_{m,y;\delta,x}$ are isomorphic.
\end{theorem}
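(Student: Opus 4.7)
My strategy is to realize an arbitrary closed rank-$2$ lattice $P$ on $\mathbb K^2$ as the quotient of the standard integer lattice on the affine plane by a discrete subgroup of $\mathrm{GL}_2(\mathbb Z) \ltimes \mathbb R^2$, and then to bring that subgroup into the normal form $G_{m,y;\delta,x}$ of Example~\ref{ex1}. The first step is to build a development map. The pullback $\widetilde P$ of $P$ to the universal cover $\mathbb R^2 \to \mathbb K^2$ is a closed rank-$2$ lattice on a simply connected manifold, so one can choose a global basis of exact sections $df_1, df_2$ of $\widetilde P$ (the primitives exist because $H^1(\mathbb R^2) = 0$); the map $D = (f_1, f_2) \colon \mathbb R^2 \to \mathbb R^2$ is then a local diffeomorphism carrying $\widetilde P$ onto the standard lattice $\mathbb Z^2$. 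The deck group $\pi_1(\mathbb K^2)$ acts $D$-equivariantly and cocompactly on the source, and a standard completeness argument for integral affine $(G,X)$-structures forces $D$ to be a global diffeomorphism.

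Via $D$, the deck group embeds as a subgroup $G \subset \mathrm{GL}_2(\mathbb Z) \ltimes \mathbb R^2$ acting freely, properly, and cocompactly on $\mathbb R^2$, with induced lattice on the quotient equal to $P$. Any lattice-preserving diffeomorphism of $\mathbb K^2$ lifts to an element of $\mathrm{GL}_2(\mathbb Z) \ltimes \mathbb R^2$, so two lattices on $\mathbb K^2$ are isomorphic if and only if the corresponding subgroups are conjugate in the integer affine group, and the problem reduces to classifying such subgroups up to conjugation. The orientation-preserving part $T \leq G$ has index~$2$ and is isomorphic to $\mathbb Z^2$; a short analysis of commuting fixed-point-free integer affine transformations with cocompact quotient shows that $T$ acts by pure translations, so $T$ is a rank-$2$ translation lattice $\Lambda \subset \mathbb R^2$. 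For a generator $g = (A, u) \in G \setminus T$ one has $g^2 \in \Lambda$, hence $A^2 = I$; the two $\mathrm{GL}_2(\mathbb Z)$-conjugacy classes of involutions with $\det = -1$ are $\bigl(\begin{smallmatrix} 1 & 0 \\ 0 & -1 \end{smallmatrix}\bigr)$ and $\bigl(\begin{smallmatrix} 1 & 1 \\ 0 & -1 \end{smallmatrix}\bigr)$, accounting for the dichotomy $\delta \in \{0, 1\}$.

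Once $A$ is fixed, conjugation by translations adjusts $u$ by $(I - A)\mathbb R^2$, and a further change of basis of $\Lambda$ by matrices centralizing $A$ brings the generators of $\Lambda$ into the form $\bigl(\begin{smallmatrix}1 & m \\ 0 & 1\end{smallmatrix}\bigr)$ with translation $(0, y)$, and gives the orientation-reversing generator translation part $(x, 0)$, reproducing $G_{m, y; \delta, x}$. Uniqueness of $(m, \delta, x, y)$ is then obtained by matching each parameter to a conjugation invariant of $G$: $y$ is the smallest positive second coordinate appearing in $\Lambda$, $x$ is half the $x$-displacement of the pure translation $g^2$, $\delta$ is the $\mathrm{GL}_2(\mathbb Z)$-conjugacy class of $A$, and $m$ records how $A$ shears the canonical basis of $\Lambda$; the parity constraint $2 \mid m$ when $\delta = 1$ reflects that in this case the $(+1)$-eigenspace of $A$ is not primitive in the integer target lattice, forcing a compatibility between the coset representative of $g$ and the basis of $\Lambda$. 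The main obstacle is this last normal-form step together with the accompanying uniqueness: one has to juggle basis change in $\Lambda$, conjugation by arbitrary translations, and replacement of $g$ by $tg$ for $t \in \Lambda$ simultaneously, and verify that precisely the stated constraints ($m \geq 0$, $x, y > 0$, $\delta \in \{0, 1\}$, and $m$ even for $\delta = 1$) survive all three normalizations and determine $(m,\delta,x,y)$ uniquely.
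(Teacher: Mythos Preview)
Your overall architecture is sound and matches the paper's: develop to $(\mathbb R^2,\mathbb Z^2)$, invoke completeness to make the developing map a diffeomorphism, and reduce to classifying discrete subgroups of $\mathrm{GL}_2(\mathbb Z)\ltimes\mathbb R^2$ up to conjugation. The completeness step is not a ``standard'' argument, by the way---the paper appeals to the Fried--Goldman--Hirsch theorem on affine manifolds with nilpotent holonomy (applied to the torus double cover)---but you at least flag it.

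There is, however, a real gap in your normal-form step. You assert that the orientation-preserving index-$2$ subgroup $T$ ``acts by pure translations, so $T$ is a rank-$2$ translation lattice $\Lambda\subset\mathbb R^2$.'' This is false, and the failure is exactly what the parameter $m$ records. In the target normal form $G_{m,y;\delta,x}$ of Example~\ref{ex1}, the orientation-preserving generator is $\bigl(\bigl(\begin{smallmatrix}1&m\\0&1\end{smallmatrix}\bigr),\bigl(\begin{smallmatrix}0\\y\end{smallmatrix}\bigr)\bigr)$, which is a genuine shear whenever $m>0$; it commutes with the translation $g^2$ (since $(2x,0)$ lies in its $+1$-eigenspace) and acts freely and cocompactly, yet is not a translation. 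So a free cocompact action of $\mathbb Z^2$ by integer-affine maps need not be by translations. Your own later sentence, that a change of basis ``brings the generators of $\Lambda$ into the form $\bigl(\begin{smallmatrix}1&m\\0&1\end{smallmatrix}\bigr)$ with translation $(0,y)$,'' is inconsistent with $\Lambda$ being a translation lattice and suggests you noticed something was off.

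The paper's fix is not to first isolate the $\mathbb Z^2$ subgroup, but to work directly with standard generators $a,b$ of $\pi_1(\mathbb K^2)$ satisfying $abab^{-1}=e$. One shows every linear part has eigenvalue $1$ (freeness), that all linear parts can be simultaneously upper-triangularized (Lemma~\ref{l6}, using the trace constraint from Corollary~\ref{c7}), and then normalizes $a$ and $b$ step by step using both the residual $\mathrm{Aut}_2$-conjugation freedom \emph{and} the freedom to change standard generators (Assertion~\ref{cl14}). Uniqueness is obtained not by abstract invariants but by an explicit computation (Lemma~\ref{l11}) of all automorphisms carrying one normal form to another. To repair your argument you need to drop the translation claim and instead first triangularize, then run this two-generator reduction; your invariant-matching sketch for uniqueness would then have to be replaced by (or shown to be equivalent to) that explicit computation.
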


Other invariants are computed in section \ref{s4.3} for each closed lattice $P$ of rank 2 (see Theorem \ref{t16}). In particular, it is shown that for some lattices on the Klein bottle $\mathbb K^2$ there is only a finite number of different Lagrangian fibrations.

\begin{theorem}
\label{t5}
For any closed lattice $P$ isomorphic to the lattice $\mathbb K_{n,y;\delta,x}$, where $n\,{>}\,0$, there exist precisely $2n$ Lagrangian fibrations $\pi\colon(M^4,\omega)\to\mathbb K^2$ with lattice $P$ not Lagrangian equivalent to each other.
\end{theorem}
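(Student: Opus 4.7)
The plan is to combine the general obstruction-theoretic classification of Theorems~\ref{t1}--\ref{t3} with the cohomological vanishings for the Klein bottle and then to carry out an explicit twisted cohomology computation.

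First I would reduce the problem to counting $H^2(\mathbb{K}^2, P)$. Since $\mathbb{K}^2$ is a closed non-orientable surface, $H^2(\mathbb{K}^2, \mathbb{R}) = H^3(\mathbb{K}^2, \mathbb{R}) = 0$. By Theorem~\ref{t3} the symplectic obstruction $[\psi] \in H^3(\mathbb{K}^2, \mathbb{R})$ is automatically zero, so every class $\gamma \in H^2(\mathbb{K}^2, P)$ is realized by some Lagrangian fibration with lattice $P$. By Theorem~\ref{t2} the fibrations with fixed primary obstruction differ by $2$-form twists $\pi^*\varphi$ modulo $\pi^* d\alpha$ with $\alpha$ a section of $T^*\mathbb{K}^2/P$; since $H^2(\mathbb{K}^2, \mathbb{R}) = 0$, every closed $2$-form on $\mathbb{K}^2$ is exact, $\varphi = d\beta$ with $\beta \in \Omega^1(\mathbb{K}^2)$, and $\beta$ descends to a section of $T^*\mathbb{K}^2/P$, so the twist freedom is trivial. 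Combined with Theorem~\ref{t1}, this identifies the set of Lagrangian equivalence classes of Lagrangian fibrations with lattice $P$ with the group $H^2(\mathbb{K}^2, P)$.

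Next I would compute $H^2(\mathbb{K}^2, P)$ for $P \cong \mathbb{K}_{n, y; \delta, x}$, $n>0$. Since $\mathbb{K}^2$ is aspherical, this equals the group cohomology $H^2(\pi, P)$ with $\pi = \pi_1(\mathbb{K}^2) = \langle a, b \mid b a b^{-1} a \rangle$ (with $b$ orientation-reversing). Reading Example~\ref{ex1}, the monodromy action on the cotangent stalk $\mathbb{Z}^2$ is the inverse transpose of the linear parts of the generators of $G_{n, y; \delta, x}$, namely
\[
a \mapsto A = \begin{pmatrix} 1 & 0 \\ -n & 1 \end{pmatrix},
\qquad
b \mapsto B = \begin{pmatrix} 1 & 0 \\ \delta & -1 \end{pmatrix},
\]
and one verifies $BAB^{-1} = A^{-1}$, as required by the relation $bab^{-1} = a^{-1}$. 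Applying the Fox calculus to $r = bab^{-1}a$ gives $\partial r/\partial a = b + a^{-1}$ and $\partial r/\partial b = 1 - a^{-1}$, so the top coboundary is
\[
\delta^1(m_1, m_2) \;=\; (B + A^{-1})\,m_1 + (I - A^{-1})\,m_2,
\qquad m_1, m_2 \in \mathbb{Z}^2,
\]
and $H^2(\mathbb{K}^2, P) = \mathbb{Z}^2 / \operatorname{Im} \delta^1$.

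Substituting the matrices, the image is the sublattice of $\mathbb{Z}^2$ spanned by the columns of $B+A^{-1}$ and $I-A^{-1}$, i.e.\ by $(2, \delta + n)^T$ and $(0, n)^T$, of index $|2 \cdot n - 0 \cdot (\delta + n)| = 2n$ in $\mathbb{Z}^2$. Hence $|H^2(\mathbb{K}^2, P)| = 2n$, giving precisely $2n$ Lagrangian equivalence classes. The main obstacle is essentially bookkeeping: correctly identifying the cotangent monodromy (inverse transpose rather than the linear part itself), carrying out the Fox derivatives of the Klein-bottle relator with the right signs, and verifying that the $2$-form twist freedom from Theorem~\ref{t2} collapses thanks to $H^2(\mathbb{K}^2, \mathbb{R}) = 0$. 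The final answer $2n$ is transparent from the coboundary matrix: the orientation-reversing monodromy $B$ contributes the factor $2$, while the twisting integer $n$ in the lattice contributes the factor $n$.
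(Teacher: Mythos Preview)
Your proposal is correct. The reduction step---using $H^2(\mathbb K^2,\mathbb R)=H^3(\mathbb K^2,\mathbb R)=0$ to kill both the symplectic obstruction and the twist ambiguity, so that Lagrangian equivalence classes biject with $H^2(\mathbb K^2,P)$---is exactly what the paper does (Theorem~\ref{t11} and Theorem~\ref{t16}). Your computation of $H^2(\mathbb K^2,P)$ via group cohomology and Fox calculus is a repackaging of the paper's direct cellular computation in Example~\ref{ex6}/Assertion~\ref{cl10}: the paper fixes the standard CW structure on $\mathbb K^2$ (one $0$-cell, two $1$-cells $a,b$, one $2$-cell attached along $abab^{-1}$) and tracks by hand how a section over the $1$-skeleton changes around $\partial e^2$ under the lattice monodromy, obtaining the coboundary sublattice generated by $(2,\delta-n)^T$ and $(0,-n)^T$. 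Your Fox derivatives of $r=bab^{-1}a$ produce the generators $(2,\delta+n)^T$ and $(0,n)^T$, which span the same sublattice. The two computations are literally the same chain complex (the cellular cochain complex of the universal cover coincides with the Fox resolution for a one-relator $K(\pi,1)$); your version is more algebraic and makes the index $2n$ visible in one line, while the paper's geometric version also reads off the group structure ($\mathbb Z_2\oplus\mathbb Z_n$ for $\delta=0$, $\mathbb Z_{2n}$ for $\delta=1$), which your index argument alone does not distinguish.
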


And for any lattice $P$ isomorphic to $\mathbb K_{0,y;\delta,x}$ there is a countable infinity of Lagrangian fibrations $\pi\colon(M^4,\omega)\to\mathbb K^2$ with lattice $P$.

\smallskip

Author thanks his scientific advisor A. \,A. Oshemkov for posing the problem and for help during the preparation of the paper.

\section{Basic definitions}
\label{s2}

\subsection{Almost Lagrangian fibrations}
\label{s2.1}

\begin{definition}
\label{d1}
Let $\eta$ be a non-degenerate $2$-form on $M^{2n}$. A submanifold $L\subset(M^{2n},\eta)$ is a \textit{maximal isotropic submanifold} if $\dim L=n$ and $\eta\big|_L=0$.
\end{definition}

Thus maximal isotropic submanifolds generalize maximal isotropic (and, in particular, Lagrangian) subspaces from linear algebra.

\begin{definition}
\label{d2}
A locally trivial fibration $\pi\colon(M^{2n},\eta)\to B^n$ is called a \textit{maximal isotropic fibration} if the fibres $F^{n} \subset (M^{2n}, \eta)$ are (connected) maximal isotropic submanifolds.
\end{definition}

Thus a maximal isotropic fibration is Lagrangian if and only if $d \eta =0$.

\begin{example}
\label{ex2}
Consider a cotangent bundle $\pi\colon T^*\!B\to B$. Let $\xi$ be the \textit{canonical (tautological) $1$-form} on the cotangent bundle $T^*\!B$ defined by $\langle \xi, v\rangle = \alpha(\pi_*(v))$ for any vector $v\in T_{(x,\alpha)}(T^*\!B)$ at a point $(x, \alpha) \in T^*_xB$. Then the cotangent bundle $\pi\colon(T^*\!B,\omega_0)\to B$ with the \textit{canonical $2$-form} $\omega_0=d\xi$ is a maximal isotropic (even Lagrangian) fibration.
\end{example}

It is easy to show that in \textit{canonical coordinates} $(p_i,q^i)$, where $q^i$ are coordinates on the base and $p_i$ are coordinates of a covector with respect to the basis $dq^i$, the symplectic form is standard $\omega_{0}= \sum_idp_i\wedge dq^i$.

\begin{example}
\label{ex3}
Let $\pi\colon(M^{2n},\eta)\to B^n$ be a maximal isotropic fibration and $\varphi$ be a $2$-form on $B^n$. We say that a maximal isotropic fibration $\pi\colon(M^{2n},\allowbreak \eta+\nobreak\pi^*\varphi)\to B^n$ is a fibration \textit{twisted by the form} $\varphi$.
\end{example}

\begin{definition}
\label{d3}
Given two fibrations $\pi_i\colon(M^{2n}_i,\eta_i)\to B^n_i$, $i=1,2$. A fiberwise diffeomorphism $f\colon M^{2n}_1\to M^{2n}_2$ is a \textit{Lagrangian isomorphism} if $f^*\eta_2=\eta_1$. If $B_{1}=B_{2}$ and $f$ is identical on the base then $f$ is a \textit{Lagrangian equivalence}.
\end{definition}

This paper classifies Lagrangian fibrations up to Lagrangian equivalence. The classification is based on the following Duistermaat construction (see \cite{2}). For any covector $\alpha\in T^*_xB$ on the base of a Lagrangian fibration $\pi:(M^{2n}, \eta) \rightarrow B^{n}$ Duistermaat constructed a vector field $v_\alpha$ on the fibre $F_x=\pi^{-1}(x)$. Here is a generalization of this construction to the case of maximal isotropic fibrations.

\subsubsection*{Duistermaat construction}

Consider a surjective map $\pi\colon E\to B$. Let $\nu$ be a bivector field on $E$, that is $\nu$ is a skew-symmetric bilinear form on each cotangent space $T_z^*E$ that varies smoothly with $z\in E$. If $E$ is a Poisson manifold, then $\nu$ is the Poisson bivector.

For any covector $\alpha\in T^*_xB$ and any point $z\in F_x=\pi^{-1}(x)$ there corresponds a vector $v_\alpha(z)\in T_zE$ symplectically dual (with respect to $\nu$) to the covector $(\pi^*\alpha)(z)$. In other words, the vector $v_\alpha(z) \in T_zE$ is defined by $\langle v_\alpha(z),\beta\rangle=\nu((\pi^*\alpha)(z),\beta)$ for all $\beta\in T^*_zE$.

Suppose that $F_x$ is a submanifold of the total space $E$, and that the bivector $\nu$ vanishes on the subspace $\pi^*(T^*_xB)\subset T^*_zE$. Then each vector $v_\alpha(z)$ is tangent to $F_x$. Hence $v_\alpha$ is a vector field on $F_x$.

\begin{definition}
\label{d4} The vector fields $v_\alpha$ are \textit{shiftings}. We denote by $s_\alpha^t$ the flow of the vector field $v_\alpha$ for time $t$ (if the flow is well defined). The time-one flow $s_\alpha^1$ is denoted by $s_\alpha$ for short.
\end{definition}

\begin{remark}
\label{r1}
Now, let $\pi\colon E\to B$ be a fibration with a non-degenerate $2$-form $\eta$ on $E$. Then Duistermaat construction can be applied to the bivector field $\nu$ dual to the form $\eta$. (In textbooks on linear algebra it is shown that any non-degenerate bilinear form $\Phi$ on a vector space $V$ defines the dual form $\widetilde\Phi$ on the dual space $V^*$).

If $\pi\colon E\to B$ is a maximal isotropic fibration, then $\nu\big|_{\pi^*(T^*\!B)}=0$ (check it in local coordinates). Hence the correspondence $\alpha\to v_\alpha$ defines an isomorphism between the cotangent space $T^*_{\pi(z)}B$ at $\pi(z)$ and the tangent space $T_zF$ to the fibre $F=\pi^{-1}(\pi(z))$ at $z\in E$.

In particular, the shifting of a covector $\alpha=df\big|_x$, where $f$ is a function on $B$, is given by $v_\alpha=-\operatorname{sgrad}\pi^*f$. Recall that, the skew-symmetric gradient $\operatorname{sgrad} g$ of a function $g$ is defined by the formula $i_{\operatorname{sgrad} g}\eta=-dg$, where $i_{v_1}\eta(v_2)=\eta(v_1,v_2)$ for all vectors $v_1$ and $v_2$.
\end{remark}

In this paper we classify Lagrangian fibrations with complete shiftings $v_\alpha$. Moreover, we classify all \textit{almost Lagrangian fibrations}, that is fibrations $\pi\colon (E, \eta)\to B$ such that the form $\eta$ can be pushed down on the base ($d\eta=\pi^*\psi$ for some $3$-form $\psi$ on the base) and all shiftings commute (see definition \ref{d4}). But first let us show that almost Lagrangian fibrations are precisely fibrations which possess a fiberwise action of the cotangent bundle (defined by $\alpha\to s_\alpha$), and which are locally Lagrangian equivalent to a twisted cotangent bundle.

The correspondence $\alpha\to s_\alpha$ defines a fiberwise action if and only if the following holds: first, all shiftings are complete; secondly, all shiftings commute. If the shiftings are complete, then $s_\alpha$ is well defined, and if the shiftings commute, then the map $\alpha \to s_\alpha$ is a homomorphism of groups.

Let us prove that if a fibration is locally Lagrangian equivalent to a twisted cotangent bundle, then all shiftings commute. (It follows from Assertions \ref{cl1} and \ref{cl2}.)

\begin{claim}
\label{cl1} Let $\pi\colon(M^{2n},\eta)\to B^n$ be a maximal isotropic fibration and $z$ be a point of the total space $M^{2n}$. Then the following are equivalent.
\begin{itemize}
\item[{\rm 1)}]
In a neighbourhood of $z$ all shiftings $v_\alpha$ commute.
\item[{\rm 2)}]
There exist local coordinates $p_1,\dots,p_n,q^1,\dots,q^n$ at $z$, where $q^i$ are coordinates on the base and $p_i$ are coordinates in the fibre, such that the matrix of the form $\eta$ is $\bigl(\begin{smallmatrix}0&&E
\\ -E&&\Omega\end{smallmatrix}\bigr)$. Here $E$ denotes the $(n\times n)$ identity matrix.
\end{itemize}\end{claim}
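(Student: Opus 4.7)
The statement is local, so I work in a coordinate chart of the base, say $q^1,\dots,q^n$ centered at $\pi(z)$. Both directions rest on the identity, immediate from Remark~\ref{r1}, that for a maximal isotropic fibration the shifting satisfies $\eta(v_\alpha,\cdot)=\pi^*\alpha$.

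\emph{Direction $(2)\Rightarrow(1)$.} In coordinates $(p_i,q^j)$ in which $\eta$ has the prescribed block matrix, the equation $\eta(v_{dq^i},\cdot)=\pi^*dq^i$ tested against the basis $(\partial_{p_k},\partial_{q^l})$ is solved by $v_{dq^i}=\partial_{p_i}$. Hence for any $1$-form $\alpha=\sum_j a_j(q)\,dq^j$ on the base one gets $v_\alpha=\sum_j a_j(q)\,\partial_{p_j}$, and any two such vector fields commute because their coefficients depend only on the base coordinates.

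\emph{Direction $(1)\Rightarrow(2)$.} The shiftings $v_j:=v_{dq^j}$ are vertical vector fields on the total space; by Remark~\ref{r1} they span the tangent space of each fibre, and by hypothesis they pairwise commute in a neighbourhood of $z$. Choose any local section $\sigma$ of $\pi$ with $\sigma(\pi(z))=z$ and set
$$
\Phi(q,p_1,\dots,p_n):=\phi^{v_1}_{p_1}\circ\cdots\circ\phi^{v_n}_{p_n}\bigl(\sigma(q)\bigr),
$$
where $\phi^{v_j}_t$ denotes the flow of $v_j$. Commutativity makes the order of composition irrelevant, and the standard straightening theorem for commuting vector fields gives $\partial_{p_i}=v_i$ in the resulting chart. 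The differential of $\Phi$ at $(\pi(z),0)$ sends $\partial_{p_i}$ to the basis $v_i(z)$ of the vertical tangent space and $\partial_{q^j}$ to the transverse vectors $\sigma_*\partial_{q^j}$, so $\Phi$ is a local diffeomorphism; only short-time flows are used, so completeness of the shiftings is not needed. In these coordinates, $\eta(v_\alpha,\cdot)=\pi^*\alpha$ at once gives $\eta(\partial_{p_i},\partial_{p_j})=dq^i(\pi_*v_j)=0$ (the $v_j$ are vertical), $\eta(\partial_{p_i},\partial_{q^j})=dq^i(\partial_{q^j})=\delta^i_j$, and $\eta(\partial_{q^i},\partial_{q^j})=:\Omega_{ij}$ is an arbitrary smooth skew-symmetric function matrix. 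This is exactly the required matrix shape.

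\emph{Main obstacle.} The only non-routine point is confirming that $\Phi$ really is a chart, which the differential computation above settles. Everything else is a clean application of the straightening theorem for commuting vector fields together with the duality $\eta\leftrightarrow\nu$.
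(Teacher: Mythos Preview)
Your proof is correct and follows essentially the same route as the paper's: construct the fibre coordinates $p_i$ as flow times of the commuting fields $v_{dq^i}$ starting from a local section, and read off the matrix form from the defining relation $\eta(v_\alpha,\cdot)=\pi^*\alpha$. You are simply more explicit than the paper in checking that $\Phi$ is a local diffeomorphism and in computing the blocks of $\eta$, which the paper leaves implicit.
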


\begin{proof}

1)~$\Rightarrow$~2). Choose local coordinates $q^1,\dots,q^n$ on $B$ (and consider them as coordinates on a neighbourhood of $z$). We want to extend $q^i$ to local coordinates at $z$, that is to find coordinates $p_1,\dots,p_n$ such that $v_{dq^i}=\frac\partial{\partial p_i}$. Consider a (local) section $s\colon B\to E$ and let $p_i$ be the time of the shift of the section $s$ along vector fields $v_{dq^i}$. Since (linearly independent) vector fields $v_{dq^i}$ commute, coordinates $p_i$ are well defined.

2)~$\Rightarrow$~1). In given coordinates $p_1,\dots,p_n,q^1,\dots,q^n$ the shifting of a covector $\alpha=\alpha_i\,dq^i$ has the form $\alpha_i\frac\partial{\partial p_i}$. Vector fields with constant coefficients commute.
\end{proof}

\begin{claim}
\label{cl2}
Let $\pi\colon(M^{2n},\eta)\to B^n$ be a maximal isotropic fibration and $z$ be a point of the total space $M^{2n}$. Then the following are equivalent.
\begin{itemize}
\item[{\rm 1)}] A neighbourhood of $z$ is Lagrangian equivalent to a neighbourhood of $\pi(z)$ in a twisted cotangent bundle.
\item[{\rm 2)}] There exist a neighbourhood of $z$ such that $d\eta=\pi^*\psi$ for some $3$-form $\psi$ on the base.
\item[{\rm 3)}] There exist local coordinates $p_1,\dots,p_n,q^1,\dots,q^n$ at $z$ where $q^i$ are coordinates on the base and $p_i$ are coordinates in the fibre, such that the matrix of the form $\eta$ is $\bigl(\begin{smallmatrix}0&&E \\ -E&&\Omega\end{smallmatrix}\bigr)$. Here $E$ denotes the $(n\times n)$ identity matrix and the matrix $\Omega$ depends only on $q^1,\dots,q^n$.
\end{itemize}
\end{claim}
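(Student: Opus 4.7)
My plan is to prove the cyclic chain $1) \Rightarrow 2) \Rightarrow 3) \Rightarrow 1)$. The implications $1) \Rightarrow 2)$ and $3) \Rightarrow 1)$ are formal. If condition 1) holds, then locally $\eta$ pulls back from $\omega_0 + \pi^*\varphi$ on a twisted cotangent bundle, so $d\eta = \pi^*d\varphi$ and I take $\psi = d\varphi$. Conversely, if condition 3) holds, then in the given coordinates
$$
\eta = \sum_i dp_i \wedge dq^i + \tfrac12 \sum_{i,j} \Omega_{ij}(q)\, dq^i \wedge dq^j,
$$
where the first summand coincides with the canonical $2$-form on $T^*B$ written in canonical coordinates (cf.\ Example \ref{ex2}) and the second is the pullback of the base $2$-form $\varphi = \tfrac12 \Omega_{ij}(q)\, dq^i \wedge dq^j$. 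Identifying $(p,q)$ with the covector $p_i\, dq^i \in T^*B$ then exhibits the required Lagrangian equivalence with a neighbourhood of $\pi(z)$ in the twisted cotangent bundle $(T^*B, \omega_0 + \pi^*\varphi)$.

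The main step is $2) \Rightarrow 3)$. My strategy is first to reduce to Claim \ref{cl1} by showing that $d\eta = \pi^*\psi$ alone forces local commutativity of shiftings. Choose local base coordinates $q^1,\dots,q^n$ near $\pi(z)$ and set $X_i := v_{dq^i}$; by Remark \ref{r1} each $X_i$ is vertical and satisfies $i_{X_i}\eta = \pi^*dq^i$. Using Cartan's formula together with the identity $[L_X, i_Y] = i_{[X,Y]}$,
$$
i_{[X_i, X_j]}\eta = L_{X_i}(\pi^* dq^j) - i_{X_j}\bigl(d(\pi^* dq^i) + i_{X_i}\pi^*\psi\bigr).
$$
Each term vanishes: $d(\pi^*dq^i) = 0$, and $i_{X_i}\pi^*\psi = 0$, $i_{X_i}\pi^*dq^j = 0$ because $X_i$ is vertical, so $L_{X_i}(\pi^*dq^j) = 0$ as well. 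Non-degeneracy of $\eta$ then yields $[X_i, X_j] = 0$ in a neighbourhood of $z$, and Claim \ref{cl1} produces local coordinates $(p_i, q^j)$ at $z$ in which the matrix of $\eta$ is $\bigl(\begin{smallmatrix} 0 & E \\ -E & \Omega(p,q) \end{smallmatrix}\bigr)$.

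It remains to promote $\Omega$ to a function of $q$ alone. In these coordinates $\eta = \sum dp_i \wedge dq^i + \tfrac12 \sum \Omega_{ij}(p,q)\, dq^i \wedge dq^j$, whence
$$
d\eta = \tfrac12\sum_{i,j,k} \frac{\partial \Omega_{ij}}{\partial p_k}\, dp_k \wedge dq^i \wedge dq^j + \tfrac12 \sum_{i,j,k} \frac{\partial \Omega_{ij}}{\partial q^k}\, dq^k \wedge dq^i \wedge dq^j.
$$
The hypothesis $d\eta = \pi^*\psi$ forbids any $dp$-terms, so $\partial \Omega_{ij}/\partial p_k \equiv 0$ and $\Omega$ depends only on $q^1,\dots,q^n$, establishing 3). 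The only real subtlety I anticipate is tracking the sign in the relation $i_{v_\alpha}\eta = \pi^*\alpha$ through the conventions of Remark \ref{r1}; once these are pinned down each implication reduces to a direct computation.
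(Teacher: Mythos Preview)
Your proof is correct and follows the same cyclic scheme $3)\Rightarrow 1)\Rightarrow 2)\Rightarrow 3)$ as the paper, with identical arguments for the two easy implications and for the final step showing that $\Omega$ is independent of the $p_k$. The one genuine difference lies in how you establish commutativity of the shiftings in $2)\Rightarrow 3)$. The paper first reduces to the case $d\eta=0$ by observing that locally $\psi=d\varphi$ and that twisting by $-\varphi$ does not alter the shiftings; it then invokes the Jacobi identity for the Poisson bracket to get $[\operatorname{sgrad}\pi^*q^i,\operatorname{sgrad}\pi^*q^j]=0$. You instead compute $i_{[X_i,X_j]}\eta$ directly via Cartan calculus and the identity $[L_X,i_Y]=i_{[X,Y]}$, using only that the $X_i$ are vertical and that $d\eta=\pi^*\psi$. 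Your route is more self-contained---it never needs the Poisson structure or the Jacobi identity---while the paper's route makes the link to the classical fact that Hamiltonian vector fields of commuting integrals commute. Both reach Claim~\ref{cl1} and finish the same way.
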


\begin{proof}
3)~$\Rightarrow$~1). As noted after example \ref{ex2},
$\bigl(\begin{smallmatrix}0&E
\\
-E&0\end{smallmatrix}\bigr)$ is the matrix of $\omega_0$ in canonical coordinates on a cotangent bundle. $\Omega$ is the matrix of the twisting form.

1)~$\Rightarrow$~2). The form $\psi$ is the differential of the twisting form.

2)~$\Rightarrow$~3). Let us show that there exist coordinates $p_1,\dots,p_n,\allowbreak q^1, \dots,q^n$ such that the matrix of the form $\eta$ has the form $\bigl(\begin{smallmatrix}0&&E
\\
-E&&\Omega\end{smallmatrix}\bigr)$.  It suffices to show that all shiftings commute (see Assertion \ref{cl1}).
The proof is divided into two steps.

\textsl{Step}~1. Suppose that $d\eta=0$. Then the corresponding Poisson bracket is well defined. From the Jacobi identity it follows that
$$
[v_{dq^i},v_{dq^j}]=[\operatorname{sgrad}\pi^*q^i,
\operatorname{sgrad}\pi^*q^j]=-\operatorname{sgrad}\{\pi^*q^i,\pi^*q^j\}=\operatorname{sgrad}0.
$$
The last equality can be easily checked in local coordinates. If the matrix of the $2$-form $\eta$ has the form  $\bigl(\begin{smallmatrix}0&A
\\
-A^T&B\end{smallmatrix}\bigr)$, then the matrix of the corresponding Poisson bracket (the inverse matrix) is  $\bigl(\begin{smallmatrix}(A^T)^{-1}BA^{-1}&-(A^T)^{-1}
\\
A^{-1}&0\end{smallmatrix}\bigr)$.

\textsl{Step} 2. General case $d\eta=\pi^*\psi$. Since $d^2\eta=0$, we have $d\psi=0$. Therefore $\psi=d\varphi$ for some $2$-form $\varphi$ on the base $B$. But a twisting does not change shiftings (shiftings depend only on the matrix $A$ from the previous step and twistings do not change this matrix). Thus shiftings commute.

It remains to prove that the matrix $\Omega$ does not depend on $p_i$. This is true, since $d\eta=\pi^*\psi$.
\end{proof}

Let us show that if a fibrations is locally equivalent to a twisted cotangent bundle and all shiftings are complete, then there exists a (global) $3$-form $\psi$ on the base $B^n$ such that $d\eta=\pi^*\psi$.

\begin{claim}
\label{cl3}
Let $\pi\colon (M^{2n},\eta)\to B^n$ be a maximal isotropic fibration which satisfies the (equivalent) conditions of Assertion \ref{cl1}. If all shiftings $v_\alpha$ are complete, then the following are equivalent.
\begin{itemize}
\item[\rm 1)] Suppose that $U$ is a simply connected subset of $B^n$. Then for any two sections $u_1,u_2\colon U\to M^{2n}$ there exists a $1$-form $\alpha$ on $U\subset B^n$ such that the time-one flow of the $v_\alpha$ takes $u_1$ to $u_2$. In other words, $s_\alpha\circ u_1=u_2$.
\item[\rm 2)]
For any $1$-form $\alpha$ on a neighbourhood of $x\in B^n$ we have $s_\alpha^*\eta=\eta+d(\pi^*\alpha)$.
\item[\rm 3)]
There exists a $3$-form $\psi$ on the base $B^n$ such that $d\eta=\pi^*\psi$.
\end{itemize}
\end{claim}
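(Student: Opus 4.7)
The strategy is to run the cycle (3)~$\Rightarrow$~(2)~$\Rightarrow$~(1)~$\Rightarrow$~(3), with Cartan's magic formula as the workhorse. The key identity underlying everything is $i_{v_\alpha}\eta = \pi^*\alpha$ for every 1-form $\alpha$ on $B$, obtained by $\mathbb R$-linear extension of the relation $v_{df} = -\operatorname{sgrad}\pi^*f$ of Remark~\ref{r1}.

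For (3)~$\Rightarrow$~(2), the field $v_\alpha$ is tangent to fibres while $\pi^*\psi$ is basic, so $i_{v_\alpha}\pi^*\psi = 0$. Cartan then gives $L_{v_\alpha}\eta = d(i_{v_\alpha}\eta) + i_{v_\alpha}d\eta = d\pi^*\alpha$. Since $s_\alpha^t$ is fiberwise, $(s_\alpha^t)^*d\pi^*\alpha = d\pi^*\alpha$, so $s_\alpha^*\eta - \eta = \int_0^1 (s_\alpha^t)^* L_{v_\alpha}\eta\,dt = d\pi^*\alpha$, which is (2).

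For (2)~$\Rightarrow$~(1), in the local coordinates of Assertion~\ref{cl1} the shift acts as $(p,q)\mapsto(p+\alpha(q),q)$, so locally any two sections differ by a 1-form. Applying (2) to a constant 1-form in the chart gives $s_\alpha^*\eta = \eta$, so $\eta$ is $p$-translation invariant; this rigidity forces the stabilisers $P_x\subset T^*_x B$ of the fiberwise $T^*B$-action to assemble into a closed lattice $P\subset T^*B$, with $F_x\cong T^*_x B/P_x$. Over a simply connected $U$, the covering $T^*U\to T^*U/P$ lifts the section of $T^*U/P$ determined by $u_2 - u_1\pmod P$ to a 1-form $\alpha$ satisfying $s_\alpha\circ u_1 = u_2$.

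For (1)~$\Rightarrow$~(3), set $\sigma_\alpha := s_\alpha^*\eta - \eta - d\pi^*\alpha$. The group law of shiftings gives the cocycle identity $\sigma_{\alpha+\beta} = s_\beta^*\sigma_\alpha + \sigma_\beta$, and the normal form of Assertion~\ref{cl1} shows that $\sigma_\alpha$ has no $dp$-components, so locally $\sigma_\alpha = \pi^*\theta_\alpha$ for a 2-form $\theta_\alpha$ on the base, with $\alpha\mapsto\theta_\alpha$ linear by the cocycle relation. For any local section $u$ and $u' = s_\alpha\circ u$, one computes $(u')^*\eta - u^*\eta = d\alpha + \theta_\alpha$; condition (1) lets one realise arbitrary pairs of sections as shifts, and by varying $u$ and $\alpha$ one pins down $\theta_\alpha\equiv 0$. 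This gives (2); taking $d$ then makes $d\eta$ invariant under all shifts, so in the coordinates of Assertion~\ref{cl1} its coefficients depend only on $q$, and the local 3-forms thus produced glue into a global $\psi$ on $B$ with $d\eta = \pi^*\psi$. The principal obstacle is the variational step that upgrades the cocycle data into $\theta_\alpha\equiv 0$: this is where the full strength of (1), rather than merely its obvious local content, must be brought to bear, and it is where I expect to invest the most care in the detailed proof.
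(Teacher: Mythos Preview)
Your $(3)\Rightarrow(2)$ via Cartan's formula matches the paper exactly, and your $(2)\Rightarrow(1)$ is sound. The problem is $(1)\Rightarrow(3)$: this implication is false under the stated hypotheses, so no amount of care in the ``variational step'' will rescue it.

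Here is why. Under the conditions of Assertion~\ref{cl1} (commuting shiftings) plus completeness, one already has local coordinates $(p,q)$ with $v_{dq^i}=\partial/\partial p_i$, so $s_\alpha$ acts as translation by $\alpha(q)$ in the $p$-variables \emph{regardless} of how the block $\Omega(p,q)$ depends on $p$. The stabilisers $P_x$ are the period lattices of the fibres in these coordinates; they are locally constant because the flows are coordinate translations and the fibration is locally trivial. Hence over any simply connected $U$ the difference $u_2-u_1\pmod P$ lifts to a $1$-form $\alpha$, and (1) holds. But if $\Omega$ genuinely depends on $p$, then $d\eta$ carries a $dp\wedge dq\wedge dq$ component and (3) fails. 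Thus (1) is automatic while (3) is not, and your cocycle $\sigma_\alpha=\pi^*\theta_\alpha$ can be nonzero: your proposed mechanism for forcing $\theta_\alpha\equiv 0$ from (1) has nothing to bite on, because (1) gives no information beyond what the hypotheses already provide.

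The paper does \emph{not} run a cycle. It proves (1) directly from the hypotheses (transitivity of the $T^*_xB$-action plus extension along paths over simply connected $U$); it proves (2) by Cartan's formula, \emph{invoking condition 3) of Assertion~\ref{cl2}} to kill $i_{v_\alpha}d\eta$ --- which is precisely the local form of (3); and it derives the global (3) from (1) and (2) by pulling $d\eta$ back along local sections and using (1)+(2) to see that the result is section-independent. So the actual content is: (1) holds unconditionally; $(3)\Rightarrow(2)$; and $(1)\wedge(2)\Rightarrow(3)$, giving $(2)\Leftrightarrow(3)$. The lead-in sentence before the Assertion already assumes the Assertion~\ref{cl2} condition, which suggests the reference to Assertion~\ref{cl1} in the statement is a slip. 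Your route is therefore to drop the attempt at $(1)\Rightarrow(3)$ and instead prove (1) outright, keep your $(3)\Rightarrow(2)$, and close with the paper's $(1)+(2)\Rightarrow(3)$ via section-pullback.
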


\begin{proof}
1) The correspondence $\alpha\,{\to}\, s_\alpha$ defines an action of $T^*_xB^n$ on $F_x$.
The action is transitive, since vector fields $v_\alpha$ span each cotangent space $T_zF$ of the fibre $F$
at point $z\in E$. Hence for any point $x\in U$ there exists a covector $\beta\in T^*_xB^n$ such that $s_\beta(u_1(x))=u_2(x)$. Extend the covector $\beta$ to a $1$-form $\alpha$ on $U$ along paths
(it is possible, since $U$ is simply connected).

2) The Lie derivative of the form $\eta$ along vector field $v_\alpha$ is given by Cartan's formula
\begin{equation}
\label{eq3} L_{v_{\alpha}} \eta= i_{v_{\alpha}}(d \eta) +
d(i_{v_{\alpha}}\eta).
\end{equation}
The first term is zero (this is clear in local coordinates from condition 3) of Assertion \ref{cl2}). By definition of $v_\alpha$, the last term is equal to $d(\pi^*\alpha)$.

3) Choose any (local) section of $M^{2n}$ and let $\psi$ be the pull back of the form $d\eta$. By 1) and 2) the form $\psi$ is well defined.
\end{proof}

\begin{definition}
\label{d5}
A maximal isotropic fibration $\pi\colon(M^{2n},\eta)\to B^n$ is an \textit{almost Lagrangian fibration} if all shiftings $v_\alpha$ are complete and $d\eta=\pi^*\psi$ for some $3$-form $\psi$ on the base $B^n$.
\end{definition}

We assume that all shiftings of Lagrangian fibrations are complete (this means that all Lagrangian fibrations are almost Lagrangian).

\begin{remark}
\label{r2}
All shiftings are complete if the fibres are compact. The fibres are compact if the total space $M^{2n}$ is compact.
\end{remark}

\begin{definition}
\label{d6}
If $d\eta=\pi^*\psi$, then the class $[\psi]\in H^3(B,\mathbb R)$ is a \textit{symplectic obstruction class}.
\end{definition}

The next lemma clarifies the definition.

\begin{lemma}
\label{l1}
An almost Lagrangian fibration has a trivial symplectic obstruction if and only if it is a twisted Lagrangian fibration.
\end{lemma}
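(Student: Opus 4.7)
The plan is to unpack the two definitions and reduce everything to the observation that subtracting a pullback from $\eta$ kills the defect $d\eta = \pi^*\psi$ without disturbing any of the other structure.

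First, the easy direction. Suppose the fibration is a twisted Lagrangian one, so $\eta = \eta_0 + \pi^*\varphi$ for some 2-form $\varphi$ on $B$ and some form $\eta_0$ making $\pi\colon (M^{2n},\eta_0)\to B^n$ Lagrangian. Then $d\eta_0 = 0$, hence $d\eta = \pi^*d\varphi$, so $\psi = d\varphi$ and $[\psi] = 0$ in $H^3(B,\mathbb R)$.

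For the converse, assume $[\psi] = 0$, pick a 2-form $\varphi$ on $B$ with $\psi = d\varphi$, and set $\eta_0 := \eta - \pi^*\varphi$. I would then verify in four short steps that $\pi\colon(M^{2n},\eta_0)\to B^n$ is a genuine Lagrangian fibration and that $\eta$ is its twist by $\varphi$:
(i) $d\eta_0 = \pi^*\psi - \pi^*d\varphi = 0$;
(ii) $\pi^*\varphi$ vanishes on vectors tangent to fibres (since $\pi_*$ kills them), so $\eta_0\big|_{F_x} = \eta\big|_{F_x} = 0$ and the fibres remain isotropic of maximal dimension;
(iii) in any local chart from Assertion \ref{cl2} the matrix of $\eta$ has block form $\bigl(\begin{smallmatrix}0&E\\-E&\Omega\end{smallmatrix}\bigr)$ and the matrix of $\pi^*\varphi$ has the form $\bigl(\begin{smallmatrix}0&0\\0&\tilde\varphi\end{smallmatrix}\bigr)$, so $\eta_0$ has matrix $\bigl(\begin{smallmatrix}0&E\\-E&\Omega-\tilde\varphi\end{smallmatrix}\bigr)$, which is still non-degenerate — so $(M^{2n},\eta_0)$ is a maximal isotropic fibration;
(iv) the shiftings for $\eta_0$ coincide with the shiftings for $\eta$ (twisting only alters the lower-right block in the matrix, and in the proof of Assertion~\ref{cl2} it is already noted that shiftings depend only on the upper-right block), so they are still complete; combined with $d\eta_0 = 0$ this makes the fibration Lagrangian.

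Finally, $\eta = \eta_0 + \pi^*\varphi$ is by definition the twist of the Lagrangian fibration $(M^{2n},\eta_0)$ by $\varphi$, which completes the proof.

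There is no real obstacle here: the only thing one might worry about is preservation of non-degeneracy and completeness of shiftings after the subtraction, and both are immediate consequences of the normal form and remarks already established in Assertion~\ref{cl2}. The whole argument is essentially the observation that the map $\varphi\mapsto\pi^*\varphi$ trivializes the obstruction class.
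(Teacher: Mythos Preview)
Your proof is correct and follows exactly the same idea as the paper: if $\psi = d\varphi$, twist by $-\varphi$ to obtain a Lagrangian fibration. The paper's proof is a single sentence that leaves the verifications (i)--(iv) to the reader, whereas you have spelled them out; in particular your check that twisting preserves non-degeneracy and completeness of shiftings makes explicit what the paper takes for granted from Assertion~\ref{cl2}.
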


\begin{proof}
Let $\psi=d\varphi$. Twist the fibration by $-\varphi$ to get a Lagrangian fibration.
\end{proof}

\subsection{Affine fibrations}
\label{s2.2}

The classification of almost Lagrangian fibrations is based on the observation that for any covector $\alpha$ on the base there corresponds an automorphisms of the fibre (time-one flow of the shifting $v_\alpha$). In this section we formalize the notion of a fiberwise action of one fibrations on another fibration.

\begin{definition}
\label{d7}

Consider a locally trivial fibration $\pi_0\colon V\to B$ in which each fibre is an abelian group (the structure of a group depends smoothly on the fibre).

We say that the fibration $\pi_0\colon V\to B$ \textit{acts fiberwisely} on a fibration $\pi\colon E\to B$ if each fibre $W_x=\pi_0^{-1}(x)$ acts on the corresponding fibre  $F_x=\pi^{-1}(x)$.

We say that $\pi\colon E\to B$ is an \textit{affine fibration} with the \textit{underlying fibration} $\pi_0\colon V\to B$ if each action of $W_x$ on $F_x$ is regular (that is free and transitive).

Two affine fibrations with the fiberwise action of  $\pi_0\colon V\to B$ are \textit{equivalent} if there exists a fiberwise diffeomorphism of total spaces which preserves the action of $\pi_0\colon V\to B$.

\end{definition}

From now on we use the notation $v\circ x$ for the action of $v\in W_x$ on $x\in F_x$.

\begin{remark}
\label{r3}
In the sequel, the fibration $\pi_0\colon V\to B$ is $T^*\!B$ but we do not use the scalar multiplication.
\end{remark}

Suppose that an affine fibration $\pi\colon E\to B$ with the underlying fibration $\pi_0\colon V\to B$ has a section $s\colon B\to E$. By definition of an affine fibration, for any two points $z_1$, $z_2$ of a fibre $F_x\subset E$ there exists one and only one element $v\in W_x$ which takes $z_1$ to $z_2$. It follows that the correspondence which assigns to each element $\alpha\in W_x$ the point $\alpha\circ s(x)$ defines a canonical equivalence of the affine fibration $\pi\colon E\to B$ and the underlying fibration $\pi_0\colon V\to B$. Note that without the section $s\colon B\to E$ there is no such canonical equivalence.

This suggests the following definition.

\begin{definition}
\label{d8}
A fibration $\pi\colon E\to B$ is an \textit{affine fibration} with the \textit{underlying fibration} $\pi_0\colon V\to B$ if there exists a cover $\{U_i\}$ of the base $B$ and maps $\phi_i\colon \pi^{-1}(U_i)\to\pi_0^{-1}(U_i)$ such that each transition function $\phi_i\circ\phi_j^{-1}$ is an action of a section $V$ over $U_i\cap U_j$.
\end{definition}

Definitions \ref{d7} and \ref{d8} are equivalent. Definition \ref{d8} was derived from definition \ref{d7}. On the other hand, any affine fibration from definition \ref{d8} has a natural fiberwise action of the underlying fibration over $\pi_0^{-1}(U_i)$. By definition, this action does not depend on the neighbourhood $U_i$.

Each almost Lagrangian fibration admits a fiberwise action of the cotangent bundle $T^*\!B$. Next definition describes the isotropy group of the action (the set of all covectors which act trivially on the fibre).

\begin{definition}
\label{d9} A \textit{lattice of rank $k$} on a manifold $B$ (or in $T^*\!B$) is a submanifold of $T^*\!B$ transversal to the fibres of $T^*\!B$ such that its intersections with each fibre $T^*_xB$ is a discrete subgroup of rank $k$.

Two lattices on $B$ and $B'$ are \textit{isomorphic} if there exists a diffeomorphism $f\colon B\to B'$ which differential $df$ moves one lattice to the other.

A lattice $P$ is \textit{closed} if any (local) section of $P$ is a closed $1$-form.
\end{definition}

In other words, a lattice on $B$ is a lattice $\mathbb Z^k$ in every cotangent bundle $T^*_xB$ smoothly varying with $x$.

By $(B,P)$ denote a manifold $B$ with a lattice $P$ on it.

In the sequel we often use the following assertion without reference.

\begin{claim}
\label{cl4} Let $\pi\colon E\to B$ be an affine fibration with the underlying fibration $\pi_0\colon V\to B$. Then for any continuous map $f\colon \widetilde B\to B$ the induced fibration $f^*\pi=\widetilde\pi\colon \widetilde E\to\widetilde B$ is an affine fibration over $\widetilde B$ with the underlying fibration $f^*\pi_0=\widetilde \pi_0\colon \widetilde V\to\widetilde B$.
\end{claim}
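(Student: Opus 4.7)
The plan is to pull back the trivializing data provided by Definition~\ref{d8}. First I would unfold the pullback construction: points of $\widetilde E$ are pairs $(b,z)\in\widetilde B\times E$ with $f(b)=\pi(z)$, and $\widetilde V$ is defined analogously. The fibre of $\widetilde\pi$ over $b\in\widetilde B$ canonically identifies with $F_{f(b)}$, and similarly $\widetilde W_b\simeq W_{f(b)}$.

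Next, I would choose a cover $\{U_i\}$ of $B$ together with maps $\phi_i\colon\pi^{-1}(U_i)\to\pi_0^{-1}(U_i)$ whose transition functions $\phi_i\circ\phi_j^{-1}$ are actions of sections $s_{ij}$ of $V$ over $U_i\cap U_j$, as supplied by Definition~\ref{d8} applied to $\pi$. Put $\widetilde U_i=f^{-1}(U_i)$, which gives an open cover of $\widetilde B$, and define $\widetilde\phi_i(b,z)=(b,\phi_i(z))$. A direct check shows that $\widetilde\phi_i\circ\widetilde\phi_j^{-1}$ sends $(b,w)$ to $(b,\phi_i\circ\phi_j^{-1}(w))$, which is precisely the action of the pulled-back section $f^*s_{ij}$ of $\widetilde V$ over $\widetilde U_i\cap\widetilde U_j$. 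This verifies Definition~\ref{d8} for $\widetilde\pi$. Alternatively, working from Definition~\ref{d7}, one defines the fibrewise action of $\widetilde W_b$ on $\widetilde F_b$ as exactly the original action of $W_{f(b)}$ on $F_{f(b)}$ under the canonical identifications; freeness and transitivity are then automatic, and smoothness (or continuity) of the action transfers via the universal property of the fibre product.

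The main obstacle I anticipate is only notational: one must be careful that the pullback of a section of $\pi_0$ is indeed a section of $\widetilde\pi_0$, and that the pulled-back action $f^*s_{ij}$ really agrees with the transition function computed from $\widetilde\phi_i$ and $\widetilde\phi_j$. Both facts are routine consequences of the functoriality of the pullback, so no genuine difficulty should arise.
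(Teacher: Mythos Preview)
The paper states Assertion~\ref{cl4} without proof, treating it as a routine consequence of the definitions. Your argument correctly supplies the missing verification: pulling back the cover and the local trivializations $\phi_i$ from Definition~\ref{d8} and checking that the transition functions of $\widetilde\pi$ are actions of the pulled-back sections $f^*s_{ij}$ is exactly the right approach, and your alternative via Definition~\ref{d7} is equally valid. One cosmetic slip: you call $\{\widetilde U_i\}$ an \emph{open} cover, but Definition~\ref{d8} does not require the cover to be open (cf.\ the remark in Section~\ref{s2.4.2}), so $\{f^{-1}(U_i)\}$ is merely a cover; this does not affect the argument.
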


The fibration  $f^*\pi=\widetilde\pi\colon\widetilde E\to\widetilde B$ is an \textit{affine fibration induced by $f$}.

\subsection{Equivalence of affine and almost Lagrangian fibrations}
\label{s2.3}

As we saw in section \ref{s2.1}, any almost Lagrangian fibration $\pi\colon(M^{2n},\eta)\to B^n$ admits a fiberwise action of the cotangent bundle $T^*\!B^n$. The action of a covector $\alpha$ is the time-one flow along the corresponding shifting $v_\alpha$. Let us show that the action of the cotangent bundle $T^*\!B^n$ determines an almost Lagrangian fibration up to a twisting (twistings are defined in example \ref{ex3}).

\begin{theorem}
\label{t6}
1) Any almost Lagrangian fibration $\pi\colon(M^{2n},\eta)\to B^n$ is an affine fibration with the underlying fibration $T^*\!B/P$ for some closed lattice $P$ on $B^n$. The fiberwise action of the cotangent bundle is defined by the correspondence $\alpha\to s_\alpha$.

2) Any affine fibration $\pi\colon E\to B$ with the underlying fibration $T^*\!B/P$ can be realized by an almost Lagrangian fibration for any closed lattice $P$ on $B$.

3) Two almost Lagrangian fibrations are Lagrangian equivalent up to a twisting if and only if the corresponding affine fibrations are equivalent.
\end{theorem}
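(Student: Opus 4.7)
My plan is to handle the three parts in turn, using the local structure theorems (Assertions~\ref{cl1}--\ref{cl3}) as the main workhorse.

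For part~1) I would build the fiberwise action directly from the correspondence $\alpha\mapsto s_\alpha$. Since all shiftings are complete (built into the definition of almost Lagrangian) and all shiftings commute (Assertions~\ref{cl1}, \ref{cl2}), this correspondence is a genuine smooth action of the additive group $T^*_xB^n$ on each fibre $F_x$. The isotropy subgroup $P_x=\{\alpha\in T^*_xB^n:s_\alpha=\mathrm{id}_{F_x}\}$ is discrete, because the infinitesimal action $\alpha\mapsto v_\alpha(z)$ is a linear isomorphism $T^*_xB^n\to T_zF_x$ (Remark~\ref{r1}); transitivity follows from connectedness of $F_x$ together with matching dimensions. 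The union $P=\bigsqcup_x P_x$ is a smooth submanifold transversal to the fibres of $T^*B^n$, visible in the coordinates of Assertion~\ref{cl2} where the shiftings are pure translations. Closedness of $P$ comes from Assertion~\ref{cl3}(2): for any local section $\sigma$ of $P$ the identity $s_\sigma^*\eta=\eta+d\pi^*\sigma$ together with $s_\sigma=\mathrm{id}$ forces $d\sigma=0$.

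For part~2) I would start from the model $(T^*B^n/P,\overline{\omega_0})$, where $\overline{\omega_0}$ is the descent of the canonical form (valid because translation by a closed $1$-form preserves $\omega_0$, and $P$ is closed by assumption). Given an arbitrary affine fibration $\pi\colon E\to B^n$ with underlying fibration $T^*B^n/P$, choose local trivializations $\phi_i\colon\pi^{-1}(U_i)\to(T^*B^n/P)|_{U_i}$; the transition functions are translations by local sections $\alpha_{ij}$ of $T^*B^n/P$, and the pulled-back forms $\eta_i=\phi_i^*\overline{\omega_0}$ satisfy $\eta_i-\eta_j=\pi^*d\alpha_{ij}$ on overlaps. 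A partition of unity subordinate to $\{U_k\}$ then glues the $\eta_i$ into a global $2$-form $\eta$; since in each chart $\eta$ has the local normal form of Assertion~\ref{cl2}(3), the fibration $\pi\colon(E,\eta)\to B^n$ is almost Lagrangian and its shifting action coincides with the given affine structure.

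Part~3) splits into the two directions. The forward direction is immediate: a Lagrangian equivalence preserves $\eta$ and hence the shiftings, while twisting by $\pi^*\varphi$ also preserves shiftings because $\pi^*\varphi$ annihilates vertical vectors, so the fiberwise $T^*B/P$-action is an invariant of the Lagrangian equivalence class up to twisting. For the converse, let $f\colon M_1\to M_2$ be an equivalence of the affine fibrations; then $f^*\eta_2$ and $\eta_1$ induce the same shifting action on every fibre. In coordinates adapted to this common action (Assertion~\ref{cl1}) both $2$-forms have matrix $\bigl(\begin{smallmatrix}0&E\\-E&\Omega\end{smallmatrix}\bigr)$, with only the $\Omega$-block possibly differing, so locally $f^*\eta_2-\eta_1=\pi^*\varphi_U$; since $\pi^*$ is injective on forms, the local $\varphi_U$ agree on overlaps and patch to a global $2$-form $\varphi$ on $B$, yielding $f^*\eta_2=\eta_1+\pi^*\varphi$, which is exactly the required twisted Lagrangian equivalence. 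The main obstacle I anticipate is part~2): one must verify carefully that the partition-of-unity gluing really produces a $2$-form whose induced shifting action reproduces the prescribed action on $E$ (rather than an equivalent one up to automorphism of $T^*B/P$) and that its differential descends to a $3$-form on the base; the cleanest way is probably to check both conditions in the charts $\phi_i$ where everything is explicit.
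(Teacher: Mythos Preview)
Your proposal is correct and follows essentially the same route as the paper: part~1) via the transitive action $\alpha\mapsto s_\alpha$ with discrete isotropy $P$ (closedness from Assertion~\ref{cl3}), part~2) via a partition-of-unity gluing of the canonical forms pulled back through local affine trivializations, and part~3) by showing that two almost Lagrangian forms with the same shifting action differ by a pullback from the base. The only cosmetic difference is in part~3): the paper defines $\varphi$ directly as the pullback of $\eta_1-\eta_2$ along local sections and invokes Assertion~\ref{cl3} for well-definedness, whereas you read off $\varphi$ from the $\Omega$-block in the normal form of Assertion~\ref{cl2}; both arguments are equivalent.
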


In other words, an almost Lagrangian fibration considered up to a twisting and an affine fibration with the underlying fibration $T^*\!B/P$ for some closed lattice $P$ on $B$ are the same thing.

\begin{proof}[of Theorem~\ref{t6}]
1) The correspondence $\alpha\to s_\alpha$ defines a fiberwise action of $T^*\!B$ on the given almost Lagrangian fibration. By assertion \ref{cl3}, the action is transitive (in each fibre). Since the dimension the base is equal to the dimension of the fibre, the isotropy group $P$ is a discrete subgroup in each cotangent space $T^*_xB$. The rank of $P$ is constant, since the fibration is locally trivial. Thus $P$ is a lattice. It follows from Assertion \ref{cl3} that $P$ is closed.

2) Let $\pi\colon E\to B$ be an affine fibration with the underlying fibration $\pi_0\colon T^*\!B/P\to B$. We must prove that the fibration $\pi\colon E\to B$ admits a structure of an almost Lagrangian fibration which is compatible with the given structure of an affine fibration. In other words, we must find a non-degenerate $2$-form $\eta$ vanishing on each fibre such that $d\eta=\pi^*\psi$ (for some $3$-form $\psi$ on the base $B$) and $s_\alpha z=\widetilde\alpha\circ z$ for all points $z\in E$ and all covectors $\alpha\in T^*_{\pi(z)}B$ (here $\widetilde\alpha$ is the element of $T^*\!B/P$ corresponding to $\alpha\in T^*\!B$). Consider the cover $\{U_i\}$ of $B$ from definition \ref{d8}. Denote by $\eta_i$ the $2$-form on $\pi^{-1}(U_i)\simeq T^*U_i/P$ induced by the canonical $2$-form $\omega_0$ on $T^*U_i$. The form $\eta$ is well defined, since the lattice is closed. Now choose a partition of unity $\phi_i$ which is subordinate to the cover $\{U_i\}$. The form $\eta=\sum_i\phi_i\eta_i$ is as required.

 The last statement can be easily checked in local coordinates. In coordinates $p_i, q_i$ at $z\in E$, where $q^1,\dots,q^n$ are coordinates on the base and $p_1,\dots,p_n$ are coordinates in the fibre with respect to the basis $dq^i$, the matrix of the form $\phi_i\eta_i$ has the form $\bigl(\begin{smallmatrix}
0&\phi_iE
\\
-\phi_iE&\phi_i\Phi_i
\end{smallmatrix}\bigr)$, where the $(n\times\nobreak n)$ matrix $\Phi_i$ depends only on $q^1,\dots,q^n$. Thus the matrix of $\eta$ is $\bigr(\begin{smallmatrix}0&E
\\
-E&\sum_i\phi_i\Phi_i\end{smallmatrix}\bigl)$.

3) It is clear that a twisting does not change the structure of an affine fibration. If $\pi_i\colon(M^{2n},\eta_i)\to B^n$, $i=1,2$, are two almost Lagrangian fibrations with the same underlying fibration, then there exists a $2$-form $\varphi$ on the base $B^n$ such that $\eta_1-\eta_2=\pi^*\varphi$. Let $\varphi$ be the pullback of the difference $\eta_1-\eta_2$ by (local) sections. It follows from Assertion \ref{cl3} that the form $\varphi$ is well defined.
\end{proof}

Now we need some definitions.

\begin{definition}
\label{d10}
A section of $T^*\!B/P$ is a \textit{latticed 1-form} (on $B$).
\end{definition}

By $\Omega_P^1(B)$ denote the space of latticed $1$-forms on $B$.

For any closed lattice $P$ the differential of a latticed $1$-form $\alpha$ is a well defined $2$-form on $B$. The form $d\alpha$ is the differential of any local representative of $\alpha$.

\begin{definition}
\label{d11}
A latticed $1$-form $\alpha$ is \textit{closed} if $d\alpha=0$.
\end{definition}

\begin{lemma}
\label{l2} Let $\pi\colon(M^{2n},\eta)\,{\to}\, B^n$ be an almost Lagrangian fibration and $\varphi_1$, $\varphi_2$ be $2$-forms on $B^n$. The twisted fibrations $\pi\colon(M^{2n},\eta+\pi^*\varphi_i)\to B^n$ are Lagrangian equivalent if and only if $\varphi_2-\varphi_1=d\alpha$ for some latticed $1$-form $\alpha$.
\end{lemma}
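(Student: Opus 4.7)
The plan is to use Theorem \ref{t6} to translate everything to the level of affine fibrations, then apply the Cartan-style identity $s_\alpha^*\eta = \eta + d(\pi^*\alpha)$ from Assertion \ref{cl3}(2).

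First I would record that a twisting by a $2$-form on the base changes neither the shiftings $v_\alpha$ nor the underlying affine structure: in the local model of Assertion \ref{cl2}(3), twisting only modifies the matrix $\Omega$, leaving the identity block that determines the shiftings untouched (this is used in Step~2 of the proof of Assertion \ref{cl2}). Consequently both twisted fibrations $\pi\colon(M^{2n},\eta+\pi^*\varphi_i)\to B^n$ are almost Lagrangian (since $\varphi_i$ need not be closed, but $d\pi^*\varphi_i$ is pulled back from $B$) and share the same underlying affine structure $T^*\!B/P\to B^n$ furnished by Theorem \ref{t6}(1).

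For the \emph{if} direction, assume $\varphi_2-\varphi_1 = d\alpha$ for a latticed $1$-form $\alpha$. Set $f := s_{-\alpha}$; this is a well-defined fiberwise self-diffeomorphism of $M^{2n}$ covering the identity on $B^n$, because the shifting of any local representative of $-\alpha$ descends to a map depending only on $-\alpha\bmod P$ (the isotropy of the $T^*\!B$-action is exactly $P$, and $P$ is closed). Applying Assertion \ref{cl3}(2) locally and using $\pi\circ s_{-\alpha}=\pi$ gives
\[
f^*(\eta+\pi^*\varphi_2) \;=\; \eta + d(\pi^*(-\alpha)) + \pi^*\varphi_2 \;=\; \eta + \pi^*(\varphi_2-d\alpha) \;=\; \eta+\pi^*\varphi_1,
\]
so $f$ is the required Lagrangian equivalence.

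For the \emph{only if} direction, let $f\colon M^{2n}\to M^{2n}$ be a Lagrangian equivalence between the two twisted fibrations. By Theorem \ref{t6}(3), $f$ is then an equivalence of affine fibrations over the common underlying fibration $T^*\!B/P\to B^n$; that is, $f$ is a fiberwise automorphism commuting with the fiberwise action of $T^*\!B/P$ and covering the identity on the base. Because the action of each fibre $T^*_xB/P_x$ on $F_x$ is regular, such an $f$ must act on each fibre by translation by a single element $\alpha(x)\in T^*_xB/P_x$; smoothness of $f$ makes $x\mapsto \alpha(x)$ a smooth section, i.e.\ a latticed $1$-form, and $f = s_{-\alpha}$. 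The identity $f^*(\eta+\pi^*\varphi_2)=\eta+\pi^*\varphi_1$ then reads $\pi^*(\varphi_2-d\alpha)=\pi^*\varphi_1$, and since $\pi$ is a surjective submersion this forces $\varphi_2-\varphi_1 = d\alpha$.

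The main obstacle is the step asserting that every Lagrangian equivalence arises as a translation $s_{-\alpha}$ by a latticed $1$-form. This is precisely where Theorem \ref{t6}(3) is essential: without it one would have to argue by hand that $f$ intertwines the $T^*\!B$-action, which in turn requires reconciling the two symplectic forms $\eta+\pi^*\varphi_i$ (which differ) with the shiftings (which agree). Once this reduction is granted, the rest is the one-line computation with $s_\alpha^*\eta = \eta + \pi^*d\alpha$.
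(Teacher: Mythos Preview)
Your proof is correct and follows essentially the same route as the paper's: both directions hinge on the fact that twisting leaves the shiftings (hence the affine structure) unchanged, so a Lagrangian equivalence must be a shift $s_\alpha$ by a latticed $1$-form, after which Assertion~\ref{cl3}(2) finishes the computation. Your write-up is more explicit than the paper's (which is quite terse), but the ideas and the order of steps are the same; the only cosmetic difference is that the paper shifts by $\alpha$ rather than $-\alpha$, which is immaterial since the statement only asks for \emph{some} latticed $1$-form.
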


\begin{proof}
($\Leftarrow$) Let $\varphi_2-\varphi_1=d\alpha$. Choose a (local) representative $\widetilde\alpha$ of the latticed $1$-form $\alpha$. Consider the shifting $v_{\widetilde\alpha}$ of the $1$-form $\widetilde\alpha$ with respect to the form $\eta+\pi^*\varphi_1$. The time-one flow of $v_{\widetilde\alpha}$ takes the fibration twisted by $\varphi_1$ to the fibration twisted by $\varphi_2$. (The shift along the vector field $v_{\widetilde\alpha}$ does not depend on $\widetilde\alpha$, since $P$ is the isotropy group of the fiberwise action).

($\Rightarrow$)  Lagrangian equivalent fibrations determine equivalent affine fibrations, since a twisting does not change the structure of an affine fibration. An equivalence of affine fibrations with the underlying fibration $T^*\!B/P$ is a shift along a latticed $1$-form. It follows from \ref{cl3} that $\varphi_1-\varphi_2=d\alpha$.
\end{proof}

\subsection{Classification of affine fibrations}
\label{s2.4}

In this section we discuss several approaches to the classification of affine fibrations $\pi_i\colon E_i\to B$ with the underlying fibration $T^*\!B/P$ up to equivalence. The first approach uses obstruction theory and the second classification is in terms of transition functions. At the end of the section we describe the classification of Lagrangian fibration in terms of \v{C}ech cohomology (used by K. N. Mishachev in \cite{4} and by J. Duistermaat in \cite{2}).

\subsubsection{Classification in terms of obstructions}
\label{s2.4.1}

Fix the following structure of a cell complex on $B$. In this section the $i$-th $k$-cell of $B$ is denoted by $e^k_i$ and the corresponding characteristic map is denoted by $\chi_i^k\colon D^k\to B$. For any fibration $\pi\colon E\to B$ we denote by $\pi^l\colon E^l\to B^l$ the induced fibration over the $l$-skeleton $B^l$.

Let us prove that affine fibrations with the underlying fibration $T^*\!B/P$ are classified by their primary obstruction classes. We prove first that affine fibrations with equal obstructions are equivalent (see Theorem \ref{t7}). Then we prove that any obstruction can be realized by an affine fibration (see Theorem \ref{t8}).

Before proving this theorems, we recall briefly the definition of primary obstruction classes (for obstruction theory see, for example, \cite{5},~\cite{6}). Then we show how to compare primary obstruction classes for affine fibrations with the underlying fibration $T^*\!B/P$.

\subsubsection*{Obstructions}
Let $s\colon B^k\to Y$ be a map from the $k$-skeleton of $B^k$ to a homotopy $k$-simple topological space $Y$ (here \textit{homotopy $k$-simplicity} means that homotopy groups $\pi_k(Y,y_0)$ are canonically isomorphic for all points $y_0$ of $Y$). Then for any $(k+1)$-cell $e^{k+1}$ the restriction $s\big|_{\partial e^{k+1}}\colon S^k\to Y$ of the map $s$ determines an element $A\in\pi_k(Y)$. Using this idea one can define obstructions to an extension of the map.

But we need obstructions to the existence of a section, and hence instead of maps we take cross sections. Let $\pi\colon E\to B$ be a locally trivial fibration with homotopy $k$-simple fibre $F$, and $s\colon B^k\to E$ be a section over $B^k$. A locally trivial fibration over a disc is trivial, thus the induces fibration  $(\chi^{k+1}_i)^*E\to D^{k+1}$ over each cell $e^{k+1}_i$ is trivial. Consequently, the restriction of the section $s$ determines the map $s\big|_{\partial D^{k+1}}\colon S^k\to F$. We see that it is possible to assign to each cell $e^{k+1}_i$ an element $c_i\in\pi_k(F)$ of the homotopy group of the fibre $F$ over the cell.

Recall that a system of local coefficients on $B$ assigns to each $x \in B$ a group $G_{x}$, and assigns to each path $s$ from $x$ to $y$ a homomorphism $\gamma_s\colon G_x\to G_y$ of these groups. The homomorphism must be the same for homotopic paths and to a composition of paths there must correspond the composition of homomorphisms.

If the fibre is homotopy $k$-simple, then the set of $k$-th homotopy groups of fibres form a system of local coefficients, denoted by $\{\pi_k(F)\}$. Since all cells are simply connected (even contractible), all groups $\pi_k(F)$ over each cell are canonically isomorphic. The set of $c_i$ forms a cellular $(k+1)$-cochain $c\in\mathcal C^{k+1}(B,\{\pi_k(F)\})$ called \textit{the obstruction cochain}.

Each obstruction cochain $c$ is a cocycle. The class $[c]\in H^{k+1}(B,\{\pi_k(F)\})$ is called the \textit{obstruction to an extension of the section $s$ over the $(k+1)$-skeleton (or primary obstruction class)}. Any coboundary corresponds to a change of the section over the $k$-skeleton outside of the $(k-1)$-skeleton. More precisely, one has the following.

\begin{claim}
\label{cl5} Consider a locally trivial fibration $\pi\colon E\to B$. Let $s\colon B^k\to E^k$ be a section with obstruction cochain $c\in\mathcal C^{k+1}(B,\{\pi_k(F)\})$. Then for any cochain $\widetilde c$ such that $\widetilde c-c=\delta b$ for some $b\in\mathcal C^k(B,\{\pi_k(F)\})$ there exists a section $\widetilde s\colon
B^k\to E^k$ with obstruction cochain $\widetilde c$ such that the sections $s$ and $\widetilde s$ coincide on the $(k-1)$-skeleton $B^{k-1}$.
\end{claim}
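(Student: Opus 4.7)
The plan is to modify the section $s$ one $k$-cell at a time, using the prescribed cochain $b$ as the "shift" on that cell, and then to verify directly that the resulting obstruction cochain changes by $\delta b$. For each open $k$-cell $e^k_i$, I would first fix a trivialisation of the induced fibration $(\chi^k_i)^*E\to D^k$, so that the restriction $s\circ\chi^k_i$ becomes an ordinary map $u_i\colon D^k\to F$ whose boundary values on $S^{k-1}=\partial D^k$ are prescribed (they come from $s$ on $B^{k-1}$, which I refuse to touch). The key elementary observation is that any two maps $u,\widetilde u\colon D^k\to F$ with $u|_{S^{k-1}}=\widetilde u|_{S^{k-1}}$ glue along the boundary into a map $D^k\cup_{S^{k-1}} D^k=S^k\to F$, whose homotopy class in $\pi_k(F)$ is a well-defined "difference" $\widetilde u-u$; conversely, any element of $\pi_k(F)$ can be realised as such a difference by concentrating its representative inside a small disc in the interior of $D^k$ and leaving $u$ unchanged near the boundary. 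Thus for each $k$-cell $e^k_i$ I can choose a modification $\widetilde u_i$ of $u_i$ that coincides with $u_i$ on $\partial D^k$ and whose difference class equals the prescribed $b_i\in\pi_k(F)$. Pushing these modifications back into $E$ via the trivialisations, and leaving $s$ unchanged on $B^{k-1}$, produces the desired section $\widetilde s\colon B^k\to E^k$.

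Next I would compute the obstruction cochain $\widetilde c$ of $\widetilde s$. Fix a $(k+1)$-cell $e^{k+1}_j$ with attaching map $\partial\chi^{k+1}_j\colon S^k\to B^k$. By definition, $c_j\in\pi_k(F)$ is the class of $s\circ\partial\chi^{k+1}_j$ (read in a trivialisation of the induced fibration over $D^{k+1}$), and similarly for $\widetilde c_j$. Since $\widetilde s$ agrees with $s$ on $B^{k-1}$ and differs on each open $k$-cell $e^k_i$ by the class $b_i$, an additivity argument in $\pi_k(F)$ yields
\[
\widetilde c_j-c_j=\sum_i [e^{k+1}_j:e^k_i]\,b_i,
\]
where the incidence number $[e^{k+1}_j:e^k_i]$ is the degree of the map $S^k\to S^k$ obtained by composing $\partial\chi^{k+1}_j$ with the quotient $B^k\to B^k/(B^k\setminus e^k_i)\simeq S^k$. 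The right-hand side is precisely $(\delta b)_j$ by the definition of the cellular coboundary on cochains with local coefficients, so $\widetilde c-c=\delta b$, as required.

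The step I expect to be the main obstacle is the additivity formula above: verifying that a modification concentrated in the interior of each $k$-cell contributes to $\widetilde c_j-c_j$ exactly the incidence number times $b_i$, with the correct signs, in the presence of the (possibly non-trivial) local coefficient system $\{\pi_k(F)\}$. I would handle this by working over one $(k+1)$-cell at a time, where the induced bundle over $D^{k+1}$ is trivial and the local system is canonically trivialised, reducing the claim to the classical degree computation in $\pi_k(F)$; the global identity then follows by summing the contributions cell by cell.
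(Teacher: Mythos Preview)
Your argument is correct and is precisely the standard ``difference cochain'' proof found in the obstruction-theory references the paper cites (Hu, Fomenko--Fuchs). Note, however, that the paper does not actually prove Assertion~\ref{cl5}: it is stated as a known fact and immediately used, so there is no ``paper's own proof'' to compare against---your write-up simply supplies what the paper leaves to the literature.
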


\subsubsection*{Comparing obstructions}

Let $\pi\colon E\to B$ be an affine fibration with the underlying fibration $T^*\!B/P$. Then the lattice $P$ is canonically isomorphic to the system of local coefficients $\{\pi_1(F)\}$. Indeed, for any fibre $F_x$ the fundamental group $\pi_1(F_x)$ is canonically isomorphic to the corresponding fibre $P_x$ of the lattice $P$, since $\pi_1(F_x)\simeq\pi_1(T^*_xB/P_x)\simeq P_x$. It follows that if $\pi_i\colon E_i\to B$, $i=1,2$, are two affine fibrations with the underlying fibration $T^*\!B/P$, then their obstructions to an extension of a section to the $2$-skeleton lie in the same group $H^{2}(B, P)$. This means that we can compare these obstruction.

Analogously, we can compare obstruction cochains.

\begin{remark}
\label{r4}
All affine fibrations considered in this paper have fibres $F$ diffeomorphic to $\mathbb R^m\times\mathbb T^{n-m}$. It implies that the fibres are homotopy $k$-simple for all $k$.
\end{remark}

\begin{theorem}
\label{t7}
Two affine fibrations $\pi_i\colon E_i\to B$, $i=1,2$, with the underlying fibration $\pi_0\colon T^*\!B/P\to B$, where $P$ is a closed lattice on $B$, are equivalent if and only if they have equal primary obstruction classes $\gamma_1=\gamma_2\in H^2(B,P)$ (the same obstruction to the existence of a section over the $2$-skeleton).
\end{theorem}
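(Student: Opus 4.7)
The plan is to prove the two implications separately. For the "only if" direction, I would take an equivalence $f\colon E_1\to E_2$ and observe that, since $f$ commutes with the fiberwise $T^*\!B/P$-action, it carries any section $s\colon B^1\to E_1^1$ to a section $f\circ s\colon B^1\to E_2^1$ with the same obstruction cochain in $\mathcal C^2(B,P)$; consequently $\gamma_1=\gamma_2$.

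For the converse I would construct an equivalence by the standard obstruction-theoretic induction over the skeleta $B^k$. Since the fibers are connected, sections exist over $B^0$ and extend cell by cell over $B^1$; choose such sections $s_i\colon B^1\to E_i^1$, and define a partial equivalence $f^1\colon E_1|_{B^1}\to E_2|_{B^1}$ by $f^1(\alpha\circ s_1(x))=\alpha\circ s_2(x)$ for $x\in B^1$ and $\alpha\in T^*_xB/P_x$; this is well-defined because the fiberwise action is free and transitive. The obstruction cochains $c_1,c_2\in\mathcal C^2(B,P)$ of the two sections have equal cohomology classes $\gamma_1=\gamma_2$, hence $c_1-c_2=\delta b$ for some $b\in\mathcal C^1(B,P)$. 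By Assertion \ref{cl5} applied to $\pi_2$, I can replace $s_2$ by a section agreeing with it on $B^0$ whose obstruction cochain equals $c_1$, and redefine $f^1$ accordingly.

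The main technical step, and the heart of the argument, is the computation that the obstruction cochain to extending $f^1$ over the $2$-skeleton equals $c_2-c_1\in\mathcal C^2(B,P)$. After pulling both fibrations back by a characteristic map $\chi_j^2\colon D^2\to B$ and trivializing, the restrictions of $s_i$ to $\partial D^2$ become maps $S^1\to F$ whose homotopy classes are $c_i(e_j^2)\in\pi_1(F)=P_x$. Under these trivializations $f^1$ becomes translation by a map $\phi\colon S^1\to W_x=T^*_xB/P_x$ determined by the condition $\phi\circ s_1=s_2$; because $W_x$ is an abelian topological group acting freely and transitively, the class $[\phi]\in\pi_1(W_x)=P_x$ equals $c_2(e_j^2)-c_1(e_j^2)$. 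After our normalization $c_1=c_2$ this obstruction vanishes on every $2$-cell, so $f^1$ extends to some $f^2\colon E_1|_{B^2}\to E_2|_{B^2}$.

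Finally, to extend $f^2$ to all of $B$, I invoke Remark \ref{r4}: each fiber satisfies $F\simeq\mathbb R^m\times\mathbb T^{n-m}$, hence $\pi_k(F)=0$ and therefore $\pi_k(W)=0$ for $k\ge2$. All further obstructions to extending the equivalence thus lie in vanishing groups $H^{k+1}(B,\pi_k(W))=0$, so the induction goes through without further obstruction and produces the desired global equivalence $f\colon E_1\to E_2$.
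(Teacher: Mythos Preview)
Your proposal is correct and follows essentially the same route as the paper's proof: both handle the forward implication by transporting sections along an equivalence, and for the converse both normalize via Assertion~\ref{cl5} so that the obstruction \emph{cochains} (not just their classes) agree, then build the equivalence cell by cell using that $\pi_k(F)=0$ for $k\ge 2$. The only difference is presentational: you phrase the $k=2$ step as computing the class $[\phi]\in\pi_1(W_x)$ of the translation map and showing $[\phi]=c_2(e^2_j)-c_1(e^2_j)$, whereas the paper phrases the same computation as showing that the section $s_2\big|_{\partial e^2_j}$ corresponding to a chosen $s_1$ is contractible because $\widetilde s_2-\widetilde s_1$ is null-homotopic.
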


\begin{proof}
($\Rightarrow$) Equivalent fibrations have equal obstructions.

($\Leftarrow$) Suppose that fibrations $\pi_i\colon E_i\to B$ have equal obstructions. Let us construct an equivalence skeleton by skeleton, cell by cell.

Suppose that we have already constructed an equivalence
$$
E^{k-1}_1\sim_{k-1}E^{k-1}_2.
$$
over the $(k-1)$-skeleton $B^{k-1}$.

We want to construct an equivalence over a $k$-cell $e^k_j$. An affine fibration with a section is canonically isomorphic to the underlying fibration (see section \ref{s2.2}). Therefore it is sufficient to find two sections  $s_i\colon D^k\to(\chi^k_j)^*E_i$, $i=1,2$, such that the equivalence which takes $s_1$ to $s_2$ coincides with the given equivalence $\sim_{k-1}$ over the $(k-1)$-skeleton.

Furthermore, in order to construct an equivalence over a cell $e^k_j$ it suffices to find a contractible section  $s^k_1\colon\partial D^k\to(\chi^k_j)^*E_1$ such that the corresponding section $s^k_2\colon\partial D^k\to(\chi^k_j)^*E_2$ with respect to $\sim_{k-1}$ is contractible (we say that a section $s$ over the boundary of a cell $\partial e^k_j$ is \textit{contractible} if this section can be extended to the whole cell $e^k_j$).

Let us consider three cases.

\textsl{Case $k=1$}. Both affine fibrations $\pi_i\colon E_i\to B$ admit a section  $s^1_i\colon B^1\to E^1_i$ over the $1$-skeleton. Moreover, by Assertion \ref{cl5}, we may assume that these sections $s^1_i$ determine equal obstruction cochains.

\textsl{Case $k=2$}. Suppose that the equivalence $\sim_1$ over the $1$-skeleton is defined by sections $s^1_i\colon B^1\to E_i$, $i=1,2$, over the $1$-skeleton $B^1$ with equal obstruction cochains. Let us prove that the equivalence $\sim_1$ can be extended to the $2$-skeleton.

Consider a $2$-cell $e^2_j$. Choose an arbitrary section $s_1$: $D^2\to(\chi^2_j)^*E_1$ over the cell $e^2_j$. We claim that the section $s_2\colon S^1\to(\chi^2_j)^*E_2$ corresponding to the section $s_1\big|_{\partial e^2_j}\colon
S^1\to(\chi^1_j)^*E_1$ with respect to $\sim_1$ is contractible. First, identify affine fibrations $(\chi^2_j)^*E_i\to D^2$ with the underlying fibrations $(\chi^2_j)^*(T^*\!B/P)\to D^2$. We may assume that the section $s_1\colon D^2\to E^1_1$ corresponds to the zero section.

Given two sections $s^1_i\colon B^1\to E_i$, denote by $\widetilde{s_i}\colon S^1\to(\chi^2_j)^*(T^*\!B/P)$, $i=1,2$, the corresponding sections of $(\chi^2_j)^*(T^*\!B/P)\to D^2$ over $\partial D^2$. These sections $\widetilde{s_1}$ and $\widetilde{s_2}$ define the same element of $\pi_1((\chi^2_j)^*(T^*\!B/P))$ (these sections determine equal obstructions). Hence the difference $\widetilde{s_2}-\widetilde{s_1}$ determines the identity element of $\pi_1((\chi^2_j)^*(T^*\!B/P))$. This means that the section $\widetilde{s_2}-\widetilde{s_1}$ is contractible. Thus $s_2$ is contractible, as required.

\textsl{Case $k>2$}. Let $(\chi^k_j|_{e^k_j})^*E_i\to D^k$ be a locally trivial fibration. Then obstructions to an extension of a section lie in the group $\pi_{k-1}(F)=\pi_{k-1}(\mathbb T^m\times\mathbb
R^{n-m})$ which is trivial for $k>2$. Thus any section over $\partial e^k_j\simeq S^{k-1}$ can be extended to a section over the whole cell $e^k_j\simeq D^k$.
\end{proof}

Any obstruction can be realized by an affine fibration.

\begin{theorem}
\label{t8} Let $P$ be a closed lattice on $B$. For any class $\gamma\in H^2(B,P)$ there exists an affine fibration with the underlying fibration $T^*\!B/P$ and with primary obstruction class $\gamma$.
\end{theorem}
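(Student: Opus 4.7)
The plan is to build $E$ skeleton-by-skeleton, explicitly realizing $\gamma$ on the $2$-skeleton and then extending automatically to higher cells by appealing to Theorem \ref{t7}. Fix the cell structure on $B$ from section \ref{s2.4.1} and choose a cellular $2$-cocycle $c\in Z^2(B,P)$ representing $\gamma$, so that $c$ assigns to each $2$-cell $e^2_j$ an element $c_j\in P_{x_j}$ at a reference point $x_j\in e^2_j$.

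Over the $1$-skeleton I set $E^1:=(T^*\!B/P)|_{B^1}$, equipped with its tautological structure as an affine fibration over itself and the zero section $s_0$. For each $2$-cell $e^2_j$ with characteristic map $\chi_j\colon D^2\to B$, the pullback $W_j:=\chi_j^*(T^*\!B/P)\to D^2$ is a (trivial) affine fibration. I glue $W_j$ to $E^1$ along $\partial D^2$ using the natural identification of $W_j|_{\partial D^2}$ with $E^1$ over $\chi_j(\partial D^2)$, \emph{twisted} by a loop-section $\alpha_j$ of $W_j|_{\partial D^2}$ whose homotopy class in $\pi_1((T^*\!B/P)_{x_j})\simeq P_{x_j}$ equals $c_j$. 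The resulting $E^2\to B^2$ is an affine fibration with underlying $(T^*\!B/P)|_{B^2}$, and the obstruction cochain of $s_0$ to extending over $e^2_j$ picks out the class of the boundary loop in $\pi_1$, which by construction is $\pm c_j$. Hence the primary obstruction class of $E^2$ equals $\pm\gamma$; by replacing $c$ with $-c$ at the start if necessary, it equals $\gamma$.

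It remains to extend $E^2$ across cells of dimension $k\ge 3$. For each $k$-cell $e^k_j$ with characteristic map $\chi_j\colon D^k\to B$, the restriction $E^{k-1}|_{\partial e^k_j}$ is an affine fibration over $S^{k-1}$ with underlying $\chi_j^*(T^*\!B/P)|_{\partial D^k}$; its primary obstruction class lies in $H^2(S^{k-1},P)$. This group vanishes for $k\ge 4$. For $k=3$ the class is the image of $\gamma$ under $H^2(B,P)\to H^2(D^3,\chi_j^*P)\to H^2(\partial D^3,P)$, which vanishes because $D^3$ is contractible (and $\chi_j^*P$ is a local system on the simply connected $D^3$, hence constant). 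By Theorem \ref{t7} the boundary fibration is therefore equivalent to the trivial affine fibration $\chi_j^*(T^*\!B/P)|_{\partial D^k}$, which extends canonically across $D^k$; gluing via this equivalence produces $E^k\to B^k$. Iterating yields $E\to B$ with underlying fibration $T^*\!B/P$ and primary obstruction class $\gamma$.

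The main obstacle will be making the twisted gluing over the $2$-skeleton rigorous: specifying $\alpha_j$ as a bona fide section of $W_j|_{\partial D^2}$ (not merely a loop in a single fibre), verifying that the glued space is locally trivial and inherits a well-defined affine structure on boundary cells it shares with neighbouring $2$-cells, and confirming that the isomorphism $\pi_1((T^*\!B/P)_{x_j})\simeq P_{x_j}$ used in the obstruction cochain is indeed the one coming from the covering $T^*\!B\to T^*\!B/P$. Once these points are checked, the vanishing arguments in the higher-dimensional step are routine applications of Theorem \ref{t7}.
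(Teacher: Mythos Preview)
Your skeleton-by-skeleton architecture is the same as the paper's: trivialise over $B^{1}$, realise the chosen cocycle $c$ on the $2$-cells by twisted gluing, then extend cell by cell in dimensions $k\ge 3$. Where you genuinely diverge is in the extension argument.

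For $k=3$ the paper works at the cochain level: it fixes the section $s^{1}$ over $B^{1}$, pushes it to $\partial e^{3}_{j}\simeq S^{2}$, and shows directly via the commutative diagram \eqref{eq4} that the obstruction to extending this section over $S^{2}$ is the value of $\delta c$ on $e^{3}_{j}$, which vanishes because $c$ was chosen to be a cocycle on all of $B$. For $k>3$ the paper splits $S^{k-1}$ into two hemispheres, takes sections on each, and glues them across the equator using $\pi_{k-2}(F)=0$.

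You instead argue at the cohomology level for all $k\ge 3$ at once: the boundary fibration over $S^{k-1}$ has primary obstruction class equal, by naturality, to the image of $\gamma\in H^{2}(B,P)$ under $(\chi_{j}\!\mid_{\partial D^{k}})^{*}$; since $\chi_{j}\!\mid_{\partial D^{k}}$ extends to $\chi_{j}\colon D^{k}\to B$, this factors through $H^{2}(D^{k},\chi_{j}^{*}P)=0$ when $k=3$, and lands in $H^{2}(S^{k-1},P)=0$ when $k\ge 4$. Then Theorem~\ref{t7} identifies the boundary fibration with the trivial one, which extends across $D^{k}$. This is cleaner and more uniform; the cocycle condition $\delta c=0$ enters implicitly, as it is exactly what lets you speak of $\gamma\in H^{2}(B,P)$ rather than merely a class in $H^{2}(B^{2},P)$.

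One point to tighten: Theorem~\ref{t7} is stated for affine fibrations whose underlying fibration is literally $T^{*}B/P$, whereas you apply it over $S^{k-1}$ with underlying fibration $(\chi_{j}\!\mid_{\partial D^{k}})^{*}(T^{*}B/P)$, which is not of the form $T^{*}S^{k-1}/Q$. The proof of Theorem~\ref{t7} uses only that the fibre is a $K(\mathbb{Z}^{r},1)$, so the statement carries over verbatim, but you should say so. With that remark, and with the gluing over $B^{2}$ made precise as you yourself flag in the last paragraph (the paper does this by writing $\partial e^{2}_{j}$ as an interval with identified endpoints and lifting to a section of $T^{*}B$ running from $0$ to $p_{j}$), your argument is complete and in fact somewhat more conceptual than the paper's diagram chase.
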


\begin{proof}

Choose a representative cocycle $c\in\gamma$. An affine fibration with obstruction cocycle $c\in\mathcal C^2(B,P)$ can be constructed inductively over skeleta (more precisely, we construct this fibration skeleton by skeleton, cell by cell).

Note that the fibration over each cell is uniquely defined. Indeed, let $\pi\colon E\to B$ be an affine fibration with the underlying fibration $T^*\!B/P$. The induced fibration $(\chi^k_j)^*E\to D^k$ over a cell $e^k_j$ is trivial (any locally trivial fibration over a disc is trivial). Every trivial fibration has a section. Thus the induced fibration $(\chi^k_j)^*E\to D^k$ is equivalent to  $(\chi^k_j)^*(T^*\!B/P)\to D^k$.

Given an affine fibration $\pi^{k-1}\colon E^{k-1}\to B^{k-1}$ with the underlying fibration $(T^*\!B/P)^{k-1}$, it remains to ``attach'' the fibration $(\chi^k_j)^*(T^*\!B/P)\to D^k$ over a cell $e^k_j$ to the given fibration $\pi^{k-1}\colon E^{k-1}\to B^{k-1}$. To do this, it suffices to find a section over the boundary a cell
$$
s\big|_{\partial e^k_j}\colon S^{k-1}\to(\chi^k_j\big|_{\partial e^k_j})^*E^{k-1}.
$$

There are four cases.

\textsl{Case $k=1$}. There is a section over $\partial e^1_j$, since the fibres are connected.

\textsl{Case $k=2$}. We must construct a fibration over the $2$-skeleton $B^2$ with obstruction cochain $c$.
Choose a point $b_j$ on each boundary $\partial e^2_j$. Then the cochain $c\in\mathcal C^2(B,P)$ assigns to each point $b_j$ an element of the lattice $p_j\in P_{b_j}$.

We must find a section
$$
s\big|_{\partial e^2_j}\colon S^2\to(\chi^2_j\big|_{\partial e^2_j})^*E^1
$$
that defines the element $p_j$ over the cell $e^2_j$ .

Now note that the fibration $\pi^1\colon E^1\to B^1$ is equivalent to $(T^*\!B/P)^1$, since the affine fibration $\pi^1\colon E^1\to B^1$ admits a section $s^1\colon B^1\to E^1$. And $(T^*\!B/P)^1$ is the quotient of $(T^*\!B)^1$ by $P^1$.

It is convenient to regard the circle $\partial e^2_j\simeq S^1$ as  a line segment $I=[b_j^1,b_j^2]$ with identified endpoints (these endpoints correspond to the point $b_j$ of the boundary $\partial e^2_j$). Denote by $(T^*\!B)^I$ the fibration over $I$ induced by the cotangent bundle $T^*\!B$. This fibration $(T^*\!B)^I\to I$ is trivial, therefore there exists a section $s_I\colon I\to(T^*\!B)^I$ such that $s_I(b_j^1)=0$, $s_I(b_j^2)=p_j$ (we want these two sections over endpoints to differ by $p_j$). The section $s_I\colon I\to(T^*\!B)^I$ induces the required section of $E^1\simeq(T^*\!B/P)^1$. (Take the quotient of $(T^*\!B)^I$ by the lattice $P$ and glue together endpoints of $I$ to get a section over the cell $e^2_j$.)

\textsl{Case $k=3$}. We must prove that the fibration $\pi^2\colon E^2\to B^2$ can be extended from $B^2$ to a cell $e^3_j$ (equivalently, we must prove that the fibration $(\chi^3_j\big|_{\partial e^3_j})^*E^2\to S^2$ has a section). To prove this, we will make use of the assumption that the obstruction cochain is a cocycle.

The case when the image of $\chi^3_j(\partial e^3_j)$ does not intersect the $1$-skeleton $B^1$ is clear. In this case the image $\chi^3_j(\partial e^3_j)$ is contained in an open cell $\operatorname{Int} e^2_m$. Any section over the cell $s\colon\operatorname{Int} e^2_m\to E^2$ (there exists a section $s$, since the fibration over the cell is trivial) defines a fibration over $\partial e^3_j$. We can assume without loss of generality that there exists a point $s_0$ of $\partial e^3_j$ such that $\chi^3_j(s_0)\in B^1$. It is convenient to regard the sphere $\partial e^3_j\simeq S^2$ with a fixed point $s_0$ as a unit disc with its boundary identified to a single point. In other words choose a map of pairs $g\colon(D^2,\partial D^2)\to(\partial e^3_j,s_0)$ which carries the interior of the disc $\operatorname{Int} D^2=D^2\setminus\partial D^2$ diffeomorphically onto $\partial e^3_j\setminus s_0$. Denote by $\chi\colon(D^2,\partial D^2)\to(B^2,B^1)$ the corresponding mapping of the disc to the $2$-skeleton.

Let us show that any section $s^1\colon B^1\to E^2$ can be extended from $s_0$ to the whole sphere $S^2$. Consider the following commutative diagram:

\begin{equation}
\label{eq4}
\begin{gathered}
\xymatrix{
&\pi_{2}(D^{2},\partial D^{2})\ar[r]^{\chi_{*}}\ar[d]_{h_1}&\pi_{2}(B^{2}, B^{1})\ar[d]_{h_2}
\\
H_{3}(e^{3}_{j}, \partial e^{3}_{j}; \mathbb{Z})\ar[r]^{\partial_{*}}
&H_{2}(\partial e^{3}_{j}, s_{0};
\mathbb{Z})\ar[r]^{(\chi^{3}_{j}\big|_{\partial e^{3}_{j}})_{*}}
& H_{2}(B^{2}, B^{1}; \mathbb{Z})\ar[r]^c
&\{\pi_{1}(F)\}
}
\end{gathered}
\end{equation}

Here $c$ is the obstruction cochain of the section $s^1$, the homomorphism $h_2$ is the Hurewicz homomorphism, and $h_1$ is the composition of the isomorphism  $g_*\colon \pi_2(D^2,\partial D^2)\to\pi_2(\partial e^3_j,s_0)$ with the Hurewicz homomorphism $\pi_2(\partial e^3_j,s_0)\to H_2(\partial e^3_j,s_0;\mathbb Z)$. The homomorphisms $h_1$ and $h_2$ are actually isomorphisms, since Hurewicz homomorphisms are isomorphisms. The section $s^1$ can be extended from $s_0$ to $\partial e^3_j$ if and only if the composition $\pi_2(D^2,\partial D^2)\to\{\pi_1(F)\}$ is a trivial homomorphism.
Consider the bottom row of the diagram \eqref{eq4}. It can be easily proved that the mapping
$$
\partial_*\circ(\chi^3_j\big|_{\partial e^3_j})_*\circ c\colon
H_3(e^3_j,\partial e^3_j;\mathbb Z)\to\{\pi_1(F)\}
$$
is precisely the restriction of the cellular cochain $\delta c\colon \mathcal C^3(B,\mathbb Z)\to\{\pi_1(F)\}$ to the subspace $H_3(e^3_j,\partial e^3_j;\mathbb Z)\hookrightarrow H^3(B^3,B^2;\mathbb Z)\simeq\mathcal
C^3(B,\mathbb Z)$. By construction $\delta c=0$, therefore the composition of homomorphisms in the bottom row is a trivial homomorphism. But $\partial_*$ is an isomorphism, therefore the mapping $\pi_2(D^2,\partial D^2)\to\{\pi_1(F)\}$ is trivial. Thus there exists a section over the sphere $\partial e^3_j\simeq S^2$.

\textsl{Case $k>3$}. In order to construct a section over $\partial e^k_j\simeq S^{k-1}$, we divide the sphere $S^{k-1}$ into to hemispheres $D^{k-1}_i$, $i=1,2$. There exists a section over each hemisphere (the fibration over each disc $D^{k-1}_i$ is trivial). The difference of these sections over the ``equator'' $D^{k-1}_1\cap D^{k-1}_2$
determines an element of $\pi_{k-2}(F)$. But this homotopy group is trivial ($F\simeq\mathbb T^m\times\mathbb
R^{n-m}$). Therefore, perturbing these sections in a neighbourhood of the ``equator'' $D^{k-1}_1\cap D^{k-1}_2$, we can glue the sections over hemispheres $D^{k-1}_i$ together.

Theorem~\ref{t8} is proved.

\end{proof}

\subsubsection{Classification in terms of transition functions}
\label{s2.4.2}

In this section we give another approach to the classification of affine fibrations $\pi\colon E\to B$ with the underlying fibration $\pi_0\colon V\to B$ (compare \cite[vol.~3, \S\,14]{7}).

Let $\{U_\alpha\}_{\alpha\in A}$ be a cover of $B$ such that there exists a section $s_\alpha\colon U_\alpha\to E$ over each element $U_\alpha$ of the cover (for instance, this holds if all $U_\alpha$ are contractible). We stress that the cover $\{U_\alpha\}_{\alpha\in A}$ is not necessarily open, that means that elements $U_\alpha$ may not be open. Then the section $s_\alpha$ determines an equivalence between the affine fibration $(\pi\colon E\to B)|_{U_\alpha}$ and the induced underlying fibration $(\pi_0\colon V\to B)\big |_{U_\alpha}$ over each section $U_\alpha$. It follows that affine fibrations $\pi\colon E \to B$ are classified by their set of transition functions $\gamma_{\alpha\beta}\colon U_\alpha\cap U_\beta\to V$ (where $\gamma_{\alpha\beta}\circ s_\alpha=s_\beta$).

Let us show that the set of transition functions $\{\gamma_{\alpha\beta}\}$ forms a $1$-cocycle for the nerve of the cover $N_{\{U_\alpha\}}$ with coefficients in $\mathcal V$. We need some definitions (to specify $\mathcal V$).

The \textit{nerve} $N_{\{U_\alpha\}}$ of a cover $\{U_\alpha\}_{\alpha\in A}$ is a simplicial complex such that there is one vertex  $v_\alpha\in N_{\{U_\alpha\}}$ for each element $U_\alpha$, and, moreover, there is an $m$-simplex
$\Delta^m_{\alpha_0,\dots,\alpha_m}$ with vertexes $v_{\alpha_i}$, $i=0,\dots,m$, whenever the intersection $\bigcap_{i=0}^mU_{\alpha_i}$ is nonempty.

Let us define a kind of simplicial cohomology. Given a simplicial complex $K$. Let us suppose that for any $m$-simplex $\Delta^m_{\alpha_0,\dots,\alpha_m}$ there corresponds a group $G^m_{\alpha_0,\dots,\alpha_m}$, and that for any pair of simplices $\Delta^m_{\alpha_0,\dots,\alpha_m}$ and $\Delta^{m+1}_{\alpha_0,\dots,\alpha_{m+1}}$ there is a coboundary operator
$$
\delta^m_{(\alpha_0,\dots,\alpha_m;\alpha_{m+1})}\colon G^m_{\alpha_0,\dots,\alpha_m}\to
G^{m+1}_{\alpha_0,\dots,\alpha_{m+1}}.
$$

Then simplicial homology $H^k(K,\{G^m_{\beta_0,\dots,\beta_m}\})$ with coefficients in $\{G^m_{\beta_0,\dots,\beta_m}\}$ are defined similarly to ordinary simplicial homology except that the coefficient at a simplex $\Delta^m_{\alpha_0,\dots,\alpha_m}$ is an element of $G^m_{\alpha_0,\dots,\alpha_m}$. In other words, the group of cochain $C^k(K,\{G^m_{\beta_0,\dots,\beta_m}\})$ consists of formal linear combinations
$$
\sum_{\alpha_0,\dots,\alpha_k}(g^k_{\alpha_0,\dots,\alpha_k}\Delta^k_{\alpha_0,\dots,\alpha_k}),
$$
where $g^k_{\alpha_0,\dots,\alpha_k}\in G^k_{\alpha_0,\dots,\alpha_k}$. The coboundary operator $\delta$ is defined by the formula:
\begin{align*}
&
\delta\biggl(\sum_{\alpha_0,\dots,\alpha_k}
(g^k_{\alpha_0,\dots,\alpha_k}\Delta^k_{\alpha_0,\dots,\alpha_k})\biggr)
\\
&\qquad=\sum_{\alpha_0,\dots,
\alpha_k}\biggl(\sum_{\alpha_{k+1}}\delta^k_{(\alpha_0,
\dots,\alpha_k;\alpha_{k+1})}(g^k_{\alpha_0,\dots,
\alpha_k})\Delta^{k+1}_{\alpha_0,\dots,\alpha_k,
\alpha_{k+1}}\biggr).
\end{align*}
Here $\Delta^m_{\sigma(\alpha_0,\dots,\alpha_m)}= (-1)^{\operatorname{sgn}\sigma}\Delta^m_{\alpha_0,\dots,\alpha_m}$ for each permutation $\sigma\colon\{\alpha_0,\dots,\alpha_m\}\to\{\alpha_0,\dots,\alpha_m\}$.

The coefficients $\mathcal V$ are defined as follows. Let $G^m_{\alpha_0,\dots,\alpha_m}$ be the group of section of the fibration $\pi_0\colon V\to B$ over the intersection $\bigcap_{i=0}^mU_{\alpha_i}$. Then $\mathcal V$ is the collection of these groups $\{G^m_{\alpha_0,\dots,\alpha_m}\}$. All coboundary operators $\delta^m_{\alpha_0,\dots,\alpha_m;\alpha_{m+1}}$ are defined by restrictions of sections.

Note that the set of transition functions $\{\gamma_{\alpha\beta}\}$ forms a $1$-cochain $c\in C^1(N_{\{U_\alpha\}},\mathcal V)$ which associates to each $1$-simplex of the nerve $N_{\{U_\alpha\}}$ the section of the fibration $\pi_0\colon V\to B$ over $U_\alpha\cap U_\beta$. In other words, the cochain $\gamma_{\alpha\beta}$ associates to each intersection $U_\alpha\cap U_\beta$ an element of $\mathcal V$.

\begin{lemma}
\label{l3}
{
The set of transition functions $\{\gamma_{\alpha\beta}\}$ forms a $1$-cocycle $c\in Z^1(N_{\{U_\alpha\}},\mathcal V)$ of the nerve $N_{\{U_\alpha\}}$ with coefficients in $\mathcal V$.}
\end{lemma}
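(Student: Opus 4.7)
The plan is to verify directly that the $1$-cochain $c\in C^1(N_{\{U_\alpha\}},\mathcal V)$ whose value on the $1$-simplex $\Delta^1_{\alpha,\beta}$ is the transition section $\gamma_{\alpha\beta}\in G^1_{\alpha,\beta}$ is (i)~a well-defined cochain (antisymmetric in the indices, as required by the rule $\Delta^1_{\alpha,\beta}=-\Delta^1_{\beta,\alpha}$), and (ii)~annihilated by $\delta$ on every $2$-simplex of the nerve.

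First I would unpack the definition of $\gamma_{\alpha\beta}$. On $U_\alpha\cap U_\beta$ the two sections $s_\alpha,s_\beta\colon U_\alpha\cap U_\beta\to E$ take values in the same fibre $F_x=\pi^{-1}(x)$ at each $x$, and since the fibrewise action of $\pi_0\colon V\to B$ is free and transitive (Definition~\ref{d7}), there is exactly one element $\gamma_{\alpha\beta}(x)\in W_x=\pi_0^{-1}(x)$ with $\gamma_{\alpha\beta}(x)\circ s_\alpha(x)=s_\beta(x)$. Smoothness of $\gamma_{\alpha\beta}$ in $x$ follows from smoothness of the action and of $s_\alpha,s_\beta$, so $\gamma_{\alpha\beta}$ is a section of $V$ over $U_\alpha\cap U_\beta$, i.e.\ an element of $G^1_{\alpha,\beta}$.

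Antisymmetry is then immediate: from $\gamma_{\alpha\beta}\circ s_\alpha=s_\beta$ and the fact that the fibrewise action is by an \emph{abelian} group, the inverse element $-\gamma_{\alpha\beta}$ satisfies $(-\gamma_{\alpha\beta})\circ s_\beta=s_\alpha$; by uniqueness this is $\gamma_{\beta\alpha}$, so $\gamma_{\beta\alpha}=-\gamma_{\alpha\beta}$. Hence $c$ is a well-defined cochain in the sign convention $\Delta^1_{\alpha,\beta}=-\Delta^1_{\beta,\alpha}$.

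For the cocycle condition I would restrict to a non-empty triple intersection $U_\alpha\cap U_\beta\cap U_\gamma$ and apply the three defining equalities $\gamma_{\alpha\beta}\circ s_\alpha=s_\beta$, $\gamma_{\beta\gamma}\circ s_\beta=s_\gamma$, and $\gamma_{\alpha\gamma}\circ s_\alpha=s_\gamma$. Composing the first two gives $\gamma_{\beta\gamma}\circ\gamma_{\alpha\beta}\circ s_\alpha=s_\gamma=\gamma_{\alpha\gamma}\circ s_\alpha$, and because the action is free this forces, in the abelian group of sections of $V$ over $U_\alpha\cap U_\beta\cap U_\gamma$, the identity
\[
\gamma_{\alpha\beta}+\gamma_{\beta\gamma}=\gamma_{\alpha\gamma},
\]
i.e.\ $\gamma_{\alpha\beta}-\gamma_{\alpha\gamma}+\gamma_{\beta\gamma}=0$. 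Unravelling the coboundary formula stated in the text (with the antisymmetry rule $\Delta^m_{\sigma}=(-1)^{\operatorname{sgn}\sigma}\Delta^m$ producing the usual alternating sum on a $2$-simplex), this is exactly the statement that $\delta c$ vanishes on $\Delta^2_{\alpha,\beta,\gamma}$, proving $c\in Z^1(N_{\{U_\alpha\}},\mathcal V)$.

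The only genuine subtlety I expect is bookkeeping: making sure the abelian additive notation ``$+$'' on sections of $V$ really does record composition of the corresponding actions on $E$ (so that the passage from $\gamma_{\beta\gamma}\circ\gamma_{\alpha\beta}$ to $\gamma_{\alpha\beta}+\gamma_{\beta\gamma}$ is legitimate), and that the paper's coboundary convention with the permutation sign reduces on a $2$-simplex to the standard alternating-sum cocycle identity. Both of these are routine once the abelian-group structure on each fibre $W_x$ and the sign rule on simplices are written out carefully.
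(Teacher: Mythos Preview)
Your proof is correct and follows essentially the same approach as the paper: apply the composed transition functions to a section $s_\alpha$ and use freeness of the fibrewise action to deduce the alternating-sum identity $\gamma_{\alpha\beta}-\gamma_{\alpha\lambda}+\gamma_{\beta\lambda}=0$. Your version is more detailed (you explicitly check antisymmetry and well-definedness of the cochain), but the core argument is the same one-line computation the paper gives.
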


\begin{proof}
We must prove that $\delta c=0$, that is $\gamma_{\alpha\beta}-\gamma_{\alpha\lambda}+\gamma_{\beta\lambda}=0$. It suffices to consider the action of these transition functions (regarded as sections of $V$) on $s_\alpha$:
$$
\gamma_{\lambda\alpha}\circ(\gamma_{\beta\lambda}\circ(
\gamma_{\alpha\beta}\circ s_\alpha))=s_\alpha.
$$

\end{proof}

\begin{theorem}
\label{t9} Let $U_\alpha$ be a cover of $B$ such that all $U_\alpha$ are contractible. Two affine fibrations $\pi_i\colon E_i\to B$, $i=1,2$, with the underlying fibration $\pi_0\colon V\to B$ are equivalent if and only if the corresponding elements of $H^1(N_{\{U_\alpha\}},\mathcal V)$ are equal.
\end{theorem}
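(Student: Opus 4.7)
The plan is to set up a bijection between equivalence classes of affine fibrations (with underlying fibration $\pi_0\colon V\to B$) and cohomology classes in $H^1(N_{\{U_\alpha\}},\mathcal V)$. Since each $U_\alpha$ is contractible, the restriction of any affine fibration $\pi\colon E\to B$ to $U_\alpha$ admits a section $s_\alpha\colon U_\alpha\to E$ (a contractible base forces the fibration to be equivalent to the underlying fibration, which has a zero section). The discussion preceding the theorem, together with Lemma~\ref{l3}, already produces a $1$-cocycle $c\in Z^1(N_{\{U_\alpha\}},\mathcal V)$ from the collection $\{s_\alpha\}$. So the heart of the proof is (a) to verify that a different choice of sections yields a cohomologous cocycle, and (b) to reconstruct an equivalence between two fibrations from a cohomology in $\mathcal V$.

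First I would prove well-definedness of the cohomology class. Replacing $s_\alpha$ by $s_\alpha'=\eta_\alpha\circ s_\alpha$ for some section $\eta_\alpha$ of $V$ over $U_\alpha$, a direct computation using the freeness of the action shows that the new transition functions satisfy $\gamma_{\alpha\beta}'=\gamma_{\alpha\beta}+\eta_\beta-\eta_\alpha$ on $U_\alpha\cap U_\beta$, so $c'-c=\delta\{\eta_\alpha\}$. Conversely, given any $\{\eta_\alpha\}\in C^0(N_{\{U_\alpha\}},\mathcal V)$, the sections $s_\alpha'=\eta_\alpha\circ s_\alpha$ realize this coboundary. Hence the class $[c]\in H^1(N_{\{U_\alpha\}},\mathcal V)$ depends only on $\pi\colon E\to B$.

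Next I would handle the two implications. For ($\Rightarrow$), given an equivalence $f\colon E_1\to E_2$, take sections $s^1_\alpha$ of $E_1$ and define $s^2_\alpha=f\circ s^1_\alpha$; since $f$ preserves the action of $V$, the two fibrations produce identical cocycles, so in particular $[c_1]=[c_2]$. For ($\Leftarrow$), suppose $c_2-c_1=\delta\{\eta_\alpha\}$ for some $\eta_\alpha\in C^0(N_{\{U_\alpha\}},\mathcal V)$. After replacing $s^2_\alpha$ by $\eta_\alpha\circ s^2_\alpha$ (which, by the previous paragraph, does not change $[c_2]$), we may assume $c_1=c_2$ as cochains. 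Then I define a fiberwise map $f\colon E_1\to E_2$ by requiring $f(w\circ s^1_\alpha(x))=w\circ s^2_\alpha(x)$ for $x\in U_\alpha$ and $w\in V_x$; this makes sense because the action is free and transitive. The crucial check is that the definitions agree on overlaps $U_\alpha\cap U_\beta$, and this follows from $\gamma^1_{\alpha\beta}=\gamma^2_{\alpha\beta}$: indeed, on such an overlap, $s^1_\beta=\gamma^1_{\alpha\beta}\circ s^1_\alpha$ and $s^2_\beta=\gamma^2_{\alpha\beta}\circ s^2_\alpha$, so using $\alpha$ or $\beta$ to define $f$ produces the same value. By construction $f$ intertwines the two fiberwise actions of $V$, hence is an equivalence of affine fibrations.

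The main obstacle is simply the compatibility check on triple overlaps for the well-definedness step and on double overlaps for the construction of $f$; both reduce to the cocycle identity $\gamma_{\alpha\beta}+\gamma_{\beta\lambda}=\gamma_{\alpha\lambda}$ combined with the freeness and transitivity of the action of $V_x$ on $F_x$, and no topological input beyond the existence of local sections on the contractible sets $U_\alpha$ is required.
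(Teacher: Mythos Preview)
Your proposal is correct and follows essentially the same approach as the paper. The paper's proof is a two-sentence summary (``coboundaries correspond bijectively to changes of local sections, hence the cohomology class is well defined and classifies the fibration''); you have spelled out exactly those routine verifications in detail, including the explicit formula $\gamma_{\alpha\beta}'=\gamma_{\alpha\beta}+\eta_\beta-\eta_\alpha$ and the gluing construction of $f$ from matching cocycles.

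One minor remark: in your final paragraph you refer to a ``compatibility check on triple overlaps for the well-definedness step'', but in fact your well-definedness argument (change of sections modifies the cocycle by a coboundary) only uses double overlaps; triple overlaps enter earlier, in Lemma~\ref{l3}, to show that the transition functions form a cocycle in the first place. This does not affect the correctness of your argument.
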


\begin{proof}
There is a natural bijection between coboundaries and changes of sections over elements $\{U_\alpha\}$ of the cover. Thus for any fibration there corresponds a unique cohomology class, and two fibrations are equivalent if and only if they define equal cohomology classes.
\end{proof}

For any class $\gamma\in H^1(N_{\{U_\alpha\}},\mathcal V)$ and for any representative $c$ for this class there exists an affine fibration isomorphic to $(\pi_0\colon V\to B)\big|_{U_\alpha}$ over each $U_\alpha$ such that transition functions $\gamma_{\alpha\beta}$ are elements of the representative $c\in\gamma$.

All affine fibrations considered below are affine fibrations with the underlying fibration $T^*\!B/P$. Let us restate Theorem \ref{t9} for these fibrations.

Coefficients $C^\infty(T^*\!B/P)$ are defined analogously to $\mathcal V$ except that the group $G^m_{\alpha_0,\dots,\alpha_m}$ is the group of sections of the fibration $T^*\!B/P$ (that is latticed $1$-forms) over $\bigcap_{i=0}^mU_{\alpha_i}$.

\begin{corollary}
\label{c1}
 Let $U_\alpha$ be a cover of $B$ such that all $U_\alpha$ are contractible. Two affine fibrations $\pi_i\colon E_i\to B$, $i=1,2$, with the underlying fibration $T^*\!B/P$ are equivalent if and only if the corresponding elements of  $H^1(N_{\{U_\alpha\}},C^\infty(T^*\!B/P))$ are equal.
\end{corollary}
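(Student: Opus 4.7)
The plan is to deduce this as a direct corollary of Theorem \ref{t9} by specializing $V$ to $T^*\!B/P$ and identifying the coefficient system $\mathcal V$ with $C^\infty(T^*\!B/P)$. There is essentially nothing new to prove beyond checking that the hypotheses of Theorem \ref{t9} apply and that the coefficients match.

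First I would verify that the hypothesis of Theorem \ref{t9} (the existence of a section $s_\alpha\colon U_\alpha\to E_i$ over each element of the cover) is satisfied in our setting. Since each $U_\alpha$ is contractible, both $T^*\!B$ and the closed lattice $P$ trivialize over $U_\alpha$, so the underlying fibration $T^*\!B/P$ is trivial over $U_\alpha$ and in particular admits sections. To produce a section of $E_i$ itself, I would invoke the obstruction theory developed in the proof of Theorem \ref{t7}: a section exists over the $1$-skeleton of $U_\alpha$ because fibres are connected, and the obstructions to extending over higher skeleta lie in $H^k(U_\alpha,\{\pi_{k-1}(F)\})$, which vanish since $U_\alpha$ is contractible. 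Thus each $E_i|_{U_\alpha}$ admits a section, as required.

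Second, I would identify the coefficient system $\mathcal V$ with $C^\infty(T^*\!B/P)$. By the definition given before Lemma \ref{l3}, the group assigned to a simplex $\Delta^m_{\alpha_0,\dots,\alpha_m}$ in $\mathcal V$ is the group of sections of $\pi_0\colon V\to B$ over $\bigcap_{i=0}^m U_{\alpha_i}$; specializing to $V=T^*\!B/P$, a section of $T^*\!B/P$ over an open set is precisely a latticed $1$-form (Definition \ref{d10}), so the groups coincide with those defining $C^\infty(T^*\!B/P)$. The coboundary operators agree as well, since in both cases they are given by restriction of sections. With these identifications, the cohomology group $H^1(N_{\{U_\alpha\}},\mathcal V)$ of Theorem \ref{t9} coincides with $H^1(N_{\{U_\alpha\}},C^\infty(T^*\!B/P))$, and Theorem \ref{t9} then yields exactly the statement of the corollary.

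There is no real obstacle: the only step requiring any thought is the verification that $E_i$ itself (not just its underlying fibration) admits sections over contractible $U_\alpha$, which is handled by the obstruction-theoretic argument above. Everything else is a matter of unwinding definitions.
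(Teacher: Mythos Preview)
Your proposal is correct and follows exactly the paper's approach: the paper states Corollary~\ref{c1} immediately after Theorem~\ref{t9} without separate proof, simply specializing $V$ to $T^*\!B/P$ and noting that the coefficient system $\mathcal V$ becomes $C^\infty(T^*\!B/P)$ by definition. Your additional verification that sections exist over contractible $U_\alpha$ just makes explicit what the paper already asserts parenthetically at the start of section~\ref{s2.4.2}.
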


\subsubsection{Classification in terms of \v{C}ech cohomology}
\label{s2.4.3}

In this section we briefly describe the classification of Lagrangian fibrations in terms of \v{C}ech cohomology (sheaf cohomology). This classification was suggested by Duistermaat in \cite{2} and was used by Mishachev in \cite{4} (all necessary facts about sheafs are described in \cite[vol.~3, section~14]{7}).

Take the direct limit of the cohomology groups
$$
\varinjlim H^1(N_{\{U_\alpha\}},C^\infty(T^*\!B/P))
$$
over the system of all possible open covers $\{U_\alpha\}$, ordered by refinement. To take this limit, it is necessary to replace coefficients $C^\infty(T^*\!B/P)$ by the sheaf $\Gamma(T^*\!B/P)$ of sections of the quotient fibration $T^*\!B/P$.

Instead of taking the direct limit, one may use Leray's theorem which states that \v{C}ech cohomology are isomorphic to the cohomology for absolutely contractible covers defined as follows.

\begin{definition}
\label{d12} We say that a cover $\{U_\alpha\}$ of $B$ is \textit{absolutely contractible} if all $U_\alpha$ and all their finite intersections $\bigcap_{i=1}^NU_{\alpha_i}$ are either empty or contractible.
\end{definition}

Note that any Lagrangian fibration over an open contractible base $U_\alpha$ admits a Lagrangian section. The next theorem is analogous to Theorem \ref{t9}.

\begin{theorem}
\label{t10}
Let $P$ be a closed lattice on a manifold $B$. Two Lagrangian fibrations $\pi_i\colon (M_i,\omega_i)\to B$ with lattice $P$ are Lagrangian equivalent if and only if they determine the same element of $H^1(B,\Lambda(T^*\!B/P))$ (the same element of \v{C}ech $1$-cohomology with coefficients in the sheaf $\Lambda(T^*\!B/P)$ of Lagrangian sections of $T^*\!B/P$).
\end{theorem}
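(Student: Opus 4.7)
The plan is to parallel the proof of Corollary \ref{c1}, refining ``section'' to ``Lagrangian section'' (that is, closed latticed $1$-form, cf.\ definition \ref{d11}) and passing from the nerve cohomology of a single cover to \v{C}ech cohomology via Leray's theorem. Fix an absolutely contractible open cover $\{U_\alpha\}$ of $B$. Leray's theorem identifies $H^1(B,\Lambda(T^*\!B/P))$ with $H^1(N_{\{U_\alpha\}},\Lambda(T^*\!B/P))$, so it suffices to establish a natural bijection between Lagrangian equivalence classes of Lagrangian fibrations with lattice $P$ and elements of the nerve cohomology $H^1(N_{\{U_\alpha\}},\Lambda(T^*\!B/P))$.

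Given a Lagrangian fibration $\pi\colon(M,\omega)\to B$, each contractible $U_\alpha$ admits a Lagrangian section $s_\alpha\colon U_\alpha\to M$ (as noted before the statement). I would use $s_\alpha$ to build a symplectic identification $\phi_\alpha\colon\pi^{-1}(U_\alpha)\to T^*U_\alpha/P$ matching $\omega|_{U_\alpha}$ with the canonical form $\omega_0$. The affine identification supplied by Theorem \ref{t6} is automatically symplectic here: by Theorem \ref{t6}(3) any two Lagrangian forms with the same affine structure differ by the pullback of a closed $2$-form $\varphi$ on the base, and pulling back by the zero section (corresponding to $s_\alpha$) gives $\varphi=s_\alpha^*\omega=0$. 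The transition functions $\gamma_{\alpha\beta}=\phi_\beta\circ\phi_\alpha^{-1}$ are then shifts by latticed $1$-forms $\gamma_{\alpha\beta}$ on $U_\alpha\cap U_\beta$, and the identity $s_{\widetilde\gamma}^*\omega_0=\omega_0+d\pi^*\widetilde\gamma$ of Assertion \ref{cl3} forces $d\gamma_{\alpha\beta}=0$. Thus each $\gamma_{\alpha\beta}$ is a Lagrangian section of $T^*\!B/P$, and the cocycle relation $\gamma_{\alpha\beta}-\gamma_{\alpha\lambda}+\gamma_{\beta\lambda}=0$ follows verbatim from Lemma \ref{l3}, yielding a class in $H^1(N_{\{U_\alpha\}},\Lambda(T^*\!B/P))$.

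It remains to verify well-definedness and bijectivity. A different choice of Lagrangian sections $s_\alpha'$ differs from $s_\alpha$ by a Lagrangian section $\tau_\alpha$ of $T^*\!B/P$ over $U_\alpha$ (by the same argument applied to the pair of local sections), so the new cocycle differs from the old one by the coboundary of $\{\tau_\alpha\}$. A Lagrangian equivalence between two fibrations transports the $s_\alpha$ of one to Lagrangian sections of the other and produces identical cocycles; conversely, equal cohomology classes can be represented by coinciding cocycles, whereupon the local identifications $\phi_\alpha$ patch into a global Lagrangian equivalence. For surjectivity, any Lagrangian cocycle $\{\gamma_{\alpha\beta}\}$ assembles the trivial pieces $(T^*U_\alpha/P,\omega_0)$ into a well-defined Lagrangian fibration with lattice $P$, because each gluing map preserves $\omega_0$ and the cocycle condition supplies associativity. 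The main subtlety is the claim in the second paragraph that the affine identification induced by a Lagrangian section pulls back $\omega_0$ to $\omega$ (and not merely to some cohomologous twisted form); once this is in hand, everything else reduces to the bookkeeping of the affine case (Corollary \ref{c1}) combined with the closure condition $d\gamma_{\alpha\beta}=0$ dictated by Assertion \ref{cl3}.
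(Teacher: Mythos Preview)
The paper does not actually prove Theorem~\ref{t10}: it only remarks that ``any Lagrangian fibration over an open contractible base $U_\alpha$ admits a Lagrangian section'' and that ``the next theorem is analogous to Theorem~\ref{t9}''. Your proposal is precisely the analogy the paper has in mind, carried out in detail---you replace arbitrary sections by Lagrangian sections, observe that the resulting transition functions are closed latticed $1$-forms via Assertion~\ref{cl3}, and invoke Leray's theorem to pass from the nerve of an absolutely contractible cover to \v{C}ech cohomology. The argument is correct, and the one point you flag as subtle (that the affine identification induced by a Lagrangian section is genuinely symplectic, not merely symplectic up to a twist) is handled cleanly by your observation that the twisting form $\varphi$ satisfies $\varphi=s_\alpha^*\omega-s_0^*\omega_0=0$.
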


The advantage of \v{C}ech cohomology is that they are uniquely defined (they not depend on the choice of cover). On the other hand, the approach from section \ref{s2.4.2} can be applied to covers that are not necessarily open. For instance, an application of \v{C}ech cohomology to triangulations requires explanation.

\section{Invariants of Lagrangian fibrations}
\label{s3}

\subsubsection*{Classification theorems for Lagrangian and almost Lagrangian fibrations}

Combining results of \S\,\ref{s2} (see Lemmas \ref{l1} and \ref{l2}, Theorems \ref{t6}--\ref{t8}), we get the following theorem.

\begin{theorem}
\label{t11}
1) Any closed lattice $P$ on $B^n$ and any primary obstruction class $\gamma\in H^2(B^n,P)$ can be realized by an almost Lagrangian fibration, that is there exists an almost Lagrangian fibration $\pi\colon(M^{2n},\eta)\to B^n$ with lattice $P$ and obstruction $\gamma$.

2) The lattice $P$ and the obstruction $\gamma$ can be realized by a Lagrangian fibration if and only if corresponding almost Lagrangian fibrations have trivial symplectic structure obstruction.

3) For any two (almost) Lagrangian fibrations $\pi_i\colon(M^{2n}_i,\eta_i)\to B^n$, $i=1,2$, with the same lattices $P_i\subset T^*\!B$ and equal primary obstruction classes $\gamma_i\in H^2(B,P_i)$ there exists a $2$-form $\varphi$ on the base $B$ such that the twisted first fibration $\pi_1\colon(M^{2n}_1,\eta_1+\pi_1^*\varphi)\to B^n$ is Lagrangian equivalent to the second Lagrangian fibration $\pi_2\colon(M^{2n}_2,\eta_2)\to B^n$.

4) \,Two twisted (almost) Lagrangian fibrations $\pi\colon(M^{2n},\eta+\pi^*\varphi_i)\to B^n$, $i=1,2$, with lattice $P$ are Lagrangian equivalent if and only if $\varphi_1-\varphi_2=d\alpha$ for some latticed $1$-form $\alpha\in\Omega_P^1(B)$.
\end{theorem}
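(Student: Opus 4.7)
The plan is to assemble Theorem~\ref{t11} from the results already established in Section~\ref{s2}; each of the four clauses corresponds essentially one-to-one to an earlier statement, so only a small amount of bookkeeping is needed.

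For part~(1), I would first apply Theorem~\ref{t8} to realize $(P,\gamma)$ by an affine fibration $\pi\colon E\to B^n$ with underlying fibration $T^*\!B/P$, and then invoke Theorem~\ref{t6}(2) to equip $E$ with a 2-form $\eta$ making it an almost Lagrangian fibration whose associated affine structure is the one just produced; its lattice is then automatically $P$ and its primary obstruction class is $\gamma$. Part~(3) is symmetric: given two almost Lagrangian fibrations with equal $(P_i,\gamma_i)$, I would re-interpret them as affine fibrations by Theorem~\ref{t6}(1), produce an equivalence of affine fibrations via Theorem~\ref{t7}, and convert it back into a Lagrangian equivalence up to a twisting using Theorem~\ref{t6}(3). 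Part~(4) is exactly Lemma~\ref{l2}.

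Part~(2) is the only clause needing a moment of care. The "only if" direction is immediate, since a Lagrangian fibration has $d\eta=0$ and hence trivial $[\psi]$. For "if", I would take any almost Lagrangian realization of $(P,\gamma)$ with trivial symplectic obstruction and apply Lemma~\ref{l1} to untwist it into a Lagrangian fibration; the untwisting does not alter the affine data (by Theorem~\ref{t6}(3), twisting is invisible to the affine structure), so the resulting Lagrangian fibration still realizes the same lattice $P$ and primary obstruction $\gamma$.

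The only real obstacle is to check that the symplectic obstruction class $[\psi]\in H^3(B^n,\mathbb R)$ is a well-defined invariant of the pair $(P,\gamma)$, so that the hypothesis in part~(2) makes sense abstractly. This is secured by part~(3): any two almost Lagrangian realizations of $(P,\gamma)$ differ by a twist by some 2-form $\varphi$ on $B^n$, and such a twist replaces $d\eta$ by $d\eta+\pi^*d\varphi$, leaving $[\psi]$ unchanged. Once this bookkeeping is in place, Theorem~\ref{t11} follows by direct citation of the earlier results.
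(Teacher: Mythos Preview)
Your proposal is correct and matches the paper's own approach: the paper simply states that Theorem~\ref{t11} follows by combining Lemmas~\ref{l1} and~\ref{l2} with Theorems~\ref{t6}--\ref{t8}, which is precisely the assembly you describe. Your added remark that part~(3) guarantees the symplectic obstruction depends only on $(P,\gamma)$ is a helpful clarification the paper leaves implicit.
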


The next statement is simple.

\begin{lemma}
\label{l4} For any Lagrangian fibration $\pi\colon(M^{2n},\omega)\to B^n$ a fibration $\pi\colon(M^{2n},\omega+\pi^*\varphi)\to B^n$ is Lagrangian if and only if the twisting $2$-form $\varphi$ is closed.
\end{lemma}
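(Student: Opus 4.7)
The plan is to verify the three defining conditions of a Lagrangian fibration for the twisted form $\omega + \pi^*\varphi$: that the fibres remain maximal isotropic, that $\omega + \pi^*\varphi$ remains non-degenerate, and that it is closed. The first two conditions will hold automatically for any smooth 2-form $\varphi$ on the base, so the closedness of $\varphi$ will be forced only by the condition $d(\omega + \pi^*\varphi) = 0$.

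First I would observe that since vertical vectors lie in $\ker \pi_*$, the pullback $\pi^*\varphi$ vanishes on every vertical pair of vectors, so $(\omega + \pi^*\varphi)\big|_{F_x} = \omega|_{F_x} + 0 = 0$. Hence the fibres are still isotropic of dimension $n$, i.e.\ maximal isotropic. Next I would check non-degeneracy: in the local coordinates $p_1,\dots,p_n,q^1,\dots,q^n$ supplied by Assertion~\ref{cl2}, the matrix of $\omega$ has the block form $\bigl(\begin{smallmatrix}0&E\\-E&\Omega\end{smallmatrix}\bigr)$, and adding $\pi^*\varphi$ only alters the lower-right block, turning $\Omega$ into $\Omega + \Phi$, where $\Phi$ is the matrix of $\varphi$ in the $q$-coordinates. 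The determinant of such a block matrix is independent of the lower-right block, so $\omega + \pi^*\varphi$ remains non-degenerate.

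Finally, for the closedness, I compute
\begin{equation*}
d(\omega + \pi^*\varphi) \;=\; d\omega + \pi^*(d\varphi) \;=\; \pi^*(d\varphi),
\end{equation*}
using $d\omega = 0$. Thus the twisted form is closed if and only if $\pi^*(d\varphi) = 0$. Since $\pi$ is a surjective submersion, $\pi^*$ is injective on differential forms on $B^n$, so this is equivalent to $d\varphi = 0$. Combining: $\omega + \pi^*\varphi$ is a symplectic form vanishing on the fibres (i.e.\ defines a Lagrangian fibration) if and only if $\varphi$ is closed.

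No step looks like a serious obstacle; the only place that demands care is the non-degeneracy check, where one must argue that adding a pullback from the base cannot destroy non-degeneracy. The cleanest way is the local block-matrix argument above, which is immediate from Assertion~\ref{cl2}; an equivalent coordinate-free argument would note that the map $v \mapsto i_v(\omega + \pi^*\varphi)$ on vertical vectors $v$ still equals $i_v \omega$ (since $i_v \pi^*\varphi = 0$), so vertical vectors still pair non-degenerately with the quotient $T M^{2n}/TF$, which is enough to conclude non-degeneracy of a 2-form that already vanishes on $TF$.
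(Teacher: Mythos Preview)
Your proof is correct. The paper does not actually supply a proof of this lemma: it is introduced with the sentence ``The next statement is simple'' and left unproved (the same statement also appears as part~1) of Theorem~\ref{t2}, again without proof). Your argument spells out precisely the three checks one must make, and each is handled cleanly; the block-matrix verification of non-degeneracy via Assertion~\ref{cl2} is exactly in the spirit of the paper's local computations, and the coordinate-free alternative you mention at the end is a nice touch.
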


We suggest the following plan of classification of almost Lagrangian fibrations over a base $B$ up to Lagrangian equivalence.
\renewcommand{\theenumi}{{\rm\arabic{enumi})}}
\renewcommand{\labelenumi}{\theenumi}
\begin{enumerate}
\item
Classify all closed lattices $P$ on $B$.
\item
For each closed lattice $P$ on $B$, classify affine fibrations with the underlying fibration $T^*\!B/P$. By Theorems \ref{t7} and \ref{t8}, it is sufficient to compute the group of primary obstruction classes $H^2(B,P)$.
\item
Compute the set of $2$-forms $\Omega^2(B,\mathbb R)$ modulo differentials of latticed $1$-forms $d\Omega_P^1(B)$.
\end{enumerate}

To classify Lagrangian fibrations, one should replace step 3) of the plan by the following steps $3')$ and $4')$.
\begin{itemize}
\item[$3')$]
Find (among fibrations from step 2)) all affine fibrations with the underlying fibration $T^*\!B/P$ which have trivial symplectic obstructions (see definition \ref{d6}).
\end{itemize}

After this step one would classify all Lagrangian fibrations up to a twisting. To classify twistings, it remains to compute the quotient $Z^2(B,\mathbb R)/d\Omega_P^1(B)$. Every $1$-form is a lattice $1$-form. Therefore, we can take the quotients of $Z^2(B,\mathbb R)$ and $d\Omega_P^1(B)$ by the subgroup of exact $2$-forms $d\Omega^1(B)$ (that is by differentials of $1$-forms). Denote by $H^2_P(B)$ the quotient space $d\Omega_P^1(B)/d\Omega^1(B)$ consisting of all cohomology classes in $H^2(B,\mathbb R)$ that can be realized by a differential of a latticed  $1$-form. The last step of our plan is the following.
\begin{itemize}
\item[$4')$] Compute \textit{nontrivial twistings} $H^2(B,\mathbb R)/H^2_P(B)$.
\end{itemize}

For some explicit computations of these invariants see sections \ref{s3.1}--\ref{s3.4}.

\begin{remark}
\label{r5} By Theorem ~\ref{t11}, almost Lagrangian fibrations over a base $B$ are classified up to a twisting by their  (closed) lattices $P\subset T^*\!B$ and primary obstruction classes $\gamma\in H^2(B,P)$. Twistings of an almost Lagrangian fibration are classified by the group
$$
\Omega^2(B,\mathbb R)/d\Omega_P^1(B).
$$

If the base $B$ is a two-manifold, then any almost Lagrangian fibration is Lagrangian. Thus their classifications coincide.
\end{remark}

\begin{example}
\label{ex4}
Classification of (almost) Lagrangian fibrations with fibre $\mathbb R^n$ (compare \cite{4}).

Any almost Lagrangian fibration $\pi\colon(M^{2n},\eta)\to B^n$ with fibre $\mathbb R^n$ is Lagrangian equivalent to a twisted cotangent bundle $\pi_0\colon(T^*\!B^n,\allowbreak \omega_0+\nobreak\pi_0^*\varphi)\to B^n$.

Two twisted cotangent bundles $\pi_0\colon(T^*\!B^n,\omega_0+\pi_0^*\varphi_i)\to B^n$, $i=1,2$, are Lagrangian equivalent if and only if the difference of the twisting forms is an exact form, that is \ $\varphi_2-\varphi_1=d\alpha$ for some $1$-form $\alpha$ on $B^n$.

A twisted Lagrangian fibration $\pi_0\colon(T^*\!B^n,\omega_0+\pi_0^*\varphi)\to B^n$ is Lagrangian if and only if the twisting $2$-form $\varphi$ is closed.
\end{example}

\subsubsection*{Classification in terms of sheafs}

Our plan of classification can be restated in terms of \v{C}ech cohomology (below we do not use this statement). We use notation of section \ref{s2.4.3}.


To classify Lagrangian fibrations, we must compute the group $H^1(B,\Lambda(T^*\!B/P))$). Consider the short exact sequence of sheafs
$$
\xymatrix{
0\ar[r]&\Lambda(T^*\!B/P)\ar[r]^i&\Gamma(T^*\!B/P)\ar[r]^>>>>>>d&Z^2B\ar[r]&0,}
$$
where $Z^2B$ is the sheaf of closed $2$-forms on $B$, the map $i$ is the natural inclusion, and $d$ is the exterior differential. Consider the associated exact cohomology sequence
\begin{align*}
&\xymatrix{
\dots \ar[r]^>>>>>{i_*}&H^0(B,\Gamma(T^*\!B/P))\ar[r]^>>>>>>{d_*}
&H^0(B,Z^2B)\ar[r]&H^1(B,\Lambda(T^*\!B/P))\ar[r]^>>>>>>{i_*}&{}}
\\
&\qquad\xymatrix{
{}\ar[r]^>>>>>>>{i_*}&H^1(B,\Gamma(T^*\!B/P))\ar[r]^>>>>>>{d_*}&H^1(B,Z^2B)\ar[r]&\dots
}.
\end{align*}

It turns out that our plan of classification is just a geometrical interpretation of this long exact sequence. Step 2) of the plan corresponds to the computation of the group $H^1(B,\Gamma(T^*\!B/P))$ (see section \ref{s2.4.2}). Step $3')$ corresponds to the computation of the kernel of the homomorphism $H^1(B,\Gamma(T^*\!B/P))\stackrel{d_*}\longrightarrow H^1(B,Z^2B)$ (check that $H^1(B,Z^2B)\simeq H^3(B,\mathbb R)$). The quotient space $H^0(B,Z^2B)/dH^0(B,\Gamma(T^*\!B/P))$ and nontrivial twistings $Z^2(B,\mathbb
R)/d\Omega_P^1(B)$ from section $4')$ are the same thing (the same group). To prove the last statement, use the following simple fact about \v{C}ech cohomology. Zero cohomology of a sheaf is the space of its global sections. Obviously, zero cohomology of $Z^2B$ and $\Gamma(T^*\!B/P)$ are spaces of $2$-form and latticed $1$-forms on $B$ respectively.

\subsection{Lattices}
\label{s3.1}

In this section we reduce the classification of closed lattices of rank $2$ on a surface $B$ up to isomorphism to the classification of some subgroups $G\subset\mathrm{GL}_2(\mathbb Z)\leftthreetimes\mathbb R^2$. (We need this result in order to classify compact Lagrangian fibrations over surfaces in \S\,\ref{s4}.)

We first define complete lattices (see definition \ref{d13}), then we prove that all lattices on compact surfaces are complete (see Theorem \ref{t13}, remark \ref{r6} and \S\,\ref{s4}), and finally we reduce the classification of complete lattices to the classification of some groups $G\subset\nobreak \mathrm{GL}_2(\mathbb Z)\leftthreetimes\nobreak\mathbb R^2$ (see corollaries \ref{c3} and \ref{c4}).

\subsubsection*{Complete lattices}

Let $L$ be a translation invariant lattice of rank $n$ on an $n$-dimensional affine space $\mathbb R^n$. Here $L$ is a lattice from definition \ref{d9}. Denote by $(\mathbb R^n,L)$ the affine space with the lattice $L$ on it and by $\mathrm{Aut}_n$ the automorphism group of $L$. There exists a basis of $\mathbb R^n$ with respect to which the lattice $L$ is the standard lattice $\mathbb Z^n$ and $\mathrm{Aut}_n\simeq \mathrm{GL}_n(\mathbb Z)\leftthreetimes\mathbb R^n\subset\mathrm{Aff}_n$. (Here $\mathrm{Aff}_n$ is the affine group of $\mathbb R^n$.)

Suppose $\widetilde{B}$ and $B$ are two manifolds, $f\colon\widetilde B\to B$ is a mapping and $P$ is a lattice on $B$. We say that the set $Q=f^*P=\bigcup_{\alpha\in P}f^*\alpha$ is the \emph{lattice on $\widetilde{B}$ induced by $f$} (if it is a lattice). If $f$ is a local diffeomorphism, in particular a covering map, then $Q$ is a lattice. Moreover, if $P$ is closed, then $Q$ is also closed.

A map $f\colon(B,P)\to(\mathbb R^n,L)$ is a \textit{developing map} if $f^*L=P$.

\begin{claim}
\label{cl6}
Any $n$-dimensional simply connected manifold $U^n$ with a closed lattice $Q$ of rank $n$ on it admits a developing map. This developing map is uniquely defined up to an automorphism of the lattice $L$ (that is up to an element of $\mathrm{Aut}_n$).
\end{claim}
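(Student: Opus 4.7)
The plan is to construct a developing map by integrating a global basis of closed 1-forms arising from $Q$, using simple connectivity of $U^n$ twice: once to trivialize $Q$ as a covering space of $U^n$, and once to upgrade closed 1-forms to exact ones. Uniqueness will then follow from a rigidity property of local self-diffeomorphisms of $\mathbb{R}^n$ that preserve $L$.

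For existence, I would first observe that the projection $Q \to U^n$ is a local diffeomorphism with discrete fibres (by the transversality and discreteness conditions in Definition~\ref{d9}), and hence a covering map with fibre $\mathbb{Z}^n$. Since $U^n$ is simply connected, this covering is trivial, so there exist $n$ global sections $\alpha_1, \ldots, \alpha_n$ of $Q$ forming a $\mathbb{Z}$-basis of every fibre $Q_x$. Each $\alpha_i$ is a closed 1-form on $U^n$ (because $Q$ is a closed lattice), and simple connectivity of $U^n$ gives $\alpha_i = df_i$ for some $f_i \in C^\infty(U^n)$. I then set $f = (f_1, \ldots, f_n) \colon U^n \to \mathbb{R}^n$. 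Since the $\alpha_i$ span $T^*_x U^n$ pointwise, $f$ is a local diffeomorphism; since $f^*(dx_i) = \alpha_i$ and $L = \mathbb{Z}^n$ is translation-invariant, $f^*L = Q$.

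For uniqueness, I would take two developing maps $f, g$ and, working locally so that $f$ is a diffeomorphism, consider $\phi = g \circ f^{-1}$ on an open subset of $\mathbb{R}^n$. By construction $\phi^* L = L$. Evaluating on the coframe $dx_1, \ldots, dx_n$ shows that the Jacobian matrix of $\phi$ at every point expresses the pulled-back $\mathbb{Z}$-basis of $L_{\phi(y)}$ in terms of the standard $\mathbb{Z}$-basis of $L_y$, hence lies in $\mathrm{GL}_n(\mathbb{Z})$. Being continuous and taking values in a discrete set, the Jacobian is locally constant, so $\phi$ is affine with linear part in $\mathrm{GL}_n(\mathbb{Z})$, i.e.\ $\phi \in \mathrm{Aut}_n$. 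Thus around each $x \in U^n$ there is a $\phi_x \in \mathrm{Aut}_n$ with $g = \phi_x \circ f$ on a neighborhood of $x$; on overlaps two such affine maps agree on the nonempty open set $f(U_x \cap U_y)$ and hence agree everywhere. Connectedness of $U^n$ then produces a single global $\phi \in \mathrm{Aut}_n$ with $g = \phi \circ f$.

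The hard part is the rigidity step: showing that any local self-diffeomorphism $\phi$ of $\mathbb{R}^n$ with $\phi^*L = L$ must be affine with linear part in $\mathrm{GL}_n(\mathbb{Z})$. The remaining ingredients — the covering-space trivialization of $Q$ and the Poincar\'e lemma on $U^n$ — are standard applications of simple connectivity. The rigidity argument itself is short, but it is the only place in the proof where the specific affine structure of the model $(\mathbb{R}^n, L)$, rather than just the abstract lattice structure on $U^n$, intervenes, so it deserves the most care in a complete write-up.
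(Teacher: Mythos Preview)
Your proof is correct, and the existence argument is essentially the paper's: extend a basis of $Q_x$ to global sections $\alpha_1,\dots,\alpha_n$ of $Q$ (you justify this explicitly via the covering-space trivialization; the paper simply asserts the extension) and then integrate these closed 1-forms using simple connectivity.

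For uniqueness the paper takes a shorter route than your local rigidity analysis of $\phi = g\circ f^{-1}$. It observes that a developing map is completely determined by its value $f(x)$ at one point together with the $\mathbb{Z}$-basis of $Q_x$ that it pulls back to the standard basis of $L$: indeed, the global sections $\alpha_i$ are then forced by continuity, and the integrals $f_i$ are fixed by the value at $x$. Since $\mathrm{Aut}_n = \mathrm{GL}_n(\mathbb{Z})\leftthreetimes\mathbb{R}^n$ acts simply transitively on pairs (point of $\mathbb{R}^n$, $\mathbb{Z}$-basis of $L$), any two developing maps differ by an element of $\mathrm{Aut}_n$. Your argument via the constancy of the Jacobian of $\phi$ is valid and reaches the same conclusion, but the step you flag as ``the hard part'' dissolves entirely in the paper's normalization viewpoint; what your approach buys is that it never needs to revisit the construction of $f$, proving rigidity from the defining property $\phi^*L=L$ alone.
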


\begin{proof}
Consider a point $x\in U^n$. Choose a basis of the lattice $Q_x$ at that point and extend it to $n$ covector fields $\alpha_1,\dots, \alpha_n\subset Q\subset\nobreak T^*U^n$. Integrals of these $1$-forms over paths define the developing map $f$. The map is well defined because the lattice is closed and $U^n$ is simply connected.

The map $f$ is the only developing map that takes point $x$ to the origin $O$ and takes the basis $\alpha_1(x),\dots,\alpha_n(x)$ of the lattice $Q$ at the point $x$ to the basis $dx^1,\dots,dx^n$ of the lattice $L$. Hence any other developing map $\widetilde{f}$ is the composition $g\circ f$ of the developing map $f$ with an automorphism $g$ of the lattice $L$.
\end{proof}

\begin{definition}
\label{d13}
A closed lattice $P$ of rank $n$ on an $n$-dimensional manifold $B^n$ is \textit{complete} if the developing map $f\colon(U^n,Q)\to(\mathbb R^n,L)$ of the universal cover $U^n$ with the induced lattice $Q$ is a diffeomorphism onto the whole $\mathbb R^n$.
\end{definition}

\subsubsection*{Affine manifolds}

Let us find all surfaces that admit a closed lattice of rank $2$ (see Theorem \ref{t12} and corollary \ref{c2} below) and prove that all lattices on these surfaces are complete (see Theorem \ref{t13} and remark \ref{r6}). We need some definitions.

\begin{definition}
\label{d14}
We say that a manifold $M^n$ admits an \textit{affine structure} if there exists an atlas $(U_\alpha,\varphi_\alpha)$ such that all transition maps $\varphi_\beta^{-1}\varphi_\gamma$ are affine transformations. An affine structure is called \textit{integral} if all transition functions $\varphi_\beta^{-1}\varphi_\gamma$ are affine transformations $x\to Ax+\vec b$ with an integer matrix $A$ (that is each function is an element of $\mathrm{GL}_n(\mathbb Z)\leftthreetimes\mathbb R^n\subset\mathrm{Aff}_n$). A manifold with an affine structure is an \textit{affine manifold}.
\end{definition}

An integral affine structure and a closed lattice of rank $n$ on a manifold $M^n$ are the same thing (see assertion \ref{cl6}).  A map $f\colon M^n_1\to M^m_2$ between two affine manifolds $M^n_1$ and $M^m_2$ is an \textit{affine map} if it is an affine transformation in affine coordinates (that is in chats of affine structures on $M^n_1$ and $M^m_2$). An affine map $f\colon M^n\to\mathbb R^n$ is a \textit{developing map}.

Assertion \ref{cl6} can be generalized as follows. For any simply connected affine manifold a developing map exists and it is unique up to an affine automorphism of  $\mathbb R^n$, that is up to an element of $\mathrm{Aff}_n$. In analogy with definition \ref{d13}, an affine structure on a manifold $B^n$ is \textit{complete} if the developing map $D\colon U^n\to\mathbb R^n$ of the universal cover $U^n$ with the induced lattice is a diffeomorphism onto the whole $\mathbb R^n$.

Recall that for any affine manifold  $B^n$ the action of the fundamental group $\pi_1(B)$ on the universal cover $U^n$ and the developing map $D\colon U^n\to\mathbb R^n$ define an affine holonomy $\rho\colon\pi_1(B)\to\mathrm{Aff}_n$ (given by the formula $\rho(g)\circ D=D\circ g$ for all $g\in\pi_1(B^n)$).

In \cite{8} J. Milnor obtained a result that can be restated as follows.

\begin{theorem}
\label{t12}
Among orientable two-dimensional surfaces only the torus $\mathbb T^2$ admits an affine structure.
\end{theorem}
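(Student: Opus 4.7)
The plan is to reduce the theorem to a constraint on flat vector bundles over closed orientable surfaces, and then to rule out all genera except $g = 1$.

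First I would translate the affine structure on a closed orientable surface $B^2$ into a flat, torsion-free connection $\nabla$ on the tangent bundle $TB^2$: in each affine chart decree that the Christoffel symbols vanish, and observe that consistency across charts follows because every transition lies in $\mathrm{Aff}_2$ and its linear part in $GL_2(\mathbb R)$ preserves the prescription. Consequently $TB^2$ is a flat oriented rank $2$ real vector bundle, whose holonomy is the linearization of the affine holonomy $\rho\colon\pi_1(B^2)\to\mathrm{Aff}_2$ associated with the developing map $D\colon\widetilde B^2\to\mathbb R^2$.

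The sphere $S^2$ is then ruled out immediately. Since $\pi_1(S^2)=1$, the affine holonomy is trivial and $D\colon S^2\to\mathbb R^2$ is a local diffeomorphism from a compact space onto a non-compact one, which is impossible; equivalently, triviality of the tangent bundle would contradict $e(TS^2)=2$. The torus $\mathbb T^2=\mathbb R^2/\mathbb Z^2$ carries an obvious affine structure inherited from $\mathbb R^2$. Only the surfaces $\Sigma_g$ with $g\geq 2$ remain to be excluded.

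This last exclusion is the main obstacle. A naive application of the Milnor--Wood inequality $|e(\xi)|\leq 2g-2$ to $\xi=TB^2$ gives only $|2-2g|\leq 2g-2$, which is an equality and hence not in itself a contradiction; in fact $T\Sigma_g$ does admit topologically flat oriented structures attaining equality, arising from Fuchsian uniformization. So flatness of $TB^2$ alone is too weak, and the proof must genuinely exploit the stronger hypothesis that a global developing map $D\colon\widetilde B^2\to\mathbb R^2$ exists and is $\rho$-equivariant. Following Milnor \cite{8}, I would combine this equivariance with the parallel (holonomy-invariant) volume form $dx^1\wedge dx^2$ on $\mathbb R^2$ transported back to $B^2$: the affine structure forces the Euler form representing $e(TB^2)$ to be cohomologous to an exact form, hence $\chi(B^2)=\int_{B^2}e(TB^2)=0$, leaving only $B^2=\mathbb T^2$. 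The principal difficulty is precisely this upgrade from the non-strict Milnor--Wood inequality to an actual vanishing of $\chi$, which one cannot obtain from flatness of $TB^2$ alone --- the affine refinement (parallel volume form plus global developing map) is indispensable.
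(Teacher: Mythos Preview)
The paper does not prove this theorem; it simply attributes the result to Milnor \cite{8} and states it. So there is no in-paper argument to compare against, only the reference.

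Your sketch has a genuine gap in the final step. You quote the Milnor--Wood inequality as $|e(\xi)|\le 2g-2$, but that is Wood's bound for flat $\mathrm{Homeo}^+(S^1)$ (equivalently $PSL(2,\mathbb R)$) circle bundles, and it is indeed attained by Fuchsian representations. An affine structure, however, makes $TB^2$ into a flat \emph{oriented plane bundle}, with linear holonomy in $GL^+(2,\mathbb R)$; for such bundles Milnor's original inequality (the result actually proved in \cite{8}) is the stronger $|e(\xi)|\le g-1$. Applying this to $\xi=T\Sigma_g$ gives $2g-2\le g-1$, hence $g\le 1$, and the proof is complete --- no further refinement is needed. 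Your Fuchsian example furnishes a flat $PSL(2,\mathbb R)$-structure on the unit tangent \emph{circle} bundle, not a flat $GL^+(2,\mathbb R)$-connection on $T\Sigma_g$ itself; by Milnor's bound the latter cannot exist for $g\ge 2$.

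Your proposed salvage is also incorrect. The form $dx^1\wedge dx^2$ on $\mathbb R^2$ is \emph{not} invariant under the linear holonomy of a general affine structure: the linear part of $x\mapsto Ax+b$ rescales it by $\det A$, and nothing in the definition of an affine structure forces $\det A=1$. Hence there is no parallel volume form to transport back to $B^2$, and the Chern--Weil style claim ``the Euler form is exact, so $\chi=0$'' does not follow. (It could not: flat $GL^+(2,\mathbb R)$-bundles with nonzero Euler number, up to $g-1$, do exist.) The developing-map refinement does not upgrade the inequality to $\chi=0$; the correct linear Milnor bound already does the whole job.
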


\begin{corollary}
\label{c2}
A compact two-dimensional surface with a closed lattice of rank $2$ is either the torus $\mathbb T^2$ or the Klein bottle $\mathbb K^2$.
\end{corollary}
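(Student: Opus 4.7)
The plan is to reduce the corollary to Milnor's theorem (Theorem~\ref{t12}) by passing to the orientation double cover. As noted after Definition~\ref{d14}, a closed lattice of rank $n$ on an $n$-manifold is the same datum as an integral affine structure, so a compact surface $B$ carrying a closed lattice of rank $2$ automatically admits an affine structure in the sense of Definition~\ref{d14}. Thus it suffices to determine which compact surfaces admit an affine structure.

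First I would handle the orientable case directly. By the classification of compact surfaces, an orientable compact surface is one of $S^2$, $\mathbb T^2$, or a surface of genus $g\ge 2$. Theorem~\ref{t12} asserts that among these only $\mathbb T^2$ admits an affine structure, so in the orientable case $B\simeq\mathbb T^2$.

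For the non-orientable case I would invoke the orientation double cover. If $B$ is a non-orientable compact surface with an affine structure, its orientation double cover $\widetilde B\to B$ is a compact orientable surface which inherits an affine atlas: pull back the charts $(U_\alpha,\varphi_\alpha)$ of $B$ through the local diffeomorphism, and observe that the transition functions on $\widetilde B$ are the same affine maps, hence still lie in $\mathrm{Aff}_2$ (in fact in $\mathrm{GL}_2(\mathbb Z)\leftthreetimes\mathbb R^2$). By Theorem~\ref{t12} applied to $\widetilde B$, one concludes $\widetilde B\simeq\mathbb T^2$. The classification of compact surfaces then identifies $B$: the only compact non-orientable surface whose orientation double cover is $\mathbb T^2$ is the Klein bottle $\mathbb K^2$ (the double cover of $\mathbb{RP}^2$ is $S^2$, and the double cover of a non-orientable surface of higher genus is an orientable surface of genus $\ge 2$). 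Hence $B\simeq\mathbb K^2$.

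The only subtle point is verifying that the affine structure genuinely lifts to the orientation double cover, but this is immediate since the covering is a local diffeomorphism and the notion of affine atlas is local. Everything else is bookkeeping against the classification of compact surfaces combined with Theorem~\ref{t12}.
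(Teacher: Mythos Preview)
Your argument is correct and is precisely the intended one: the paper states Corollary~\ref{c2} without proof as an immediate consequence of Theorem~\ref{t12}, relying on the identification of closed rank-$2$ lattices with integral affine structures and the passage to the orientation double cover in the non-orientable case. You have simply written out this implicit reasoning in full.
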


\begin{theorem}
\label{t13}
1) If $M^n$ is a manifold with a nilpotent fundamental group $\pi_1(M^n)$, then all closed lattices of rank $n$ on $M^n$ are complete.

2) Let $\widetilde  B^n$ be a covering space of $B^n$. If all closed lattices of rank $n$ on $\widetilde B^n$ are complete then all closed lattices of rank $n$ on $B^n$ are also complete.
\end{theorem}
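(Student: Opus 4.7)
The plan is to handle part 2) first as a direct consequence of the functoriality of developing maps, and then to attack part 1) using the holonomy representation of $\pi_1(M^n)$.

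For part 2), I would observe that the universal cover $U^n$ of $B^n$ is simultaneously the universal cover of any connected covering space $\widetilde B^n$ of $B^n$. If $P$ is a closed lattice of rank $n$ on $B^n$, then its pullback $\widetilde P$ to $\widetilde B^n$ is a closed lattice of the same rank, and the lattice $Q$ induced on $U^n$ through either $B^n$ or $\widetilde B^n$ coincides. Hence a single developing map $D\colon(U^n,Q)\to(\mathbb R^n,L)$ (unique up to $\mathrm{Aut}_n$ by Assertion~\ref{cl6}) witnesses completeness of $P$ if and only if it witnesses completeness of $\widetilde P$; the hypothesis gives the latter, yielding the former.

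For part 1), given a closed lattice $P$ of rank $n$ on $M^n$, let $D\colon U^n\to\mathbb R^n$ be a developing map. Full rank of the lattice forces $D$ to be a local diffeomorphism, so $D(U^n)\subset\mathbb R^n$ is open; since $\mathbb R^n$ is simply connected, the problem reduces to showing $D$ is surjective, for then $D$ is a surjective local diffeomorphism between simply connected manifolds and is therefore a diffeomorphism. The fundamental group $\pi_1(M^n)$ acts on $U^n$ by deck transformations and on $\mathbb R^n$ through the affine holonomy $\rho\colon\pi_1(M^n)\to\mathrm{Aut}_n$, with $D$ equivariant; nilpotency of $\pi_1(M^n)$ descends to the image $\rho(\pi_1(M^n))$. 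I would then prove surjectivity by induction on the nilpotency class of this image. In the abelian base case, integrality of the linear parts (values in $\mathrm{GL}_n(\mathbb Z)$) combined with the $\mathrm{Aut}_n$-invariance of the lattice $L$ forces enough pure translations to appear in the holonomy so that their orbits on the open set $D(U^n)$ exhaust $\mathbb R^n$. In the inductive step, one quotients by the center of $\rho(\pi_1(M^n))$ and uses central elements together with equivariance of $D$ to enlarge $D(U^n)$.

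The main obstacle I anticipate is verifying closedness of $D(U^n)\subset\mathbb R^n$. Given a boundary point $y\in\overline{D(U^n)}\setminus D(U^n)$ and a sequence $x_k\in U^n$ with $D(x_k)\to y$, one must produce deck transformations bringing the $x_k$ into a relatively compact region of $U^n$ so that a convergent subsequence exists, and then exploit the local diffeomorphism property of $D$ to extract a preimage of $y$. The $L$-periodic structure of $D(U^n)$ together with the polynomial word-length growth of the nilpotent group $\pi_1(M^n)$ supplies the compactifying input required. Once surjectivity is in hand, the conclusion is immediate.
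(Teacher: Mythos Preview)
Your treatment of part 2) is correct and is exactly the paper's one-line argument (``induce the lattice on $\widetilde B^n$''), just unpacked.

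For part 1), however, the paper does not argue from scratch: it invokes the theorem of Fried--Goldman--Hirsch \cite{9} that a compact orientable affine manifold with nilpotent volume-preserving holonomy is complete, and then observes that an integral affine structure on an orientable manifold has holonomy in $\mathrm{SL}_n(\mathbb R)\leftthreetimes\mathbb R^n$. Your proposal is essentially an attempt to reprove that theorem in the integral case, and in the sketch given there is a genuine gap. The step ``$D$ is a surjective local diffeomorphism between simply connected manifolds and is therefore a diffeomorphism'' is false as stated: a surjective local diffeomorphism need not be a covering map without a properness hypothesis, and your subsequent closedness discussion is precisely where properness would have to be established. The plan you outline for that (pull points back by deck transformations into a compact region) tacitly uses compactness of $M^n$, which is also what the Fried--Goldman--Hirsch theorem requires; note that without compactness the statement is false already for trivial $\pi_1$ (an open ball in $\mathbb R^n$ with the standard lattice is not complete).

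Beyond that, the induction on nilpotency class is too schematic to count as a proof: in the abelian base case it is not clear why integrality of the linear parts forces ``enough pure translations'' (commuting elements of $\mathrm{GL}_n(\mathbb Z)\leftthreetimes\mathbb R^n$ need not produce a cocompact translation subgroup), and the inductive step of ``quotienting by the center'' does not immediately yield an affine manifold to which the hypothesis applies. If you want to avoid the citation, you would need to supply a full argument at roughly the level of \cite{9}; otherwise, the cleanest route is the paper's: cite Fried--Goldman--Hirsch, check nilpotency and volume preservation, and use part 2) together with the orientable double cover for the non-orientable case.
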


\begin{proof}
1) It was proved in \cite{9} that if $M^n$ is a compact orientable affine manifold such that its affine holonomy is nilpotent (that is $\rho(\pi_1(M^n))$ is nilpotent) and preserves the volume (that is there exists an atlas whose transition maps $\varphi_j\varphi_i^{-1}$ belong to the group $\mathrm{SL}_n(\mathbb
R)\leftthreetimes\mathbb R^n$), then the affine structure on $M^n$ is complete.

In this case, affine holonomy is nilpotent because $\pi_1(M^n)$ is nilpotent. Transition maps $\varphi_j\varphi_i^{-1}$ of an integral affine manifold belong to the group $\mathrm{GL}_n(\mathbb
Z)\leftthreetimes\mathbb R^n$, and transition functions $\varphi_j\varphi_i^{-1}$ of an orientable manifold belong to $\mathrm{GL}_n^+(\mathbb R)\leftthreetimes\mathbb R^n$. Therefore $\varphi_j\varphi_i^{-1}\in
\mathrm{SL}_n(\mathbb R)\leftthreetimes\mathbb R^n$.

2)  Induce the lattice on  $\widetilde B^n$.
\end{proof}

\begin{remark}
\label{r6} Below (in \S\,\ref{s4}) a (compact) base $B$ is either the torus or the Klein bottle.  Both spaces satisfy conditions of \ref{t13}, therefore all closed lattices of rang $2$ on these spaces are complete.
\end{remark}

\subsubsection*{Classification of complete lattices}

Any manifold $B$ with a complete lattice $P$ is a quotient of the Euclidean space $(\mathbb R^n,L)$ with standard lattice $L$ by a free action of the fundamental group $\pi_1(B)$ (equivalently, the universal cover is isomorphic to $(\mathbb R^n,L)$). When do two action $\rho_i\colon\pi_1(B)\to\mathrm{Aut}_n$, $i=1,2$ define isomorphic lattices on $B$? The answer is given by Assertion \ref{cl8}. We start with a trivial assertion.

\begin{claim}
\label{cl7} Let $E$ be a manifold with lattice $Q$. Suppose that an action of a group $G$ preserves the lattice $Q$, and that the quotient map $\pi\colon E\to E/G$ is a covering map. Then there exists a lattice $P$ on the quotient space $E/G$ such that $Q=\pi^*P$. The lattice $Q$ is closed if and only if the lattice $P$ is closed.
\end{claim}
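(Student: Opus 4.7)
The plan is to define $P$ by transporting $Q$ down through local sections of the covering $\pi$, verify that the definition does not depend on the chosen section (this is where the $G$-invariance of $Q$ enters), and finally observe that closedness is preserved in both directions because $\pi$ is a local diffeomorphism.

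First I would work locally. Since $\pi\colon E\to E/G$ is a covering map, every point of $E/G$ has an evenly covered neighbourhood $U$ admitting a smooth section $s\colon U\to E$ which is a diffeomorphism onto $s(U)\subset E$. Over such a $U$ set
\[
P\big|_U \;:=\; (s^{-1})^{*}\bigl(Q\big|_{s(U)}\bigr),
\]
equivalently, $P_y = \bigl((d\pi_{s(y)})^{*}\bigr)^{-1}(Q_{s(y)})\subset T^{*}_{y}(E/G)$ for $y\in U$. Since $d\pi_{s(y)}$ is a linear isomorphism on cotangent spaces, $P_y$ is a discrete subgroup of rank $k$, and it varies smoothly with $y$ because $Q$ does; so the resulting subset of $T^{*}(E/G)$ is a lattice in the sense of Definition~\ref{d9}.

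The key step is well-definedness: if $\widetilde s$ is another local section over $U$, then $\widetilde s(y)=g\cdot s(y)$ for some $g\in G$ (locally constant in $y$). The identity $\pi\circ g=\pi$ gives $(dg_{s(y)})^{*}\circ (d\pi_{g\cdot s(y)})^{*}=(d\pi_{s(y)})^{*}$, while $G$-invariance of $Q$ means $(dg_{s(y)})^{*}(Q_{g\cdot s(y)})=Q_{s(y)}$. Combining these yields $\bigl((d\pi_{s(y)})^{*}\bigr)^{-1}(Q_{s(y)})=\bigl((d\pi_{\widetilde s(y)})^{*}\bigr)^{-1}(Q_{\widetilde s(y)})$, so the local definitions glue to a globally well-defined lattice $P$ on $E/G$. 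By construction, over each evenly covered neighbourhood $\pi^{*}P=Q$, so the equality $\pi^{*}P=Q$ holds globally.

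It remains to compare the two notions of closedness. Any local section $\alpha$ of $P$ over an open $U\subset E/G$ pulls back to a local section $\pi^{*}\alpha$ of $Q$ over $\pi^{-1}(U)$; conversely, any local section of $Q$ over a sheet $s(U)$ is of the form $\pi^{*}\alpha$ for a unique local section $\alpha$ of $P$ over $U$. Since $\pi$ is a local diffeomorphism, $\pi^{*}$ commutes with $d$ and is injective on forms, so $d\alpha=0\iff d(\pi^{*}\alpha)=0$. This shows $P$ is closed if and only if $Q$ is closed. The main (and only) subtlety is the well-definedness in the middle paragraph; everything else is bookkeeping made possible by the fact that $\pi$ is a local diffeomorphism.
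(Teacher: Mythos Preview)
The paper does not actually prove Assertion~\ref{cl7}: it introduces the claim as ``a trivial assertion'' and moves on immediately to Assertion~\ref{cl8}. Your write-up is correct and simply spells out the details the author chose to suppress --- the local definition of $P$ via sections of the covering, the use of $G$-invariance to show independence of the chosen sheet, and the equivalence of closedness via the fact that $\pi$ is a local diffeomorphism. There is nothing to compare, since the paper offers no argument of its own.
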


\begin{claim}
\label{cl8} Let $U$ be a simply connected manifold with lattice $Q$. Suppose that two actions $\rho_i\colon G_i\to\mathrm{Aut}_Q$ preserve the lattice $Q$, and that the quotient maps $\pi_i\colon U\to U/G_i$, $i=1,2$ are covering maps. The lattices $P_i$ on $U/G_i$ induced by the lattice $Q$ are isomorphic if and only if there exists an automorphism $f\in\mathrm{Aut}_Q$ which takes $\rho_1$ to $\rho_2$. That is there exists an isomorphism $\varphi\colon G_1\to G_2$ such that $f\circ(\rho_1(g))=\rho_2(\varphi(g))\circ f$ for all $g\in G_1$.
\end{claim}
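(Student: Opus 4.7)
The plan is to use that $U$, being simply connected, serves as the universal cover of both $U/G_1$ and $U/G_2$, so any diffeomorphism between the quotients lifts to a lattice-preserving automorphism of $U$. The universal-cover lifting criterion, together with uniqueness of lifts, will produce the required $f$ and $\varphi$ simultaneously.

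The easy direction ($\Leftarrow$) is essentially formal. Given $f\in\mathrm{Aut}_Q$ and an isomorphism $\varphi\colon G_1\to G_2$ with $f\circ\rho_1(g)=\rho_2(\varphi(g))\circ f$, the map $f$ sends each $G_1$-orbit to a $G_2$-orbit and hence descends to a diffeomorphism $\bar f\colon U/G_1\to U/G_2$. Since $\pi_i\colon U\to U/G_i$ are local diffeomorphisms satisfying $\pi_i^*P_i=Q$ and $f^*Q=Q$, chasing definitions gives $\bar f^*P_2=P_1$, so $\bar f$ is an isomorphism of lattices.

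For the hard direction ($\Rightarrow$), suppose $\bar f\colon U/G_1\to U/G_2$ is a diffeomorphism with $\bar f^*P_2=P_1$. I would apply the covering-space lifting criterion to $\bar f\circ\pi_1\colon U\to U/G_2$: because $U$ is simply connected there exists a continuous lift $f\colon U\to U$ with $\pi_2\circ f=\bar f\circ\pi_1$, and $f$ is smooth since $\pi_2$ is a local diffeomorphism. Applying the same construction to $\bar f^{-1}$, and adjusting by a deck transformation to normalize base points, produces a two-sided inverse, so $f$ is a diffeomorphism. The chain $f^*Q=f^*\pi_2^*P_2=\pi_1^*\bar f^*P_2=\pi_1^*P_1=Q$ shows $f\in\mathrm{Aut}_Q$.

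It remains to extract $\varphi$ and verify equivariance. For $g\in G_1$, both $f$ and $f\circ\rho_1(g)$ lift $\bar f\circ\pi_1$ through $\pi_2$, since $\pi_2\circ(f\circ\rho_1(g))=\bar f\circ\pi_1\circ\rho_1(g)=\bar f\circ\pi_1$. By uniqueness of lifts, they differ by a unique deck transformation of $\pi_2$. Because $\pi_2$ is a covering, $\rho_2$ realizes $G_2$ as the full deck transformation group of $\pi_2$, so this deck transformation has the form $\rho_2(\varphi(g))$ for a well-defined $\varphi(g)\in G_2$, and we obtain $f\circ\rho_1(g)=\rho_2(\varphi(g))\circ f$. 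The uniqueness part of the lifting lemma makes $\varphi$ a homomorphism, and repeating the argument with $f^{-1}$ yields an inverse homomorphism, so $\varphi$ is an isomorphism. The only subtle point I expect is confirming that the $\rho_i$ are free and properly discontinuous (which is automatic from $\pi_i$ being a covering map), so that $\rho_i(G_i)$ is the full deck group; once this is in place the covering-space machinery delivers both $f$ and $\varphi$ mechanically.
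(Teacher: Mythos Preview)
Your proposal is correct and follows essentially the same approach as the paper: lift the quotient isomorphism to $U$ via covering-space theory (the paper phrases this as ``homotopy lifting along paths''), then extract $\varphi$ from the fact that $f\circ\rho_1(g)$ and $f$ cover the same map. The only cosmetic difference is that the paper defines $\varphi(g_1)$ pointwise via freeness of $\rho_2$ and then argues local constancy, whereas you invoke the uniqueness-of-lifts/deck-group formulation directly; these are equivalent packagings of the same argument.
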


\begin{proof}
($\Leftarrow$) The automorphism $f$ of the total space $(U,Q)$ induces an automorphism of quotient spaces.

($\Rightarrow$) Using homotopy lifting property, lift an automorphism of quotient spaces $(U/G_i,P_i)$ to an automorphism $f$ of the covering space $(U,Q)$ along paths. The automorphism $f$ is well defined, since the covering space is simply connected.

Let us define $\varphi$. Choose a point $x\in U$. Since $f$ is a fiberwise automorphism, for any element $g_1\in G_1$ there exists an element $g_2$ such that
\begin{equation}
f(\rho_{1}(g_{1})x) = \rho_{2}(g_{2})(f(x)).
\label{eq5}
\end{equation}
The element $g_2$ is uniquely defined, since the action of $\rho_2$ is free. Put $\varphi(g_1)=g_2$. Since  $\pi_2\colon U\to U/G_2$ is covering map, the element $g_2$ is the same for all elements in a neighbourhood of $x$. It yields that the element is the same for all points $x$ of $U$. Thus $\varphi$ is well defined.
\end{proof}

\begin{corollary}
\label{c3} There is a natural one-to-one correspondence between complete lattices (of rank $n$) on $n$-dimensional manifolds, considered up to isomorphism, and groups $G\subset\mathrm{Aut}_n$ such that the action of $G$ on $(\mathbb R^n,L)$ is free and the quotient space $\mathbb R^n/G$ is a manifold, considered up to conjugation by an element $g\in\mathrm{Aut}_n$.
\end{corollary}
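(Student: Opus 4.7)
The plan is to build inverse constructions in both directions, with Assertions~\ref{cl6}, \ref{cl7}, and \ref{cl8} doing essentially all of the real work. The two equivalence relations should match up almost tautologically once the underlying correspondence is set up.

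\emph{Forward direction.} Given a complete lattice $P$ on an $n$-manifold $B^n$, I would pass to the universal cover $\pi\colon U^n\to B^n$ with induced lattice $Q=\pi^*P$. By definition~\ref{d13}, the developing map $D\colon(U^n,Q)\to(\mathbb R^n,L)$ is a diffeomorphism. The deck action of $\pi_1(B^n)$ on $U^n$ preserves $Q$, so transporting it via $D$ yields a homomorphism $\rho\colon\pi_1(B^n)\to\mathrm{Aut}_n$. The action $\rho$ is free (deck transformations are free) and its quotient $\mathbb R^n/\rho(\pi_1(B^n))$ is (identified via $D$ with) $B^n$, in particular a manifold. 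Set $G:=\rho(\pi_1(B^n))\subset\mathrm{Aut}_n$.

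\emph{Backward direction.} Conversely, given $G\subset\mathrm{Aut}_n$ acting freely on $(\mathbb R^n,L)$ with manifold quotient $B:=\mathbb R^n/G$, Assertion~\ref{cl7} supplies a closed lattice $P$ on $B$ with $\pi^*P=L$, where $\pi\colon\mathbb R^n\to B$ is the quotient map. Since $\mathbb R^n$ is simply connected and $G$ acts freely, $\pi$ is the universal covering of $B$, and the identity map is a developing map; thus $P$ is complete in the sense of Definition~\ref{d13}.

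\emph{Compatibility of equivalences.} The two constructions are mutually inverse at the level of pairs, so what remains is to check that they descend correctly to the prescribed equivalence classes. By Assertion~\ref{cl6}, the developing map $D$ used in Step~1 is unique only up to an element of $\mathrm{Aut}_n$, so $G$ is well defined only up to conjugation in $\mathrm{Aut}_n$, which is precisely the equivalence imposed on the subgroup side. Conversely, Assertion~\ref{cl8} says that the lattices induced on $\mathbb R^n/G_1$ and $\mathbb R^n/G_2$ are isomorphic if and only if there is $f\in\mathrm{Aut}_n$ and an isomorphism $\varphi\colon G_1\to G_2$ with $f\circ g=\varphi(g)\circ f$ for every $g\in G_1$, i.e.\ exactly $fG_1f^{-1}=G_2$ inside $\mathrm{Aut}_n$. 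Matching these observations completes the bijection.

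\emph{Main obstacle.} The only nontrivial ingredient is Assertion~\ref{cl8}, which is already in hand. The one point requiring a small amount of care is the verification in the backward direction that $\mathbb R^n\to\mathbb R^n/G$ really is the \emph{universal} cover (not merely a cover), which is where simple connectivity of $\mathbb R^n$ and freeness of the $G$-action are used; once this is noted, completeness of $P$ is immediate from Definition~\ref{d13}, and the rest of the argument is purely formal.
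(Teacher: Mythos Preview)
Your proposal is correct and follows exactly the approach the paper intends: the corollary is stated without proof in the paper, as an immediate consequence of Assertions~\ref{cl6}, \ref{cl7}, and~\ref{cl8}, and you have simply spelled out what the paper leaves implicit. The only mild imprecision is in the backward direction, where ``$G$ acts freely'' alone does not guarantee that $\mathbb R^n\to\mathbb R^n/G$ is a covering map; you also need the hypothesis that the quotient is a manifold (equivalently, that the action is properly discontinuous), which you do include in your setup but not in the justification sentence.
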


\begin{corollary}
\label{c4} There is a natural bijection between closed complete lattices (of rank~$n$) on an $n$"=dimensional manifold $B$ and free actions $\rho\colon\pi_1(B)\to\mathrm{Aut}_n$ of the fundamental group $\pi_1(B)$ on $(\mathbb R^n,L)$ with quotient space $B$.
\end{corollary}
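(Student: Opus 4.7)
The plan is to deduce Corollary~\ref{c4} as a direct specialization of Corollary~\ref{c3}, using the definition of completeness to identify the universal cover with $(\mathbb R^n, L)$ and the deck transformation action with $\rho$.

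For the forward direction, starting from a closed complete lattice $P$ on $B$, I would pass to the universal cover $p\colon \widetilde B\to B$ and equip it with the induced closed lattice $Q=p^*P$ (a lattice by the remark after definition~\ref{d9}, closed since pullbacks of closed $1$-forms are closed). Completeness (definition~\ref{d13}) says the developing map $D\colon(\widetilde B,Q)\to(\mathbb R^n,L)$ is a diffeomorphism. The deck action of $\pi_1(B)$ on $\widetilde B$ preserves $Q$ (since $Q$ is a pullback), and transporting through $D$ gives a homomorphism $\rho\colon\pi_1(B)\to\mathrm{Aut}_n$. Freeness of $\rho$ follows from freeness of the deck action on a universal cover, and the quotient $\mathbb R^n/\rho(\pi_1(B))$ is $B$ by construction.

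For the reverse direction, given a free action $\rho\colon\pi_1(B)\to\mathrm{Aut}_n$ with quotient $B$, assertion~\ref{cl7} produces a closed lattice $P$ on $B=\mathbb R^n/\rho(\pi_1(B))$ whose pullback to $\mathbb R^n$ is $L$. Because the action is free on the simply connected space $\mathbb R^n$, the quotient map is the universal covering, so the developing map of $(B,P)$ is just the identity on $\mathbb R^n$; in particular $P$ is complete.

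The main content of the corollary is that these two constructions are mutual inverses, and the only real work is checking that the bijection is well defined on the appropriate equivalence classes. This is precisely what assertion~\ref{cl8} delivers: two actions $\rho_1,\rho_2\colon\pi_1(B)\to\mathrm{Aut}_n$ produce isomorphic lattices on $B$ exactly when there exist $f\in\mathrm{Aut}_n$ and an automorphism $\varphi$ of $\pi_1(B)$ with $f\circ\rho_1(g)=\rho_2(\varphi(g))\circ f$, and uniqueness of the developing map up to $\mathrm{Aut}_n$ (assertion~\ref{cl6}) shows conversely that an isomorphism of lattices lifts to such an intertwiner. The step that deserves the most care is the well-definedness of $\rho$ on the nose (rather than up to conjugation): a different choice of developing map $D'=g\circ D$ replaces $\rho$ by $g\rho g^{-1}$, so one should read the right-hand side of the bijection modulo conjugation by $\mathrm{Aut}_n$, which is exactly the equivalence relation built into assertion~\ref{cl8}. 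No further obstacle is expected beyond this bookkeeping, since all the analytic content (existence of developing maps, descent of lattices, completeness) has already been established in assertions~\ref{cl6}--\ref{cl8}.
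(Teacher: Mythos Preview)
Your proposal is correct and matches the paper's intent: the paper states Corollary~\ref{c4} without proof, treating it as an immediate consequence of Assertions~\ref{cl6}--\ref{cl8} and Corollary~\ref{c3}, and your argument is exactly the natural unpacking of that inference. Your explicit attention to the ambiguity in the developing map (hence the need to read $\rho$ up to conjugation in $\mathrm{Aut}_n$) is a point the paper leaves implicit in the phrase ``natural bijection,'' so if anything you are being more careful than the source.
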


Note that
$$
\mathrm{Aut}_n\simeq \mathrm{GL}_n(\mathbb Z)\leftthreetimes\mathbb
R^n\subset\mathrm{Aff}_n
$$
This means that we reduced the classification of lattices to classification of matrix groups.

\subsection{Primary obstruction classes}
\label{s3.2}

Let $P$ be a closed lattice on $B$. To classify all affine fibrations $\pi\colon E\to B$ with the underlying fibration $\pi_0\colon T^*\!B/P\to B$, it suffices to group of primary obstruction classes $H^2(B,P)$ (see section \ref{s2.4}). This group is the quotient of the group of cocycles $\mathcal Z^2(B,P)$ by the subgroup of coboundaries $\mathcal
B^2(B,P)$. The group of cocycles $\mathcal Z^2(B,P)$ consists of all (obstruction) cochains $c\in\mathcal C^2(B,P)$ such that $\delta c=0$. The group of coboundaries $\mathcal B^2(B,P)$ consists of differential cochains
$c\in\mathcal C^2(B,P)$ assigned to sections over the $1$-skeleton $B^1$ (see Assertion \ref{cl5}).

In this section we calculate the group of primary obstruction classes in two special cases (see examples \ref{ex5} and \ref{ex6}). We need this result in \S\,\ref{s4}.

\begin{example}
\label{ex5}
The base $B$ is the quotient of the plane $\mathbb R^2$ by an action of a subgroup $\mathbb Z\oplus\mathbb Z\simeq\pi_1(\mathbb T^2)$ of the affine group $\mathrm{Aff}_2$. The images of generators of $\mathbb Z^2$ are $g=(\bigl(\begin{smallmatrix}1&n
\\
0&1\end{smallmatrix}\bigr),\bigl(\begin{smallmatrix}0
\\
y\end{smallmatrix}\bigr))$ and $h=(\bigl(\begin{smallmatrix}1&0
\\
0&1\end{smallmatrix}\bigr),\bigl(\begin{smallmatrix}x
\\
0\end{smallmatrix}\bigr))$, where $n\in\mathbb N$, $x,y\in\mathbb R$,
$x,y>0$. As a manifold, the base $B$ is diffeomorphic to the torus $\mathbb T^2$.

The lattice on $B$ is induced by the standard lattice $\mathbb Z^2$ on the plane $\mathbb R^2$. The induced lattice is well defined, since matrix parts of $g$ and $h$ are integer matrices.

Fix the following cell structure on the base. The base $B^2$ can be obtained from a parallelogram $OO_1O_2O_3$ with vertices $O=\bigl(\begin{smallmatrix}0
\\
0\end{smallmatrix}\bigr)$, $O_1=\bigl(\begin{smallmatrix}x
\\
0\end{smallmatrix}\bigr)$, $O_2=\bigl(\begin{smallmatrix}x
\\
y\end{smallmatrix}\bigr)$, $O_3=\bigl(\begin{smallmatrix}0
\\
y\end{smallmatrix}\bigr)$ by identifying opposite edges, in the way indicated by the arrows in the picture \ref{ri1},\,a.

\begin{figure}[h]
\begin{center}
\includegraphics{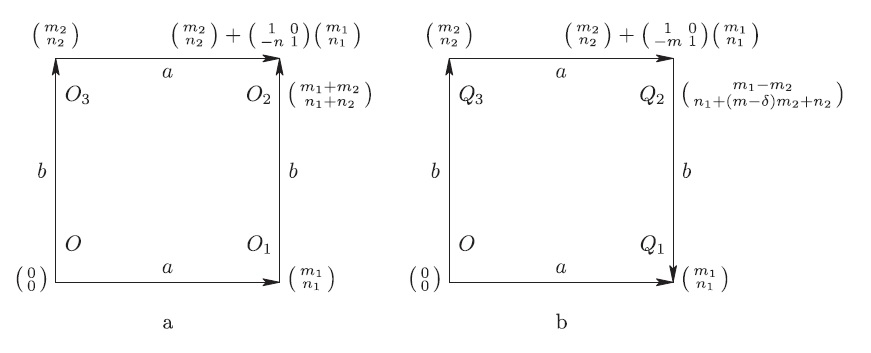}
\vskip3mm
\caption{Obstructions: torus (a) and Klein bottle(b)}
\label{ri1}
\end{center}
\end{figure}

Fix the following representation of obstruction cochains $c\in\mathcal C^2(B,P)$ by integer vectors. Identify every cochain $c\in\mathcal C^2(B,P)$ with the corresponding element of the lattice $P$ at the point $O=\bigl(\begin{smallmatrix}0
\\
0\end{smallmatrix}\bigr)$. Fix a basis of $P_O$.

\begin{claim}
\label{cl9} In this case
$$
\mathcal Z^2(\mathbb T^2,P)=\biggl\{\begin{pmatrix}
k
\\
l\end{pmatrix}\biggm|k,l\in\mathbb Z\biggr\},
\qquad
\mathcal B^2(\mathbb T^2,P)=\biggl\{\begin{pmatrix}0
\\
nm\end{pmatrix}\biggm|m\in\mathbb Z\biggr\}.
$$
Thus $H^2(\mathbb T^2,P)\simeq\mathbb Z\oplus\mathbb Z_n$.
\end{claim}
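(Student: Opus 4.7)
The plan is to compute the cellular cochain complex of $\mathbb T^2$ with local coefficients in $P$ directly, using the CW-structure described in the excerpt: one $0$-cell $O$, two $1$-cells $\alpha = OO_1$ and $\beta = OO_3$, one $2$-cell $F$, and nothing in higher dimensions. Since there are no $3$-cells, every $2$-cochain is automatically a cocycle, so after the chosen identification $\mathcal Z^2(\mathbb T^2, P) = \mathcal C^2(\mathbb T^2, P) \cong P_O \cong \mathbb Z^2 = \bigl\{\binom{k}{l} : k,l \in \mathbb Z\bigr\}$, which already gives the first formula.

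For $\mathcal B^2$ I would first compute the monodromy representation $\rho \colon \pi_1(\mathbb T^2) = \langle g, h \rangle \to \mathrm{GL}_2(\mathbb Z)$ of the local system $P$. Because $P$ descends from the standard lattice $\mathbb Z\{dx, dy\} \subset T^*\mathbb R^2$, $\rho$ is read off from the chart transitions at $\pi(O) = \pi(O_3) = \pi(O_1)$ induced by the deck transformations. Since $h$ is a pure translation, $\rho(h) = I$. For $g$, whose linear part is $A = \bigl(\begin{smallmatrix}1 & n \\ 0 & 1\end{smallmatrix}\bigr)$, I would expand chart $2$ coordinates using $x_2 = x_1 + n y_1$, $y_2 = y_1 + y$, obtaining the relations $dx_1 = dx_2 - n\,dy_2$ and $dy_1 = dy_2$, which give $\rho(g) = \bigl(\begin{smallmatrix}1 & 0 \\ n & 1\end{smallmatrix}\bigr)$ on column vectors of lattice coefficients.

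Next I would lift $F$ to the parallelogram $\tilde F = OO_1O_2O_3$ in the universal cover $\mathbb R^2$ and write its $\pi_1$-equivariant cellular boundary
\[
\partial \tilde F \;=\; \tilde\alpha \,+\, h\cdot\tilde\beta \,-\, g\cdot\tilde\alpha \,-\, \tilde\beta,
\]
where $\tilde\alpha, \tilde\beta$ are the lifts of $\alpha, \beta$ starting at $O$. For a $1$-cochain $c \in \mathcal C^1(\mathbb T^2, P)$ the equivariance $\tilde c(\gamma\sigma) = \rho(\gamma)\tilde c(\sigma)$ yields
\[
(\delta c)(F) \;=\; c(\alpha) + \rho(h) c(\beta) - \rho(g) c(\alpha) - c(\beta) \;=\; \bigl(I - \rho(g)\bigr) c(\alpha) \;=\; \begin{pmatrix} 0 & 0 \\ -n & 0 \end{pmatrix} c(\alpha).
\]
As $c(\alpha)$ ranges over $\mathbb Z^2$, the image equals $\bigl\{\binom{0}{-nk} : k \in \mathbb Z\bigr\} = \bigl\{\binom{0}{nm} : m \in \mathbb Z\bigr\}$, which is the asserted formula for $\mathcal B^2$, and the quotient is $\mathbb Z^2/\mathbb Z\binom{0}{n} \cong \mathbb Z \oplus \mathbb Z_n$.

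The main subtlety I expect is the direction of the transpose in $\rho(g)$: since $P$ lives in the cotangent bundle, the natural action on lattice coefficients involves a transpose of the linear part of $g$, and sign conventions in the literature vary. Conveniently, the precise sign of the off-diagonal entry is immaterial for the final answer, since $\{nm : m \in \mathbb Z\} = \{-nm : m \in \mathbb Z\}$, so either $\bigl(\begin{smallmatrix}1 & 0 \\ n & 1\end{smallmatrix}\bigr)$ or $\bigl(\begin{smallmatrix}1 & 0 \\ -n & 1\end{smallmatrix}\bigr)$ yields the same image subgroup $\mathbb Z\binom{0}{n}$; the bookkeeping is the only real obstacle and it does not affect the conclusion.
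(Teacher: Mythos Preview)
Your argument is correct and is essentially the same computation as the paper's, only phrased in the standard language of cellular cohomology with local coefficients rather than geometrically. The paper tracks the ``change'' of a latticed $1$-form along the two paths $OO_1O_2$ and $OO_3O_2$ and takes their difference; this is exactly your equivariant coboundary formula $(\delta c)(F)=(I-\rho(g))c(\alpha)$, with the paper's transformation $(dg)^*=\bigl(\begin{smallmatrix}1&0\\-n&1\end{smallmatrix}\bigr)$ playing the role of your $\rho(g)$ (so the paper's sign convention is the opposite of yours, which as you note is immaterial).
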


\begin{proof}
There is no $3$-cells, hence
$$
\mathcal Z^2(\mathbb T^2,P)=\mathcal C^2(\mathbb T^2,P)
=\biggl\{\begin{pmatrix}k
\\
l\end{pmatrix}\biggm|k,l\in\mathbb Z\biggr\},
$$
So, the calculation of cohomologies $H^2(\mathbb T^2,P)$ is reduced to the calculation of coboundaries $\mathcal B^2(\mathbb T^2,P)$.

In order to compute the group of coboundaries $\mathcal B^2(\mathbb T^2,P)$, we take two sections $s^1$, $\widetilde s^{\,1}$ over the $1$-skeleton $B^1$. To describe the difference of this sections we need the following notion. Let $\alpha$ be a latticed $1$-form on a path $s\colon I=[0,1]\to B$ from $x$ to $x$ (that is $\alpha\colon I\to
T^*\!B/P$, $\alpha(t)\in T^*_{s(t)}B/P$). Lift the latticed $1$-form, regarded as a section of $T^*\!B/P$, to a section $\widetilde\alpha\colon I\to T^*\!B$ over the path $s$. Then the difference $p=\widetilde\alpha(1)-\widetilde\alpha(0)\in P_x$ is an element of the lattice $P$ at point $x$. We say that the \textit{latticed $1$-form $\alpha$ changes by $p$ along $s$}.

The sections $s^1$, $\widetilde s^{\,1}$ differ by a latticed $1$-form $\alpha$ on the $1$-skeleton $B^1$. Suppose that the form $\alpha$ changes by $\bigl(\begin{smallmatrix}m_1
\\
n_1\end{smallmatrix}\bigr)$ along $a=OO_1$ and changes by
$\bigl(\begin{smallmatrix}m_2
\\
n_2\end{smallmatrix}\bigr)$ along $b=OO_3$.

To describe the difference of obstruction cochains for sections $s^1$ and $\widetilde s^{\,1}$, it is sufficient to find the change of $\alpha$ along $\partial e^2_i$. We compute the changes  $p_1,p_2\in P$ of $\alpha$ along $OO_1O_2$ and $OO_3O_2$ respectively and then we take the difference between $p_1$ and $p_2$.

As an example, we compute the change of the form along $O_3O_2$. Consider first the case $\bigl(\begin{smallmatrix}m_2
\\
n_2\end{smallmatrix}\bigr)=\bigl(\begin{smallmatrix}0
\\
0\end{smallmatrix}\bigr)$. Note that $O_3O_2$ is identified with $a=OO_1$ by means of
$g=(\bigl(\begin{smallmatrix}1&n
\\
0&1\end{smallmatrix}\bigr),\bigl(\begin{smallmatrix}0
\\
y\end{smallmatrix}\bigr))$. The affine transformation $g$ induces the linear transformation of the cotangent bundle $T^*\mathbb R^n\simeq\mathbb R^{2n}$ with matrix $(dg)^*=\bigr(\bigl(\begin{smallmatrix}1&n
\\
0&1\end{smallmatrix}\bigr)^{-1}\bigl)^T=\bigl(\begin{smallmatrix}1&0
\\
-n&1\end{smallmatrix}\bigl)$. If the form $\alpha$ changes by $\bigl(\begin{smallmatrix}m_1
\\
n_1\end{smallmatrix}\bigr)$ along  $OO_1$, then it changes by $\bigl(\begin{smallmatrix}m_1
\\
-m_1n+n_1\end{smallmatrix}\bigr)$ along $O_3O_2$.

If the sections $s^1$ and $\widetilde s^{\,1}$ differ by $\bigl(\begin{smallmatrix}m_2
\\
n_2\end{smallmatrix}\bigr)$ at point $O_3$, then the form $\alpha$ changes by $\bigl(\begin{smallmatrix}m_1+m_2
\\
-m_1n+n_1+n_2\end{smallmatrix}\bigr)$ along $OO_3O_2$. Other calculations are analogous (these calculations are readily seen in picture \ref{ri1},\,a.)

It follows that the difference cochain assigned to sections $s^1$ and $\widetilde s^{\,1}$, considered as an element of the lattice, is equal to $\bigl(\begin{smallmatrix}0
\\
nm_1\end{smallmatrix}\bigr)$.
\end{proof}
\end{example}

\begin{example}
\label{ex6} The base $B$ is again a quotient of the plane by an action of group $G\subset\mathrm{Aff}_2$. This time the base $B$ is the Klein bottle $\mathbb K^2$ and
$$
G=\biggl\langle
g=\biggl(\begin{pmatrix}1&m
\\
0&1\end{pmatrix},\begin{pmatrix}\alpha y
\\
y\end{pmatrix}\biggr),
\
h=\biggr(\begin{pmatrix}1&\delta
\\
0&-1\end{pmatrix},
\begin{pmatrix}x
\\
0\end{pmatrix}\biggr)\biggr\rangle,
$$
where $\alpha=\frac{m-\delta}2$, $m\in\mathbb Z$, $m\ge0$, $x,y\in\mathbb
R$, $x,y>0$, and $\delta$ is either $0$ or $1$ (this is a full list of closed lattices on the Klein bottle, see section \S\,\ref{s4}). The induced lattice $P$ on the Klein bottle $\mathbb K^2$ is a \textit{lattice from series $\mathbb K^2_{m,y;\delta,x}$}.

Just as in the previous example, the base $B^2$ can be obtained from a parallelogram $OQ_1Q_2Q_3$ with vertices $O=\bigl(\begin{smallmatrix}0
\\
0\end{smallmatrix}\bigr)$, $Q_1=\bigl(\begin{smallmatrix}x
\\
0\end{smallmatrix}\bigr)$, $Q_2=\bigl(\begin{smallmatrix}\alpha y+x
\\
y\end{smallmatrix}\bigr)$, $Q_3=\bigl(\begin{smallmatrix}\alpha y
\\
y\end{smallmatrix}\bigr)$
by identifying opposite edges, in the way indicated by the arrows in the picture ~\ref{ri1},\,b (in this case $Q_2Q_1=gh(OQ_3)$).

\begin{claim}
\label{cl10}
1) If $P$ is a lattice from series $\mathbb K^2_{m,y;0,x}$, then $H^2(\mathbb K^2,P)\simeq\mathbb Z_2\oplus\mathbb Z_m$. Moreover,
$$
\mathcal Z^2(\mathbb K^2,P)=\biggl\{\begin{pmatrix}k
\\
l\end{pmatrix}
\biggm|k,l\in\mathbb Z\biggr\},
\qquad
\mathcal B^2(\mathbb K^2,P)=\biggl\{\begin{pmatrix}2p
\\
mq\end{pmatrix}\biggm|p,q\in\mathbb Z\biggr\}.
$$

2)
If $P$ is a lattice from series $\mathbb K^2_{m,y;1,x}$, then $H^2(\mathbb K^2,P)\simeq\mathbb Z_{2m}$. Moreover,
$$
\mathcal Z^2(\mathbb K^2,P)=\biggl\{\begin{pmatrix}k
\\
l\end{pmatrix}\biggm|k,l\in\mathbb Z\biggr\},
\qquad
\mathcal B^2(\mathbb K^2,P)=\biggl\{\begin{pmatrix}2mp
\\
q\end{pmatrix}\biggm|p,q\in\mathbb Z\biggr\}.
$$
\end{claim}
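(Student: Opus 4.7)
The plan is to mimic the argument of Example~\ref{ex5} on the fundamental parallelogram $OQ_1Q_2Q_3$. The natural CW-structure has no $3$-cells, so every cellular $2$-cochain with coefficients in $P$ is automatically a cocycle; after identifying $\mathcal C^2(\mathbb K^2,P)$ with $P_O\simeq\mathbb Z^2$ in the fixed basis, we already have $\mathcal Z^2(\mathbb K^2,P)=\mathbb Z^2$ in both cases. The problem therefore reduces to identifying the subgroup of coboundaries $\mathcal B^2(\mathbb K^2,P)$.

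To compute $\mathcal B^2$, I would take two sections $s^1$ and $\widetilde s^{\,1}$ over $B^1$ and parametrize their difference by a latticed $1$-form $\alpha$ whose changes along $a=OQ_1$ and $b=OQ_3$ are integer vectors $\bigl(\begin{smallmatrix}m_1\\ n_1\end{smallmatrix}\bigr)$ and $\bigl(\begin{smallmatrix}m_2\\ n_2\end{smallmatrix}\bigr)$ respectively. The identification $Q_3Q_2=g(OQ_1)$ forces the change of $\alpha$ along $Q_3Q_2$ to be $(A_g^{-1})^T\bigl(\begin{smallmatrix}m_1\\ n_1\end{smallmatrix}\bigr)$, and the identification $Q_2Q_1=gh(OQ_3)$ forces the change along $Q_2Q_1$ to be $(A_{gh}^{-1})^T\bigl(\begin{smallmatrix}m_2\\ n_2\end{smallmatrix}\bigr)$, with both matrices computed directly from $A_g=\bigl(\begin{smallmatrix}1&m\\ 0&1\end{smallmatrix}\bigr)$ and $A_h=\bigl(\begin{smallmatrix}1&\delta\\ 0&-1\end{smallmatrix}\bigr)$.

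Summing the four contributions around the oriented boundary $\partial e^2=OQ_1+Q_1Q_2+Q_2Q_3+Q_3O$---noting that $Q_1Q_2$ and $Q_2Q_3$ are traversed opposite to the identifications and so contribute with a minus sign---the $n_1,n_2$-terms cancel exactly as in Example~\ref{ex5} and one is left with a coboundary that depends only on $m_1,m_2$. From the resulting rank-$2$ subgroup of $\mathbb Z^2$ both the stated description of $\mathcal B^2$ and the isomorphism type of the quotient follow by an immediate Smith normal form computation: for $\delta=0$ the two generators have $\gcd$ equal to $\gcd(2,m)$, giving invariant factors $(\gcd(2,m),\operatorname{lcm}(2,m))$ and hence $H^2\simeq\mathbb Z_2\oplus\mathbb Z_m$; for $\delta=1$ the two generators have $\gcd$ equal to $1$ (since $\gcd(m,m-1)=1$), yielding the single cyclic factor $H^2\simeq\mathbb Z_{2m}$.

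The main subtlety is the orientation bookkeeping around $\partial e^2$: one has to confirm from Figure~\ref{ri1}(b) that $gh\colon OQ_3\to Q_2Q_1$ is orientation-reversing on the counterclockwise boundary traversal while $g\colon OQ_1\to Q_3Q_2$ is orientation-preserving, and to insert the corresponding signs into the sum. Once this is set up, the rest is a direct parallel of the torus computation, with the only new ingredient being the (nontrivial) matrix $(A_h^{-1})^T$ that enters the expression for $(A_{gh}^{-1})^T$.
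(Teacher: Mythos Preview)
Your proposal is correct and follows essentially the same route as the paper's proof: both arguments identify $\mathcal Z^2$ with $\mathbb Z^2$ (no $3$-cells), compute the coboundary by tracking the change of a latticed $1$-form around the boundary of the fundamental parallelogram via the matrices $(A_g^{-1})^T$ and $(A_{gh}^{-1})^T$, observe that the $n_1,n_2$-contributions cancel, and read off the generators $(2,\delta-m)$ and $(0,-m)$ of $\mathcal B^2$. The only addition in your write-up is the explicit Smith normal form step at the end, which the paper leaves implicit.
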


\begin{proof}

In notation of example \ref{ex5} the difference cochain of two sections $s^1$, $\widetilde s^{\,1}$ is equal to $\bigl(\begin{smallmatrix}2m_2
\\
(\delta-m)m_2-mm_1\end{smallmatrix}\bigr)$ (see picture \ref{ri1},\,b).

In this case $Q_3Q_2=g(OQ_1)$ and $Q_2Q_1=gh(OQ_3)$. It follows easily that the form $\alpha$ changes by
$\bigl(\begin{smallmatrix}m_2
\\
n_2\end{smallmatrix}\bigr)+\bigl(\begin{smallmatrix}1&0
\\
-m&1\end{smallmatrix}\bigr)\bigl(\begin{smallmatrix}m_1
\\
n_1\end{smallmatrix}\bigr)$ along $OQ_3Q_2$ and it changes by
$\bigl(\begin{smallmatrix}m_1
\\
n_1\end{smallmatrix}\bigr)-\bigl(\begin{smallmatrix}1&0
\\
\delta-m&-1\end{smallmatrix}\bigr)\bigl(\begin{smallmatrix}m_2
\\
n_2\end{smallmatrix}\bigr)$ along $OQ_1Q_2$. Thus the subgroup $\mathcal B^2(\mathbb K^2,P)\subset\mathcal Z^2(\mathbb K^2,P)\simeq\mathbb Z\oplus\mathbb Z$ is generated by $(2,\delta-m)$ and $(0,-m)$. If $\delta=0$, then this subgroup is isomorphic to $2\mathbb Z\oplus m\mathbb Z\subset\mathbb Z\oplus\mathbb Z$. If $\delta=1$, then it is isomorphic to $2m\mathbb Z\oplus\mathbb Z\subset\mathbb Z\oplus\mathbb Z$.
\end{proof}
\end{example}

\subsection{Nontrivial twistings}
\label{s3.3}

Let $\pi\colon E\to B$ be an affine fibration with the underlying fibration $\pi_0\colon T^*\!B/P\to B$ for some closed lattice $P$ on $B$. Let us compute nontrivial twistings, that is the quotient of second cohomology group $H^2(B,\mathbb R)$ by the group of differentials of latticed $1$-forms $H^2_P(B)$. Below (to classify compact Lagrangian fibrations in section \S\,\ref{s4}) we need only the case when the base $B$ is the torus $\mathbb T^2$ (in other cases the group $H^2(B,\mathbb R)$ is trivial). Let us consider an example.

\begin{example}
\label{ex7}
Mishachev showed in \cite{4} that there are only two series of lattices (of rank $2$) on the torus $\mathbb T^2$. Both series are quotients of the plane $\mathbb R^2$ by an action of the group $G\subset
\mathrm{GL}_2(\mathbb Z)\leftthreetimes\mathbb
R^2\subset\mathrm{Aff}_2$, where $G\simeq\pi_1(\mathbb T^2)$ (that is $\mathbb Z\oplus\mathbb Z$ acts by affine transformations which preserves the standard lattice $\mathbb Z^2$ on the plane).

\textsl{Series $1$}. The generators of the group $G$ are
$$
g=\biggl(\begin{pmatrix}1&0
\\
0&1\end{pmatrix},\begin{pmatrix}u
\\
v\end{pmatrix}\biggr),
\qquad
h=\biggl(\begin{pmatrix}1&0
\\
0&1\end{pmatrix},\begin{pmatrix}w
\\
z\end{pmatrix}\biggr).
$$

\textsl{Series $2$}. The generators of the group $G$ are
$$
g=\biggl(\begin{pmatrix}1&0
\\
0&1\end{pmatrix},\begin{pmatrix}u
\\
0\end{pmatrix}\biggr),
\qquad
h=\biggl(\begin{pmatrix}1&n
\\
0&1\end{pmatrix},\begin{pmatrix}0
\\
z\end{pmatrix}\biggr).
$$
\begin{remark}
\label{r7} This is precisely series $\mathbb T^2_{u,v;w,z}$ and $\mathbb T^2_{n,z;u}$ from Theorem \ref{t15}.
\end{remark}

To compute nontrivial twistings $H^2(B,P)/H^2_P(B)$, it suffices to compute the cohomology class of the differential $d\alpha$ for each latticed $1$-form $\alpha$ on $B$. In the two-dimensional case it is sufficient to compute the integral $\displaystyle\int_{\mathbb T^2}\,d\alpha$.

To each latticed $1$-form $\alpha$ assign a $1$-form $\widetilde\alpha$ on the plane $\mathbb R^2$,
(take a pullback the latticed $1$-form $\alpha$, then choose any representative of the latticed $1$-form on the plane).

A $1$-form $\widetilde\alpha$ on the plane corresponds to a latticed $1$-form on the torus if and only if the action
$\pi_1(\mathbb T^2)$ preserves the form $\widetilde\alpha$ up to (global) sections of the lattice $\mathbb Z^2$.

\begin{claim}
\label{cl11} Let $\widetilde\alpha$ be a $1$-form such that
$$
(g^{-1})^*\widetilde\alpha=\widetilde\alpha+k\,dx+l\,dy,
\qquad
(h^{-1})^*\widetilde\alpha=\widetilde\alpha-p\,dx-q\,dy,
$$
where $k,l,p,q\in\mathbb Z$. Then for both series of lattices we have
$$
\int_{\mathbb T^2}\,d\alpha=pu+qv+kw+lz
$$
(here $v$ and $w$ are equal to $0$ for the second series).

Series 1. All $k,l,p,q$ are possible.

Series 2. The $1$-form $\widetilde\alpha$ can be realized by a latticed $1$-form $\alpha$ if and only if $k=0$.
\end{claim}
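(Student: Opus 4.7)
The plan is to apply Stokes' theorem to a fundamental domain for the covering $\mathbb R^2\to\mathbb T^2$ and then to verify the realizability statements separately. First I would lift $\alpha$ to a $1$-form $\widetilde\alpha$ on $\mathbb R^2$. The hypotheses say precisely that under the deck transformations $g$ and $h$ the pulled back form differs from $\widetilde\alpha$ by the constant closed $1$-forms $k\,dx+l\,dy$ and $-p\,dx-q\,dy$; consequently $d\widetilde\alpha$ is $G$-invariant, descends to the $2$-form $d\alpha$ on $\mathbb T^2$, and for any fundamental domain $D\subset\mathbb R^2$ one has
$$
\int_{\mathbb T^2} d\alpha \;=\; \int_D d\widetilde\alpha \;=\; \int_{\partial D}\widetilde\alpha
$$
by Stokes' theorem.

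For Series 1 the natural choice of $D$ is the parallelogram with vertices $O$, $g(O)$, $gh(O)$, $h(O)$; for Series 2 this specializes to the rectangle $[0,u]\times[0,z]$, since both translation parts are axis-aligned. In either case $\partial D$ decomposes into two pairs of opposite sides identified by $g$ and $h$, and on each pair the combined boundary integral collapses (via a change of variables by the relevant deck transformation) to the line integral of the constant ``jump'' $1$-form along a single side. These integrals evaluate to $kw+lz$ and $pu+qv$ respectively, summing with appropriate orientation signs to $pu+qv+kw+lz$ for Series 1 and to $pu+lz$ for Series 2 (since $v=w=0$).

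For the realizability claims I would argue as follows. In Series 1, exhibit the ansatz $\widetilde\alpha = Ax\,dx + Bx\,dy + Cy\,dx + Dy\,dy$: the four transformation conditions decouple into two independent $2\times2$ linear systems in $(A,C)$ and in $(B,D)$, each with matrix determinant $uz-vw\neq0$, so every quadruple $(k,l,p,q)\in\mathbb Z^4$ is realized by a suitable $\widetilde\alpha$. In Series 2 the constraint $k=0$ is forced by the commutativity relation $gh=hg$: computing $((gh)^{-1})^*\widetilde\alpha$ in the two orderings $(g^{-1})^*\circ(h^{-1})^*$ and $(h^{-1})^*\circ(g^{-1})^*$, and using the fact that $(h^{-1})^*dx=dx-n\,dy$ while $(g^{-1})^*$ acts trivially on both $dx$ and $dy$, one finds that the two expressions differ by the term $-kn\,dy$. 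Equality therefore forces $kn=0$ and hence $k=0$, since $n\neq0$ (the case $n=0$ reduces to Series 1). Conversely, with $k=0$ one writes down an explicit $\widetilde\alpha$ by a linear ansatz adjusted by a $y\,dy$-type contribution to accommodate the shear part of $h$.

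The chief technical obstacle is the commutativity argument in Series 2 and, in both series, the careful orientation bookkeeping required in the Stokes step to pin down the correct sign of the formula $pu+qv+kw+lz$ (as opposed to its negative).
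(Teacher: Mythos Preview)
Your proposal is correct and follows essentially the same route as the paper: compute $\int_{\mathbb T^2}d\alpha$ as $\int_{\partial D}\widetilde\alpha$ over the fundamental parallelogram via Stokes, and derive the Series~2 constraint $k=0$ from the commutativity $g^*h^*\widetilde\alpha=h^*g^*\widetilde\alpha$. The paper in fact gives only a sketch here (deferring to Mishachev), so your explicit ansatz for realizability in Series~1 and your explicit computation of the $-kn\,dy$ discrepancy in Series~2 supply detail that the paper omits; this is added value rather than a different method.
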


This statement was proved in other notations by Mishachev in \cite{4} (nontrivial twistings were called cohomological invariants of shifts). So we give only a

\textsc{Sketch of proof}.
The integral $\displaystyle\int_{\mathbb T^2}\,d\alpha$ is equal to the integral  of $d\widetilde\alpha$ over the parallelogram with vertices $\bigl(\begin{smallmatrix}0
\\
0\end{smallmatrix}\bigr)$, $\bigl(\begin{smallmatrix}u
\\
v\end{smallmatrix}\bigr)$, $\bigl(\begin{smallmatrix}w
\\
z\end{smallmatrix}\bigr)$ and $\bigl(\begin{smallmatrix}u+w
\\
v+z\end{smallmatrix}\bigr)$. Now the result follows from Stokes' theorem.

The only restriction on $k,l,p,q$ follows from $g^*h^*\widetilde\alpha=h^*g^*\widetilde\alpha$.
\end{example}

\subsection{Symplectic obstructions}
\label{s3.4}

Consider an almost Lagrangian fibration $\pi\colon(M^{2n},\eta)\to B^n$ over $B^n$. Let us show that the symplectic obstruction class $\gamma\in H^3(B,\mathbb R)$ (see definition \ref{d6}) is uniquely determined by the corresponding affine fibration over $3$-skeleton. (Below we do not use this result).

Recall that de Rham cohomology and singular cohomology are isomorphic.

\begin{theorem}[\rm{(De Rham's theorem)}]
Pairing of differential forms $\omega^k$ and chains $\sigma^k\colon\Delta^k\to B^n$, via integration
$$
\int_{\Delta^k}(\sigma^k)^*\omega^k,
$$
gives a homomorphism from $k$-forms to singular $k$-cochains
$$
\rho\colon\Omega^*(B^n,\mathbb R)\to C^*(B^n,\mathbb R)
$$

The homomorphism $\rho$ induces a canonical isomorphism (of algebras) between de Rham cohomology and singular cohomology.
\end{theorem}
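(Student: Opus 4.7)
The plan is to first verify that $\rho$ descends to a well-defined map on cohomology via Stokes' theorem, then to show it is an isomorphism using a Mayer--Vietoris induction on a good cover, and finally to handle the multiplicative structure and the issue of non-compactness.

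First, Stokes' theorem immediately gives that for a smooth $(k+1)$-chain $\sigma$ and a $k$-form $\omega$ one has $\int_\sigma d\omega = \int_{\partial\sigma}\omega$. Translated to cochains, this reads $\rho(d\omega)=\delta\rho(\omega)$, so $\rho$ is a chain map and therefore induces a well-defined homomorphism $\rho_*\colon H^*_{dR}(B^n)\to H^*(B^n,\mathbb R)$. It is also clear that $\rho$ respects pullback along smooth maps, so $\rho_*$ is natural.

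Next, I would argue by induction on the number of sets in a good cover $\{U_\alpha\}$ of $B^n$, that is a cover (in the sense of Definition \ref{d12}) in which all non-empty finite intersections are diffeomorphic to $\mathbb R^n$. The base case is a single contractible open set, where both sides vanish in positive degree, by the Poincar\'e lemma for de Rham cohomology and by homotopy invariance for singular cohomology, and both give $\mathbb R$ in degree zero, with $\rho_*$ sending the constant function $1$ to the cochain taking value $1$ on every $0$-simplex; this is clearly an isomorphism. For the inductive step, both de Rham and singular cohomology fit into a Mayer--Vietoris long exact sequence, and $\rho$ is compatible with restrictions. One must check that $\rho_*$ intertwines the connecting homomorphisms; this is the standard computation using a partition of unity subordinate to $\{U,V\}$ to represent the de Rham boundary map. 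The five lemma then propagates the isomorphism from $U$, $V$ and $U\cap V$ to $U\cup V$.

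The main obstacle, as usual, is to handle manifolds that do not admit a finite good cover. For these one either exhausts $B^n$ by an increasing sequence of open submanifolds with finite good covers and passes to the direct limit (using that both theories commute with such limits in the appropriate sense), or one runs the inductive argument transfinitely over a countable good cover together with a Mayer--Vietoris telescope. A more delicate point, if one wants the isomorphism of \emph{algebras} and not merely of vector spaces, is that $\rho$ on the level of cochains does not send wedge product to cup product on the nose; one must either replace the cup product by a chain-homotopic representative (via the Alexander--Whitney and Eilenberg--Zilber maps) or invoke the uniqueness of natural graded-commutative products on $H^*(B^n,\mathbb R)$ extending the cup product on $H^0$.
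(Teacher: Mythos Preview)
The paper does not prove De Rham's theorem at all: it is stated as a classical result (with its usual name) and then immediately used, without any argument. So there is nothing to compare your proposal against.

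That said, your outline is the standard Bott--Tu proof via Mayer--Vietoris induction on a good cover, and it is correct in spirit. A few comments: the compatibility of $\rho_*$ with the connecting homomorphisms is the only non-formal check in the inductive step, and you correctly flag it; your handling of the non-compact case by direct limits is fine, though one should be slightly careful that singular cohomology with real coefficients commutes with the relevant colimits (it does, via the isomorphism with \v{C}ech cohomology on manifolds, or by a direct argument with compactly supported cochains). Your remark on the algebra structure is also accurate: $\rho$ is not multiplicative on the cochain level, and one really does need an acyclic-models or Eilenberg--Zilber argument to conclude that $\rho_*$ respects products. None of this is needed for the paper's purposes, which only use the additive isomorphism in degree~$3$.
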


In order to choose a representative cocycle $c^3_\gamma$ for $\gamma$, we assign to each $k$-simplex $\sigma^k\colon\Delta^k\to B^n$ (which we denote briefly by $[\sigma^k]$) a section $S(\sigma^k)\colon\Delta^k\to M^{2n}$ such that $\pi\circ S(\sigma^k)=\sigma^k$. (Actually we need only sections over $2$-dimensional and $3$-dimensional simplices.)

Since $\pi^*\gamma=[d\eta]$, the natural representative cocycle $c^3_\gamma$ for $\gamma$ assigns to each $3$-simplex  $[\sigma^3]$ the integral of the form $d\eta$ over the section $S(\sigma^3)$ over this $3$-simplex. But we need another representative for $\gamma$.

Consider the $2$-cocycle $c^2_\eta$ that assigns to each $2$-simplex $[\sigma^2]$ the integral of the form $\eta$ over the section $S(\sigma^2)$. Take the other representative cocycle $\widetilde c{\,}^3_\gamma=c^3_\gamma+\delta c^2_\eta$ for $\gamma$. Let us prove that the value of $\widetilde c{\,}^3_\gamma$ on each $3$-simplex depends only on transitions functions between sections over $2$-dimensional and $3$-dimensional simplices.

Suppose that $\partial[\sigma^3]=\sum_{i=0}^3(-1)^i[\sigma^2_i]$. Denote by $\alpha_i\colon\Delta^2\to T^*\!B/P$ the transition functions between sections $S(\sigma^3)$ and $S(\sigma^2_i)$ (note that $\pi_0\circ\alpha_i=\sigma^2_i$).

\begin{claim}
\label{cl12} The value of the cochain $\widetilde c{\,}^3_\gamma$ on a $3$-simplex $[\sigma^3]$ is equal to
$$
\sum_{i=0}^3(-1)^i\int_{\Delta^2}(d\alpha_i).
$$
\end{claim}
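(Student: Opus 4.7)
The plan is to unfold the definition of $\widetilde c{\,}^3_\gamma$ on a $3$-simplex and massage each face contribution into $\int_{\Delta^2} d\alpha_i$. The two ingredients are Stokes' theorem and the shift formula $s_\alpha^*\eta=\eta+d(\pi^*\alpha)$ from Assertion~\ref{cl3},~2).

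First, I would write out
\[
\widetilde c{\,}^3_\gamma([\sigma^3])=\int_{\Delta^3}S(\sigma^3)^*d\eta\;+\;\sum_{i=0}^3(-1)^i\int_{\Delta^2}S(\sigma^2_i)^*\eta
\]
and apply Stokes' theorem to the first term. Since pullback commutes with $d$, we have $S(\sigma^3)^*d\eta=d(S(\sigma^3)^*\eta)$, and because $\partial\Delta^3=\sum_{i=0}^3(-1)^i\Delta^2_{(i)}$,
\[
\int_{\Delta^3}d(S(\sigma^3)^*\eta)=\sum_{i=0}^3(-1)^i\int_{\Delta^2}(S(\sigma^3)|_i)^*\eta,
\]
where $S(\sigma^3)|_i\colon\Delta^2\to M^{2n}$ is the restriction of the section $S(\sigma^3)$ to the $i$-th face. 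Both this restriction and the independently chosen section $S(\sigma^2_i)$ lie over $\sigma^2_i$, and they differ precisely by the shift by the transition function $\alpha_i$.

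The next step is to compare the two face integrals $\int_{\Delta^2}(S(\sigma^3)|_i)^*\eta$ and $\int_{\Delta^2}S(\sigma^2_i)^*\eta$. Since $\Delta^2$ is contractible, I would lift the latticed $1$-form $\alpha_i\colon\Delta^2\to T^*\!B/P$ to an ordinary map $\widetilde\alpha_i\colon\Delta^2\to T^*\!B$; then one of the two sections equals $s_{\widetilde\alpha_i}$ applied to the other. Pulling back the identity of Assertion~\ref{cl3},~2), gives
\[
S(\sigma^2_i)^*\eta-(S(\sigma^3)|_i)^*\eta=\pm\,d\bigl((\sigma^2_i)^*\widetilde\alpha_i\bigr),
\]
where the sign depends on the direction in which $\alpha_i$ shifts. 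The $2$-form $(\sigma^2_i)^*d\widetilde\alpha_i$ is independent of the lift (because $P$ is closed), so this is by definition the $2$-form on $\Delta^2$ denoted $d\alpha_i$. Substituting into the previous display and combining with the expression for $\widetilde c{\,}^3_\gamma([\sigma^3])$ obtained in the previous paragraph yields the claimed equality.

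The proof is thus a short bookkeeping argument and no real obstacle appears. The main thing to check carefully is the sign convention: the signs coming from the cellular coboundary $\delta$, from the induced orientation of $\partial\Delta^3$ in Stokes' theorem, and from the direction of the shift by $\alpha_i$ must be arranged so as to cancel and produce $+\,d\alpha_i$ rather than $-\,d\alpha_i$ in the final sum. Everything else is routine.
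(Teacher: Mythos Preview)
Your proposal is correct and follows exactly the paper's own argument: apply Stokes' theorem to the $3$-simplex integral, group the result face by face with the $\delta c^2_\eta$ sum, and then invoke Assertion~\ref{cl3},~2) to identify each face difference $(S(\sigma^3)|_i)^*\eta-S(\sigma^2_i)^*\eta$ with $d\alpha_i$. The only discrepancy is a global sign in front of the $\sum(-1)^i\int S(\sigma^2_i)^*\eta$ term (the paper's displayed computation has a minus where you have a plus), but you already flagged the sign bookkeeping as the one point requiring care, and it does not affect the method.
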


\begin{proof}
We have
\begin{align*}
&\int_{\Delta^{3}}\! (S(\sigma^{3}))^{*}\,d \eta- \sum_{i=0}^{3}
(-1)^{i} \int_{\Delta^{2}}\! (S(\sigma^{2}))^{*} \eta
=
\int_{\partial\Delta^{3}}\! (S(\sigma^{3}))^{*} \eta- \sum_{i=0}^{3}
(-1)^{i} \int_{\Delta^{2}}\! (S(\sigma^{2}))^{*} \eta
\\
&\qquad=\sum_{i=0}^{3} (-1)^{i} \int_{\Delta^{2}} (S(\sigma^{3}))^{*} \eta-
(S(\sigma^{2}))^{*} \eta= \sum_{i=0}^{3} (-1)^{i} \int_{\Delta^{2}}
(d\alpha_{i}).
\end{align*}
The last equality follows from Assertion \ref{cl3}.
\end{proof}

\begin{remark}
\label{r8}
If the space $B^n$ is triangulated, then one may consider only the simplices $[\sigma^3]$ of the triangulation (sometimes it is more convenient to consider $\gamma$ as an element of simplicial cohomology).
\end{remark}

\section{Classification of compact Lagrangian fibrations over surfaces}
\label{s4}

In this section we classify all Lagrangian fibrations $\pi\colon(M^4,\omega)\to B^2$ with compact total space $M^4$ (we say \textit{compact} Lagrangian fibrations for short). We follow the plan of classification from section \S\,\ref{s3}. In section \ref{s4.2} we classify all closed lattices $P$ of rank $2$ on surfaces $B^2$ (see Theorem \ref{t15}). In section \ref{s4.3} we compute primary obstructions $H^2(B^2,P)$ and nontrivial twistings $H^2(B^2,\mathbb R)/H^2_P(B^2)$ (see Theorem \ref{t16}).

In \cite{4} Mishachev classified all Lagrangian fibrations over orientable surfaces and showed that among non-orientable surfaces only the Klein bottle can be the base of a Lagrangian fibration (compare corollary \ref{c2}). Section \ref{s4.1} describes the fundamental group of the Klein bottle $\pi_1(\mathbb K^2)$.

\subsection{Fundamental group of the Klein bottle}
\label{s4.1}

The Klein bottle can be constructed from a quadrilateral by identification of sides (see picture \ref{ri2}). Recall that if $B$ is a CW-complex, then $1$-cells of $B$ can be identified with generators of the fundamental group, and $2$-cells of $B$ determine relations among those generators (this is Hurewicz isomorphism between homotopy groups and homology groups).

\begin{figure}[h]
\begin{center}
\includegraphics{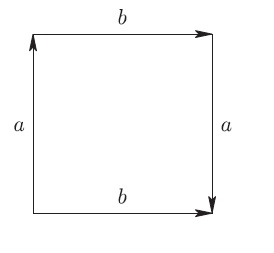}
\vskip3mm
\caption{Klein bottle}
\label{ri2}
\end{center}
\end{figure}

\begin{definition}
\label{d15} Two generators $a,b\in\pi_1(\mathbb
K^2)$ with the only relation
\begin{equation}
abab^{-1}=e.
\label{eq6}
\end{equation}
will be called \textit{standard generators}.
\end{definition}

For instance, sides $a$ and $b$ of the quadrilateral define standard generators.

\begin{claim}
\label{cl13}
Let $a,b\in\pi_1(\mathbb K^2)$ be standard generators. Then any element $x\in\pi_1(\mathbb K^2)$ can be uniquely written in the form $a^kb^l$. The correspondence $(k,l)\to x$ defines an isomorphism between $\pi_1(\mathbb K^2)$ and a semidirect product $\mathbb Z\leftthreetimes\mathbb Z$ with group operation $(k,l)(m,n)=(k+(-1)^lm,l+n)$.
\end{claim}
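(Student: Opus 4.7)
The plan is to combine a normal-form rewriting argument inside $\pi_1(\mathbb{K}^2)$ with an explicit construction of the semidirect product $G = \mathbb{Z} \leftthreetimes \mathbb{Z}$, and to tie the two together via von Dyck's universal property of group presentations. Throughout I use only the single relation $abab^{-1}=e$ coming from the CW-presentation of $\mathbb{K}^2$ in picture \ref{ri2}.

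First I would extract a commutation identity. Rewriting $abab^{-1}=e$ gives $ba = a^{-1}b$, and a straightforward double induction on $|k|$ and $|l|$ then yields
\[
b^l a^k = a^{(-1)^l k}\, b^l \qquad \text{for all } k,l \in \mathbb{Z}.
\]
Applied repeatedly, this rule lets us move every occurrence of $a^{\pm 1}$ past every $b^{\pm 1}$ to the left in an arbitrary word, which proves that every element of $\pi_1(\mathbb{K}^2)$ admits at least one representative of the form $a^k b^l$.

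Next I would construct $G$ directly. Endow the set $\mathbb{Z} \times \mathbb{Z}$ with the operation $(k,l)(m,n) = (k + (-1)^l m,\, l + n)$. A routine check (the only mildly delicate point, using $(-1)^{l+n} = (-1)^l(-1)^n$) verifies associativity, identity $(0,0)$, and inverse $(k,l)^{-1} = (-(-1)^l k,\,-l)$. Setting $\tilde a = (1,0)$ and $\tilde b = (0,1)$ in $G$, I would verify the defining relation:
\[
\tilde a \tilde b \tilde a \tilde b^{-1} = (1,1)(1,0)(0,-1) = (0,1)(0,-1) = (0,0).
\]
By von Dyck's theorem there is therefore a unique homomorphism $\varphi\colon \pi_1(\mathbb{K}^2) \to G$ with $\varphi(a) = \tilde a$ and $\varphi(b) = \tilde b$.

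Finally, uniqueness of the normal form and the isomorphism statement fall out together from $\varphi$. An induction shows $(1,0)^k = (k,0)$ and $(0,1)^l = (0,l)$, hence $\varphi(a^k b^l) = (k,0)(0,l) = (k,l)$. So if $a^k b^l = a^m b^n$ in $\pi_1(\mathbb{K}^2)$, then $(k,l) = (m,n)$ in $G$, forcing $k=m$ and $l=n$; this proves uniqueness of the normal form. Combined with existence from the first step, $\varphi$ is bijective, hence the desired isomorphism, and the group law on $(k,l)$-pairs is precisely the one stated. The main obstacle is just the bookkeeping for associativity of $G$ (the twist by $(-1)^l$ must be tracked carefully); everything else is either an induction from $ba = a^{-1}b$ or a direct one-line computation.
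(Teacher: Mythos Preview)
Your proof is correct, and it takes a genuinely different route from the paper's. Both arguments start from the commutation rule $b^la^k=a^{(-1)^lk}b^l$, but then diverge. The paper defines the map in the \emph{other} direction, $(k,l)\mapsto a^kb^l$, checks it is a homomorphism $\mathbb Z\leftthreetimes\mathbb Z\to\pi_1(\mathbb K^2)$, and then proves injectivity in two steps: first $l=0$ via the observation that the defining relator has total $b$-degree zero, and then $a^k\ne e$ for $k\ne0$ by invoking Magnus' Freiheitssatz for one-relator groups (Theorem~4.10 of \cite{10}). You instead build the concrete semidirect product $G$ first, use von Dyck to get $\varphi\colon\pi_1(\mathbb K^2)\to G$, and read off both injectivity and uniqueness of the normal form from $\varphi(a^kb^l)=(k,l)$. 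Your approach is more elementary and entirely self-contained---it replaces the appeal to a substantial theorem of combinatorial group theory with a routine verification that $G$ really is a group---at the cost of that extra bookkeeping. The paper's approach is terser but imports a heavy external result for what is, after all, a very small group.
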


\begin{proof}
Relation \eqref{eq6} implies that $b^la^k=a^{(-1)^lk}b^l$, for all $k,l\in\mathbb Z$. Hence the correspondence $(k,l)\to x$ is a homomorphism. It remains to prove that the kernel is trivial, that is $a^kb^l\ne e$.

The the total degree of $b$ in \eqref{eq6} is $0$. Hence it suffices to prove that $a^k\ne e$. This follows from the following fact (see.\ \cite[Theorem~4.10]{10}) Let G be a group with generators $a_1, \dots , a_n$ and with a single defining relator $R(a_1, \dots, a_n)$. If $R(a_1, \dots, a_n)$ is a cyclically reduced word which involves $a_n$, then every (nontrivial) relator of $G$ must involve $a_n$. (A \textit{reduced} word is a word in which symbols $a_i$ and $a_i^{-1}$ do not occur consecutively. A \textit{cyclically reduced} is a reduced word which does not simultaneously begin with $a_i$ and end with $a_i^{-1}$.)
\end{proof}

Let us describe all standard generators.

\begin{claim}
\label{cl14} Let $a,b$ be standard generators of $\pi_1(\mathbb K^2)$. Then two other elements $a_1,b_1\in\pi_1(K^2)$ are standard generators if and only if $a_1=a^{\varepsilon_1}$, $b_1=a^kb^{\varepsilon_2}$ for some $\varepsilon_1=\pm1$, $\varepsilon_2=\pm1$, $k\in\mathbb Z$.
\end{claim}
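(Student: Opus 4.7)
The plan is to write any element of $\pi_1(\mathbb K^2)$ in the normal form $a^k b^l$ provided by Assertion \ref{cl13}, impose the defining relation \eqref{eq6} and the generation condition on $a_1,b_1$, and solve the resulting elementary conditions on the exponents.

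Write $a_1=a^{k_1}b^{l_1}$ and $b_1=a^{k_2}b^{l_2}$. First I would carry out the relation $a_1 b_1 a_1 b_1^{-1}=e$, equivalently $b_1 a_1 b_1^{-1}=a_1^{-1}$, purely in the semidirect product $\mathbb Z\leftthreetimes\mathbb Z$ using the commutation rule $b^l a^k=a^{(-1)^l k}b^l$. A direct computation reduces the equation to the two numerical conditions
\begin{equation*}
l_1=0,\qquad k_1\bigl(1+(-1)^{l_2}\bigr)=0.
\end{equation*}
The first equation already forces $a_1=a^{k_1}$, and then the second forces either $k_1=0$ (which must be discarded because $a_1$ would then be trivial and could not help generate $\pi_1(\mathbb K^2)$) or $l_2$ odd.

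Next I would impose that $a_1,b_1$ generate $\pi_1(\mathbb K^2)$. The cleanest tool is the homomorphism $\phi\colon\pi_1(\mathbb K^2)\to\mathbb Z$, $\phi(a^kb^l)=l$ (well-defined because the relator $abab^{-1}$ has $b$-exponent sum zero). Surjectivity of $\phi$ on $\langle a_1,b_1\rangle$ gives $l_2 \mathbb Z=\mathbb Z$, hence $l_2=\pm1$. The kernel of $\phi$ is the infinite cyclic group $\langle a\rangle$, so I must also show that $\langle a_1,b_1\rangle\cap\langle a\rangle=\langle a\rangle$. Using $b_1 a_1 b_1^{-1}=a_1^{-1}$ one checks that any word in $a_1,b_1$ reduces to $a_1^{\pm n}b_1^{m}$, whose $\phi$-value is $m l_2$; thus the intersection with $\ker\phi$ consists precisely of elements $a_1^{n}=a^{k_1 n}$, i.e.\ it equals $\langle a^{k_1}\rangle$. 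For this to be all of $\langle a\rangle$ we need $k_1=\pm1$. This gives the required form $a_1=a^{\varepsilon_1}$, $b_1=a^{k}b^{\varepsilon_2}$ with $k:=k_2$.

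For the converse I would substitute these expressions into $a_1 b_1 a_1 b_1^{-1}$ and verify by the same commutation rule that the product collapses to $a^{\varepsilon_1(1+(-1)^{\varepsilon_2})}=e$ (since $\varepsilon_2$ is odd). The pair also generates, because $a=a_1^{\varepsilon_1}$ and then $b^{\varepsilon_2}=a^{-k}b_1$ gives $b\in\langle a_1,b_1\rangle$. Since $a_1,b_1$ generate and satisfy the defining relation, the obvious surjection $\langle a_1,b_1\mid a_1b_1a_1b_1^{-1}\rangle\twoheadrightarrow\pi_1(\mathbb K^2)$ is an isomorphism (both sides are isomorphic to the Klein bottle group), so no extra relation is forced; hence $a_1,b_1$ are standard generators in the sense of Definition \ref{d15}.

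The only slightly delicate step is the generation argument, specifically pinning down $k_1=\pm1$: the issue is that in $\pi_1(\mathbb K^2)$, conjugation by $b_1$ inverts $a_1$ rather than producing new elements in $\langle a\rangle$, so one must verify that no combination of $a_1$ and $b_1$ yields an element of $\langle a\rangle$ outside $\langle a^{k_1}\rangle$. Once the normal form $a_1^{\pm n}b_1^m$ for arbitrary words is established via the relation, this becomes automatic and the rest is bookkeeping with the semidirect-product multiplication.
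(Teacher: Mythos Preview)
Your proof is correct and follows essentially the same approach as the paper's: write $a_1,b_1$ in the normal form $a^kb^l$ from Assertion~\ref{cl13}, use the relation \eqref{eq6} to force $l_1=0$, then use the generation condition to pin down $k_1=\pm1$ and $l_2=\pm1$. Your version is considerably more detailed---in particular your use of the abelianization map $\phi$ to extract $l_2=\pm1$ and the explicit kernel analysis for $k_1=\pm1$, as well as the Hopfian-type check in the converse direction---whereas the paper dispatches the forward direction in two lines by simply asserting that writing $a$ and $b$ as $a_1^pb_1^q$ forces $k_1=\pm1$, $l_2=\pm1$.
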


\begin{proof}

($\Leftarrow$) Check that $a_1b_1a_1b_1^{-1}=e$ and that $a_1,b_1$ generate the whole group (express $a$ and $b$ in the form $a_1^pb_1^q$).

($\Rightarrow$) Let $a_1=(k_1,l_1)$, $b_1=(k_2,l_2)$. It follows from \eqref{eq6} that $l_1=0$ (more precisely, $2l_1=0$). Since any element can be written in the form $a_1^pb_1^q$
(in particular, standard generators $a$ and $b$ can be written in that form), it follows that $k_1=\pm1$ and $l_2=\pm1$.
\end{proof}
\begin{remark}
\label{r9} All standard generators $a_1$, $b_1$ can be described in algebraic terms.

Consider some standard generators $a$, $b$ of the group $\pi_1(\mathbb K^2)$.
Then the center $\pi_1(\mathbb K^2)$ ($\simeq\mathbb Z$) of the fundamental group is generated by $b^2$, and the commutator subgroup $(\pi_1(\mathbb K^2))'$ ($\simeq\mathbb Z$) is generated by $a^2$. Therefore an element $a_1^2$ of the fundamental group $\pi_1(\mathbb K^2)$ is a generator of the commutator subgroup if and only if $a_1=a^{\varepsilon_1}$. Analogously, an element $b_1^2$ is a generator of the center if and only if $b_1=a^kb^{\varepsilon_2}$
\end{remark}

Let us find the elements that preserve orientation. It is clearly seen in picture \ref{ri2} that the element $a\in\pi_1(\mathbb K^2)$ preserves orientation and the element $b$ reverses orientation. Assertion \ref{cl14} implies the following.

\begin{claim}
\label{cl15} For any standard generators $a,b$ the element $a$ preserves orientation and the element $b$ reverses orientation.
\end{claim}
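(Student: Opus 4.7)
The orientation behavior of an element $g\in\pi_1(\mathbb K^2)$ is controlled by the orientation character $\varepsilon\colon\pi_1(\mathbb K^2)\to\{\pm1\}$, which sends $g$ to $+1$ if the corresponding deck transformation of the universal cover preserves orientation and to $-1$ if it reverses orientation. Since the composition of orientation-preserving (resp. reversing) maps follows the usual sign rule, $\varepsilon$ is a group homomorphism. The plan is to use this homomorphism together with Assertion \ref{cl14}.

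First I would fix the specific standard generators $a,b$ coming from the sides of the quadrilateral in picture \ref{ri2}. As observed right before the statement, one sees directly from the identifications that $\varepsilon(a)=+1$ and $\varepsilon(b)=-1$; this is the geometric input.

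Next, given any other standard generators $a_1,b_1$, Assertion \ref{cl14} provides integers $\varepsilon_1,\varepsilon_2\in\{\pm1\}$ and $k\in\mathbb Z$ such that $a_1=a^{\varepsilon_1}$ and $b_1=a^k b^{\varepsilon_2}$. Applying the homomorphism $\varepsilon$ yields
$$
\varepsilon(a_1)=\varepsilon(a)^{\varepsilon_1}=(+1)^{\varepsilon_1}=+1,
\qquad
\varepsilon(b_1)=\varepsilon(a)^k\varepsilon(b)^{\varepsilon_2}=(+1)^k(-1)^{\varepsilon_2}=-1.
$$
Thus $a_1$ preserves orientation and $b_1$ reverses orientation, which is exactly the claim.

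The only non-formal step is justifying that the homomorphism $\varepsilon$ is well defined and that $\varepsilon(a)=+1$, $\varepsilon(b)=-1$ for the standard quadrilateral generators; this is a routine check from the identification pattern in picture \ref{ri2}, so there is no serious obstacle. The rest is purely algebraic and is immediate from Assertion \ref{cl14}.
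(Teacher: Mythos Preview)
Your proof is correct and follows essentially the same approach as the paper: observe from the quadrilateral picture that the specific generators satisfy $\varepsilon(a)=+1$, $\varepsilon(b)=-1$, then invoke Assertion~\ref{cl14} to transfer this to arbitrary standard generators. The paper simply says ``Assertion~\ref{cl14} implies the following'' without writing out the computation; you have made the mechanism explicit via the orientation character homomorphism, which is exactly the intended argument.
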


Any element of $\pi_1(\mathbb K^2)$ has the form $a^kb^l$. Hence the following holds.

\begin{corollary}
\label{c5} The normal subgroup of $\pi_1(\mathbb K^2)$ consisting of orientation-preserving elements is generated by $a$ and $b^2$.
\end{corollary}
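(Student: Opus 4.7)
The plan is to combine the normal form from Assertion \ref{cl13} with the orientation information from Assertion \ref{cl15}. By Assertion \ref{cl13}, every element of $\pi_1(\mathbb K^2)$ is uniquely expressible as $a^k b^l$, and by Assertion \ref{cl15} the generator $a$ preserves orientation while $b$ reverses it. Hence $a^k b^l$ preserves orientation if and only if $l$ is even, so the orientation-preserving subgroup $H$ consists precisely of the elements $\{a^k b^{2m}\mid k,m\in\mathbb Z\}$.

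Next I would check that $H$ coincides with the subgroup generated by $a$ and $b^{2}$. The inclusion $\langle a,b^{2}\rangle\subseteq H$ is immediate. For the reverse inclusion, note that the defining relation $abab^{-1}=e$ gives $bab^{-1}=a^{-1}$, hence $b^{2}ab^{-2}=b(a^{-1})b^{-1}=a$. Thus $b^{2}$ commutes with $a$, so $\langle a,b^{2}\rangle$ is abelian and every element of it has the form $a^{k}(b^{2})^{m}=a^{k}b^{2m}$, which gives $H\subseteq\langle a,b^{2}\rangle$.

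Finally, I would verify normality. Because $H$ is the kernel of the orientation homomorphism $\pi_1(\mathbb K^2)\to\mathbb Z/2$ sending $a^k b^l$ to $l\bmod 2$, it is automatically a normal subgroup (of index two). Alternatively, one can check normality directly: conjugation by $a$ is trivial on $a$ and (since $a$ and $b^2$ commute) on $b^2$, while conjugation by $b$ sends $a$ to $a^{-1}\in H$ and fixes $b^2$.

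I do not expect any genuine obstacle; the only point that warrants care is making sure that the unique normal-form expression of Assertion \ref{cl13} is really being used to conclude both that $l\bmod 2$ is a well-defined invariant (so the orientation criterion is unambiguous) and that the abelian subgroup $\langle a,b^2\rangle$ exhausts all elements with even $b$-exponent.
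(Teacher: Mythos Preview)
Your proof is correct and follows essentially the same approach as the paper, which simply notes that every element has the form $a^{k}b^{l}$ and invokes Assertion~\ref{cl15}; you have merely filled in the details (the parity criterion on $l$, the commutativity of $a$ and $b^{2}$, and the normality via the orientation homomorphism) that the paper leaves implicit.
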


\subsection{Classification of complete lattices of rank 2}
\label{s4.2}

In section \ref{s3.1} we reduced the classification of $n$-dimensional lattices to the classification of subgroups $G\subset\mathrm{Aut}_n\simeq \mathrm{GL}_n(\mathbb Z)\leftthreetimes\mathbb R^n$ such that the quotient space is a manifold, up to an isomorphism $\mathrm{Aut}_n$.

Since the action is free, any element $g\in G\subset\mathrm{Aut}_n$ has an eigenvector, for any dimension $n$
(see Assertion \ref{cl16}). In dimension $2$ there exists a basis of the plane such that the linear part $dg$ of all $g\in G$ is an upper triangular matrix (see Lemma \ref{l6}). The rest of the proof is by exhaustion. The final result is summarized in Theorem \ref{t15}.

\begin{claim}
\label{cl16}
The action of a subgroup $G\subset\mathrm{Aff}_n\simeq \mathrm{GL}_n(\mathbb R)\leftthreetimes\mathbb R^n$ on $\mathbb R^n$ is free if and only if the equation $(A-E)x=-\vec b$ has no solutions for all $(A,\vec b)\in G$.
\end{claim}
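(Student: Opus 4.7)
The statement is essentially a direct unpacking of what it means for an affine action to be free, so the proof is very short; the only care one needs is to read the quantifier as ranging over non-identity elements of $G$ (for the identity $(E,\vec 0)$ the equation $(A-E)x=-\vec b$ is of course satisfied by every $x$).

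My plan is as follows. An element $(A,\vec b)\in\mathrm{Aff}_n$ acts on $\mathbb R^n$ by the rule $x\mapsto Ax+\vec b$, so a point $x\in\mathbb R^n$ is fixed by $(A,\vec b)$ if and only if $Ax+\vec b=x$, i.e. $(A-E)x=-\vec b$. Thus the set of fixed points of $(A,\vec b)$ is precisely the (possibly empty) affine subspace of solutions of this linear equation. The action of $G$ on $\mathbb R^n$ is free by definition if no non-identity element of $G$ has a fixed point, which by the above rewrites exactly as the condition that $(A-E)x=-\vec b$ has no solution for every non-identity $(A,\vec b)\in G$.

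Hence the forward implication $(\Rightarrow)$ is immediate: a solution $x$ of $(A-E)x=-\vec b$ for some non-identity $(A,\vec b)\in G$ would be a fixed point, contradicting freeness. The reverse implication $(\Leftarrow)$ is equally immediate: if some non-identity $g=(A,\vec b)\in G$ fixed a point $x$, that $x$ would solve the equation, contradicting the hypothesis. There is no real obstacle here; the only thing to watch is to make the (implicit) convention that $(A,\vec b)\neq(E,\vec 0)$ in the statement, since otherwise the equation is trivially solvable by every $x$.
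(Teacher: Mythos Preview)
Your proof is correct and matches the paper's own argument, which is simply the one-line observation that freeness means $Ax+\vec b\ne x$ for all $x$ and all nontrivial $(A,\vec b)\in G$. Your remark about excluding the identity element is appropriate and makes explicit a convention the paper leaves implicit.
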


By definition, the action is free if and only if $Ax+\vec b\ne x$ for all $x\in\mathbb R^n$ and for all $(A,\vec b)\in\nobreak G$

\begin{corollary}
The matrix part of each transformation has eigenvalue $1$.
\end{corollary}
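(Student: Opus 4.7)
The plan is to deduce this corollary immediately from Assertion~\ref{cl16} by a one-line contrapositive argument in elementary linear algebra. Fix a non-identity element $(A,\vec b)\in G$; I want to show that $1$ is an eigenvalue of $A$, i.e.\ that $\det(A-E)=0$.

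Suppose to the contrary that $1\notin\operatorname{Spec}(A)$. Then $A-E$ is an invertible linear endomorphism of $\mathbb{R}^n$, so the equation $(A-E)x=-\vec b$ admits the unique solution $x_0:=-(A-E)^{-1}\vec b$. Rewriting, this says $Ax_0+\vec b=x_0$, so $x_0$ is a fixed point of the affine transformation $(A,\vec b)$. Since $(A,\vec b)$ is not the identity, this contradicts freeness of the $G$-action via Assertion~\ref{cl16}. Hence $1\in\operatorname{Spec}(A)$, as required.

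There is no genuine obstacle; the only point worth flagging is that the statement (like Assertion~\ref{cl16}) is to be read for non-identity elements, the identity trivially having matrix part $E$ with eigenvalue $1$. The geometric content is exactly that freeness of an affine action forces every nontrivial linear part to fail to be $1$-regular, which will later be combined with the discreteness of the orbit (so $G$ preserves a lattice and thus lies in $\mathrm{GL}_n(\mathbb Z)\leftthreetimes\mathbb R^n$) to force, in dimension $n=2$, the existence of a common eigenvector and hence the upper-triangular normal form used in Lemma~\ref{l6}.
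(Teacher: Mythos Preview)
Your argument is correct and is exactly the intended one: the paper states this corollary immediately after Assertion~\ref{cl16} with no explicit proof, because it follows by the one-line linear-algebra contrapositive you wrote down. Your remark about the identity element is also the right way to read both Assertion~\ref{cl16} and the corollary.
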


\begin{prop}
\label{p1} Let $v=(v_1,\dots,v_n)$ be an $n$-dimensional vector $v=(v_1,\dots,v_n)$ with integer coeffitients. If the coefficients $v_i$ are relatively prime (that is the greatest common divisor of these elements is $1$), then the vector can be extended to an integer basis.
\end{prop}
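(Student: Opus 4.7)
The plan is to argue by induction on $n$, with Bezout's identity as the engine. For $n=1$, the hypothesis $\gcd(v_1)=1$ forces $v_1=\pm1$, so $\{v\}$ is itself a basis. For the inductive step, I would first handle the degenerate case: if $v_2=\cdots=v_n=0$, then $v_1=\pm1$ and we extend by $e_2,\dots,e_n$. Otherwise let $d=\gcd(v_2,\dots,v_n)>0$, so that $\gcd(v_1,d)=1$ and we may pick $a,b\in\mathbb{Z}$ with $a v_1+b d=1$. Writing $v_i=d u_i$ for $i=2,\dots,n$, the vector $u=(u_2,\dots,u_n)\in\mathbb{Z}^{n-1}$ has coprime entries, so by the induction hypothesis there exist $w_2,\dots,w_{n-1}\in\mathbb{Z}^{n-1}$ such that $\{u,w_2,\dots,w_{n-1}\}$ is an integer basis of $\mathbb{Z}^{n-1}$.

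Now I would exhibit a basis of $\mathbb{Z}^n$ containing $v$ explicitly. Let $A$ be the $(n-1)\times(n-1)$ integer matrix with columns $u,w_2,\dots,w_{n-1}$, so $\det A=\pm1$. Consider the $n\times n$ matrix whose columns are
\[
v=\begin{pmatrix}v_1\\ d u\end{pmatrix},\qquad
\tilde u=\begin{pmatrix}-b\\ a u\end{pmatrix},\qquad
\tilde w_j=\begin{pmatrix}0\\ w_j\end{pmatrix}\quad(j=2,\dots,n-1).
\]
Expanding its determinant along the first row (entries $v_1,-b,0,\dots,0$) and factoring $d$ and $au$ out of the appropriate columns produces $(a v_1+bd)\det A=\pm1$, so these columns form an integer basis of $\mathbb{Z}^n$ containing $v$.

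An alternative route, which I would mention in passing, is to note that the statement is equivalent to saying that every primitive vector lies in the $GL_n(\mathbb{Z})$-orbit of $e_1$: one applies the Euclidean algorithm through elementary row operations to bring $v$ to $(\gcd,0,\dots,0)=e_1$, and then the inverse unimodular matrix has $v$ as its first column. I do not anticipate a genuine obstacle here; the only delicate point is the bookkeeping showing that $\det=\pm1$ in the induction step, which reduces to the Bezout identity $a v_1+b d=1$.
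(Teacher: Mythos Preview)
Your proof is correct and follows precisely the approach the paper indicates: the paper's entire proof of Proposition~\ref{p1} is the sentence ``The proposition can be easily proved by induction on $n$,'' and you have carried out that induction correctly, with B\'ezout's identity driving the step. The determinant computation checks out (the $(1,1)$ minor has first column $au$ contributing $a\det A$, the $(1,2)$ minor has first column $du$ contributing $d\det A$, and the cofactor expansion gives $(av_1+bd)\det A=\pm1$); your phrase ``factoring $d$ and $au$ out'' is slightly garbled, but the argument itself is sound.
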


The proposition can be easily proved by induction on $n$.
\smallskip

From now on we assume that the dimension $n$ is $2$.

\begin{claim}
\label{cl17} Let $g$ be an element of $G$. If $g$ preserves orientation, then there exists an integral basis of
$\mathbb R^2$ such that the matrix part $dg$ of the element is $\bigl(\begin{smallmatrix}1&n
\\
0&1\end{smallmatrix}\bigr)$, where $n\in\mathbb Z$, $n\ge0$. If $g$ reverses orientation, then there is an integral basis of the plane with respect to which the matrix $dg$ is $\bigl(\begin{smallmatrix}1&\delta
\\
0&-1\end{smallmatrix}\bigr)$, where $\delta$ is either $0$ or $1$. Both $n$ and $\delta$ are uniquely defined.
\end{claim}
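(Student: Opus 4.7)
The plan is to first get an upper-triangular form over $\mathbb{Z}$, and then to normalize the free parameter to a canonical value. By the corollary to Assertion~\ref{cl16}, $dg\in\mathrm{GL}_2(\mathbb Z)$ has $1$ as an eigenvalue, so the $+1$-eigenspace of $dg$ in $\mathbb R^2$ is at least one-dimensional; it contains a nonzero integer vector, and dividing out the gcd of its coordinates produces a primitive integer eigenvector $e_1$. By Proposition~\ref{p1}, extend $e_1$ to an integer basis $(e_1,f)$ of $\mathbb Z^2$. In this basis $dg$ has the form $\bigl(\begin{smallmatrix}1 & a\\ 0 & d\end{smallmatrix}\bigr)$ with $a,d\in\mathbb Z$ and $d=\det dg\in\{\pm1\}$.

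If $g$ preserves orientation, then $d=1$ and $dg=\bigl(\begin{smallmatrix}1&a\\0&1\end{smallmatrix}\bigr)$. Replacing $f$ by $-f$ sends $a\mapsto -a$, so we may assume $n:=a\ge 0$. The remaining task is uniqueness: any integer basis putting $dg$ in the required form must take its first vector into the rank-one sublattice of primitive integer $+1$-eigenvectors, so the first basis vector is $\pm e_1$; a direct computation shows that any further admissible change of basis preserves the image sublattice $(dg-E)\mathbb Z^2\subset\mathbb Z\cdot e_1$, whose index in $\mathbb Z\cdot e_1$ equals $|n|$. This makes $n\ge 0$ intrinsic.

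If $g$ reverses orientation, then $d=-1$ and $dg=\bigl(\begin{smallmatrix}1&a\\0&-1\end{smallmatrix}\bigr)$. Here the admissible integer basis changes of the form $f\mapsto f+c e_1$ ($c\in\mathbb Z$) transform $a$ into $a+2c$ (this is a one-line check: the equation $(dg)f=ae_1-f$ rewrites in the new basis as $(dg)(f+ce_1)=(a+2c)e_1-(f+ce_1)$). Hence the parity of $a$ is invariant, and by choosing $c$ appropriately we reduce to $\delta\in\{0,1\}$. Uniqueness then follows from the parity invariance together with the observation that sign flips $e_1\mapsto -e_1$ and $f\mapsto -f$ change $a$ to $-a$, which preserves $a\bmod 2$; as in the previous case, any other normalizing basis must send $e_1$ to $\pm e_1$, so no further freedom arises.

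The main (though minor) obstacle is the orientation-reversing case: one has to check that the parity of $a$ is the only obstruction to killing $a$ entirely, i.e.\ that no integer change of basis can turn an odd $a$ into $0$. This is exactly what the identity $a\mapsto a+2c$ guarantees, but it requires ruling out more exotic basis changes, which is handled by the remark that the first column of any conjugating matrix $U\in\mathrm{GL}_2(\mathbb Z)$ must be a primitive integer $+1$-eigenvector, forcing it to equal $\pm e_1$; thus the only freedom left is precisely the $f\mapsto \pm f+c e_1$ transformations analyzed above.
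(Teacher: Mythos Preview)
Your argument is correct. The existence part is essentially identical to the paper's: pick a primitive integer $+1$-eigenvector (guaranteed by the corollary to Assertion~\ref{cl16}), extend it to an integer basis via Proposition~\ref{p1}, and then normalize the off-diagonal entry by the obvious basis changes.

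For uniqueness the two arguments diverge in presentation, though the underlying invariant is the same. The paper observes that the gcd of the entries of $B=dg-E$ is a conjugacy invariant (each entry of $UBU^{-1}$ is an integer combination of entries of $B$, and vice versa), and this single invariant reads off $n$ in the orientation-preserving case and the parity of $\delta$ in the orientation-reversing case. You instead first pin down that the new first basis vector must be $\pm e_1$, and then analyze the residual freedom $f\mapsto \pm f+ce_1$ explicitly, extracting $|n|$ as the index of $(dg-E)\mathbb Z^2$ in $\mathbb Z e_1$ and $\delta$ as the parity of $a$ under $a\mapsto a+2c$. Your route is more hands-on and makes the allowed basis changes fully explicit; the paper's gcd trick is slicker and handles both cases uniformly without having to enumerate the normalizing transformations. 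One small wording issue: your phrase ``preserves the image sublattice'' is only literally true once $e_1'=\pm e_1$ is fixed; under a general $U\in\mathrm{GL}_2(\mathbb Z)$ the image is transported by $U$, and it is the index in the (equally transported) kernel lattice that is invariant. This does not affect correctness.
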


\begin{proof}
Existence (of the basis). Choose an eigenvector $e_1$ of the operator $dg$ with eigenvalue $1$ and extend it to a basis. Then
$dg=\bigl(\begin{smallmatrix}1&m
\\
0&\pm1\end{smallmatrix}\bigr)$ with respect to the new basis $e_1,e_2$. If $m<0$ and $g$ preserves orientation, then invert $e_1$ to get $m>0$. If $g$ does not preserve orientation, then change $e_2$ to $e_2-\bigl[\frac m2\bigr]e_1$ (where $[x]$ is the biggest integer not exceeding $x$), to make $m$ equal either to $0$ or to $1$.

Uniqueness. Let $A=dg$ and $B=A-E$, where $E$ is the identity matrix. The upper-right element ($n$ or $\delta$) is uniquely determined by the greatest common divisor $N$ of elements of the matrix $B$. If $g$ preserves orientation, then the divisor is equal to $n$. If $\det(dg)=\det(A)=-1$, then the divisor $N$ and the element $\delta$ are either both even ($\delta=0$, $N=2$) or both odd ($N=\delta=1$). The divisor is an invariant, since the elements of $B$ in a new basis are linear combinations with integer coefficients of elements of $B$ in the given basis.
\end{proof}

Recall that an affine basis, that is $n+1$ points in general linear position for an $n$-dimensional affine space, can be considered as a point of the affine space together with a basis of the corresponding vector space.

\begin{definition}
An \textit{integral affine basis} is a point of an affine space $\mathbb R^n$ together with an integral basis of the corresponding vector space.
\end{definition}

\begin{lemma}
\label{l5} If $g\in G$ is not a translation, then there exists an integral affine basis of $\mathbb R^2$ such that $g$ has either the form $\bigl(\bigl(\begin{smallmatrix}1&n
\\
0&1\end{smallmatrix}\bigr),\bigl(\begin{smallmatrix}0
\\
y\end{smallmatrix}\bigr)\bigr)$, where $n\in\mathbb Z$, $n>0$,
$y\in\mathbb R\setminus0$, or the form
$\bigl(\bigl(\begin{smallmatrix}1&\delta
\\
0&-1\end{smallmatrix}\bigr),\bigl(\begin{smallmatrix}x
\\
0\end{smallmatrix}\bigr)\bigr)$, where $x\in\mathbb R\setminus0$,
and $\delta$ is either $0$ or $1$.
\end{lemma}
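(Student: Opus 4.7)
The plan is to normalize $g$ in two stages, using two independent pieces of freedom in choosing an integral affine basis: the vector part must lie in $\mathrm{GL}_2(\mathbb Z)$, while the origin is an arbitrary point of $\mathbb R^2$. First I would fix the linear part $dg$ by a change of integral vector basis; then I would fix the translation part by shifting the origin; and finally I would use freeness of the $G$-action to rule out the degenerate values of $y$ and $x$.

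More precisely, Assertion~\ref{cl17} already provides an integral basis with respect to which $A:=dg$ is one of $\bigl(\begin{smallmatrix}1&n\\0&1\end{smallmatrix}\bigr)$ with $n\ge0$, or $\bigl(\begin{smallmatrix}1&\delta\\0&-1\end{smallmatrix}\bigr)$ with $\delta\in\{0,1\}$. Since $g$ is not a translation, $A\ne E$, which excludes $n=0$ in the first case. Write $g=(A,\vec b)$ in the current affine basis. If we keep the vector basis and move the origin by $\vec t\in\mathbb R^2$, a direct computation gives $g=(A,(A-E)\vec t+\vec b)$ in the new coordinates; the new basis is still integral because the vector part was not touched.

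It remains to choose $\vec t$ so that the appropriate coordinate of the translation part vanishes. In the first case $A-E=\bigl(\begin{smallmatrix}0&n\\0&0\end{smallmatrix}\bigr)$, so $(A-E)\vec t=\bigl(\begin{smallmatrix}nt_2\\0\end{smallmatrix}\bigr)$, and $t_2=-b_1/n$ produces translation $\bigl(\begin{smallmatrix}0\\y\end{smallmatrix}\bigr)$. In the second case $A-E=\bigl(\begin{smallmatrix}0&\delta\\0&-2\end{smallmatrix}\bigr)$, so $(A-E)\vec t=\bigl(\begin{smallmatrix}\delta t_2\\-2t_2\end{smallmatrix}\bigr)$, and $t_2=b_2/2$ produces translation $\bigl(\begin{smallmatrix}x\\0\end{smallmatrix}\bigr)$. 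The nonvanishing of $y$ and $x$ then follows from Assertion~\ref{cl16}: freeness of the action means $(A-E)\vec u=-\vec b'$ has no solution $\vec u\in\mathbb R^2$. In the first case this equation reads $nu_2=0,\ 0=-y$, which would be solvable (take $u_2=0$) precisely if $y=0$; so $y\ne0$. In the second case it reads $\delta u_2=-x,\ -2u_2=0$, forcing $u_2=0$ and hence solvability iff $x=0$; so $x\ne0$.

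There is no real obstacle here; the only point requiring attention is the bookkeeping that separates "integer" data (the vector basis, and the parameters $n$, $\delta$) from "real" data (the origin, and hence $b_1,b_2,y,x$), so that the origin shift $t_2=-b_1/n$ or $t_2=b_2/2$ is allowed even though it is generally irrational. Uniqueness of $n$ and $\delta$ was already addressed in Assertion~\ref{cl17}; the lemma makes no uniqueness claim about $x$ or $y$, so nothing more is needed.
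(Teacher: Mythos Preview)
Your proof is correct and follows essentially the same approach as the paper: invoke Assertion~\ref{cl17} to normalize the linear part, then shift the origin (by $t_2=-b_1/n$ or $t_2=b_2/2$, exactly as in the paper) to kill the appropriate coordinate of the translation. You are in fact more thorough than the paper, which does not spell out the change-of-origin formula $(A,(A-E)\vec t+\vec b)$ or the appeal to Assertion~\ref{cl16} for $y\ne0$ and $x\ne0$.
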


\begin{proof}
Choose a basis from Assertion \ref{cl17}. Suppose that the vector part of the element $g$ has the form
$\bigl(\begin{smallmatrix}z_1
\\
z_2\end{smallmatrix}\bigr)$. If $g$ preserves orientation them take $\bigl(\begin{smallmatrix}0
\\
-z_1/n\end{smallmatrix}\bigr)$ as the new origin. Otherwise take $\bigl(\begin{smallmatrix}0
\\
z_2/2\end{smallmatrix}\bigr)$.
\end{proof}

\begin{corollary}
\label{c7} The trace of $dg$ is $1+\det(dg)$.
\end{corollary}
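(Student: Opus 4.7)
The plan is to extract this corollary directly from the general fact that the action is free, rather than from the explicit normal forms of Assertion~\ref{cl17}. First I would invoke the Corollary to Assertion~\ref{cl16}, which states that the matrix part $dg$ of every element $g \in G$ has $1$ as an eigenvalue (since otherwise $(dg - E)$ would be invertible and the equation $(dg - E)x = -\vec b$ would have a solution, contradicting freeness).

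Next, since $dg$ is a $2 \times 2$ matrix, its characteristic polynomial is
\[
\chi(\lambda) = \lambda^2 - \operatorname{tr}(dg)\,\lambda + \det(dg).
\]
Substituting the eigenvalue $\lambda = 1$ yields $1 - \operatorname{tr}(dg) + \det(dg) = 0$, which rearranges to the claimed identity $\operatorname{tr}(dg) = 1 + \det(dg)$.

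There is no real obstacle here: the statement is a one-line consequence of the Cayley--Hamilton style identity $\chi(1)=0$, which in turn follows from freeness of the action. As a sanity check, one may verify directly against the two normal forms of Assertion~\ref{cl17}: the orientation-preserving case gives $\operatorname{tr} = 2$, $\det = 1$, and $1 + 1 = 2$; the orientation-reversing case gives $\operatorname{tr} = 0$, $\det = -1$, and $1 + (-1) = 0$. Both are consistent, so the argument via the characteristic polynomial is the shortest route and avoids any case analysis.
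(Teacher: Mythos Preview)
Your argument is correct. The paper presents this as a corollary of Lemma~\ref{l5} (and implicitly Assertion~\ref{cl17}): once $dg$ has been reduced to one of the two upper-triangular normal forms, one simply reads off the trace and determinant in each case---exactly the ``sanity check'' you append at the end.

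Your main route is genuinely different and cleaner: you bypass the normal forms entirely and use only the fact that $1$ is an eigenvalue of $dg$ (Corollary to Assertion~\ref{cl16}), then evaluate the characteristic polynomial at $\lambda=1$. This has the advantage of being a single-line argument with no case split, and it makes transparent that the identity holds for \emph{any} $2\times 2$ matrix with eigenvalue $1$, not just those arising from $G$. The paper's route, by contrast, leverages the structural work already done and is the natural thing to write down once the normal forms are in hand. Either is perfectly adequate here.
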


\begin{lemma}
\label{l6}
There exists a basis of $\mathbb R^2$ such that the matrix part $dg$ of all elements $g\in G$ is upper triangular.
\end{lemma}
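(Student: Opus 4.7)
The strategy is to produce a single line $V\subset\mathbb R^2$ invariant under every $dg$, $g\in G$; once $V$ is found, Proposition \ref{p1} lets one extend a primitive integer vector $v_1\in V$ to an integer basis of $\mathbb Z^2$, and in that basis every matrix in $H:=\{dg:g\in G\}$ is upper triangular. If $H$ is trivial we are done, so fix $g_0\in G$ with $A_0:=dg_0\neq I$; by Assertion \ref{cl17}, $A_0$ is (in some integer basis) either unipotent of Jordan type or equal to the involution $\bigl(\begin{smallmatrix}1&\delta\\0&-1\end{smallmatrix}\bigr)$, so in either case $A_0-I$ has rank $1$ and I set $V:=\ker(A_0-I)$.

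The decisive observation is that for every $g\in G$ the conjugate $gg_0g^{-1}$ lies in $G$, so $(dg)A_0(dg)^{-1}\in H$, and its $1$-eigenspace equals $(dg)V$. Hence it suffices to prove $(dg)A_0(dg)^{-1}\in\{A_0,A_0^{-1}\}$, because $A_0$ and $A_0^{-1}$ share the same $1$-eigenspace $V$ (the equation $A_0v=v$ is equivalent to $A_0^{-1}v=v$). To organize the remaining analysis I try to take $g_0$ inside the orientation-preserving subgroup $G^+:=\ker(\det\circ d)$, a normal subgroup of index $\leq 2$ in $G$. If the linear parts of all elements of $G^+$ are trivial, then any two orientation-reversing elements differ by an element of $G^+$ and share the same linear part $B_0$, so $H\subseteq\{I,B_0\}$ and Assertion \ref{cl17} triangularizes $B_0$ directly. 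Otherwise I may assume $g_0\in G^+$ with $A_0$ nontrivial unipotent.

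Now I invoke Corollary \ref{c2}: the compact quotient $\mathbb R^2/G$ is the torus or the Klein bottle, so $G\simeq\mathbb Z^2$ or $G\simeq\pi_1(\mathbb K^2)$; in the latter case $G^+$ is generated by $a,b^2$ (Corollary \ref{c5}) and is abelian. In the torus case $G$ itself is abelian, so $(dg)A_0(dg)^{-1}=A_0$ holds trivially. In the Klein bottle case, writing $A:=da$, $B:=db$ and $g_0=a^mb^{2n}$, one has $A_0=A^mB^{2n}=A^m$ because $B^2=I$ (for any $B\in H$ with $\det B=-1$ the trace is $0$, so Cayley--Hamilton gives $B^2=I$); writing $g=a^kb^l$, and using the linear-part incarnation $BAB^{-1}=A^{-1}$ of the defining relation $bab^{-1}=a^{-1}$ of Assertion \ref{cl13}, a short direct computation yields $(dg)A_0(dg)^{-1}=A^kB^lA^mB^{-l}A^{-k}=A^{(-1)^lm}=A_0^{(-1)^l}$, as required.

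The main obstacle is the Klein bottle step, in which two things must be checked carefully: the reduction to orientation-preserving $g_0$, and the propagation of the relation $bab^{-1}=a^{-1}$ to the identity $(dg)A_0(dg)^{-1}=A_0^{\pm 1}$ for every $g_0\in G^+$ (not just $g_0=a$). The identity $B^2=I$, which annihilates the $b^{2n}$-factor of $g_0$ at the level of linear parts, is essential; once it is in hand, the torus case and the case of trivial $G^+$-action are formalities.
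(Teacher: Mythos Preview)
Your argument is correct in substance, but it is considerably more elaborate than the paper's proof and imports a hypothesis the lemma does not carry. The paper proves Lemma~\ref{l6} in two lines using only Corollary~\ref{c7}: if some orientation-preserving $g_1$ has $dg_1=\bigl(\begin{smallmatrix}1&m\\0&1\end{smallmatrix}\bigr)$ with $m\neq 0$ (Assertion~\ref{cl17}), then for any $g_2$ with $dg_2=\bigl(\begin{smallmatrix}a_{11}&a_{12}\\a_{21}&a_{22}\end{smallmatrix}\bigr)$ one computes $\operatorname{tr}(d(g_1g_2))=a_{11}+a_{22}+ma_{21}$, while Corollary~\ref{c7} forces $\operatorname{tr}(d(g_1g_2))=1+\det(d(g_1g_2))=a_{11}+a_{22}$; hence $a_{21}=0$. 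No knowledge of the global structure of $G$ is needed.

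Your route instead determines $G$ first (via Corollary~\ref{c2}) as $\mathbb Z^2$ or $\pi_1(\mathbb K^2)$ and then checks invariance of the $1$-eigenline by exploiting the explicit relations in those groups. This works, and your use of $B^2=I$ (which is exactly Corollary~\ref{c7} with $\det=-1$) to kill the $b^{2n}$-factor is clean. What it buys over the paper's proof is nothing; what it costs is (i) the appeal to Milnor's theorem behind Corollary~\ref{c2}, (ii) an implicit compactness assumption that Lemma~\ref{l6} does not make. The lemma is stated and used for \emph{all} freely acting $G\subset\mathrm{Aut}_2$ with manifold quotient, and the paper later applies it to derive the non-compact cases (Lemma~\ref{l9}) as well. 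Your plan leaves a small gap here: if the quotient is non-compact and $G^+$ has a nontrivial linear part, Corollary~\ref{c2} does not apply. The fix is one line (then $G$ is cyclic, so Assertion~\ref{cl17} already triangularizes its generator), but you should say it rather than silently assume compactness.
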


\begin{proof}
The case when all orientation-preserving elements of $G$ are translations is trivial. It suffices to make the matrix part of any orientation-reversing element upper triangular.

Let $g_1,g_2\in G$ be such that $dg_1=\bigl(\begin{smallmatrix}1&m
\\
0&1\end{smallmatrix}\bigr)$, $m\ne0$,
and $dg_2=\bigl(\begin{smallmatrix}a_{11}&a_{12}
\\
a_{21}&a_{22}\end{smallmatrix}\bigr)$. Then $\mathrm{tr}(d(g_1g_2))=a_{11}+a_{22}+ma_{21}$. By corollary \ref{c7}, we have $\mathrm{tr}(d(g_1g_2))=a_{11}+a_{22}$. We obtain $a_{21}=0$.
\end{proof}

Denote by $T$ those elements of $G$ which act as translation of the plane, and by $H$ those elements of $G$ which preserve the orientation of the plane. Obviously, $T\vartriangleleft H\vartriangleleft G$. The group $T$ is isomorphic either to $\mathbb Z$ or to $\mathbb Z^2$, since the orbits under the action $G$ are discrete. In this section we denote by $t,h,g$ elements of $T$, $H$ and $G$ respectively.

\begin{corollary}
\label{c8}
Either $H=T$ or $H/T\simeq\mathbb Z$.
\end{corollary}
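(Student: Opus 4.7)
The plan is to construct an explicit homomorphism $\lambda\colon H\to\mathbb Z$ whose kernel is exactly $T$, from which the corollary will follow at once, since every subgroup of $\mathbb Z$ is either trivial or infinite cyclic.

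First I would invoke Lemma~\ref{l6} to fix a common basis of $\mathbb R^2$ in which the linear parts $dg$ of all $g\in G$ are simultaneously upper triangular. The basis produced in the proof of Lemma~\ref{l6} begins from the integral basis supplied by Assertion~\ref{cl17} for a fixed non-translation element, so this common triangularizing basis is itself integral (in the remaining case, where every orientation-preserving element is a translation, one already has $H=T$ and there is nothing to prove). For any $h\in H$ the matrix $dh$ is then upper triangular with integer entries. Combining $\det(dh)=1$ (because $h$ preserves orientation) with the fact that $1$ is always an eigenvalue of $dh$ (the corollary to Assertion~\ref{cl16}), both diagonal entries of $dh$ must equal $1$, and hence
$$
dh=\begin{pmatrix}1 & \lambda(h) \\ 0 & 1\end{pmatrix}\qquad\text{for some }\lambda(h)\in\mathbb Z.
$$

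Next I would define $\lambda\colon H\to\mathbb Z$ by sending $h$ to this integer upper-right entry. Since the product of two unipotent upper triangular $2\times 2$ matrices adds their off-diagonal entries, $\lambda$ is a group homomorphism. Its kernel consists of those $h$ with $dh=E$, which is precisely the translation subgroup $T$. Therefore $H/T$ embeds into $\mathbb Z$ via $\lambda$, and, being a subgroup of $\mathbb Z$, is either the trivial group (so $H=T$) or isomorphic to $\mathbb Z$, which is the dichotomy asserted in the corollary.

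The only subtle point I foresee is making sure that the common triangularizing basis from Lemma~\ref{l6} is integral, so that $\lambda(h)$ really lands in $\mathbb Z$ rather than merely in $\mathbb R$; this is what lets the discreteness of the image come for free, rather than requiring a separate argument ruling out dense subgroups of $\mathbb R$. Once that point is nailed down, everything else is a short, essentially formal computation ending with the classification of subgroups of $\mathbb Z$.
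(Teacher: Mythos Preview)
Your proposal is correct and follows essentially the same approach as the paper: the paper's proof also defines the homomorphism $H\to\mathbb Z$ by sending $h$ to the upper-right entry of $dh$ in the upper-triangular basis provided by Lemma~\ref{l6}, and observes that its kernel is $T$. Your write-up simply fills in the details (the form of $dh$, why $\lambda$ is a homomorphism, and the integrality of the basis) that the paper leaves implicit; in particular, your caution about integrality is well placed but harmless, since $G\subset\mathrm{GL}_2(\mathbb Z)\ltimes\mathbb R^2$ and the basis from Lemma~\ref{l6} is obtained via Assertion~\ref{cl17}, hence integral.
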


\begin{proof}
There exists a homomorphism form $H$ to $\mathbb Z$ with kernel $T$. If the matrix part $dg$ of all elements $g\in G$ is upper triangular, then the homomorphism assigns to each element $h\in H$ the upper-right element of the matrix $dh$. \end{proof}

It follows that $G$ is generated by not more than four elements ($T$ is generated by not more than two elements, both
$H/T$ and $G/H$ are generated by not more than one element). If the quotient $\mathbb R^2/G$ is orientable, then $G$ is generated by not more than three elements. It implies that the quotient of the plane $\mathbb R^2$ by the action of $G$ must be one either $S^2$, $\mathbb{RP}^2$, $\mathbb T^2$, or $\mathbb K^2$ (combine the facts that the fundamental group of any other orientable surface is generated by more than four elements with only one relation and that every non-orientable surface has the orientable double cover)

Any closed lattice of rank $2$ on a surface determines an immersion of the surface in the plane $\mathbb R^2$ (developing map from section \ref{s3.1}). But the sphere can not be immersed in the plane, and the projective plane is covered by the sphere. Therefore neither the sphere nor the projective plane possesses a lattice (this also follows from corollary \ref{c2}).

\begin{lemma}
\label{l7}
Among compact two-dimensional surfaces only the torus and the Klein bottle admit a complete closed lattice of rank $2$.
\end{lemma}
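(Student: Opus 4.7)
The plan is to combine the reduction obtained in this section with the classification of compact surfaces. By Corollaries~\ref{c3} and~\ref{c4}, any compact surface $B$ admitting a complete closed lattice of rank $2$ is diffeomorphic to $\mathbb R^2/G$ for some $G\subset\mathrm{Aut}_2\simeq\mathrm{GL}_2(\mathbb Z)\leftthreetimes\mathbb R^2$ acting freely, with $G\simeq\pi_1(B)$. From Lemma~\ref{l6} together with Corollary~\ref{c8} and the analysis of $T$, the group $G$ is generated by at most four elements, and by at most three when $B$ is orientable. I will use these bounds to shortlist the candidates for $B$ and then eliminate the unwanted ones.

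First I would handle the orientable case. The fundamental group of a closed orientable surface of genus $g$ has abelianization $\mathbb Z^{2g}$, so it cannot be generated by fewer than $2g$ elements. The bound of at most three generators forces $g\le 1$, leaving only $S^2$ and $\mathbb T^2$. For a non-orientable $B$, I pass to the orientation double cover $\widetilde B\to B$: the index-two subgroup $G_{+}\subset G$ of orientation-preserving transformations acts freely on $\mathbb R^2$ with quotient $\widetilde B$, which is orientable. By the orientable case just settled, $\widetilde B\in\{S^2,\mathbb T^2\}$. Since the orientation double cover of the non-orientable surface of genus $k$ is orientable of genus $k-1$, this restricts $B$ to $\mathbb{RP}^2$ or $\mathbb K^2$.

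It remains to exclude $S^2$ and $\mathbb{RP}^2$. This is immediate from the completeness hypothesis: by Definition~\ref{d13}, the developing map of a complete lattice identifies the universal cover of $B$ with $\mathbb R^2$, whereas both $S^2$ and $\mathbb{RP}^2$ have the non-contractible $S^2$ as their universal cover. (Equivalently, as noted in the paragraph preceding the lemma, $S^2$ admits no immersion into $\mathbb R^2$ by a compactness argument, and the same obstruction pulls back to $\mathbb{RP}^2$ through its double cover $S^2$.) Hence $B\in\{\mathbb T^2,\mathbb K^2\}$, as claimed.

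The main obstacle will be the generator-counting step for the orientable case, since naively one only knows that the standard presentation of the genus-$g$ orientable surface uses $2g$ generators, not that no smaller generating set exists. I would close that gap via the abelianization: the rank of $H_1(B;\mathbb Z)$ is a lower bound on the number of generators of $\pi_1(B)$, and for the closed orientable genus-$g$ surface this rank equals $2g$. The non-orientable step then follows automatically by reduction to the orientable case via the double cover.
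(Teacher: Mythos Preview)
Your proposal is correct and follows essentially the same route as the paper: bound the number of generators of $G$ using the structure $T\vartriangleleft H\vartriangleleft G$ (at most three in the orientable case), reduce non-orientable surfaces to orientable ones via the orientation double cover, and then exclude $S^2$ and $\mathbb{RP}^2$. Your use of the abelianization $H_1(B;\mathbb Z)\cong\mathbb Z^{2g}$ to justify the lower bound on generators is a genuine improvement over the paper, which asserts this step without proof; likewise your exclusion of $S^2$ and $\mathbb{RP}^2$ via the universal cover is a clean variant of the paper's immersion argument.
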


Let us find all non-compact surfaces that admit a closed lattice of rank $2$ (see Lemma \ref{l9} below).

\begin{claim}
\label{cl18} If $H$ is not cyclic, then the quotient $\mathbb R^2/H$ is the torus.
\end{claim}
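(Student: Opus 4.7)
The plan is to split the argument by analysing the translation subgroup $T\subset H$ and applying Corollary~\ref{c8}, which says that $H/T$ is either trivial or isomorphic to $\mathbb Z$. First, if $H=T$ then non-cyclicity of $H$ forces $T\simeq\mathbb Z^2$, a full translation lattice in $\mathbb R^2$, and $\mathbb R^2/H\simeq\mathbb T^2$ immediately. So I would then assume $H\supsetneq T$, so that $H/T\simeq\mathbb Z$; the case $T=0$ would give $H\simeq\mathbb Z$ cyclic, contradicting the hypothesis, so $T\neq 0$, i.e.\ $T\simeq\mathbb Z$ or $T\simeq\mathbb Z^2$.

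The hardest step, I expect, is ruling out the sub-case $T\simeq\mathbb Z^2$ with $H\supsetneq T$. Here I would push proper discontinuity down to the quotient: the hypothesis that $\mathbb R^2/G$, and hence $\mathbb R^2/H$, is a manifold forces $H$ to act properly discontinuously on $\mathbb R^2$, so $H/T\simeq\mathbb Z$ would have to act properly discontinuously on the \emph{compact} torus $\mathbb R^2/T$. But no infinite discrete group can act properly discontinuously on a compact space---taking $K$ to be the entire space shows that $\{g:gK\cap K\neq\emptyset\}$ equals the whole group, which is required to be finite---contradiction. So this sub-case does not occur.

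In the remaining sub-case $T\simeq\mathbb Z$, the argument becomes a direct structural computation. I would pick $h\in H\setminus T$ and invoke Assertion~\ref{cl17} together with Lemma~\ref{l5} to choose an integral affine basis putting $h$ in the form $\bigl(\bigl(\begin{smallmatrix}1&n\\0&1\end{smallmatrix}\bigr),\bigl(\begin{smallmatrix}0\\y\end{smallmatrix}\bigr)\bigr)$ with $n>0$, $y\neq 0$. Writing $T=\langle t\rangle$ with $t=(I,\vec v)$, normality of $T$ in $H$ yields $dh\,\vec v\in\mathbb Z\vec v$; since $dh$ is unipotent with eigenvalue $1$, this forces $dh\,\vec v=\vec v$, so $\vec v$ lies in the fixed line of $dh$, the $x$-axis, whence $\vec v=(x,0)$ for some $x\neq 0$. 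Then $t$ and $h$ commute, so $H=\langle t,h\rangle\simeq\mathbb Z^2$. The parallelogram $[0,x]\times[0,y]$ is then readily checked to be a fundamental domain whose vertical sides are identified by $t$ and whose horizontal sides are identified by $h$ (since $h(p_1,0)=(p_1,y)$), yielding $\mathbb R^2/H\simeq\mathbb T^2$ as claimed. Alternatively, having shown $H\simeq\mathbb Z^2$ and that the quotient is orientable with affine structure, one could invoke Theorem~\ref{t12} to identify it with the torus.
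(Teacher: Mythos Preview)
Your argument is correct and follows essentially the same route as the paper. The core step---showing that the generator $t$ of $T$ must lie along the $x$-axis in the normal-form coordinates for $h$---is exactly the paper's commutator computation rephrased: the paper computes $[h,t]=(I,(nz,0))$ and notes that $z\neq 0$ would produce a second independent translation, while you observe that normality forces $dh\,\vec v=\vec v$, hence $\vec v$ lies in the $1$-eigenspace of $dh$. These are the same fact.

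Two small remarks. First, when you ``pick $h\in H\setminus T$'' you should pick it so that its image generates $H/T\simeq\mathbb Z$; otherwise the conclusion $H=\langle t,h\rangle$ does not follow from $t$ and $h$ commuting. The paper makes the same move with its ``without loss of generality we may assume that $H$ is generated by two elements $h$ and $t$''. Second, your proper-discontinuity argument ruling out the sub-case $T\simeq\mathbb Z^2$ with $H\supsetneq T$ is correct but unnecessary: the paper simply observes that if $T$ has rank $2$ then $\mathbb R^2/T$ is already compact, and $\mathbb R^2/H$, being a quotient of it, is compact as well---which is all that is needed, since the goal is compactness (the identification with $\mathbb T^2$ then comes from Theorem~\ref{t12}).
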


\begin{proof}
It suffices to prove that the quotient space $\mathbb R^2/H$ is compact. The case when $T$ is generated by two elements is clear. Without loss of generality we may assume that $H$ is generated by two elements $h$ and $t$.

First let us prove that $H$ is commutative. Choose the integral affine basis from Lemma \ref{l5} for the element $g$. If the translation $t$ has the form
$\bigl(\bigl(\begin{smallmatrix}1&0
\\
0&1\end{smallmatrix}\bigr),\bigl(\begin{smallmatrix}x
\\
z\end{smallmatrix}\bigr)\bigr)$, then the commutator $[g,t]=gtg^{-1}t^{-1}$ is $\bigl(\bigl(\begin{smallmatrix}1&0
\\
0&1\end{smallmatrix}\bigr),\bigl(\begin{smallmatrix}nz
\\
0\end{smallmatrix}\bigr)\bigr)$. The inequality $z\ne0$ would contradict the assumption that $T$ is generated by one element $t$.

The quotient space is compact because for any vector
$\bigl(\begin{smallmatrix}u
\\
v\end{smallmatrix}\bigr)\in\mathbb R^2$ there exists an element
$g^kt^l\in H$ such that $g^kt^l\bigl(\begin{smallmatrix}u
\\
v\end{smallmatrix}\bigr)=\bigl(\begin{smallmatrix}\widetilde u
\\
\widetilde v\end{smallmatrix}\bigr)$, where $0\le\widetilde u<x$
and $0\le\widetilde v<y$. Every element $g^kt^l$ has the form
$\bigl(\bigl(\begin{smallmatrix}1&kn
\\
0&1\end{smallmatrix}\bigr),\bigl(\begin{smallmatrix}lx
\\
ky\end{smallmatrix}\bigr)\bigr)$, consequently
$\bigl(\begin{smallmatrix}\widetilde u
\\
\widetilde v\end{smallmatrix}\bigr)=\bigl(\begin{smallmatrix}u+knv+lx
\\
v+ky\end{smallmatrix}\bigr)$. First choose proper $k$, then choose $l$.
\end{proof}

The remaining case is when $H$ is cyclic. We begin with simple algebra.

\begin{claim}
\label{cl19} Suppose that a group $G_2$ possesses an infinite cyclic subgroup $G_1$ of index $2$. Then $G_2$ is either commutative or isomorphic to the group of matrices
$$
\biggl\{\begin{pmatrix}\varepsilon&m
\\
0&1\end{pmatrix}\biggm|\varepsilon=\pm1,\ m\in\mathbb
Z\biggr\}.
$$
\end{claim}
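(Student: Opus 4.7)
The plan is to exploit the fact that $G_1$, having index~$2$, is automatically normal in $G_2$, so that conjugation by any $b\in G_2\setminus G_1$ acts on the infinite cyclic group $G_1$ by an automorphism. Since $\operatorname{Aut}(\mathbb Z)=\{\pm 1\}$, this leaves only two possibilities and the argument splits into two cases.

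Write $G_1=\langle a\rangle$ and pick any $b\in G_2\setminus G_1$, so that $G_2=G_1\sqcup bG_1$. Since $G_1\trianglelefteq G_2$, we have $bab^{-1}=a^k$ for some $k\in\mathbb Z$, and since $b^2\in G_1$, we have $b^2=a^n$ for some $n\in\mathbb Z$. First I would apply the conjugation twice: $a=b^2ab^{-2}=b(a^k)b^{-1}=a^{k^2}$, which forces $a^{k^2-1}=e$, and hence $k^2=1$ because $a$ has infinite order. Thus $k=\pm1$.

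If $k=1$, then $b$ commutes with $a$, and since $G_2$ is generated by $a$ and $b$, the group $G_2$ is abelian, which is the first alternative. If $k=-1$, I would further pin down $n$: conjugating $b^2=a^n$ by $b$ gives $a^n=b(a^n)b^{-1}=a^{-n}$, whence $a^{2n}=e$ and therefore $n=0$, i.e.\ $b^2=e$. In this case $G_2$ has the presentation $\langle a,b\mid b^2=e,\ bab^{-1}=a^{-1}\rangle$, the infinite dihedral group.

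It remains to identify this infinite dihedral group with the matrix group in the statement. I would send
$$
a\longmapsto\begin{pmatrix}1&1\\0&1\end{pmatrix},\qquad b\longmapsto\begin{pmatrix}-1&0\\0&1\end{pmatrix},
$$
and verify directly that $b^2=E$ and $bab^{-1}=a^{-1}$, so the map extends to a homomorphism from $G_2$ to the matrix group. Surjectivity is immediate from $a^mb^{\varepsilon}\mapsto\bigl(\begin{smallmatrix}\varepsilon&m\\0&1\end{smallmatrix}\bigr)$ (up to sign conventions), and injectivity follows from the normal form $a^mb^{\varepsilon}$ for elements of the infinite dihedral group together with the observation that the matrix image $\bigl(\begin{smallmatrix}\varepsilon&m\\0&1\end{smallmatrix}\bigr)$ already encodes both $\varepsilon$ and $m$ uniquely.

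The only mildly delicate step is the case $k=-1$ argument forcing $b^2=e$; everything else is routine coset/conjugation bookkeeping. I do not expect a genuine obstacle.
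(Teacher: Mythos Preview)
Your proof is correct and follows exactly the approach the paper indicates: the paper's entire argument is the single sentence ``This statement follows from the fact that there are only two automorphisms of the infinite cyclic group,'' and you have simply supplied the details of that sketch---the normality of the index-$2$ subgroup, the dichotomy $bab^{-1}=a^{\pm1}$, the observation that $b^2=e$ is forced when $k=-1$, and the explicit identification with the matrix group.
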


This statement follows from the fact that there are only two automorphisms of the infinite cyclic group.

\begin{lemma}
\label{l8} The group $G$ is cyclic if and only if $H$ is cyclic.
\end{lemma}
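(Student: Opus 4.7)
The plan is to prove both implications. The implication $G$ cyclic $\Rightarrow$ $H$ cyclic is immediate, since $H$ is a subgroup of $G$ and subgroups of cyclic groups are cyclic. All the work is in the converse.

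Assume $H$ is cyclic. By Corollary \ref{c8} applied at the level of $G/H$ (equivalently, by the discussion preceding it which gives $[G:H]\in\{1,2\}$), either $G=H$, in which case there is nothing to prove, or $H$ is an infinite cyclic subgroup of index $2$ in $G$. I would then invoke Assertion \ref{cl19}: in this case $G$ must be either commutative, or isomorphic to the matrix group
$$D=\biggl\{\begin{pmatrix}\varepsilon&m\\0&1\end{pmatrix}\biggm|\varepsilon=\pm1,\ m\in\mathbb Z\biggr\}$$
(the infinite dihedral group). The strategy is to rule out the dihedral alternative and the torsion in the abelian alternative by using the hypothesis that $G$ acts freely on $\mathbb R^2$.

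The key tool is the standard averaging observation: any element $g\in\mathrm{Aff}_2$ of finite order $n>1$ fixes the point $\bar x=\frac1n\sum_{k=0}^{n-1}g^k(x_0)$, because $g$ permutes the summands cyclically. Hence freeness of the action forces $G$ to be torsion-free. The dihedral group $D$ contains the order-$2$ element $\bigl(\begin{smallmatrix}-1&0\\0&1\end{smallmatrix}\bigr)$, so it cannot act freely, ruling out the second alternative of \ref{cl19}. For the commutative alternative, $G$ is a finitely generated abelian group containing $\mathbb Z\simeq H$ as an index-$2$ subgroup, so $G\simeq\mathbb Z$ or $G\simeq\mathbb Z\oplus\mathbb Z/2$; the second possibility is again excluded by torsion-freeness. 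Thus $G\simeq\mathbb Z$, i.e.\ $G$ is cyclic.

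The only nontrivial step is the freeness-implies-torsion-free argument, and it is routine; the bulk of the lemma is packaged into Assertion \ref{cl19}, which reduces the problem to the two algebraic alternatives. So I expect the main obstacle to be purely bookkeeping — checking that the two excluded alternatives really do contain torsion, and that this torsion comes from an element of $G$ itself (not just a quotient) so that the averaging trick applies directly.
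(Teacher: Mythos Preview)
Your proof is correct and rests on the same two ingredients as the paper's argument: Assertion \ref{cl19} and the fact that a nontrivial finite-order affine transformation of $\mathbb R^2$ has a fixed point (so freeness forces $G$ to be torsion-free). The organization, however, is genuinely different. The paper proceeds constructively: given a generator $h$ of $H$ and an orientation-reversing $g$ with $g^2=h^l$, it forms $g'=gh^{-[l/2]}$, uses Assertion \ref{cl19} to see that $(g')^2$ is either $e$ or $h$, rules out $(g')^2=e$ via the fixed-point observation (``any affine involution is a reflection and has a fixed point''), and concludes that $g'$ generates $G$. You instead argue abstractly: Assertion \ref{cl19} gives two isomorphism types for $G$, and torsion-freeness (via the barycenter averaging trick) kills the dihedral alternative and the $\mathbb Z\oplus\mathbb Z/2$ alternative, leaving $G\simeq\mathbb Z$. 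Your route is slightly cleaner conceptually and isolates the torsion-free principle explicitly; the paper's route has the advantage of exhibiting an explicit generator of $G$, which is what one actually wants downstream for the normal forms in Lemma \ref{l9} and Table \ref{tab1}. One small presentational point: you assert ``$H$ is an infinite cyclic subgroup'' before establishing torsion-freeness; strictly, the dichotomy from the index argument is $G=H$ versus $[G:H]=2$, and the infinitude of $H$ in the second case follows only after you know $G$ is torsion-free (else $H$ trivial, $|G|=2$, contradiction). This is harmless but worth reordering.
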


\begin{proof}
Let $h$ be a generator of $H$ and let $g$ be an orientation-reversing element. Since $g^2\in H$, we have $g^2=h^l$. Then the square of the element $g'=gh^{-[l/2]}$ (where $[x]$ is largest integer not greater than $x$) is either $e$ or $h$ (by Assertion \ref{cl19} either $g$ and $h$ commute or $l=0$). But any affine transformation $f$ satisfying $f^2=e$ is a reflection. Every reflection has fixed points. It follows easily that $g'$ generates $G$.
\end{proof}

\begin{remark}
\label{r10} If $H$ is cyclic, then $G=H$ if and only if a generator of $G$ preserves orientation.
\end{remark}

\begin{lemma}
\label{l9} The plane, the cylinder and the M\"{o}bius strip are the only non-compact surfaces that admit a closed lattice of rank $2$.

The plane corresponds to the case $G=e$, the cylinder corresponds to the case when $G$ is generated by one
orientation-preserving element, and the M\"{o}bius strip corresponds to the case when $G$ is generated by one orientation-reserving element.
\end{lemma}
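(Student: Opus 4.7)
The plan is to read the classification off the structural results already developed in this section. The key reduction is Assertion~\ref{cl18}: if the orientation-preserving subgroup $H\subset G$ is not cyclic, then $\mathbb R^2/H$ is the torus, so $\mathbb R^2/G$ is either the torus or the Klein bottle, both compact. For a non-compact quotient, $H$ must therefore be cyclic, and by Lemma~\ref{l8} $G$ itself is then cyclic (possibly trivial).

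First I would rule out the case $H=\{e\}$ with $G\ne\{e\}$: any orientation-reversing $g\in G$ would satisfy $g^2\in H=\{e\}$, hence $g$ is an affine involution with $\det(dg)=-1$, that is a reflection about some line, which has fixed points and contradicts the freeness requirement (Assertion~\ref{cl16}). So there are exactly three cases: (a) $G=\{e\}$, whose quotient is $\mathbb R^2$ itself; (b) $G=\langle g\rangle$ with $g$ orientation-preserving; and (c) $G=\langle g\rangle$ with $g$ orientation-reversing. In cases (b) and (c), $G\simeq\mathbb Z$ acts freely by Assertion~\ref{cl16}, so the quotient is a connected surface without boundary whose fundamental group is $\mathbb Z$.

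It remains to identify the two quotients topologically. By the classification of surfaces, a connected surface without boundary with fundamental group $\mathbb Z$ is either the cylinder $S^1\times\mathbb R$ (orientable) or the Möbius strip (non-orientable), the two being distinguished by orientability. In case (b) the deck transformation preserves orientation, so $\mathbb R^2/G$ is the cylinder; in case (c) it reverses orientation, so $\mathbb R^2/G$ is the Möbius strip. For concreteness, one can verify this via explicit fundamental domains using the normal forms from Lemma~\ref{l5}: in case (b), for a non-translation $g=\bigl(\bigl(\begin{smallmatrix}1&n\\0&1\end{smallmatrix}\bigr),\bigl(\begin{smallmatrix}0\\y_0\end{smallmatrix}\bigr)\bigr)$, a horizontal strip $\{0\le y\le y_0\}$ is a fundamental domain whose bounding lines are glued by a shear plus translation (producing the cylinder); in case (c) the element $g^2$ is the pure translation by $(2x,0)$, so $\mathbb R^2/\langle g^2\rangle$ is already a cylinder on which $g$ descends to a fixed-point-free involution, and the further quotient is the Möbius strip.

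I expect the main obstacle to be this final topological identification --- either one invokes the classification of (non-compact) surfaces abstractly, or one constructs the required diffeomorphisms via the fundamental domains as sketched. Everything else in the proof is bookkeeping on top of Assertion~\ref{cl18}, Lemma~\ref{l8}, and the freeness criterion of Assertion~\ref{cl16}.
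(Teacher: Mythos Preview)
Your argument is correct and follows essentially the same route as the paper, which dispatches the lemma in one line by referring back to the preceding results (Assertion~\ref{cl18}, Lemma~\ref{l8}, and the normal forms of Lemma~\ref{l5}); you have simply written out the details that the paper leaves implicit. The separate treatment of the case $H=\{e\}$, $G\ne\{e\}$ is harmless but redundant, since it is already absorbed into the proof of Lemma~\ref{l8}.
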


This Lemma follows directly from Lemmas \ref{l5} and \ref{l7}.

\begin{theorem}[\rm{(classification of complete lattices)}]
\label{t15}
1) Among compact two-dimensional surfaces only the torus $\mathbb T^2$ and the Klein bottle $\mathbb K^2$ admit a closed lattice of rank $2$.  Any closed lattice of rank $2$ on these surfaces is complete.

2) The plane $\mathbb R^2$, the cylinder $C^2$, the M\"{o}bius strip $\mathbb M^2$, the torus $\mathbb T^2$ and the Klein bottle $\mathbb K^2$ are the only surfaces that possess a complete lattice of rank $2$.

3) Table \ref{tab1} classifies all groups $G\subset\mathrm{Aut}_2$ which act freely on $\mathbb R^2$ so that the quotient space $\mathbb R^2/G$ is a manifold (classification is up to conjugation by an element of $\mathrm{Aut}_2$).

4) There is a natural bijection between groups $G$ from 3) and complete lattices on two-dimensional surfaces.
Each group $G$ defines a complete lattice on the quotient space $\mathbb R^2/G$, each complete lattice can be obtained in this way.

5) Lattices from different series are nonisomorphic.

6) Two lattices $\mathbb T^2_{u_1,v_1;w_1,z_1}$ and $\mathbb T^2_{u_2,v_2;w_2,z_2}$ are isomorphic if and only if there exist $C,D\in \mathrm{GL}(2,\mathbb Z)$ such that $\bigl(\begin{smallmatrix}u_2&w_2
\\
v_2&z_2\end{smallmatrix}\bigr)=C\bigl(\begin{smallmatrix}u_1&w_1
\\
v_1&z_1\end{smallmatrix}\bigr)D$.

7) Two lattices $C^2_{u_1,v_1}$ and $C^2_{u_2,v_2}$ are isomorphic if and only if there exists $C\in
\mathrm{GL}(2,\mathbb Z)$ such that $\bigl(\begin{smallmatrix}u_2
\\
v_2\end{smallmatrix}\bigr)=C\bigl(\begin{smallmatrix}u_1
\\
v_1\end{smallmatrix}\bigr)$.

8) Lattices from other series are pairwise nonisomorphic.
\end{theorem}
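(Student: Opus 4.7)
Parts 1) and 2) of the theorem amount to assembling results already proved in this section. Lemma \ref{l7} shows that among compact surfaces only $\mathbb{T}^2$ and $\mathbb{K}^2$ admit a closed lattice of rank $2$, while Lemma \ref{l9} produces the complete list of non-compact surfaces (plane, cylinder, Möbius strip). Completeness on $\mathbb{T}^2$ and $\mathbb{K}^2$ is Remark \ref{r6}, which invokes Theorem \ref{t13} for nilpotent (in fact abelian or step-$2$ nilpotent) fundamental groups. Completeness in the non-compact cases can be read off directly from the explicit generators of $G$ provided by Lemma \ref{l5}, since the quotient map $\mathbb{R}^2\to\mathbb{R}^2/G$ is then by construction the universal cover with induced standard lattice.

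For parts 3) and 4), the plan is a case analysis organized by the chain $T\vartriangleleft H\vartriangleleft G$ studied in Lemmas \ref{l6}, \ref{l7}, \ref{l8} and Corollary \ref{c8}. The cases are distinguished by the rank of $T$ (namely $0$, $1$ or $2$), by whether $H/T$ is trivial or infinite cyclic, and by whether $G=H$ or $G/H\simeq\mathbb{Z}/2$. In each case, Lemmas \ref{l5}, \ref{l6} together with Assertion \ref{cl17} produce an integral affine basis of $\mathbb{R}^2$ in which all generators take the canonical upper-triangular form appearing in Table \ref{tab1}. Corollary \ref{c3} then converts this into the claimed bijection between conjugacy classes in $\mathrm{Aut}_2$ and complete lattices on the corresponding quotient surfaces.

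For parts 5)--8) the key tool is again Corollary \ref{c3}: isomorphism of lattices is the same thing as conjugacy of the associated groups inside $\mathrm{Aut}_2\simeq\mathrm{GL}_2(\mathbb{Z})\leftthreetimes\mathbb{R}^2$. For the torus series $\mathbb{T}^2_{u,v;w,z}$, conjugation by a pure translation leaves the matrix $\bigl(\begin{smallmatrix}u&w\\ v&z\end{smallmatrix}\bigr)$ of translation parts unchanged, while conjugation by $C\in\mathrm{GL}_2(\mathbb{Z})$ multiplies it on the left by $C$; independently, we are free to re-choose the two generators of $T\simeq\mathbb{Z}^2$, which acts on the right by a matrix $D\in\mathrm{GL}_2(\mathbb{Z})$. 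This produces criterion 6), and the cylinder criterion 7) is the same computation with one generator and no right action. For the remaining series, the normalizations in the table ($m\ge 0$, $\delta\in\{0,1\}$, and so on) are uniquely determined by the matrix parts of the generators via the uniqueness clause of Assertion \ref{cl17}, and the shifts $x,y$ are then pinned down by Lemma \ref{l5}, which altogether yields the non-isomorphism statements.

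The principal obstacle will be verifying that the normal forms in part 3) are genuinely canonical: one must check that each choice made during the case analysis (ordering of generators, sign of $e_1$ in Assertion \ref{cl17}, placement of the origin in Lemma \ref{l5}) is forced by an invariant of the conjugacy class of $G$, and in particular that the Klein bottle series $\mathbb{K}^2_{m,y;\delta,x}$ does not collapse under a permissible change of generators of $G\simeq\pi_1(\mathbb{K}^2)$. Here Assertions \ref{cl14} and \ref{cl15} on standard generators will do the essential work: they constrain the possible changes of generators enough that $m$, $\delta$, $x$ and $y$ emerge as honest invariants of the lattice. Once this rigidity is established, parts 5) and 8) follow automatically, since different series have distinguishable matrix parts or ranks of $T$.
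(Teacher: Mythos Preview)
Your plan is correct and matches the paper's approach closely: the paper also reduces 1), 2), 4) to the earlier lemmas you cite, carries out the case analysis along $T\vartriangleleft H\vartriangleleft G$ using Lemmas~\ref{l5}, \ref{l6} and Assertion~\ref{cl17} to reach the normal forms, and handles 6), 7) exactly by the left/right $\mathrm{GL}_2(\mathbb Z)$-action you describe.

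Two points deserve more care than your sketch suggests. First, you omit the verification that every entry in Table~\ref{tab1} really acts freely (so that the quotient is a manifold and the lattice is well defined); the paper checks this via Assertion~\ref{cl16}, and for the Klein bottle writes out every element $a^kb^l$ explicitly (Lemma~\ref{l10}) to do so. Second, for the Klein bottle in part 8) your appeal to the uniqueness clause of Assertion~\ref{cl17} and to Lemma~\ref{l5} is the right idea but not sufficient on its own: those results normalize a \emph{single} element, whereas here one must control a conjugating automorphism $f$ acting simultaneously on both generators while also allowing the change of standard generators from Assertion~\ref{cl14}. The paper makes this precise in Lemma~\ref{l11} by writing down the conjugation formula (Assertion~\ref{cl20}) and solving the resulting system of six equations; only then do $m$, $\delta$, $x$, $y$ fall out as invariants. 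Your outline should flag that this step requires an explicit computation rather than a soft invariance argument.
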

\begin{table}[h]
\renewcommand{\arraystretch}{1.2} 
\caption{Complete lattices of rank $2$} \label{tab1}
\vskip3mm
\begin{center}
\begin{tabular}{||c|c|c|c|c|c||} \hline
Base & Series & $g$ & $h$ & $t_1$ & $t_2$
\\
\hline\hline
$\mathbb R^2$ & $\mathbb R^2$ & -- & -- & -- & --
\\
\hline
$\mathbb T^1\times\mathbb R^1$ & $C^2_{u,v}$ & -- & -- &
$\bigl(\begin{smallmatrix}1&0
\\
0&1\end{smallmatrix}\bigr)\bigl(\begin{smallmatrix}u
\\
v\end{smallmatrix}\bigr)$ & --
\\
\hline
$\mathbb T^1\times\mathbb R^1$ & $C^2_{n,y}$ & -- &
$\bigl(\begin{smallmatrix}1&n
\\
0&1\end{smallmatrix}\bigr)\bigl(\begin{smallmatrix}0
\\
y\end{smallmatrix}\bigr)$ & -- & --
\\
\hline
$\mathbb M^2$ & $\mathbb M^2_{\delta,x}$ &
$\bigl(\begin{smallmatrix}1&\delta
\\
0&-1\end{smallmatrix}\bigr)\bigl(\begin{smallmatrix}x
\\
0\end{smallmatrix}\bigr)$ & -- & -- & $\bigl(\begin{smallmatrix}1&0
\\
0&1\end{smallmatrix}\bigr)\bigl(\begin{smallmatrix}2x
\\
0\end{smallmatrix}\bigr)$
\\
\hline
$\mathbb T^2$ & $\mathbb T^2_{u,v;w,z}$ & -- & -- &
$\bigl(\begin{smallmatrix}1&0
\\
0&1\end{smallmatrix}\bigr)\bigl(\begin{smallmatrix}u
\\
v\end{smallmatrix}\bigr)$ & $\bigl(\begin{smallmatrix}1&0
\\
0&1\end{smallmatrix}\bigr)\bigl(\begin{smallmatrix}w
\\
z\end{smallmatrix}\bigr)$
\\
\hline
$\mathbb T^2$ & $\mathbb T^2_{n,y;x}$ & -- &
$\bigl(\begin{smallmatrix}1&n
\\
0&1\end{smallmatrix}\bigr)\bigl(\begin{smallmatrix}0
\\
y\end{smallmatrix}\bigr)$ & $\bigl(\begin{smallmatrix}1&0
\\
0&1\end{smallmatrix}\bigr)\bigl(\begin{smallmatrix}x
\\
0\end{smallmatrix}\bigr)$ & --
\\
\hline
$\mathbb K^2$ & $\mathbb K^2_{0,y;\delta,x}$ &
$\bigl(\begin{smallmatrix}1&\delta
\\
0&-1\end{smallmatrix}\bigr)\bigl(\begin{smallmatrix}x
\\
0\end{smallmatrix}\bigr)$ & -- & $\bigl(\begin{smallmatrix}1&0
\\
0&1\end{smallmatrix}\bigr)\bigl(\begin{smallmatrix}-\frac{\delta y}2
\\
y\end{smallmatrix}\bigr)$ & $\bigl(\begin{smallmatrix}1&0
\\
0&1\end{smallmatrix}\bigr)\bigl(\begin{smallmatrix}2x
\\
0\end{smallmatrix}\bigr)$
\\
\hline
$\mathbb K^2$ & $\mathbb K^2_{n,y;0,x}$ &$\bigl(\begin{smallmatrix}1&0
\\
0&-1\end{smallmatrix}\bigr)\bigl(\begin{smallmatrix}x
\\
0\end{smallmatrix}\bigr)$ &$\bigl(\begin{smallmatrix}1&n
\\
0&1\end{smallmatrix}\bigr)\bigl(\begin{smallmatrix}\frac{ny}2
\\
y\end{smallmatrix}\bigr)$ & -- & $\bigl(\begin{smallmatrix}1&0
\\
0&1\end{smallmatrix}\bigr)\bigl(\begin{smallmatrix}2x
\\
0\end{smallmatrix}\bigr)$
\\
\hline
$\mathbb K^2$ & $\mathbb K^2_{2n,y;1,x}$ & $\bigl(\begin{smallmatrix}1&1
\\
0&-1\end{smallmatrix}\bigr)\bigl(\begin{smallmatrix}x
\\
0\end{smallmatrix}\bigr)$ & $\bigl(\begin{smallmatrix}1&2n
\\
0&1\end{smallmatrix}\bigr)\bigl(\begin{smallmatrix}\frac{(2n-1)y}2
\\
y\end{smallmatrix}\bigr)$ & -- & $\bigl(\begin{smallmatrix}1&0
\\
0&1\end{smallmatrix}\bigr)\bigl(\begin{smallmatrix}2x
\\
0\end{smallmatrix}\bigr)$
\\
\hline
\end{tabular}
\end{center}
\end{table}

\subsubsection*{Description of table \ref{tab1}}

Each row of Table \ref{tab1} contains a base (a surface), the notion for a series of lattices on the base, and
generators of the fundamental group of the base (considered as elements of $\mathrm{Aut}_2$).

In the table we denote by $g$ an element that does not preserve orientation and by $h$ an orientation-preserving  element which is not a translation. We denote translations by $t_1$ and $t_2$. For orientable surfaces, the elements in the table form a basis of the fundamental group. In the case of the Klein bottle, elements in the three first columns  (that is $h,g$ or $t_1,g$) are standard generators.

For non-orientable surfaces, we put $t_2=g^2$. This means that elements in the three last columns of the table generate the subgroup $H\subset G$ consisting of all elements that preserve orientation.

In the table $x,y,u,v,w,z\in\mathbb R$, $x,y>0$, $n\in\mathbb N$, and $\delta$ is either $0$ or $1$.

\begin{proof}[of Theorem \ref{t15}]
We have already proved 1),~2) and~4) (combine corollaries \ref{c2},~\ref{c3}, remark~\ref{r6}
and Lemmas \ref{l7},~\ref{l9}). It remains to prove the following: first,
any group $G$ from item 3) can be reduced to the required form; secondly, the action of each such group is free (we must check that the lattice on the quotient space $\mathbb R^2/G$ is well defined); and thirdly, series from Table \ref{tab1} are pairwise nonisomorphic (more precisely, we must prove items 5)--8)).

\textsl{Reduction to the required form}. We make use of Lemmas \ref{l5} and \ref{l6}. After that it is clear how to proceed further (how to change the basis).

Consider the case of the Klein bottle $B=\mathbb K^2$ (other cases are analogous).

Take standard generators $a,b\in\pi_1(\mathbb K^2)$ (that is generators with the only relation $ab=ba^{-1}$). By Lemma \ref{l6} there exists an integral affine basis such that
$a=\bigl(\bigl(\begin{smallmatrix}1&n
\\
0&1\end{smallmatrix}\bigr),\bigl(\begin{smallmatrix}x_1
\\
x_2\end{smallmatrix}\bigr)\bigr)$, where $n\in\mathbb Z$,
$x_1,x_2\in\mathbb R$, and the element $b$ has the form
$\bigl(\bigl(\begin{smallmatrix}\sigma&m
\\
0&-\sigma\end{smallmatrix}\bigr),\bigl(\begin{smallmatrix}y_1
\\
y_2\end{smallmatrix}\bigr)\bigr)$, where $m\in\mathbb Z$, $\sigma=\pm1$,
$y_1,y_2\in\mathbb R$ (by Assertion \ref{cl15}, the element $a$ preserves orientation).

We must reduce the standard generators $a$ and $b$ to the required form by the following operations. We can choose an integral affine basis of $\mathbb R^2$ and we can change standard generators of the Klein bottle $\mathbb K^2$. To simplify notations throughout this part of the proof we use the same notations for parameters of $a$ and $b$ (for example, $n$ is always the upper-right element of $da$).

Without loss of generality it can be assumed that $\sigma=1$.
It is not hard to check that the identity $ab=ba^{-1}$ holds if and only if $x_1+y_1+ny_2=y_1-\sigma x_1+\sigma nx_2-mx_2$ and $x_2+y_2=y_2+\sigma x_2$. If $\sigma=-1$, then $x_2=0$ and, consequently, $n=0$ (the action of $a$ is free). If $a$ is a translation, then, by Lemma \ref{l6}, there exists an integral affine basis of $\mathbb R^2$ such that $\sigma=1$.

Let us reduce the linear parts $da$ and $db$ to the required form. If $m$ and $n$ are odd, then substitute $ba^{-1}$ for $b$ (recall assertion~\ref{cl14}) to change the parity of $m$. If $n<0$, then invert the first basis vector $e_1$ to get $n\ge0$. Further, change the second basis vector ~$e_2$ to $e_2-\bigl[\frac m2\bigr]e_1$ to make $m$ equal either to $0$ or to $1$) (compare \ref{cl17}).

Now, we reduce the vector parts of the elements $a$ and $b$ to the required form. Take
$\bigl(\begin{smallmatrix}0
\\
-z_1/n\end{smallmatrix}\bigr)$ as the new origin to make $y_2$ equal to $0$ (compare Lemma \ref{l5}). Further, if $x_2<0$, replace $e_1$, $e_2$ by $-e_1$, $-e_2$ to get $x_2>0$. After that, if we have $y_1<0$, then substitute $b^{-1}$ for $b$ to change the sign of $y_1$. Remaining restrictions on $a$ and $b$ (that is $x_1=\frac{n-\delta}2x_2$) follows from the identity $ab=ba^{-1}$.

\textsl{The action is free}. Consider a group $G$ from Table \ref{tab1}. By Assertion \ref{cl16}, we must prove that for all elements $(A,\vec b)\in G$ the equation $(A-E)x=-\vec b$ has no solutions. The proof is straightforward. To simplify the case of the Klein bottle we describe all element of the group.

\begin{lemma}
\label{l10} If $a=\bigl(\bigl(\begin{smallmatrix}1&n
\\
0&1\end{smallmatrix}\bigr),\bigl(\begin{smallmatrix}\alpha x
\\
x\end{smallmatrix}\bigr)\bigr)$
and $b=\bigl(\bigl(\begin{smallmatrix}1&\delta
\\
0&-1\end{smallmatrix}\bigr),\bigl(\begin{smallmatrix}y
\\
0\end{smallmatrix}\bigr)\bigr)$, then
\begin{equation}
a^{k}b^{l}=\left(\begin{pmatrix} 1 &
\dfrac{1+(-1)^{l-1}}{2}\delta+(-1)^{l}kn
\\
0 & (-1)^{l}\end{pmatrix},
\\
\begin{pmatrix} k \alpha x+ \dfrac{k(k-1)}{2}nx +ly
\\
kx \end{pmatrix}\right).
\end{equation}
\end{lemma}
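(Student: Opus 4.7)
The proof will be a direct computation using the semidirect product law in the affine group: $(A,\vec u)(B,\vec v)=(AB,\,A\vec v+\vec u)$. The plan is to compute $a^{k}$ and $b^{l}$ separately, and then multiply them. Both sub-computations are tractable because the matrix parts have very simple structure: $M_{a}=\bigl(\begin{smallmatrix}1&n\\0&1\end{smallmatrix}\bigr)$ is unipotent, and $M_{b}=\bigl(\begin{smallmatrix}1&\delta\\0&-1\end{smallmatrix}\bigr)$ satisfies $M_{b}^{2}=E$.

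First I would establish by induction on $k$ that
\[
M_{a}^{k}=\begin{pmatrix}1&kn\\0&1\end{pmatrix},\qquad
a^{k}=\left(M_{a}^{k},\ \Bigl(\sum_{j=0}^{k-1}M_{a}^{j}\Bigr)\begin{pmatrix}\alpha x\\x\end{pmatrix}\right).
\]
Summing the geometric-style series $\sum_{j=0}^{k-1}M_{a}^{j}=\bigl(\begin{smallmatrix}k&n\,k(k-1)/2\\0&k\end{smallmatrix}\bigr)$ and applying it to $(\alpha x,x)^{T}$ gives the translation part $\bigl(k\alpha x+\tfrac{k(k-1)}{2}nx,\ kx\bigr)^{T}$. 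This handles the $a^{k}$ factor and in particular reproduces the two lower entries of the claimed formula and the $(-1)^{l}kn$ contribution to the top-right of the matrix (after the final multiplication).

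Next I would compute $b^{l}$. Since $M_{b}^{2}=E$, we have $M_{b}^{l}=E$ when $l$ is even and $M_{b}^{l}=M_{b}$ when $l$ is odd; both cases are captured uniformly by
\[
M_{b}^{l}=\begin{pmatrix}1&\tfrac{1+(-1)^{l-1}}{2}\delta\\0&(-1)^{l}\end{pmatrix}.
\]
For the translation part, $M_{b}^{j}\bigl(\begin{smallmatrix}y\\0\end{smallmatrix}\bigr)=\bigl(\begin{smallmatrix}y\\0\end{smallmatrix}\bigr)$ for every $j$ because $(y,0)^{T}$ is fixed by $M_{b}$, so $b^{l}=(M_{b}^{l},(ly,0)^{T})$ again by a short induction.

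Finally I would multiply: $a^{k}b^{l}=(M_{a}^{k}M_{b}^{l},\,M_{a}^{k}(ly,0)^{T}+v_{a^{k}})$. The matrix product $M_{a}^{k}M_{b}^{l}$ yields exactly the claimed matrix, with its top-right entry $\tfrac{1+(-1)^{l-1}}{2}\delta+(-1)^{l}kn$. The vector $M_{a}^{k}(ly,0)^{T}=(ly,0)^{T}$ (the first column of $M_{a}^{k}$), which adds to $v_{a^{k}}$ to give the required translation part. There is no real obstacle here; the only place to be careful is bookkeeping the sign $(-1)^{l}$ correctly when multiplying $M_{a}^{k}M_{b}^{l}$, and verifying that the parity-dependent expression $\tfrac{1+(-1)^{l-1}}{2}\delta$ specializes correctly to $0$ for even $l$ and $\delta$ for odd $l$.
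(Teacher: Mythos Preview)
Your proposal is correct and is precisely the direct calculation the paper has in mind; the paper's own proof consists only of the sentence ``The proof is by direct calculation.'' Your decomposition into computing $a^{k}$ and $b^{l}$ separately via the affine group law and then multiplying is the natural way to carry this out, and all the intermediate identities you state check.
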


The proof is by direct calculation.
\smallskip

\textsl{Series are nonisosmorphic}. Lattices on non-diffeomorphic surfaces are pairwise nonisomorphic. For orientable surfaces, lattices from different series are pairwise nonisomorphic (one series is generated by translations and another is not generated by translations).

In all other cases we make use of Assertion \ref{cl8} (about isomorphic lattices on the quotient space). Items 6),~7) are clear. The matrix $C$ corresponds to changes of basis of the plane (changes of the origin do not affect translations), ant the matrix $D$ corresponds to changes of basis of the fundamental group. Notice that different generators of the fundamental group of the cylinder define different signs in the formula from item 7). This can be ignored, since the matrix $-E$ is an element of the group $\mathrm{GL}_2(\mathbb Z)$.

The case $\mathbb T^2_{n,y;x}$ was considered by Mishachev in \cite{4} (this case can also be considered analogously to the case of the Klein bottle below). Consider the Klein bottle $\mathbb K^2$. Next lemma describes all automorphisms between two lattices on the Klein bottle $\mathbb K^2$.

\begin{lemma}
\label{l11} Suppose that two lattices $P_i$, $i=1,2$, on the Klein bottle are defined by two pairs of standard generators $a_i,b_i$, where $a_i\,{=}\,\bigl(\bigl(\begin{smallmatrix}1&n_i
\\
0&1\end{smallmatrix}\bigr),\allowbreak \bigl(\begin{smallmatrix}\alpha_iy_i
\\
y_i\end{smallmatrix}\bigr)\bigr)$ and
$b_i=\bigl(\bigl(\begin{smallmatrix}1&\delta_i
\\
0&-1\end{smallmatrix}\bigr),\bigl(\begin{smallmatrix}x_i
\\
0\end{smallmatrix}\bigr)\bigr)$.

An automorphism $f$ of the plane takes $P_1$ to $P_2$ if and only if
\begin{align}
fa_{1}f^{-1}&=a_{2}^{\varepsilon_{1}},
\label{eq8}
\\
fb_{1}f^{-1}&=a_{2}^{k}b_{2}^{\varepsilon_{2}}.
\label{eq9}
\end{align}
for some $\varepsilon_1=\pm1$, $\varepsilon_2=\pm1$ and $k\in\mathbb Z$.
If $a_1\ne a_2$ or $b_1\ne b_2$, then there is no such automorphism $f$ (lattices from different series are nonisomorphic). If $a_1=a_2$ and $b_1=b_2$, then $f$ is an automorphism if and only if the following holds:
\begin{itemize}
\item[1)]
$f=\bigl(\bigl(\begin{smallmatrix}\varepsilon_2&\mu
\\
0&\varepsilon_1\end{smallmatrix}\bigr),\bigl(\begin{smallmatrix}z_1
\\
\frac{ky}2\end{smallmatrix}\bigr)\bigr)$, where
$\mu=\frac{-\varepsilon_1\delta+\varepsilon_1kn+\varepsilon_2\delta}2$,
$z_1\in\mathbb R$;
\item[2)] if $n\ne0$, then $\varepsilon_2=1$;
\item[3)] if $n$ is odd, then $k$ is even;
\item[4)] if $\delta\ne0$ (and $n$ is even), then $\varepsilon_1=\varepsilon_2$.
\end{itemize}
\end{lemma}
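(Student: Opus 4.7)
The plan is to apply Assertions \ref{cl8} and \ref{cl14} to translate the geometric problem into a conjugation equation in $\mathrm{Aut}_2$, then split it into its matrix and vector parts and solve entry-by-entry. By Assertion \ref{cl8}, $f$ takes $P_1$ to $P_2$ if and only if there exists an isomorphism $\varphi \colon \pi_1(\mathbb K^2) \to \pi_1(\mathbb K^2)$ satisfying $f \rho_1(g) f^{-1} = \rho_2(\varphi(g))$ for all $g$. Since $\varphi$ must send standard generators to standard generators, Assertion \ref{cl14} forces $\varphi(a_1) = a_2^{\varepsilon_1}$ and $\varphi(b_1) = a_2^k b_2^{\varepsilon_2}$ with $\varepsilon_i \in \{\pm 1\}$, $k \in \mathbb Z$, which upon substitution is precisely \eqref{eq8}--\eqref{eq9}. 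The Klein bottle relation on the right is preserved automatically, since $b_2^{\pm 1} a_2 b_2^{\mp 1} = a_2^{-1}$ for both signs.

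Write $f = (F, \vec t)$ with $F = \bigl(\begin{smallmatrix}p & q \\ r & s\end{smallmatrix}\bigr) \in \mathrm{GL}_2(\mathbb Z)$ and $\vec t = \bigl(\begin{smallmatrix}z_1 \\ z_2\end{smallmatrix}\bigr)$. The matrix part of \eqref{eq8} reads $F \bigl(\begin{smallmatrix}1 & n_1 \\ 0 & 1\end{smallmatrix}\bigr) = \bigl(\begin{smallmatrix}1 & \varepsilon_1 n_2 \\ 0 & 1\end{smallmatrix}\bigr) F$, which when $n_1 > 0$ forces $r = 0$, $n_1 = n_2$, and $p = \varepsilon_1 s$; when $n_1 = 0$ one instead extracts $r = 0$ from the centralizer of $\bigl(\begin{smallmatrix}1 & \delta \\ 0 & -1\end{smallmatrix}\bigr)$ that appears in the matrix part of \eqref{eq9}. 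The second coordinate of the vector part of \eqref{eq8}, computed via Lemma \ref{l10}, reads $s y_1 = \varepsilon_1 y_2$ once $r = 0$, giving $y_1 = y_2$ and $s = \varepsilon_1$. Analogously the matrix part of \eqref{eq9} forces $\delta_1 = \delta_2$ and the vector part gives $x_1 = x_2$. Thus $f$ can exist only when $(n_i, \delta_i, x_i, y_i)$ agree, which proves that lattices from different series are nonisomorphic.

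Assuming $a_1 = a_2 = a$ and $b_1 = b_2 = b$, the combination $p = \varepsilon_1 s$ with $s = \varepsilon_1$ yields $p = 1$; identifying $p = \varepsilon_2$ in the lemma's notation gives Restriction~2 ($\varepsilon_2 = 1$ when $n \ne 0$), and in all cases $F = \bigl(\begin{smallmatrix}\varepsilon_2 & \mu \\ 0 & \varepsilon_1\end{smallmatrix}\bigr)$. Computing $FBF^{-1}$ directly and equating with the matrix $\bigl(\begin{smallmatrix}1 & \delta - kn \\ 0 & -1\end{smallmatrix}\bigr)$ of $a^k b^{\varepsilon_2}$ (Lemma \ref{l10}) solves for $\mu = \frac{\varepsilon_2\delta - \varepsilon_1\delta + \varepsilon_1 kn}{2}$. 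The integrality condition $\mu \in \mathbb Z$ splits by cases: when $\delta = 0$ the formula reduces to $\mu = \varepsilon_1 kn/2$, forcing $k$ even whenever $n$ is odd (Restriction~3); when $\delta = 1$, so $n$ is even by Theorem \ref{t15}, the formula forces $\varepsilon_2 - \varepsilon_1$ to be even, i.e.\ $\varepsilon_1 = \varepsilon_2$ (Restriction~4). For the vector parts, the second coordinate of \eqref{eq9} gives $z_2 = ky/2$; after substituting $\alpha = (n-\delta)/2$, the first coordinates of both \eqref{eq8} and \eqref{eq9} reduce to tautologies, leaving $z_1 \in \mathbb R$ free. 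The main technical obstacle will be the case $n = 0$, where \eqref{eq8} imposes no matrix constraint and the form of $F$ must be recovered from \eqref{eq9} alone, combined with the careful parity bookkeeping needed to separate the integrality conditions in Restrictions~3 and~4.
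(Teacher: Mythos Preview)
Your approach is the same as the paper's: invoke Assertions~\ref{cl8} and~\ref{cl14} to reduce to the conjugation identities \eqref{eq8}--\eqref{eq9}, then split into matrix and vector parts and read off the constraints entry by entry. Your derivation of $r=0$ directly from the $(2,1)$ entry of the matrix part of \eqref{eq9} is cleaner than the paper's eigenvector argument, and your identification $p=\varepsilon_2$, $s=\varepsilon_1$ matches the paper's $\sigma_1=\varepsilon_2$, $\sigma_2=\varepsilon_1$.

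There is, however, a genuine slip in your derivation of Restriction~4. You write that when $\delta=1$ and $n$ is even, integrality of $\mu=\frac{\varepsilon_2\delta-\varepsilon_1\delta+\varepsilon_1 kn}{2}$ ``forces $\varepsilon_2-\varepsilon_1$ to be even, i.e.\ $\varepsilon_1=\varepsilon_2$''. But $\varepsilon_1,\varepsilon_2\in\{\pm1\}$, so $\varepsilon_2-\varepsilon_1\in\{-2,0,2\}$ is \emph{always} even; the implication fails. Concretely, for $n=2$, $\delta=1$, $k=0$, $\varepsilon_1=-1$, $\varepsilon_2=1$ one gets $\mu=1\in\mathbb Z$ and $f=\bigl(\bigl(\begin{smallmatrix}1&1\\0&-1\end{smallmatrix}\bigr),\bigl(\begin{smallmatrix}z_1\\0\end{smallmatrix}\bigr)\bigr)$, and one checks directly that $faf^{-1}=a^{-1}$ and $fbf^{-1}=b$, so \eqref{eq8}--\eqref{eq9} hold with $\varepsilon_1\ne\varepsilon_2$. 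The paper's own argument at this point (``$\sigma_2\delta_2=\sigma_1\delta_1$, since $\lambda$ is an integer'') commits the same conflation of parity with equality, so Restriction~4 as stated is not actually a consequence of \eqref{eq8}--\eqref{eq9}; your proof faithfully reproduces the paper's reasoning here, including its gap.
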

\begin{proof}
It follows from Assertions \ref{cl8} and \ref{cl14} that $f$ is an automorphisms if and only if identities \eqref{eq8} and \eqref{eq9} hold.

Note that any automorphisms $f$ preserves the first basis vector $e_1$, since $e_1$ is the only common eigenvector with eigenvalue $1$ for all elements of $\pi_1(\mathbb K^2)$. The next assertion describes the action of an automorphism.

\begin{claim}
\label{cl20} Let $e=\bigl(\bigl(\begin{smallmatrix}1&\widetilde m
\\
0&\sigma\end{smallmatrix}\bigr),\bigl(\begin{smallmatrix}\widetilde x
\\
\widetilde y\end{smallmatrix}\bigr)\bigr)$, where $\widetilde
m,\widetilde x,\widetilde y,\sigma\in\mathbb R$. Then the conjugation by an automorphism
$$
f=\biggl(\begin{pmatrix}\sigma_1&\lambda
\\
0&\sigma_2\end{pmatrix},\begin{pmatrix}z_1
\\
z_2\end{pmatrix}\biggr),
$$
where $\sigma_1=\pm1$, $\sigma_2=\pm1$, $\lambda\in{\mathbb Z}$, $z_1,z_2\in\mathbb R$,
is given by the formula
\begin{equation}
\label{eq10} fef^{-1}=\biggl(\begin{pmatrix} 1 & (\sigma-1)
\sigma_{2} \lambda+ \sigma_{1} \sigma_{2} \widetilde{m}
\\
0 & \sigma\end{pmatrix},
\begin{pmatrix} \sigma_{1}\widetilde{x} + \lambda\widetilde{y}
+ (1-\sigma)\sigma_{2} \lambda z_{2} - \sigma_{1} \sigma_{2}
\widetilde{m} z_{2}
\\
(1- \sigma)z_{2} + \sigma_{2} \widetilde{y}
\end{pmatrix}\biggr).
\end{equation}
\end{claim}

In particular, for elements $a$ and $b$ from Lemma \ref{l11} we have:

\begin{align}
\label{eq11}
faf^{-1}&= \Biggl(\begin{pmatrix} 1 &
\sigma_{1}\sigma_{2}n
\\
0 & 1 \end{pmatrix},
\begin{pmatrix} \sigma_{1}\dfrac{n-\delta}{2}y
-\sigma_{1}\sigma_{2}nz_{2}+\lambda y
\\
\sigma_{2}y \end{pmatrix}\Biggr),
\\
fbf^{-1}&=\biggl(\begin{pmatrix} 1 &
-2\sigma_{2}\lambda+\sigma_{1}\sigma_{2}\delta
\\
0 & -1 \end{pmatrix},
\begin{pmatrix} \sigma_{1}x+
2\sigma_{2}\lambda z_{2}-\delta\sigma_{1} \sigma_{2}z_{2}
\\
2z_{2} \end{pmatrix}\biggr). \label{eq12}
\end{align}

Hence equations \eqref{eq8} and \eqref{eq9} are equivalent to the following:

\begin{gather}
\sigma_{1}\sigma_{2}n_{1} = \varepsilon_{1}n_{2},
\label{eq13}
\\
-2\sigma_{2}\lambda+\sigma_{1}\sigma_{2}\delta_{1}=
\delta_{2} - kn_{2},
\label{eq14}
\\
\sigma_{1}\frac{n_{1}-\delta_{1}}{2}y_{1}
-\sigma_{1}\sigma_{2}n_{1}z_{2}+\lambda y_{1} = \frac{n_{2} -
\varepsilon_{1} \delta_{2}}{2} y_{2},
\label{eq15}
\\
\sigma_{2}y_{1} = \varepsilon_{1} y_{2},
\label{eq16}
\\
\sigma_{1}x_{1}+ 2\sigma_{2}\lambda z_{2}-\delta_{1} \sigma_{1}
\sigma_{2}z_{2} = k \frac{n_{2} -\delta_{2}}{2} y_{2} +
\frac{k(k-1)}{2}n_{2}y_{2} + \varepsilon_{2}x_{2},
\label{eq17}
\\
2z_{2} = k y_{2}.
\label{eq18}
\end{gather}

Compare matrix parts of $a_1$, $a_2$ and $b_1$, $b_2$ to obtain \eqref{eq13} and \eqref{eq14}.
To get \eqref{eq15} and \eqref{eq16} compare vector parts of $a_1$ and $a_2$. To get \eqref{eq17} and \eqref{eq18} compare vector parts of $b_1$ and $b_2$.

Since $y_i>0$, $\sigma_i=\pm1$ and $\varepsilon_i=\pm1$, it follows from identity \eqref{eq16} that $\sigma_2=\varepsilon_1$ and $y_1=y_2$. Analogously, it follows from identity \eqref{eq13} that $n_1=n_2$. Moreover, if $n_i\ne0$, then $\sigma_1=1$.

From \eqref{eq14} and \eqref{eq18} it follows that
$$
\lambda=\frac{-\sigma_2\delta_2+k\sigma_2n_2+\sigma_1\delta_1}2,
\qquad
z_2=\frac{ky_2}2.
$$ Combining obtained identities with \eqref{eq17}, we get $y_1=y_2$, $\sigma_1=\varepsilon_2$.

It remains to prove that $\delta_1=\delta_2$ (then we get $a_1=a_2$ and $b_1=b_2$). If $n_1$ ($=n_2$) is odd, then $\delta_1=\delta_2=0$ (and $k$ is even, since $\lambda$ is an integer). If $n_1$ ($=n_2$) is even, then $\sigma_2\delta_2=\sigma_1\delta_1$, since $\lambda$ is an integer (that is $\delta_1=\delta_2$ and if $\delta_i\ne0$, then $\sigma_1=\sigma_2$).

Identity \eqref{eq15} is automatically satisfied.
\end{proof}

Theorem \ref{t15} is proved.
\end{proof}

\begin{remark}
\label{r11}
For any complete lattice on the Klein bottle (see Theorem \ref{t15}) the corresponding lattice on the torus
(that is on the orientable double cover) is defined by the element in the three right rows of Table \ref{tab1} (the torus with the lattice if the quotient of $(\mathbb R^2,\mathbb Z^2)$ by the action of the group generated by these elements).
\end{remark}

\subsection{Other invariants}
\label{s4.3}

Consider a complete lattice $P$ on a two-dimensional manifold $B^2$. Let us compute the group of primary obstructions $H^2(B^2,P)$ and nontrivial twistings $H^2(B^2,\mathbb R)/H^2_P(B^2)$. (Actually we have already computed them in \S\,\ref{s3}.)

\begin{theorem}[\rm{(invariants of Lagrangian fibrations)}]
\label{t16}
1)  If $B^2$ is neither the torus nor the Klein bottle, then both $H^2(B^2,P)$ and $H^2(B^2,\allowbreak\mathbb R)/H^2_P(B)$ are trivial for any complete lattice $P$ on $B^2$.

2) For any complete lattice $P$ from series $\mathbb T^2_{u,v;w,z}$ the space $H^2(\mathbb T^2,P)$ is isomorphic to $\mathbb Z\oplus\mathbb Z$. The group $H^2_P(\mathbb T^2)$ is the subgroup of $H^2(\mathbb T^2,\mathbb R)\simeq\mathbb R$ spanned by $u,v,w,z$.

3) For any complete lattice $P$ from series $\mathbb T^2_{n,y;x}$ the space $H^2(\mathbb T^2,P)$ is isomorphic to $\mathbb Z\oplus\mathbb Z_n$. The group $H^2_P(\mathbb T^2)$ is the subgroup of $H^2(\mathbb T^2,\mathbb R)\simeq\mathbb R$ spanned by $x$ and $y$.

4) For any complete lattice $P$ from series $\mathbb K^2_{m,y;0,x}$, $m\in\mathbb Z$, $m\ge0$, the space $H^2(\mathbb K^2,P)$ is isomorphic to $\mathbb Z_2\oplus\mathbb Z_m$ (where, by definition, $\mathbb Z_0$ is $\mathbb Z$). The group
$H^2_P(\mathbb K^2)$ is trivial.

5) For any complete lattice $P$ from series $\mathbb K^2_{2m,y;1,x}$, $m\in\mathbb Z$, $m\ge0$, the space $H^2(\mathbb K^2,P)$ is isomorphic to $\mathbb Z_{4m}$ (where $\mathbb Z_0=\mathbb Z$). The group $H^2_P(\mathbb K^2)$ is trivial.
\end{theorem}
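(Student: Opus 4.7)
The plan is to assemble Theorem \ref{t16} directly from the computations already carried out in \S\,\ref{s3.2} and \S\,\ref{s3.3}, matching each series of lattices from Table \ref{tab1} to the corresponding case of Assertions \ref{cl9}, \ref{cl10} and \ref{cl11}. No new obstruction-theoretic work is needed; the entire theorem is a bookkeeping exercise in which parameters from the earlier examples are relabeled to agree with the parameters of Theorem \ref{t15}.

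First, for item 1) I would observe that the remaining surfaces from Theorem \ref{t15}, namely $\mathbb R^2$, the cylinder $C^2$ and the M\"obius strip $\mathbb M^2$, are all homotopy equivalent either to a point or to $S^1$. Consequently $H^2(B^2,A)=0$ for every local coefficient system $A$ on $B^2$, and $H^2(B^2,\mathbb R)=0$, so both groups in the statement are trivial. For items 4) and 5), the Klein bottle satisfies $H^2(\mathbb K^2,\mathbb R)=0$, so the quotient $H^2(\mathbb K^2,\mathbb R)/H^2_P(\mathbb K^2)$ is automatically trivial. The computation of $H^2(\mathbb K^2,P)$ is then exactly Assertion \ref{cl10}: the series $\mathbb K^2_{m,y;0,x}$ is case~1) of \ref{cl10} and yields $\mathbb Z_2\oplus\mathbb Z_m$; the series $\mathbb K^2_{2m,y;1,x}$ is case~2) of \ref{cl10} with the parameter called ``$m$'' in Example \ref{ex6} equal to $2m$ here, and gives $\mathbb Z_{2\cdot 2m}=\mathbb Z_{4m}$.

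For item 3), the computation $H^2(\mathbb T^2,P)\simeq\mathbb Z\oplus\mathbb Z_n$ for $P$ in series $\mathbb T^2_{n,y;x}$ is precisely Assertion \ref{cl9}. For item 2), the holonomy of the lattice is trivial (both generators of $\pi_1(\mathbb T^2)$ act as pure translations with identity matrix part), so $P$ is canonically the constant coefficient system $\mathbb Z^{2}$ and $H^2(\mathbb T^2,P)=H^2(\mathbb T^2,\mathbb Z)^{\oplus 2}\simeq\mathbb Z\oplus\mathbb Z$. The remaining ingredient is the description of $H^2_P(\mathbb T^2)$, which is obtained from Assertion \ref{cl11}: for series $\mathbb T^2_{u,v;w,z}$ every integer quadruple $(k,l,p,q)$ is realizable and $\int_{\mathbb T^2}d\alpha=pu+qv+kw+lz$, so $H^2_P(\mathbb T^2)$ is the $\mathbb Z$-span of $u,v,w,z$ inside $H^2(\mathbb T^2,\mathbb R)\simeq\mathbb R$; for series $\mathbb T^2_{n,y;x}$ Assertion \ref{cl11} forces $k=0$, leaving $\int d\alpha=pu+lz$ which, under the parameter identification $u\leftrightarrow x$, $z\leftrightarrow y$, is the $\mathbb Z$-span of $x$ and $y$.

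The main obstacle is purely a matter of parameter matching: Examples \ref{ex5}, \ref{ex6} and \ref{ex7} use names that do not coincide with those in Theorem \ref{t15} (in particular the doubling $m\mapsto 2m$ hidden in item 5, and the interchange of ``$u,z$'' with ``$x,y$'' in items 2 and 3). Once this dictionary is written down explicitly, every part of Theorem \ref{t16} follows immediately from Assertions \ref{cl9}, \ref{cl10}, \ref{cl11} and the vanishing of $H^2$ in the remaining low-dimensional cases, with no additional cocycle computation required.
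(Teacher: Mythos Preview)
Your proposal is correct and follows the same approach as the paper: the paper's own proof of Theorem~\ref{t16} is essentially a list of cross-references to Assertions~\ref{cl9}, \ref{cl10} and Example~\ref{ex7} (Assertion~\ref{cl11}), together with the observation that the non-compact bases are homotopy equivalent to a $1$-complex and that $H^2(\mathbb K^2,\mathbb R)=0$. Your only deviation is that for item~2) you argue directly from the triviality of the holonomy that $P$ is the constant system $\mathbb Z^2$ and hence $H^2(\mathbb T^2,P)\simeq\mathbb Z\oplus\mathbb Z$, whereas the paper simply cites Assertion~\ref{cl9} for both items~2) and~3); these amount to the same computation (setting $n=0$ in \ref{cl9} gives $\mathcal B^2=0$), and your explicit parameter dictionary $u\leftrightarrow x$, $z\leftrightarrow y$ and $m\mapsto 2m$ is exactly what the paper leaves implicit.
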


The result is summarized in table \ref{tab2}.

\begin{table}[h]
\renewcommand{\arraystretch}{1.2} 
\caption{Invariants of compact Lagrangian fibrations}
\label{tab2}
\vskip3mm
\begin{center}
\begin{tabular}{|c|c|c|c|c|c|}
\hline
Base & Series & \multicolumn{2}{|c|}{Nontrivial twistings}
& \multicolumn{2}{|c|}{$H^2(B^2,P)$}
\\
\cline{3-6} & & $H^2(B^2,\mathbb R)$ & $H^2_P(B^2)$ &
$\mathcal Z^2(B^2,P)$ & $\mathcal B^2(B^2,P)$
\\
\hline\hline
$\mathbb T^2$ & $\mathbb T^2_{u,v;w,z}$& $\mathbb R$ &
$u\mathbb Z+v\mathbb Z+w\mathbb Z+z\mathbb Z$ &
$\mathbb Z\oplus\mathbb Z$ & $0$
\\
\hline
$\mathbb T^2$ & $\mathbb T^2_{n,y;x}$ & $\mathbb R$ &
$x\mathbb Z+y\mathbb Z$ & $\mathbb Z\oplus\mathbb Z$ &
$0\oplus n\mathbb Z$
\\
\hline
$\mathbb K^2$ & $\mathbb K^2_{m,y;0,x}$ & $0$ & $0$ &
$\mathbb Z\oplus\mathbb Z$ & $2\mathbb Z\oplus m\mathbb Z$
\\
\hline
$\mathbb K^2$ & $\mathbb K^2_{2m,y;1,x}$ & $0$ & $0$ &
$\mathbb Z\oplus\mathbb Z$ & $4m\mathbb Z\oplus\mathbb Z$
\\
\hline
\end{tabular}
\end{center}
\end{table}

\begin{proof}[of Theorem~\ref{t16}]
1) The base $B^2$ must be one of the following: the plane, the cylinder or the M\"{o}bius strip (see theorem \ref{t15}). This means that the base $B^{2}$ is non-compact. Thus $H^2_P(B^2)=0$. There is no obstructions $H^2(B^2,P)=0$ because the base $B^2$ is homotopic to a one-dimensional CW-complex.

2), 3) See assertion \ref{cl9} (obstructions) and example \ref{ex7} (nontrivial twistings).

4), 5) See assertion \ref{cl10} for primary obstruction classes. There is no nontrivial twistings, since $H^2(\mathbb K^2, \mathbb R)=0$.
\end{proof}

\begin{corollary}
\label{c9}
Two Lagrangian fibrations $\pi_{i}: (M^{4}_{i},
\omega_{i}) \rightarrow B^{2}$ over a non-compact base $B^{2}$ with
complete lattices $P_{i}$ are Lagrangian equivalent if and only if they have
the same lattice $P_{1}=P_{2}$.
\end{corollary}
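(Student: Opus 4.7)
The plan is to deduce the statement as a direct corollary of the general classification Theorem~\ref{t11} combined with the vanishing results of Theorem~\ref{t16}(1) for non-compact two-dimensional bases.

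First I would handle the forward direction ($\Rightarrow$). The lattice $P$ of a Lagrangian fibration is defined intrinsically as the isotropy group of the fiberwise action $\alpha \mapsto s_\alpha$ of $T^*B$ (see Theorem~\ref{t6}(1)). Any Lagrangian equivalence $f\colon M^4_1 \to M^4_2$ is fiberwise and satisfies $f^*\omega_2 = \omega_1$, hence intertwines the shiftings $v_\alpha$ on the two total spaces. Consequently $f$ respects the fiberwise actions and therefore their isotropy subgroups, giving $P_1 = P_2$.

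For the backward direction ($\Leftarrow$), assume $P_1 = P_2 = P$. By Theorem~\ref{t15}, a non-compact surface admitting a complete lattice of rank $2$ is the plane $\mathbb{R}^2$, the cylinder $C^2$, or the M\"obius strip $\mathbb{M}^2$, each of which is homotopy equivalent to a $1$-dimensional CW complex. Theorem~\ref{t16}(1) therefore yields
\[
H^2(B^2, P) = 0 \qquad \text{and} \qquad H^2(B^2, \mathbb{R})/H^2_P(B^2) = 0.
\]
In particular the primary obstruction classes $\gamma_i \in H^2(B^2, P)$ automatically coincide (both are zero). Applying Theorem~\ref{t11}(3) produces a $2$-form $\varphi$ on $B^2$ such that $\pi_1\colon(M^4_1, \omega_1 + \pi_1^*\varphi) \to B^2$ is Lagrangian equivalent to $\pi_2\colon(M^4_2, \omega_2) \to B^2$.

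It remains to eliminate the twisting $\varphi$. By Lemma~\ref{l4} the form $\varphi$ is closed; since $H^2(B^2, \mathbb{R}) = 0$ on our non-compact bases, $\varphi = d\beta$ for some $1$-form $\beta$ on $B^2$. Every ordinary $1$-form defines a latticed $1$-form (by projection to $T^*B/P$), so Theorem~\ref{t11}(4) (equivalently Lemma~\ref{l2}) shows that $(M^4_1, \omega_1 + \pi_1^*\varphi)$ is Lagrangian equivalent to $(M^4_1, \omega_1)$. Composing the two Lagrangian equivalences completes the proof. No step presents a genuine obstacle: the content is entirely packaged in Theorems~\ref{t11} and~\ref{t16}(1), and the role of non-compactness is merely to force the two potential obstructions (primary and twisting) to vanish simultaneously.
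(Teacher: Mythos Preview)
Your proposal is correct and follows exactly the route the paper intends: the corollary is stated immediately after Theorem~\ref{t16} with no separate argument, and your derivation from Theorem~\ref{t11} together with the vanishing in Theorem~\ref{t16}(1) is precisely the implicit proof. The only cosmetic difference is that you spell out the forward direction and the elimination of the twist via Lemma~\ref{l4}, which the paper leaves entirely to the reader.
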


Summary. All Lagrangian fibrations $\pi\colon(M^4,\omega)\to B^2$ with compact fibres $F$ are classified by the following data: a closed lattice $P$ of rank $2$ on the base $B^2$, a primary obstruction class $\gamma\in H^2(B^2,P)$ and a nontrivial twisting $[\alpha] \in H^2(B^2,\mathbb R)/H^2_P(B^2)$ (see Theorem \ref{t11}).

Any closed lattice $P$ of rank $2$ on a compact two-dimensional surface $B^2$ is complete (see Theorem \ref{t13} and Remark \ref{r6}). Theorem \ref{t15} classifies all complete lattices up to isomorphism. Groups
$$
H^2(B^2,P),
\qquad
H^2(B^2,\mathbb R)/H^2_P(B^2)
$$
are described in theorem \ref{t16}.

This completes the classification of Lagrangian fibrations $\pi\colon(M^4,\omega)\to B^2$ with compact total spaces $M^4$ up to Lagrangian equivalence.

\section{Examples of Lagrangian and almost Lagrangian fibrations}
\label{s5}

In this section we give an example of a nontrivial almost Lagrangian fibration and obtain a full list of Lagrangian fibrations over two-dimensional surfaces up to fiberwise symplectomorphism.

\begin{example}
\label{ex8}
An almost Lagrangian fibration with a nontrivial symplectic obstruction class (see definition \ref{d6}).
In this example we construct a fibration $\pi\colon(M^6,\eta)\to\mathbb T^3$ over the torus $\mathbb T^3$ such that the total space $M^6$ is compact, $d\eta=\pi^*\psi$ for some $3$-form $\psi$, and the symplectic obstruction class  $[\psi]\in H^3(\mathbb T^3,\mathbb R)$ is nontrivial (see assertion \ref{cl21}).

This fibration can be realized as a quotient of a twisted cotangent bundle. Consider a fibration $\pi_0\colon(T^*\mathbb R^3,\,\omega_0+\pi_0^*\varphi)\to\mathbb R^3$. In standard coordinates $(x,y,z,\alpha,\beta,\gamma)$, where $x$, $y$, $z$ are coordinates on the base and $\alpha$, $\beta$, $\gamma$ are coordinates in the fibre, the canonical form is given by $\omega_0=d\alpha\wedge dx+d\beta\wedge
dy+d\gamma\wedge dz$ and the twisting form is given by $\varphi=2x\,dy\wedge\,dz+2y\,dz\wedge\,
dx+2z\,dx\wedge\,dy$. Let Lagrangian isomorphisms $s_1,s_2,s_3$ and $f_1,f_2,f_3$ of the total space $T^*\mathbb
R^3$ be given by the formulas:

\begin{align}
\label{eq19}
s_{1}(x, y, z, \alpha, \beta, \gamma) &= (x, y, z,
\alpha+1, \beta, \gamma),
\\
\label{eq20}
s_{2}(x, y, z, \alpha, \beta, \gamma) &= (x, y, z,
\alpha, \beta+1, \gamma),
\\
\label{eq21}
s_{3}(x, y, z, \alpha, \beta, \gamma) &= (x, y, z,
\alpha, \beta, \gamma+1),
\\
\label{eq22} f_{1}(x, y, z, \alpha, \beta, \gamma) &= (x+1, y, z,
\alpha, \beta+z, \gamma- y),
\\
\label{eq23}
f_{2}(x, y, z, \alpha, \beta, \gamma) &= (x, y+1, z,
\alpha- z, \beta, \gamma+ x),
\\
\label{eq24} f_{3}(x, y, z, \alpha, \beta, \gamma) &= (x, y, z+1,
\alpha+y, \beta-x, \gamma).
\end{align}

Denote by $\pi\colon(M^6,\eta)\to B^3$ the quotient of $\pi_0\colon(T^*\mathbb R^3,\omega_0+\pi^*\varphi)\to\mathbb R^3$ by the action of Lagrangian isomorphisms $s_i$,~$f_j$.

\begin{claim}
\label{cl21}
The fibration $\pi\colon(M^6,\eta)\to B^3$ is an almost Lagrangian fibration with a nontrivial symplectic obstruction.
\end{claim}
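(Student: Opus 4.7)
The plan is to verify the claim by checking four ingredients in turn: first that each generator is a Lagrangian isomorphism of the twisted cotangent bundle, so that the quotient $\eta$ and its exterior derivative are well defined; second, that the quotient $M^6$ is a smooth compact manifold fibering over $\mathbb{T}^3$; third, that the descended form $\eta$ satisfies $d\eta=\pi^{*}\psi$ for an explicit $3$-form $\psi$ on the base; and fourth, that $[\psi]\in H^{3}(\mathbb{T}^3,\mathbb{R})$ is nonzero. Completeness of the shiftings is then automatic by Remark \ref{r2}, since the fibres of the quotient are compact tori.

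First I would check that all six maps $s_i$ and $f_j$ preserve the twisted form $\omega_{0}+\pi_{0}^{*}\varphi$. The maps $s_i$ are fibrewise translations and clearly preserve both $\omega_{0}$ and $\pi_{0}^{*}\varphi$. For the $f_j$ a short computation is needed: for example
\[
f_{1}^{*}\omega_{0}=d\alpha\wedge dx+d(\beta+z)\wedge dy+d(\gamma-y)\wedge dz=\omega_{0}-2\,dy\wedge dz,
\]
while $f_{1}^{*}(\pi_{0}^{*}\varphi)=\pi_{0}^{*}\varphi+2\,dy\wedge dz$, so the two corrections cancel and $f_{1}^{*}(\omega_{0}+\pi_{0}^{*}\varphi)=\omega_{0}+\pi_{0}^{*}\varphi$. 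The analogous check for $f_{2}$ and $f_{3}$ is identical by cyclic symmetry. Thus $\omega_{0}+\pi_{0}^{*}\varphi$ descends to a well defined nondegenerate $2$-form $\eta$ on the quotient, vanishing on the fibres since these maps send fibres to fibres.

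Next, the group $\Gamma$ generated by $s_{1},s_{2},s_{3},f_{1},f_{2},f_{3}$ acts on the base coordinates $(x,y,z)$ simply by integral translations, so the base is $B^{3}=\mathbb{R}^{3}/\mathbb{Z}^{3}=\mathbb{T}^{3}$. A direct commutator computation shows that the $f_{j}$ do not commute: for instance $f_{1}f_{2}f_{1}^{-1}f_{2}^{-1}=s_{3}^{-2}$, and cyclically. Hence $\Gamma$ is a lattice in a Heisenberg-type group sitting as an extension of $\mathbb{Z}^{3}$ (base translations) by $\mathbb{Z}^{3}$ (the fibre lattice $\langle s_{1},s_{2},s_{3}\rangle$). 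The action is free and properly discontinuous (it is the restriction of a free transitive Lie group action to a cocompact lattice), so $M^{6}$ is a compact smooth nilmanifold, and $\pi\colon M^{6}\to\mathbb{T}^{3}$ is a locally trivial fibration whose fibres are the tori $(T^{*}_{x}\mathbb{R}^{3})/\mathbb{Z}^{3}\cong\mathbb{T}^{3}$. This gives the structure of a maximal isotropic fibration.

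Finally, observe that
\[
d(\omega_{0}+\pi_{0}^{*}\varphi)=\pi_{0}^{*}d\varphi=\pi_{0}^{*}(6\,dx\wedge dy\wedge dz).
\]
Since $d\varphi$ is invariant under all six generators (they act on the base as integer translations), it descends to the $3$-form $\psi=6\,dx\wedge dy\wedge dz$ on $\mathbb{T}^{3}$, and by construction $d\eta=\pi^{*}\psi$. Therefore $\pi\colon(M^{6},\eta)\to\mathbb{T}^{3}$ is an almost Lagrangian fibration in the sense of Definition \ref{d5} (shiftings are complete by compactness of the fibres, Remark \ref{r2}). For nontriviality of the symplectic obstruction, it suffices to integrate: $\int_{\mathbb{T}^{3}}\psi=6\neq 0$, so $[\psi]$ is a nonzero multiple of the fundamental class in $H^{3}(\mathbb{T}^{3},\mathbb{R})\cong\mathbb{R}$.

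The only mildly delicate step is the equivariance check for the $f_{j}$: one must be careful that the twisting form $\varphi$ was chosen precisely so that the correction $f_{j}^{*}\omega_{0}-\omega_{0}$ is cancelled by $f_{j}^{*}\pi_{0}^{*}\varphi-\pi_{0}^{*}\varphi$. Everything else reduces to compactness of the quotient and Stokes' theorem on $\mathbb{T}^{3}$.
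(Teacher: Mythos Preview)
Your proof is correct and follows essentially the same three-step strategy as the paper (well-definedness of the quotient fibration, invariance of the form, and nontriviality via $\int_{\mathbb T^3}\psi=6$). You are in fact more careful than the paper: you explicitly verify $f_j^*(\omega_0+\pi_0^*\varphi)=\omega_0+\pi_0^*\varphi$ and compute a commutator $f_1f_2f_1^{-1}f_2^{-1}=s_3^{-2}$, whereas the paper merely asserts that the $s_i,f_j$ are fiberwise symplectomorphisms and that the commutators lie in the fibre lattice.
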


\begin{proof}
The proof is in three steps.

\textsl{Step} 1. The fibration $\pi\colon(M^6,\eta)\to B^3$ is a locally trivial fibration over the torus $\mathbb T^3$ with fibre $\mathbb T^3$. The base is the torus, since $f_i$ act as translations on the base $\mathbb R^3$.  Two points in a fibre are equivalent  if and only if they differ by an integer vector $(\alpha,\beta,\gamma)$ (check that each commutator $[f_i,f_j]=f_if_jf_i^{-1}f_j^{-1}$ takes a point $(x,y,z,\alpha,\beta,\gamma)$ to $(x,y,z,\alpha+\widetilde m_1,\beta+\widetilde m_2,\gamma+\widetilde m_3)$ for some integers $\widetilde m_1$, $\widetilde m_2$, $\widetilde m_3$ depending on $i$ and $j$). Hence the fibre is the torus. The fibration is locally trivial, since the quotient of the base is a manifold.

\textsl{Step}~2. The fibration $\pi\colon(M^6,\eta)\to B^3$ is an almost Lagrangian fibration. The form $\omega_0+\pi_0^*\varphi$ can be induced on the quotient space, since $s_i$,~$f_j$ are fiberwise symplectomorphisms.

\textsl{Step}~3. The symplectic obstruction is nontrivial. Let $\psi$ be a $3$-form on the base $B^3$ such that $d\eta=\pi^*\psi$. Then $\displaystyle\int_{B^3}\psi=6$ ($\ne0$), since $d(\omega_0+\pi_0^*\varphi)=\pi^*_0(6\,dx\,\wedge dy\,\wedge dz)$.
\end{proof}
\end{example}

\begin{example}
\label{ex9} A full list of Lagrangian fibrations over two-dimensional surfaces with compact fibres (see Lemmas  \ref{l12}--\ref{l14}). Note that some fibrations in this list are fiberwise symplectomorphic.

\textsl{Fibrations over the torus $\mathbb T^2$}. All these fibrations can be realized as a quotient of a twisted cotangent bundle. Consider a cotangent bundle $\pi_0\colon(\mathbb R^4,\omega_\lambda)\to\mathbb R^2$. In standard coordinates $(x,y,\alpha,\beta)$, where $x$, $y$ are coordinates on the base and $\alpha$, $\beta$ are coordinates in the fibre, we have $\omega_\lambda=d\alpha\wedge dx+d\beta\wedge dy+\lambda\,dx\wedge dy$. Two Lagrangian isomorphisms $s_1$, $s_2$ are defined by the formulas:

\begin{align}
\label{eq25}
s_{1}(x, y, \alpha, \beta) &= (x, y, \alpha+1, \beta),
\\
\label{eq26}
s_{2}(x, y, \alpha, \beta) &= (x, y, \alpha, \beta+1).
\end{align}

\textsl{Fibrations with lattice $\mathbb T^2_{n,y_0;x_0}$}. Let $t,h$ be Lagrangian isomorphisms given by the formulas:

\begin{align}
\label{eq27}
t(x, y, \alpha, \beta) &= \biggl(x+x_{0}, y, \alpha, \beta+
\frac{nm_{0}+n_{0}}{y_{0}}y\biggr),
\\
\label{eq28}
h(x, y, \alpha, \beta) &= \biggl(x+ny, y+y_{0}, \alpha-
\frac{m_{0}}{x_{0}}x, \beta- n \alpha+\frac{nm_{0}}{x_{0}}x\biggr).
\end{align}

By $\pi\colon(M^4_{m_0,n_0},\omega_\lambda)\to\mathbb T^2_{n,y_0;x_0}$ denote the quotient of the fibration $
{\vrule width0pt depth0pt height10pt}  
\pi_0\colon(\mathbb R^4,\omega_\lambda)\to\mathbb R^2$ by Lagrangian isomorphisms $s_1,s_2$ and $t,h$.

\begin{lemma}
\label{l12}
Any Lagrangian fibration $\pi\colon(M^4,\omega)\to\mathbb T^2$ with lattice $P$ from the series $\mathbb T^2_{n,y_0;x_0}$ is fiberwise symplectomorphic to a fibration $\pi\colon(M^4_{m_0,n_0},\omega_\lambda)\to\mathbb T^2_{n,y_0;x_0}$ for some $\lambda\in\mathbb R$ and $m_0,n_0\in\mathbb Z$.
\end{lemma}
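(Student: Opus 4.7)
The plan is to reduce the claim to the abstract classification in Theorem~\ref{t11}: two Lagrangian fibrations sharing the same lattice on the base are Lagrangian equivalent if and only if they have equal primary obstruction classes and equal classes of nontrivial twistings. After identifying the given lattice $P$ with the specific model $\mathbb T^2_{n,y_0;x_0}$ via an automorphism of the base (possible by Theorem~\ref{t15}), it suffices to show that, as the parameters vary, the fibrations $\pi\colon(M^4_{m_0,n_0},\omega_\lambda)\to\mathbb T^2_{n,y_0;x_0}$ realize every pair of invariants
$$
(\gamma,[\varphi])\in H^2(\mathbb T^2,P)\oplus H^2(\mathbb T^2,\mathbb R)/H^2_P(\mathbb T^2)\simeq(\mathbb Z\oplus\mathbb Z_n)\oplus\mathbb R/(x_0\mathbb Z+y_0\mathbb Z),
$$
the groups being identified by Theorem~\ref{t16}.

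Step one is a routine verification that $\pi\colon(M^4_{m_0,n_0},\omega_\lambda)\to\mathbb T^2_{n,y_0;x_0}$ is a Lagrangian fibration with the claimed lattice: the form $\omega_\lambda$ is closed and non-degenerate on $\mathbb R^4$; each of $s_1,s_2,t,h$ is a symplectomorphism covering the projection; the action they generate is free and properly discontinuous; and the base action agrees with the defining action of the group for $\mathbb T^2_{n,y_0;x_0}$, while $s_1,s_2$ generate the standard fiber lattice $\mathbb Z^2\subset T^*\mathbb R^2$. The term $\lambda\,dx\wedge dy$ is a pullback from the base, so the family $\omega_\lambda$ is precisely the one-parameter family of twistings of $\omega_0$ by the basic closed $2$-forms; integrating over the fundamental cycle of $\mathbb T^2$ shows that as $\lambda$ ranges over $\mathbb R$ the twisting class sweeps out all of $\mathbb R/(x_0\mathbb Z+y_0\mathbb Z)$.

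The main obstacle is computing the primary obstruction class of $(M^4_{m_0,n_0},\omega_0)$ in terms of $(m_0,n_0)$. I would use the cell structure of Example~\ref{ex5} with fundamental parallelogram $OO_1O_2O_3$ and the identifications $\mathcal Z^2(\mathbb T^2,P)\simeq\mathbb Z\oplus\mathbb Z$, $\mathcal B^2(\mathbb T^2,P)=0\oplus n\mathbb Z$ of Assertion~\ref{cl9}. The section $(x,0,0,0)$ over the edge $OO_1$ is preserved by $t$ since $t(x,0,0,0)=(x+x_0,0,0,0)$, and the section $(0,y,0,0)$ over $OO_3$ is preserved by $h$ since $h(0,y,0,0)=(ny,y+y_0,0,0)$; these assemble into a section over the $1$-skeleton. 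Tracing these choices around $\partial(OO_1O_2O_3)$ by means of the formulas~\eqref{eq27}--\eqref{eq28}---so that the corrections in the $(\alpha,\beta)$-coordinates coming from the terms $\frac{nm_0+n_0}{y_0}y$, $-\frac{m_0}{x_0}x$, $-n\alpha+\frac{nm_0}{x_0}x$ get accumulated---one reads off an obstruction cocycle in $P_O\simeq\mathbb Z^2$ whose two coordinates are linear in $(m_0,n_0)$ with unimodular integer matrix. Modulo $\mathcal B^2(\mathbb T^2,P)=0\oplus n\mathbb Z$, as $(m_0,n_0)$ ranges over $\mathbb Z^2$ the class therefore covers the full group $\mathbb Z\oplus\mathbb Z_n$.

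With these two steps established, given any Lagrangian fibration $\pi\colon(M^4,\omega)\to\mathbb T^2$ with lattice from the series $\mathbb T^2_{n,y_0;x_0}$, I choose $(m_0,n_0)\in\mathbb Z^2$ matching its primary obstruction class and $\lambda\in\mathbb R$ matching its nontrivial twisting class, and then invoke Theorem~\ref{t11} to produce a Lagrangian equivalence---in particular a fiberwise symplectomorphism---between it and $\pi\colon(M^4_{m_0,n_0},\omega_\lambda)\to\mathbb T^2_{n,y_0;x_0}$.
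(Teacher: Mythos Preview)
Your proposal is correct and follows essentially the same route as the paper: verify that the model fibrations are Lagrangian with the given lattice, construct the section over the $1$-skeleton along the edges $OO_1$ and $OO_3$ by the straight-line sections $(x,0,0,0)$ and $(0,y,0,0)$, read off the obstruction cocycle, and then observe that the twisting parameter $\lambda$ sweeps out all cohomology classes via $\int_{\mathbb T^2}\lambda\,dx\wedge dy=\lambda x_0y_0$. The only cosmetic difference is that the paper carries the section computation through to the explicit answer $\bigl(\begin{smallmatrix}m_0\\n_0\end{smallmatrix}\bigr)$ for the cocycle, whereas you stop at ``linear in $(m_0,n_0)$ with unimodular integer matrix'' and argue surjectivity abstractly; both arrive at the same conclusion.
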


\begin{proof}
The proof is in two steps.

\textsl{Step}~1. The fibration $\pi\colon(M^4_{m_0,n_0},\omega_\lambda)\to\mathbb T^2_{n,y_0;x_0}$ is a Lagrangian fibration with lattice $P$ from the series $\mathbb T^2_{n,y_0;x_0}$ and with obstruction $\bigl(\begin{smallmatrix}m_0
\\
n_0+n\mathbb Z\end{smallmatrix}\bigr)\in P_O$. The proof that the fibration is a Lagrangian fibration over the torus with torus fibre is completely analogous to the proof of Assertion \ref{cl21}.

To prove that the lattice of the fibration belongs to the series $\mathbb T^2_{n,y_0;x_0}$, consider the action of isomorphisms $t$ and $h$ on the fibre $F_O$ over the origin $O$.

There exists a section over a $1$-skeleton such that the corresponding obstruction cocycle is equal to $\bigl(\begin{smallmatrix}m_0
\\
n_0\end{smallmatrix}\bigr)$. In this case the $1$-skeleton can be considered as the parallelogram $OO_1O_2O_3$ with vertices $O=(0,0,0,0)$, $O_1=(x_0,0,0,0)$, $O_2=(x_0,y_0,0,0)$, $O_3=(0,y_0,0,0)$ (compare with the parallelogram from section \ref{s3.2}). Consider the section that assigns to sides $OO_1$ and $OO_3$ line segments $OO_1$  and $OO_3$ respectively. Then the section over $O_2O_3$ is the line segment $O_3Q_3$, where $Q_3=(x_0,y_0,-m_0,nm_0)$. Analogously, the section over $O_1O_2$ is the line segment $O_1Q_2$, where $Q_2=(x_0,y_0,0,nm_0+n_0)$. It remains to note that points $Q_2$ and $Q_3$ differ by $(0,0,m_0,n_0)$.

\textsl{Step}~2. Fibrations $\pi\colon(M^4_{m_0,n_0},\omega_\lambda)\to\mathbb T^2_{n,y_0;x_0}$ realize all possible nontrivial shiftings $\pi\colon(M^4_{m_0,n_0},\omega_0)\to\mathbb T^2_{n,y_0;x_0}$. Indeed, $\omega_\lambda-\omega_0=\pi^*(\lambda\, dx\wedge dy)$ and $\displaystyle\int_{\mathbb T^2}{\lambda\,dx\,\wedge dy}=\lambda x_0y_0$.
\end{proof}

\textsl{Fibrations with lattice $\mathbb T^2_{u,v;w,z}$}. Let Lagrangian isomorphisms $t_1$,~$t_2$ be given by the formulas:
\begin{align}
\label{eq29} t_{1}(x, y, \alpha, \beta) &= (x+u, y+v, \alpha- \delta
x, \beta- \delta y),
\\
\label{eq30}
t_{2}(x, y, \alpha, \beta) &= (x+w, y+z,
\alpha+\gamma x, \beta+ \gamma y)
\end{align}
for some $\delta,\gamma\in\mathbb R$ such that the linear combinations $\gamma u-\delta v$ and $\gamma v-\delta z$
are integers. In other words, $\gamma(u,v)+\delta(w,z)=(m_0,n_0)$ for some $m_0,n_0\in\mathbb Z$.

By $\pi\colon(M^4_{m_0,n_0},\omega_\lambda)\to\mathbb T^2_{u,v;w,z}$ denote the quotient of the fibration $\pi_0\colon(\mathbb R^4,\omega_\lambda)\to\mathbb R^2$ by isomorphisms $s_1$,~$s_2$ and $t_1$,~$t_2$.

\begin{lemma}
\label{l13}
Any Lagrangian fibration $\pi\colon(M^4,\omega)\to\mathbb T^2$ with lattice $P$ from the series $\mathbb T^2_{u,v,w,z}$  is fiberwise symplectomorphic to a fibration $\pi\colon(M^4_{m_0,n_0},\omega_\lambda)\to\mathbb T^2_{u,v;w,z}$ for some  $\delta,\gamma\in\mathbb R$.
\end{lemma}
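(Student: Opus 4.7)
The plan is to follow the two-step structure used in the proof of Lemma \ref{l12} and then invoke Theorem \ref{t11}, which says that Lagrangian fibrations with a fixed lattice $P$ are classified up to Lagrangian equivalence by their primary obstruction class in $H^2(\mathbb T^2,P)$ together with a nontrivial twisting in $H^2(\mathbb T^2,\mathbb R)/H^2_P(\mathbb T^2)$. By Theorem \ref{t16} (see Table \ref{tab2}), for the lattice $\mathbb T^2_{u,v;w,z}$ these groups are explicit: obstructions form $\mathbb Z\oplus\mathbb Z$, and nontrivial twistings form $\mathbb R/(u\mathbb Z+v\mathbb Z+w\mathbb Z+z\mathbb Z)$. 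It therefore suffices to realise every pair (obstruction class, twisting class) by some fibration in the family $\pi\colon(M^4_{m_0,n_0},\omega_\lambda)\to\mathbb T^2_{u,v;w,z}$.

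In Step~1 I would first check that $t_1,t_2$ preserve $\omega_\lambda$: the extra terms $d(-\delta x)\wedge dx$ and $d(-\delta y)\wedge dy$ vanish, and similarly for $t_2$, so $s_1,s_2,t_1,t_2$ are all Lagrangian isomorphisms of $(\mathbb R^4,\omega_\lambda)$. A direct calculation then yields
\[
[t_1,t_2](x,y,\alpha,\beta)=(x,y,\alpha-(\gamma u+\delta w),\beta-(\gamma v+\delta z))=(x,y,\alpha-m_0,\beta-n_0),
\]
which is an integer translation in the fibre precisely because of the integrality condition on $\gamma,\delta$. Thus $\langle s_1,s_2,t_1,t_2\rangle$ acts freely on $\mathbb R^4$, the quotient is a locally trivial fibration $\pi\colon M^4_{m_0,n_0}\to\mathbb T^2_{u,v;w,z}$ with torus fibre, and $\omega_\lambda$ descends to a symplectic form. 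Reading off the action of $t_1,t_2$ on the base shows the induced lattice is $\mathbb T^2_{u,v;w,z}$, exactly as in Step~1 of Lemma \ref{l12}.

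In Step~2 I would compute the primary obstruction class. Taking the $1$-skeleton of $\mathbb T^2_{u,v;w,z}$ to be the parallelogram $OO_1O_2O_3$ with $O=(0,0,0,0)$, $O_1=(u,v,0,0)$, $O_3=(w,z,0,0)$, $O_2=O_1+O_3$, and the zero section along $OO_1$ and $OO_3$, the sections over the remaining sides obtained by transport along $t_2$ and $t_1$ meet above $O_2$ at points that differ by the commutator, i.e.\ by $(m_0,n_0)\in P_O$. Hence the obstruction cocycle equals $(m_0,n_0)$. Since the real matrix $\bigl(\begin{smallmatrix}u&w\\ v&z\end{smallmatrix}\bigr)$ is invertible (the lattice has rank $2$), for every $(m_0,n_0)\in\mathbb Z\oplus\mathbb Z$ one can solve $\gamma(u,v)+\delta(w,z)=(m_0,n_0)$ for real $\gamma,\delta$, so all obstruction classes are realised. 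The parameter $\lambda$ handles twistings: $\omega_\lambda-\omega_0=\pi_0^*(\lambda\,dx\wedge dy)$ descends to a closed $2$-form on the quotient whose integral equals $\lambda\,|uz-vw|$, and as $\lambda$ ranges over $\mathbb R$ this sweeps out all of $H^2(\mathbb T^2,\mathbb R)$. Theorem \ref{t11} then gives the desired Lagrangian equivalence, which is in particular a fibrewise symplectomorphism.

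The main obstacle will be the bookkeeping in the obstruction computation: one has to be careful to transport the zero section along the correct deck transformations ($t_1$ across $O_1O_2$, $t_2$ across $O_3O_2$) and to track signs so that the difference at $O_2$ comes out to $(m_0,n_0)$ and not some off-by-one variant. A secondary point to address is that different $(\gamma,\delta)$ giving the same $(m_0,n_0)$ yield Lagrangian-equivalent fibrations; this follows because their difference solves the homogeneous system and the corresponding modification of $t_1,t_2$ is implemented by a globally defined fibrewise translation (a closed latticed $1$-form), which by Theorem \ref{t11} does not change the primary obstruction class.
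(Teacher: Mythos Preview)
Your proposal is correct and follows exactly the route the paper indicates: the paper's own proof is literally ``completely analogous to the proof of Lemma \ref{l12}'', and you have spelled out that analogy in detail, including the commutator computation, the obstruction cocycle via the parallelogram section, and the twisting via $\lambda$. One small remark: your ``secondary point'' about different $(\gamma,\delta)$ yielding the same $(m_0,n_0)$ is moot here, since the matrix $\bigl(\begin{smallmatrix}u&w\\ v&z\end{smallmatrix}\bigr)$ is invertible and hence the system $\gamma(u,v)+\delta(w,z)=(m_0,n_0)$ has a \emph{unique} real solution.
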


The proof is completely analogous to the proof of Lemma \ref{l12}. The fibration $\pi\colon(M^4_{m_0,n_0},\omega_\lambda)\to\mathbb T^2_{u,v;w,z}$ is a Lagrangian fibration with lattice $P$ from the series $\mathbb T^2_{u,v,w,z}$ and with obstruction $\bigl(\begin{smallmatrix}m_0
\\
n_0\end{smallmatrix}\bigr)\in P_O$.
\smallskip

\textsl{Fibration over the Klein bottle $\mathbb K^2$ with lattice $\mathbb K^2_{m,y_0;\delta,x_0}$}.
All these fibrations over the Klein bottle can be obtained from a fibration $\pi\colon(I^2\times\mathbb T^2,\omega)\to I^2$ over a parallelogram $I^2$ by identifying points over opposite sides by fiberwise isomorphisms $g$ and $h$.

The base $I^2$ is the parallelogram with vertices $(0,0)$, $(x_0,0)$, $\bigl(\frac{m-\delta}2y_0,y_0\bigr)$ and $\bigl(x_0+\frac{m-\delta}2y_0,y_0\bigr)$ for some $x_0,y_0\in\mathbb R$, $m,\delta\in\mathbb Z$. In standard coordinates ($x$, $y$, $\alpha$, $\beta$), where $x$, $y$ are coordinates on the base and $\alpha,\beta\ (\operatorname{mod}1)$ are coordinates in the fibre, the form $\omega$ is $d\alpha\wedge dx+d\beta\wedge dy$. Fiberwise symplectomorphisms $g$, $h$ are given by the formulas:

\begin{align}
\nonumber
h(x, y, \alpha, \beta) &= \biggl(x+my +
\frac{m-\delta}{2}y_{0},\ y + y_{0},
\\
\label{eq31}
&\qquad\qquad \alpha+ \frac{m_{0}}{x_{0}}x +
\biggl(\frac{mm_{0}}{x_{0}} + \frac{n_{0}}{x_{0}}\biggr)y,
\ - m \alpha+ \beta+ \frac{n_{0}}{x_{0}}x\biggr),
\\
\label{eq32} g(x, y, \alpha, \beta)& = \biggl(x + (\delta-m) y +
\frac{m-\delta}{2} y_{0} + x_{0},\ -y+y_{0},\ \alpha,\ (\delta-m)
\alpha-\beta\biggr)
\end{align}
for some $m_0,n_0\in\mathbb Z$.

By $\pi\colon(M^4_{m_0,n_0},\omega)\to\mathbb K^2_{m,y_0;\delta,x_0}$ denote the quotient of the fibration $\pi\colon(I^2\times\mathbb T^2,\omega)\to I^2$ by isomorphisms $g$ and $h$ (more precisely, by identifying points over opposite sides of the parallelogram $I^2$).

The proof of the next lemma is similar to that of Lemma \ref{l12}.

\begin{lemma}
\label{l14}
Any Lagrangian fibration over the Klein bottle is fiberwise symplectomorphic to a fibration $\pi\colon(M^4_{m_0,n_0},\omega)\to\mathbb K^2_{m,y_0;\delta,x_0}$ for some $x_0,y_0\in\mathbb R$, $\delta,m,m_0,n_0\in\mathbb Z$.
\end{lemma}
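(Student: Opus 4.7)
The plan follows the general classification strategy from Theorem \ref{t11}: a Lagrangian fibration is determined up to Lagrangian equivalence by the triple (lattice, primary obstruction class, nontrivial twisting class). For the Klein bottle, Theorem \ref{t16} (items 4, 5) shows that $H^2(\mathbb K^2, \mathbb R)/H^2_P(\mathbb K^2) = 0$, so the twisting data is absent. Hence the lemma reduces to showing that (i) the family $\{(M^4_{m_0,n_0}, \omega)\}$ realizes every admissible lattice, and (ii) for each fixed lattice, the integer parameters $(m_0, n_0)$ realize every primary obstruction class. Since fiberwise symplectomorphism is weaker than Lagrangian equivalence, it suffices to establish these two surjectivities.

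For (i), by Theorem \ref{t4} every closed rank-$2$ lattice on $\mathbb K^2$ is isomorphic (via a base diffeomorphism) to some $\mathbb K^2_{m, y_0; \delta, x_0}$ with $m \ge 0$, $\delta \in \{0,1\}$, and $m$ even when $\delta = 1$. Any such lattice isomorphism lifts to a fiberwise symplectomorphism between the total spaces, so I may assume the given lattice is exactly $\mathbb K^2_{m, y_0; \delta, x_0}$. Next I verify the construction produces a valid Lagrangian fibration: a direct check using the formulas \eqref{eq31}--\eqref{eq32} confirms $g^*\omega = h^*\omega = \omega$ and that the $(\alpha,\beta)$-parts of $g, h$ are integer-linear modulo $1$, so the quotient $(I^2 \times \mathbb T^2)/\langle g, h\rangle$ is a smooth manifold fibered in $2$-tori over $\mathbb K^2_{m, y_0; \delta, x_0}$ with a globally well-defined symplectic form.

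The main step is computing the primary obstruction class. Using the Duistermaat correspondence from Remark \ref{r1}, shiftings along $dx, dy$ correspond (in standard coordinates on $T^*\mathbb R^2$) to $-\partial/\partial\alpha$ and $-\partial/\partial\beta$ respectively, so the lattice at a point $(x, y)$ of the base is generated by the elements that give the identity action on the fibre after the identifications by $g, h$. Tracking the $(\alpha, \beta)$-components of $g^{-1}$ and $h^{-1}$ shows that the lattice indeed equals $\mathbb K^2_{m, y_0; \delta, x_0}$. Then, mimicking Step 1 in the proof of Lemma \ref{l12}, I construct a section $s^1$ over the $1$-skeleton by taking the zero section along the two sides $O Q_1$ and $O Q_3$, and transport it by $g$ and $h$ to sections over $Q_3 Q_2$ and $Q_1 Q_2$. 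The obstruction cocycle is the element of $P_O \simeq \mathbb Z^2$ measuring the discrepancy of the two lifts of the point corresponding to $Q_2$. A direct calculation using the formulas \eqref{eq31}--\eqref{eq32} gives the value $\begin{pmatrix} 2m_0 \\ (\delta-m)m_0 - m n_0 \end{pmatrix}$, which matches exactly the generators of $\mathcal{B}^2(\mathbb K^2, P)$ computed in Assertion \ref{cl10} (with $m_1, m_2$ replaced by $m_0, n_0$ and the roles of $n_1, n_2$ absorbed into coboundary equivalence).

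Consequently, as $(m_0, n_0)$ ranges over $\mathbb Z^2$, the primary obstruction realized by $(M^4_{m_0,n_0}, \omega)$ ranges over all of $H^2(\mathbb K^2, P)$, which is $\mathbb Z_2 \oplus \mathbb Z_m$ for $\delta = 0$ and $\mathbb Z_{2m}$ for $\delta = 1$. Finally, by part 3 of Theorem \ref{t11} combined with the triviality of nontrivial twistings on $\mathbb K^2$, any two Lagrangian fibrations over $\mathbb K^2$ with the same lattice and the same primary obstruction are Lagrangian equivalent, hence in particular fiberwise symplectomorphic. The main technical obstacle I expect is the obstruction calculation: the coefficients $m_0/x_0$, $n_0/x_0$, $m m_0/x_0$ appearing in $g$ and $h$ are not a priori integers, and one must verify that the corresponding flows descend consistently to the torus fibre (where $\alpha, \beta$ are defined only modulo $1$) and that the resulting integer class in $P_O$ is read off correctly in the semidirect-product presentation of $\pi_1(\mathbb K^2)$.
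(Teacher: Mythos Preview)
Your overall strategy is the paper's: reduce to the pair (lattice, primary obstruction) using Theorem~\ref{t11} and the vanishing of $H^2(\mathbb K^2,\mathbb R)$, then show that the family $(M^4_{m_0,n_0},\omega)$ hits every obstruction class. The paper's own proof simply refers to the method of Lemma~\ref{l12} and records that the obstruction cocycle of $(M^4_{m_0,n_0},\omega)$ is $\bigl(\begin{smallmatrix}m_0\\n_0\end{smallmatrix}\bigr)\in P_O$.

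There is, however, a genuine gap in your obstruction computation. You assert that the cocycle equals $\bigl(\begin{smallmatrix}2m_0\\(\delta-m)m_0-mn_0\end{smallmatrix}\bigr)$ and then note that this ``matches exactly the generators of $\mathcal B^2(\mathbb K^2,P)$'' from Assertion~\ref{cl10}. But that observation destroys the argument rather than completing it: an element of $\mathcal B^2$ is a coboundary, so the class you have written down is \emph{zero} in $H^2(\mathbb K^2,P)$ for every choice of $(m_0,n_0)$. Your subsequent claim that ``as $(m_0,n_0)$ ranges over $\mathbb Z^2$, the primary obstruction \ldots\ ranges over all of $H^2(\mathbb K^2,P)$'' therefore does not follow from what you computed.

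What has happened is that you have reproduced the \emph{difference-cochain} formula from the proof of Assertion~\ref{cl10} (which measures how the obstruction changes when the section over the $1$-skeleton is modified) rather than the obstruction cocycle of the specific section you described. Carrying out the computation you actually set up --- zero section along $OQ_1$ and $OQ_3$, transport by $h$ and $g$ to the opposite sides, compare the two resulting points over $Q_2$ --- one finds, exactly as in Step~1 of Lemma~\ref{l12}, that the discrepancy is $\bigl(\begin{smallmatrix}m_0\\n_0\end{smallmatrix}\bigr)$. This value does range over all of $\mathcal Z^2(\mathbb K^2,P)=\mathbb Z\oplus\mathbb Z$, and hence over all of $H^2(\mathbb K^2,P)$, which is what the lemma needs.
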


In this case, $\pi\colon(M^4_{m_0,n_0},\omega)\to\mathbb K^2_{m,y_0;\delta,x_0}$ is a Lagrangian fibration with lattice $P$ from the series $\mathbb K^2_{m,y_0;\delta,x_0}$ and one of the obstruction cocycles of the fibration is equal to $\bigl(\begin{smallmatrix}m_0
\\
n_0\end{smallmatrix}\bigr)\in P_O$. There is no need to take all possible $x_0$, $y_0$, $\delta$, $m$, $m_0$, $n_0$. By theorem \ref{t15}, one may assume that $x_0,y_0>0$, $m>0$, $\delta$ is equal to $0$ or $1$, and $m$ is even for $\delta=1$. And by theorem \ref{t16}, it suffices to consider $m_0\,(\bmod$ $2)$ and $n_0\,(\bmod$ $m)$ when $\delta=0$, and it suffices to consider $m_0\,(\bmod$ $2m)$ and $n_0=0$ when $\delta=1$.

It follows that if $m\ne0$, then it suffices to consider only a finite number of $m_0$,~$n_0$.
\end{example}

\section{Further investigations}
\label{s6}

In this section we suggest some directions for further investigations. In sections \ref{s6.1},~\ref{s6.2} we discuss some possible generalizations of obtained results. We conclude with some problems for the reader (see section~\ref{s6.3}).

There are two natural ways to generalize the classification of Lagrangian fibrations up to Lagrangian equivalence. One may consider other fibrations and one may consider other equivalence relations. Let us consider some examples of possible generalizations.

\subsection{Other fibrations}
\label{s6.1}

This paper classifies fibrations $\pi\colon(M^{2n},\omega)\to B^n$ over a manifold $B^n$ such that the form $\omega$ is non-degenerate, $d\omega=\pi^*\psi$ (for some $3$-form~$\psi$ on the base $B^n$) and the form $\omega$ vanishes on each fibre $F\subset M^{2n}$ (that is $\omega\big|_F\equiv0$). There are some natural ways to generalize Lagrangian fibrations.

\textsl{Fibrations with singularities.} One may allow a fibration to be locally trivial only over an open everywhere dense set $O^n\subset B^n$. The fibration can have singularities over the complement $B^n/O^n$. One may start with non-degenerate singularities because they are classified by the Eliasson theorem (see, for example, \cite{1}). More information about non-degenerate singularities can be found in \cite{11} and \cite{3} (and in other papers by N. T. Zung).

\textsl{Other structures on the total space}.  The classification of Lagrangian fibrations in this paper is based on the Duistermaat construction (see section \ref{s2.1}). One can easily generalize this construction to the following two cases.

1) The form $\omega$ is non-degenerate and $\omega\big|_F\equiv0$. (Recall that we do not need the condition $d\omega=\pi^*\psi$ for the Duistermaat construction. We use it latter to prove that a fibration is Lagrangian equivalent to a twisted cotangent bundle, see Theorem \ref{t6})

2) Instead a $2$-form there is a bivector $\nu$ on the total space of a fibration $\pi\colon(M^{2n},\nu)\to B^n$ (see section \ref{s2.1}).

\textsl{Bases which are not manifolds}.
In order to study non-degenerate singularities we need bases which are not manifolds. By Delzant's theorem the base of a fibrations with elliptic singularities is a manifold with corners (for details see, for example, \cite{12}).

Also, in \cite{4} K. N. Mishachev suggested that one may try to classify fibration over orbifolds.

\subsection{Other classifications}
\label{s6.2}

There are three natural equivalence relations on the set of fibrations with a symplectic structure on the total space.
\begin{itemize}
\item[1.] Lagrangian equivalence $=$ fiberwise symplectomorphism identical on the base.
\item[2.] Lagrangian isomorphism $=$ fiberwise symplectomorphism.
\item[3.] Fiberwise diffeomorphism (not necessary identical on the base).
\end{itemize}

One may try to classify these fibrations up to these equivalences. Also one may classify the total spaces up to
\begin{itemize}
\item[A.] Symplectomorphism (not necessary fiberwise).
\item[B.] Diffeomorphism (not necessary fiberwise).
\end{itemize}

Different classifications of Lagrangian fibrations are also discussed in \cite{13}. Total spaces of Lagrangian fibration with non-degenerate non-hyperbolic singularities are classified up to diffeomorphism in \cite{13}.

\begin{remark}
\label{r12}
Compact Lagrangian fibrations over surfaces were classified up to Lagrangian isomorphism by Mishachev in \cite{4}. Unfortunately, he made several mistakes. The classification of fibrations up to Lagrangian isomorphism is in two steps.

\textsl{Step}~1. Classify closed lattices of rank $2$ up to isomorphism.

\textsl{Step}~2. For each closed lattice $P$ take the quotient of the action of the automorphism group $\mathrm{Aut}_P$ on the set of Lagrangian fibrations with lattice $P$.

In terms of sheafs (see section \ref{s2.4.3} and \S\,\ref{s3}) step 2 is equivalent to the computation of the quotient of the group $H^1(B,\Lambda(T^*\!B/P))$ by the action of $\mathrm{Aut}_P$. Mishachev described the action of the group $\mathrm{Aut}_P$ on the normal subgroup $H^0(B,Z^2B)/d_*(H^0(B,\Gamma(T^*\!B/P)))$ and on the quotient by this subgroup, that is on the kernel of the homomorphism $H^1(B,\Gamma(T^*\!B/P))\stackrel{d_*}\longrightarrow H^1(B,Z^2B)$. But this does not determine the action on $H^1(B,\Lambda(T^*\!B/P))$.

It remains to choose a representative for each element of   $\mathrm{Ker}(H^1(B,\Gamma(T^*\!B/P))\stackrel{d_*}\longrightarrow
H^1(B,Z^2B))$ and then to describe the action of the group $\mathrm{Aut}_P$ on the representatives (from the group $H^1(B,\Lambda(T^*\!B/P))$). In other words (in terms of obstructions), it remain to take a Lagrangian fibration for each obstruction and then to describe the action of $\mathrm{Aut}_P$ on these fibrations.

There is also a (minor) mistake in Mishachev calculations of the group $H^1(B,\Gamma(T^*\!B/P))$, which he denoted by
$H^1(\mathcal T^*\mathcal B/\mathcal P)$. The group $H^1(\mathcal T^*\mathcal B/\mathcal P)$ is not $\mathbb Z_n\oplus\mathbb Z$ but $\mathbb Z\oplus\mathbb Z_n$.
\end{remark}

\subsection{Several problems}
\label{s6.3}

\begin{problem}
\label{pr1}
Classify all Lagrangian fibrations with non-degenerate singularities up to Lagrangian equivalence.
\end{problem}

This is a nontrivial problem. Some facts about local and semi-local structure of non-degenerate singularities can be found in \cite{1} and in papers by N. T. Zung (see, for example, \cite{11}). Fibrations with two types of non-degenerate singularities (non-hyperbolic) are studied in \cite{12} and \cite{13}.

The classification in terms of \v{C}ech cohomology (see section \ref{s2.4.3}) is not an answer
This classification can be roughly outlined as follows. For each locally trivial fibration $\pi\colon(M^{2n},\omega)\to B^n$ with fibre $F$ there exists a cover $\{U_\alpha\}$ of the base $B^n$ such that the induced fibration over each element of the cover is trivial. Denote this trivializations by $\varphi_\alpha\colon\pi^{-1}(U_\alpha)\to U_\alpha\times F$.

Classification in terms of \v{C}ech cohomology states that the fibration is uniquely determined by the set of transition functions $\varphi_\beta\circ\varphi_\gamma^{-1}$. And this transition functions $\varphi_\alpha$ are determined by (Lagrangian) sections of the Lagrangian fibrations, that is by elements of the sheaf $\Lambda(T^*\!B/P)$.

Analogously fibrations with structure group $G$ over base $B$ are classified by \v{C}ech $1$-cohomology
$H^1(B,\mathcal G)$ with coefficients in the sheaf $\mathcal G$ of sections of the trivial fibration $G\times B\to B$.

This group is hard to compute unless it is trivial (for instance, the base is homotopic to a point or a circle,
compare \cite{13}). This yields that using these classifying invariants it is hard to get an explicit list of Lagrangian fibrations.

In conclusion, here are two more problems about Lagrangian fibrations. Recall the plan of classification of Lagrangian fibrations from \S\,~\ref{s3}. The most difficult part of this plan is the classification of lattices.

\begin{problem}
Classify all compact (almost) Lagrangian fibrations over the $n$-torus $\mathbb T^n$.
\label{pr2}
\end{problem}

For $n>2$, there are almost Lagrangian fibrations that are not Lagrangian. In other words there exist almost Lagrangian fibrations with nontrivial symplectic obstructions (see definition \ref{d6}). It is interesting whether all classes $H^3(\mathbb T^n,\mathbb R)$ can be realized by symplectic obstructions.

\begin{problem}
Classify (almost) Lagrangian fibrations over two-dimensional surfaces with fibre $\mathbb R^1\times\mathbb T^1$.
\label{pr3}
\end{problem}

Maybe, the many-valued Morse theory developed by S.P. Novikov (see \cite{14}) would help to solve the last problem.

\smallskip

After the paper had been written, author found out that Lagrangian fibrations over the Klein bottle were independently classified by D. Sepe (see \cite{15}).

\end{document}